\def\@seccntformat#1{\@ifundefined{#1@cntformat}%
   {\csname the#1\endcsname\quad}  % default
   {\csname #1@cntformat\endcsname}% enable individual control
}
\let\oldappendix\appendix %% save current definition of \appendix
\renewcommand\appendix{%
    \oldappendix
    \newcommand{\section@cntformat}{\appendixname~\thesection\quad}
}
\newtheorem{lemma*}{Lemma}[section]
\newtheorem{lemma}{Lemma}[subsection]
\newtheorem{theorem*}[lemma*]{Theorem}
\newtheorem{theorem}[lemma]{Theorem}
\newtheorem{corollary}[lemma]{Corollary}
\newtheorem{proposition}[lemma]{Proposition}
\newtheorem{proposition*}[lemma*]{Proposition}
\theoremstyle{definition}
\newtheorem{definition}[lemma]{Definition}
\newtheorem{definition*}[lemma*]{Definition}
\newcommand{\F}{\mathbb{F}}
\newcommand{\Aut}{\operatorname{Aut}}
\newcommand{\Inn}{\operatorname{Inn}}
\newcommand{\ol}{\overline}
\newcommand{\PSL}{\operatorname{PSL}}
\newcommand{\SL}{\operatorname{SL}}
\newcommand{\Syl}{\operatorname{Syl}}
\newcommand{\Out}{\operatorname{Out}}
\newcommand{\Fit}{\operatorname{Fit}}
\newcommand{\Sz}{\operatorname{Sz}}
\newcommand{\divides}{\bigm|}
\newcommand{\ndivides}{%
  \mathrel{\mkern.5mu % small adjustment
    % superimpose \nmid to \big|
    \ooalign{\hidewidth$\big|$\hidewidth\cr$\nmid$\cr}%
  }%
}
\newcommand{\kyle}{
\begin{tikzpicture}[scale=0.12]
    \draw (-1, 0) -- (0, -1.2) -- (1, 0) -- (0, 1.2) -- (-1, 0) -- (1, 0);
    \draw[fill=black] (-1, 0) circle (0.25);
    \draw[fill=black] (1, 0) circle (0.25);
    \draw[fill=black] (0, -1.2) circle (0.25);
    \draw[fill=black] (0, 1.2) circle (0.25);
\end{tikzpicture}
}
\newcommand{\kfour}{
\begin{tikzpicture}[scale=0.12]
    \draw (-1, 0) -- (0, -1.2) -- (1, 0) -- (0, 1.2) -- (-1, 0) -- (1, 0);
    \draw (0,1.2) -- (0,-1.2);
    \draw[fill=black] (-1, 0) circle (0.25);
    \draw[fill=black] (1, 0) circle (0.25);
    \draw[fill=black] (0, -1.2) circle (0.25);
    \draw[fill=black] (0, 1.2) circle (0.25);
\end{tikzpicture}
}
\newcommand{\stargraph}{
\begin{tikzpicture}[scale=0.12]
    \draw (0,0) -- (1.386, 0.8);
    \draw (0,0) -- (-1.386, 0.8);
    \draw (0,0) -- (0,-1.6);
    \draw[fill=black] (0,0) circle (0.25);
    \draw[fill=black] (1.386, 0.8) circle (0.25);
    \draw[fill=black] (-1.386, 0.8) circle (0.25);
    \draw[fill=black] (0,-1.6) circle (0.25);
\end{tikzpicture}
}
\newcommand{\spoon}{
\begin{tikzpicture}[scale=0.12]
    \draw (0,0) -- (0,2.4) -- (1.2, 1.2) -- (0,0);
    \draw (1.2,1.2) -- (2.75, 1.2);
    \draw[fill=black] (0,0) circle (0.25);
    \draw[fill=black] (0,2.4) circle (0.25);
    \draw[fill=black] (1.2,1.2) circle (0.25);
    \draw[fill=black] (2.75,1.2) circle (0.25);
\end{tikzpicture}
}
\newcommand{\Aten}{%can easily be changed to look more like the star shape
\begin{tikzpicture}[scale=0.12]
    \draw (0,0) -- (1.2,0) -- (2.4, 0);
    \draw[fill=black] (0,0) circle (0.25);
    \draw[fill=black] (1.2,0) circle (0.25);
    \draw[fill=black] (2.4,0) circle (0.25);
    \draw[fill=black] (1.2,1.2) circle (0.25);
\end{tikzpicture}
}
\title{Classifying prime graphs of finite groups -- a methodical approach}
\author{Thomas Michael Keller, Gavin Pettigrew, Saskia Solotko, Lixin Zheng}
\date{}
\begin{document}

\maketitle

\begin{abstract}
    For a finite group $G$, the vertices of the prime graph $\Gamma(G)$ are the primes that divide $|G|$, and two vertices $p$ and $q$ are connected by an edge if and only if there is an element of order $pq$ in $G$. Prime graphs of solvable groups as well as groups whose noncyclic composition factors have order divisible by exactly three distinct primes have been classified in graph-theoretic terms. In this paper, we begin to develop a general theory on the existence of edges in the prime graph of an arbitrary $T$-solvable group, that is, a group whose composition factors are cyclic or isomorphic to a fixed nonabelian simple group $T$. We then apply these results to classify the prime graphs of $T$-solvable groups for, in a suitable sense, most $T$ such that $|T|$ has exactly four prime divisors. We find that these groups almost always have a 3-colorable prime graph complement containing few possible triangles.

\end{abstract}

\section{Introduction}

In this paper, we continue the investigation of prime graphs (Gruenberg-Kegel graphs) of finite groups. For a finite group $G$, the prime graph $\Gamma(G)$ is defined as follows: the vertices are the prime divisors of $|G|$, and there is an edge between primes $p$ and $q$ if and only if there is an element of order $pq$ in $G$. Gruenberg and Kegel introduced this notion in the 1970s to study certain cohomological questions of group rings, but it has been studied for its own sake ever since. 

For a long time, the research on prime graphs focused on simple groups and groups whose prime graphs have a specific graph-theoretic property (such as being acyclic, see \cite{lucido}). Until the last decade, classifying prime graphs of large families of groups in purely graph-theoretical terms -- that is, starting with a group-theoretic property and then classifying all the possible prime graphs of the groups having this property -- seemed to be out of reach. Nevertheless, the authors of \cite{siberian} classified prime graphs with at most five vertices, while \cite{baechle} classified prime graphs of solvable cut groups with three or four vertices. In \cite{debon}, the authors classified prime graphs of solvable rational groups, though the only primes dividing the order of a solvable rational group are 2, 3, and 5.

%Nevertheless, the authors of \cite{siberian}, \cite{debon}, and \cite{baechle} classified the prime graphs of groups with order divisible by at most five distinct primes, solvable rational groups, and solvable cut groups with order divisible by three or four distinct primes, respectively. Note that the only prime divisors of a solvable rational group are 2, 3, and 5, so all of these groups have relatively few prime divisors. 

Classifying prime graphs of families of groups with many distinct prime divisors started in earnest with the 2015 paper by Gruber, Keller, Lewis, Naughton, and Strasser, where they classified the prime graphs of solvable groups. We state their result as a lemma.
\begin{lemma*}\label{ThomasMichaelKeller}
    \cite[Theorem 2.10]{2015} An unlabeled simple graph $\Gamma$ is isomorphic to the prime graph of a solvable group if and only if its complement is $3$-colorable and triangle-free.
\end{lemma*}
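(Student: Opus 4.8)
I would split the proof into the two implications, which are quite different in character; the construction for the ``if'' direction is where the real work lies.

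\textbf{Necessity.} Suppose $G$ is solvable with $\Gamma(G)=\Gamma$. That $\overline{\Gamma}$ is triangle-free is the short half: if $p,q,r\in\pi(G)$ were pairwise nonadjacent in $\Gamma$, then a Hall $\{p,q,r\}$-subgroup $H$ (which exists by solvability) would have $\Gamma(H)$ equal to the edgeless graph on three vertices, hence three connected components, whereas a solvable group with disconnected prime graph is Frobenius or $2$-Frobenius and so has exactly two components. In particular $\overline{N}_{\Gamma}(p)=\{q\neq p:q\not\sim p\}$ induces a clique of $\Gamma$ for every vertex $p$. For $3$-colorability of $\overline{\Gamma}$ --- that is, for $\pi(G)$ being a union of three cliques of $\Gamma$ --- I would induct on $|G|$. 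For a minimal normal $p$-subgroup $N$, the graph $\Gamma(G/N)$ is obtained from $\Gamma$ by deleting some edges at $p$ (or deleting $p$ altogether), so when $p\mid|G/N|$ the induction hypothesis applies directly to $\overline{\Gamma(G/N)}\supseteq\overline{\Gamma}$; the remaining case is that $N$ is a normal Sylow $p$-subgroup, $G=N\rtimes H$, and there $\overline{N}_{\Gamma}(p)$ is exactly the set of primes $q$ for which a Sylow $q$-subgroup of $H$ acts fixed-point-freely on $N$, and one must produce a $3$-clique cover of $\Gamma(H)$ meeting that (clique) set in at most two parts. Showing such a cover exists --- in effect, that a solvable group's Fitting series, however long, forces enough edges between nonadjacent factors to compress $\pi(G)$ into three cliques --- is the technical heart of this direction, and I expect to need the module structure of chief factors (or to quote the fact from the existing structure theory of prime graphs of solvable groups).

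\textbf{Sufficiency: the construction.} Now suppose $\overline{\Gamma}$ is triangle-free and $3$-colorable, and fix a partition $V(\Gamma)=\pi_1\sqcup\pi_2\sqcup\pi_3$ into cliques of $\Gamma$, so every nonedge of $\Gamma$ joins two different classes. I would realize $\Gamma$ as $\Gamma(G)$ with $G$ an iterated semidirect product of nilpotent groups, one per class. The model is the two-class case: given cliques $\pi_1,\pi_2$ with prescribed cross-edges, I choose the primes in $\pi_2$, and then, via Dirichlet's theorem, the primes in $\pi_1$ so large that $\mathbb{F}_p^{\times}$ has order divisible by every prime of $\pi_2$ for each $p\in\pi_1$; I let $B$ be the direct product of cyclic groups of the orders in $\pi_2$, and set $A=\bigoplus_{p\in\pi_1}A_p$ with $A_p=\bigoplus_{q\in\pi_2,\,p\sim q}M_{p,q}$, where $M_{p,q}$ is the one-dimensional $\mathbb{F}_p[B]$-module on which the cyclic factor of order $q$ acts trivially and every other factor acts nontrivially. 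In $G=A\rtimes B$ one checks --- using that $\pi(A)$ and $\pi(B)$ are coprime, that $p$-elements of $G$ lie in $A$, and that $q$-elements of $G$ are conjugate into $B$ --- that for $p\in\pi_1$ and $q\in\pi_2$ one has $p\sim q$ in $\Gamma(G)$ if and only if some element of order $q$ in $B$ centralizes a nonzero vector of $A_p$, which by construction happens exactly for the prescribed edges; and edges inside $\pi_1$ and inside $\pi_2$ are present because $A$ and $B$ are nilpotent. For three classes I would iterate: first build $B$ realizing the induced subgraph on $\pi_2\sqcup\pi_3$ (a union of two cliques) by the two-class procedure, then form $G=A\rtimes B$ with $A$ a nilpotent $\pi_1$-group carrying a $B$-module structure whose fixed-point pattern on each $A_p$ reproduces the edges from $p$ to $\pi_2\sqcup\pi_3$. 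Triangle-freeness of $\overline{\Gamma}$ enters precisely here: since $\overline{N}_{\Gamma}(p)$ is a clique, the set of primes of $\pi(B)$ that must act fixed-point-freely on $A_p$ is structured enough that the required modules can be chosen simultaneously --- otherwise some element of $B$ would be asked both to fix and not to fix nonzero vectors of $A_p$.

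\textbf{The main obstacle.} The principal difficulty is this construction: one must pin down the primes (via Dirichlet) and the module actions so that every prescribed edge is witnessed by a genuinely commuting pair of prime-order elements, while no forbidden pair acquires an element of that order, and the ``no forbidden pair'' half is a delicate Sylow/Hall analysis inside the iterated semidirect product. On the necessity side the corresponding sticking point is $3$-colorability of the complement, which --- unlike triangle-freeness --- does not come from a one-paragraph argument and genuinely needs the structure theory of solvable groups.
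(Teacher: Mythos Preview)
The paper does not prove this statement at all; it is quoted verbatim as \cite[Theorem 2.10]{2015} and used throughout as a black box. There is therefore no proof in the paper to compare your attempt against.

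That said, your outline diverges substantially from the argument in \cite{2015}, which the present paper invokes repeatedly (see \Cref{frobenius digraph} and the surrounding discussion). For necessity, the original paper does not induct on $|G|$: it defines the \emph{Frobenius digraph} $\vv{\Gamma}(G)$ by orienting each edge $p-q$ of $\ol\Gamma(G)$ according to whether the Hall $\{p,q\}$-subgroup is Frobenius or $2$-Frobenius of type $(p,q)$ or $(p,q,p)$, proves this digraph contains no directed $3$-path (this is \cite[Corollary 2.7]{2015}), and then invokes the Gallai--Hasse--Roy--Vitaver theorem to obtain $3$-colorability. Your inductive sketch for $3$-colorability is, as you yourself say, incomplete, and it is not clear the induction closes without essentially rediscovering the digraph structure. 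Your triangle-freeness argument is fine and is the standard one.

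For sufficiency, your construction is in the right spirit but has gaps. As written, a vertex $p\in\pi_1$ adjacent in $\Gamma$ to every vertex of $\pi_2$ gives $A_p=0$, so $p$ never enters $|G|$; you need a trivial summand or a direct $C_p$ factor. More seriously, in the three-class iteration $B$ is no longer nilpotent, so your assertion that ``edges inside $\pi_2$ and inside $\pi_3$ are present'' for $G=A\rtimes B$ requires justification, and the fixed-point analysis for cross-edges from $\pi_1$ to $\pi_3$ (through a non-abelian $B$) is not addressed. The construction in \cite[Theorem 2.8]{2015}, summarized and generalized in \Cref{graphlizard} of this paper, handles exactly these issues by building the group layer by layer along the $\mathcal{O}\to\mathcal{D}\to\mathcal{I}$ orientation and invoking \cite[Lemma 3.5]{2015} to control fixed points; the triangle-free hypothesis is used to guarantee that the $\mathcal{D}$-layer acts Frobeniusly where required.
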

We say that a simple group is $K_n$ if its order is divisible by $n$ distinct primes. In \cite{2022} and \cite{A5}, the authors prove a broader classification than the one above, describing all groups whose composition factors are either cyclic or $K_3$. For groups possessing a unique $K_3$ composition factor (up to isomorphism), the prime graph complements often contain triangles, but remain 3-colorable. However, it was found that whenever two nonisomorphic $K_3$ groups appear as composition factors, the prime graph complements are always triangle-free and 3-colorable. The following terminology will be helpful.
\begin{definition*}
    Let $G$ be a group and $\mathcal{T}$ a set of distinct nonabelian simple groups. Then $G$ is {\it $\mathcal{T}$-solvable} if each of its composition factors are cyclic or in $\mathcal{T}$. Furthermore, $G$ is said to be {\it strictly $\mathcal{T}$-solvable} if $G$ is $\mathcal{T}$-solvable and has at least one composition factor from $\mathcal{T}$. If $\mathcal{T}$ has only one element $T$, we will refer to $\{T\}$-solvable simply as $T$-solvable. If $\mathcal T$ is the set of all $K_n$ groups (up to isomorphism), we will write $K_n$-solvable instead of $\mathcal T$-solvable.
\end{definition*}
In these terms, we can say that \cite{2022} and \cite{A5} classified the prime graphs of $K_3$-solvable groups. In this paper, we take the next natural step and study prime graphs of $T$-solvable groups, where $T$ is a $K_4$ group. Doing this for all $K_4$ groups is a near-Herculean task: the list of $K_4$ groups is quite lengthy (possibly infinite), and the groups are more structurally and computationally complex than $K_3$ groups. Therefore, while previous work on prime graphs used many ad hoc arguments, this paper builds towards a more general framework to streamline our classifications and help support future work on this subject. The development of this framework is as important, if not more so, than the classification results in Section \ref{section:allresults}, as it gives a methodical way to approach this problem that applies much more generally than previous arguments.

In Section \ref{section:preliminaries} of this paper, we begin to develop a general theory of the prime graphs of $T$-solvable groups for some nonabelian simple group $T$. A typical result from this section looks like this:
\begin{proposition*}[See 
\Cref{funny}]
    For each $p \in \pi(K) \setminus \pi(T)$, one of the following holds:
    
    \begin{enumerate}
    	\item For each odd prime $r \in \pi(T)$ satisfying $(r, |M(T)|) = 1$ (where $M(T)$ is the Schur multiplier of $T$), we have $r-p \notin \ol \Gamma(G).$
    	\item $G$ has a section $V.E$, where $V$ is a nontrivial elementary abelian $p$-group and $E$ is a perfect central extension of $T$.
    \end{enumerate}
\end{proposition*}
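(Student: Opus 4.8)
Suppose conclusion (1) fails; I will deduce (2). So fix an odd prime $r\in\pi(T)$ with $(r,|M(T)|)=1$ and $r-p\in\ol\Gamma(G)$, meaning $G$ has no element of order $rp$. The first step is to turn this into a statement about group actions. Every chief factor of a $T$-solvable group is elementary abelian or isomorphic to a power of $T$, and since $p\in\pi(K)\subseteq\pi(G)$ while $p\notin\pi(T)$, the group $G$ has a chief factor $V=A/B$ that is a nontrivial elementary abelian $p$-group, which (as $p\in\pi(K)$ and $K\trianglelefteq G$) we may take to satisfy $B\le A\le K$. If some element $x$ of order $r$ had a nontrivial fixed point on $V$, then, since $\langle x\rangle$ normalizes $A$ and $r\neq p$, coprime action would give $C_V(x)=C_A(x)B/B$ with $p\mid|C_A(x)|$, so $C_G(x)$, and hence $G$, would contain an element of order $rp$ — impossible. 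Thus every element of order $r$ acts fixed-point-freely on every elementary abelian $p$-chief factor of $G$, and the same reasoning forces $r\mid|G/C_G(V)|$, as otherwise an $r$-element would centralize $V\neq 1$.

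Next I locate a chief factor on which the nonabelian part of $G$ acts. Put $\ol G:=G/C_G(V)$, acting faithfully and irreducibly on $V$, and ask whether $T$ is a composition factor of $\ol G$. If it is not, then for any chief factor $H/L$ of $G$ with $H/L\cong T^k$ (one exists because $G$ is strictly $T$-solvable), comparing composition factors of $H/(H\cap C_G(V))$, a normal subgroup of $\ol G$, forces $(H\cap C_G(V))L=H$; hence $H\cap C_G(V)$ maps onto $H/L\cong T^k$ and so, as $r\in\pi(T)$, contains an element of order $r$, which then centralizes $V\neq 1$ and produces an element of order $rp$ — a contradiction. Therefore $T$ is a composition factor of $\ol G$, and we are reduced to a group acting faithfully and irreducibly on a nontrivial elementary abelian $p$-group with $T$ among its composition factors.

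To finish, I pass to $L:=\ol G^{(\infty)}$, the last term of the derived series: it is perfect, still acts faithfully on $V$, and — since $\ol G/L$ is solvable — still has $T$ as a composition factor. A nontrivial perfect group is not a $p$-group, so $L$ acts nontrivially on some $\F_p[L]$-composition factor $V_0$ of $V$, and $L/C_L(V_0)$ is again perfect, faithful and irreducible on $V_0$, with $T$ as a composition factor. Inside $L/C_L(V_0)$ one extracts a minimal perfect subnormal subgroup covering $T$: this is where the hypotheses on $r$ enter, the oddness of $r$ and the coprimality $(r,|M(T)|)=1$ ensuring that $r$-elements of $T$ are reflected faithfully in every central cover, which pins the subnormal subgroup down as a perfect central extension $E$ of $T$ rather than a larger perfect group or a non-central extension. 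Since $E$ occurs as a section of a group acting faithfully on $V_0$, it acts nontrivially on a suitable elementary abelian $p$-section $V'$ of $V_0$, and the resulting subquotient $V'.E$ of $G$ is the section demanded by (2), with $V'\neq 1$ precisely because $E$ — equivalently $T$ — acts nontrivially. The main obstacle is exactly this last extraction: guaranteeing that the nonabelian composition factor persists down to a nontrivial action on a $p$-group and that the acting group is a genuine \emph{central} extension of $T$; minimality of $V$ and of $V_0$, together with the coprimality assumption, are what make this work, while the fixed-point-free conclusion of the first paragraph is what eliminates the ``$T$ acts trivially'' scenario and so forces alternative (2).
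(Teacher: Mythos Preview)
Your first two paragraphs are essentially sound (aside from the slip ``$K\trianglelefteq G$'': only $K\le G$ is given, though you really only need $p\in\pi(G)$). The genuine gap is the third paragraph. The assertion that inside the perfect $T$-solvable group $M:=L/C_L(V_0)$ one can ``extract a minimal perfect subnormal subgroup covering $T$'' which is automatically a perfect \emph{central} extension of $T$ is unjustified and, as stated, false. Consider $M\cong U\rtimes T$ with $U$ a faithful irreducible $\F_qT$-module for some prime $q\neq p$: then $M$ is perfect and $T$-solvable, and its only perfect subnormal subgroup having $T$ as a factor is $M$ itself, which is \emph{not} a central extension of $T$. Your invocation of the oddness of $r$ and of $(r,|M(T)|)=1$ is purely verbal --- neither hypothesis is used in a concrete step, and neither by itself excludes such an $M$. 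The closing remark that ``minimality of $V$ and of $V_0$, together with the coprimality assumption, are what make this work'' names the ingredients without ever deploying them.

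This is exactly where the real content of the proof lies, and the paper's argument is correspondingly much more structural. It reduces via the chief series of $K$ (not of $G$) to a group $H\cong W.T$ with $W$ a solvable $p'$-group acting on $V$, then runs a minimal-counterexample argument in $|H|+|V|$. The hypothesis $(r,|M(T)|)=1$ enters concretely to dispose of the case $r\mid|W|$: the Frobenius action forces cyclic Sylow $r$-subgroups, whence a central $C_r$ chief factor yields a central extension of $T$ by $C_r$ that must split, a contradiction. In the remaining case $r\nmid|W|$, a Zassenhaus-decomposition argument, powered by the fixed-point-free action of order-$r$ elements on $V$, shows that every abelian normal subgroup of $H$ contained in $W$ is central and cyclic; a Fitting-subgroup analysis together with a cited result on Frobenius-type actions then forces $W=Z(H)$, so that $H$ itself is the desired perfect central extension of $T$. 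Your sketch compresses this entire structural analysis into a single sentence; to repair it you must actually show that the solvable part sitting below $T$ is forced to be central.
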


We use these fundamental results throughout Section \ref{section:allresults} of the paper to obtain numerous classification results when $T$ is $K_4$. Some computational aspects of the paper involving the program GAP \cite{GAP4} are discussed in Section \ref{section:GAP}. Then, in Appendix A, we further demonstrate the power of the theoretical foundation by revisiting $A_5$-solvable groups. Those prime graphs were classified in \cite[Section 6]{REU} and \cite{A5} on a total of more than 35 pages, but with our general techniques, we can obtain the same result on less than a third of that. In doing so, we also correct a mistake in the proof of Lemma 4.5 in \cite{A5}. Finally, Appendix B contains important tables of group-theoretic and representation-theoretic information used throughout the paper.

The following result of \cite{K4} lists every $K_4$ group.

\begin{lemma*}
    \cite[Lemma 2.2]{K4} Let $G$ be a $K_4$ group. Then $G$ is isomorphic to one of the following groups: $A_7$, $A_8$, $A_9$, $A_{10}$, $M_{11}$, $M_{12}$, $J_2$, $\PSL(3,4)$, $\PSL(3,5)$, $\PSL(3,7)$, $\PSL(3,8)$, $\PSL(3,17)$, $\PSL(4,3)$, $O_5(4)$, $O_5(5)$, $O_5(7)$, $O_5(9)$, $O_7(2)$, $O_8^+(2)$, $\Sz(8)$, $\Sz(32)$, $U_3(4)$, $U_3(5)$, $U_3(7)$, $U_3(8)$, $U_3(9)$, $U_4(3)$, $U_5(2)$, ${}^2F_4(2)'$, $G_2(3)$, ${}^3D_4(2)$, and $\PSL(2,q)$, where $q$ is a prime power satisfying
    \[q(q^2-1) = (2,q-1) \cdot 2^{\alpha_1} \cdot 3^{\alpha_2} \cdot r^{\alpha_3} \cdot s^{\alpha_4}\]
    for distinct primes $r,s$ and positive integers $\alpha_i$, $i \in \{1,2,3,4\}$.
\end{lemma*}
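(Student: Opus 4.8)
The proof rests on the Classification of Finite Simple Groups, handling the alternating, sporadic, and Lie‑type families in turn; throughout write $\pi(G)$ for the set of primes dividing $|G|$, so that the task is to find all nonabelian simple $G$ with $|\pi(G)| = 4$. The alternating and sporadic cases are immediate. Since $\pi(A_n)$ consists of exactly the primes at most $n$, one has $|\pi(A_n)| = 4$ precisely when there are four primes $\le n$, i.e.\ for $7 \le n \le 10$, giving $A_7, A_8, A_9, A_{10}$. For the $26$ sporadic groups one simply reads the orders from the ATLAS and keeps those divisible by exactly four primes, namely $M_{11}$, $M_{12}$, and $J_2$; the Tits group ${}^2F_4(2)'$ is conveniently treated alongside the exceptional groups of Lie type.

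For the groups of Lie type the engine is Zsygmondy's theorem on primitive prime divisors. The order of a finite simple group of Lie type over $\F_q$ (with $q = p^f$) is, up to the power $q^N$ and bounded cancellation, a product $\prod_i \Phi_{m_i}(q)^{e_i}$ of cyclotomic values, where the multiset of indices $m_i$ depends only on the (twisted) type and whose size grows with the Lie rank. Each $\Phi_m(q)$ contributes a prime dividing no $q^k-1$ with $k<m$, outside the short standard list of Zsygmondy exceptions; hence distinct indices $m_i$ contribute distinct primes, and together with $p$ this forces $|\pi(G)|$ to grow with the rank. Consequently only finitely many types of small rank can be $K_4$ — essentially $A_1, A_2, {}^2A_2, A_3, {}^2A_3, B_2 = C_2, {}^2B_2, {}^2G_2, G_2, {}^3D_4, {}^2F_4$, and the Tits group. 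For each such type of rank $\ge 2$, a second application of Zsygmondy — now bounding $q$ rather than the rank, since too large a $q$ makes the individual $\Phi_{m_i}(q)$ acquire too many primes — reduces the problem to finitely many pairs $(p,f)$, which are settled by direct factorization of the order. This produces the remaining entries: $\PSL(3,q)$ and $U_3(q)$ for small $q$, $\PSL(4,3)$, $U_4(3)$, $U_5(2)$, $O_5(q)$ for $q \in \{4,5,7,9\}$, $O_7(2)$, $O_8^+(2)$, $\Sz(8)$, $\Sz(32)$, $G_2(3)$, ${}^3D_4(2)$, and ${}^2F_4(2)'$.

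The one type that does \emph{not} collapse to a finite list is $A_1$. Here $|\PSL(2,q)| = \tfrac{1}{(2,q-1)}\,q(q-1)(q+1)$, so $\pi(\PSL(2,q)) = \{p\} \cup \pi(q-1) \cup \pi(q+1)$, and since $2$ and $3$ always lie in this set once $q \ge 4$, the condition $|\pi| = 4$ is exactly the statement that $q(q^2-1)/(2,q-1)$ has precisely two prime divisors besides $2$ and $3$, which is the displayed Diophantine equation $q(q^2-1) = (2,q-1)\cdot 2^{\alpha_1} 3^{\alpha_2} r^{\alpha_3} s^{\alpha_4}$. One must also confirm that no higher‑rank type yields an infinite subfamily, but this is again the Zsygmondy count: for rank $\ge 2$ there are at least three "independent" cyclotomic factors besides the characteristic, so imposing $|\pi| = 4$ bounds $q$.

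I expect the main obstacle to be the Lie‑type bookkeeping rather than any single deep step: one must enumerate the cyclotomic factors of each group order exactly, keep careful track of the Zsygmondy exceptions (where a factor fails to contribute a new prime and thereby lowers the count below the naive bound), and verify the residual small‑$q$ cases so that every $K_4$ example — including the ones in small characteristic such as $\PSL(3,4)$, $O_8^+(2)$, $U_5(2)$, ${}^3D_4(2)$ — is captured and nothing spurious is admitted. The $\PSL(2,q)$ analysis, by contrast, is short once one observes that $2,3 \in \pi(\PSL(2,q))$ automatically.
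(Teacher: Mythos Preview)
The paper does not prove this lemma at all: it is simply quoted from the literature as \cite[Lemma 2.2]{K4} (Bugeaud, Cao, Mignotte), with no argument given in the present paper. So there is no ``paper's own proof'' to compare your proposal against.

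That said, your outline is the standard route to such a classification and matches the strategy of the original source: invoke the Classification of Finite Simple Groups, dispose of the alternating and sporadic families by inspection, and for the groups of Lie type use Zsygmondy primitive prime divisors of the cyclotomic factors in the order formula to bound first the rank and then~$q$, leaving only $\PSL(2,q)$ as a genuinely infinite family subject to the displayed arithmetic condition. As a sketch this is sound; turning it into a complete proof is exactly the ``Lie-type bookkeeping'' you flag, and that work is carried out in \cite{K4} rather than here.
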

Aside from the additional complexity that comes from introducing a fourth prime divisor, these groups introduce many obstacles that did not exist when studying the prime graphs of $K_3$ groups. In \cite{2022} and \cite{A5}, the authors constantly exploited the fact that every $K_3$ group $T$ satisfies $\pi(T) = \pi(\Aut(T))$, a property which fails for some $K_4$ groups. This property is also important for us because a meaningful analysis of a strictly $T$-solvable group $G$ with the tools of \Cref{section:preliminaries} typically requires that $G$ contain a subgroup $K \cong N.T$, where $N$ is solvable and $\pi(G) = \pi(K)$. The best known method for producing such a $K$ is \Cref{2.4 2022}, which requires $\pi(T) = \pi(\Aut(T))$. Hence, although the tools of \Cref{section:preliminaries} are presented with greater generality when possible, the classification results of this paper only consider $T$ of this type. 
\begin{lemma*}\label{Aut}
    All $K_4$ groups $T$ satisfy $\pi(\Aut(T)) = \pi(T)$ except:
    \begin{enumerate}
        \item[(1)] The Suzuki group $\Sz(8)$
        \item[(2)] The $K_4$ groups $\PSL(2,p^f)$ when there exists a prime $r|f$ such that $r \ndivides |\PSL(2,p^f)|$.
    \end{enumerate}
\end{lemma*}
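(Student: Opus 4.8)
The plan is to go through the list of $K_4$ groups given in \cite[Lemma 2.2]{K4} and, for each family, compare $\pi(T)$ with $\pi(\Aut(T))$. The outer automorphism group $\Out(T)$ is well understood (it is recorded in the ATLAS and, for groups of Lie type, is a product of the diagonal, field, and graph automorphism groups), so $\pi(\Aut(T)) = \pi(T) \cup \pi(\Out(T))$, and the question reduces to checking whether every prime dividing $|\Out(T)|$ already divides $|T|$. For the sporadic groups ($M_{11}$, $M_{12}$, $J_2$, ${}^2F_4(2)'$, ${}^3D_4(2)$, etc.) and the alternating groups $A_7$ through $A_{10}$, $\Out(T)$ has order $1$ or $2$, and $2 \in \pi(T)$ always, so these are immediate. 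For the Lie type groups of fixed rank over a fixed field ($\PSL(3,q)$, $\PSL(4,3)$, $O_5(q)$, $O_7(2)$, $O_8^+(2)$, $U_n(q)$, $G_2(3)$, $U_4(3)$, $U_5(2)$), one checks case by case that the primes appearing in the diagonal and graph automorphism contributions ($2$ and $3$, from $\gcd(n,q-1)$-type factors, the $S_3$ graph automorphisms of $O_8^+$, etc.) divide $|T|$, and that the field automorphism group — cyclic of order $f$ when the field is $\F_{p^f}$ — contributes only primes $r \mid f$; here one uses that $|T|$ is divisible by $p^f - 1$ or similar, which is typically enough to absorb $r$ for these small fixed values of $q$. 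This leaves the two genuinely infinite families, $\Sz(q)$ with $q = 2^{2m+1}$ and $\PSL(2,q)$, as the cases requiring real attention.

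For the Suzuki groups, $|\Sz(2^f)| = 2^{2f}(2^f-1)(2^{2f}+1)$ with $f = 2m+1$ odd, and $\Out(\Sz(2^f))$ is cyclic of order $f$. The $K_4$ Suzuki groups are exactly $\Sz(8)$ and $\Sz(32)$ (from the list), i.e.\ $f = 3$ and $f = 5$. For $\Sz(32)$, $f = 5$ and $5 \mid 2^5 - 1 = 31$? No, $5 \nmid 31$; but $5 \mid 2^{10}+1 = 1025 = 25 \cdot 41$, so $5 \in \pi(\Sz(32))$ and $\Sz(32)$ is fine. For $\Sz(8)$, $f = 3$ and we need $3 \mid |\Sz(8)| = 2^6 \cdot 7 \cdot 5 \cdot 13 = 29120$: since $3 \nmid 7$, $3 \nmid 5$, $3 \nmid 13$, we get $3 \notin \pi(\Sz(8))$, yet $3 \mid |\Out(\Sz(8))| = 3$, so $\Sz(8)$ is the genuine exception listed in (1). (More conceptually: $\Sz(2^f)$ has $\pi(\Aut) = \pi(T)$ iff every prime divisor of $f$ divides $|\Sz(2^f)|$, and the general statement is phrased just for the two relevant $f$.)

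For $\PSL(2,q)$ with $q = p^f$, we have $|\PSL(2,q)| = \frac{1}{(2,q-1)} q(q-1)(q+1)$, and $\Out(\PSL(2,q))$ has order $(2,q-1) \cdot f$: the diagonal part has order $(2,q-1) \in \{1,2\}$ and contributes only the prime $2$, which lies in $\pi(T)$ whenever $|T| > 1$ (indeed $\PSL(2,q)$ has even order for $q \ge 4$); the field part is cyclic of order $f$ and contributes exactly the primes dividing $f$. Hence $\pi(\Aut(T)) = \pi(T)$ if and only if every prime $r \mid f$ satisfies $r \mid |\PSL(2,q)|$, which is exactly the negation of condition (2). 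So here the statement is essentially a definitional unwinding once the structure of $\Out$ is known; the only thing to note is that when $f = 1$ there is nothing to check, and that for a prime $r \mid f$, $r$ may or may not divide $q(q^2-1)$ depending on arithmetic of $p$ modulo small primes — e.g.\ $r \mid q - 1$ would put $r$ in $\pi(T)$, but this need not happen.

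The main obstacle is simply the bookkeeping: one must be careful that the list in \cite[Lemma 2.2]{K4} is handled exhaustively and that for each fixed-field Lie type group the outer automorphism primes are correctly identified (the graph automorphisms of $O_8^+(2)$, giving $\Out \cong S_3$ hence the prime $3$, and the various $U_n$ cases are the easiest places to slip). Everything else is a finite check against tables of $|T|$ and $|\Out(T)|$, which we relegate to the tables in Appendix B. Once the structure of $\Out(T)$ is in hand, the only families where the conclusion is not an instant consequence of ``$2,3 \in \pi(T)$'' are $\Sz$ and $\PSL(2,q)$, and for those the stated exceptions are precisely the arithmetic condition ``some prime dividing the field degree fails to divide the group order,'' as explained above.
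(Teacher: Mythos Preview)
Your approach is exactly the paper's: reduce to checking whether every prime dividing $|\Out(T)|$ already divides $|T|$, using the known structure of outer automorphism groups, and relegate the finite verification to the tables in Appendix~B. Your treatment of the two substantive families, $\Sz(2^f)$ and $\PSL(2,p^f)$, is in fact more explicit than the paper's one-sentence remark. One small slip: you list ${}^3D_4(2)$ among groups with $|\Out(T)|\in\{1,2\}$, but $|\Out({}^3D_4(2))|=3$; the conclusion is unaffected since $3\in\pi({}^3D_4(2))$, but the stated reason is wrong there.
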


The smallest example of (2) is when $q = 32$, where we have $|\Out(\PSL(2,32))| = 5 \ndivides |\PSL(2,32)|$. We remark that $|\Out(\Sz(8))| = 3 \ndivides |\Sz(8)|$ as well. Verifying the remainder of this lemma is simply a matter of searching through the literature on the size of the outer automorphism group $|\Out(T)|$, which is widely available for the finite simple groups. We include some information in Appendix B, \Cref{table:1}.

An additional obstacle is that the number of $K_4$ groups is far greater than 8, the number of $K_3$ groups. In fact, it is unknown whether there are infinitely many $q$ for which $\PSL(2,q)$ is $K_4$. Thus a complete set of classifications of the prime graphs of $\PSL(2, q)$-solvable groups would have to rely on a very general relationship between the number $q$ and the structure of these groups. Taking note of their subgroups, central extensions, and representation data -- information that has proved integral to prior classification results -- for a couple dozen values of $q$, it appears that any useful relationship of this sort would have to be quite complex. For these reasons, our problem is significantly more difficult than the $K_3$ case. Nevertheless, in this paper, we obtain the following. \\

\noindent{\bf Main result.} We classify the prime graphs of $T$-solvable groups for all $K_4$ groups $T$ not isomorphic to $\Sz(8)$, $\Sz(32),$ or $\PSL(2,q)$ for any $q$. \\

The reason $\Sz(32)$ is in that collection, even though $\pi(\Sz(32))$ contains $|\Out(\Sz(32))| = 5$, has to do with computational issues regarding $K_4$-solvable groups. The sizes of $K_4$ groups are orders of magnitude larger than the sizes of $K_3$ groups, and this means that computations take much more time and memory to run. Specifically, computing the character table for $\Sz(32)$ is already a daunting task in GAP. Furthermore, there are memory limitations in finding extensions of $\Sz(32)$ that realize each possible configuration of the prime graph complement.

As an example of the flavor of our results, we provide a classification result from Section \ref{section:allresults} for $T = A_7$, which was one of the most difficult cases.

\begin{theorem*}[See Theorem \ref{a7bigproof}]
     Let $\Gamma$ be an unlabeled simple graph. Then $\Gamma$ is isomorphic to the prime graph of an $A_7$-solvable group if and only if one of the following is satisfied:
    \begin{enumerate}
    \item[(1)] $\ol \Gamma$ is triangle-free and 3-colorable.
    \item[(2)] There exist vertices $a,b,c,d$ in $\ol \Gamma$ and a $3$-coloring for which all vertices adjacent to but not included in $\{a,b,c,d\}$ have the same color. Additionally, one of the following holds:
    \begin{enumerate}
        \item[(2.1)] $\ol\Gamma$ contains exactly two triangles $\{a,b,c\}$ and $\{b,c,d\}.$ All edges incident to $a,b,d$ are included in these two triangles.
        \item[(2.2)] $\ol\Gamma$ contains exactly one triangle $\{a,b,c\}$ and a vertex $d$ adjacent only to vertex $c.$ Vertices $a$ and $b$ are not adjacent to any other vertices.
        \item[(2.3)] $\ol\Gamma$ contains exactly one triangle $\{a,b,c\}$ and a vertex $d$ that is adjacent only to vertex $b.$ Vertices $a$ and $b$ are not adjacent to any other vertices.
        \item[(2.4)] $\ol\Gamma$ contains exactly one triangle $\{a,b,c\}$ and a vertex $d.$  Vertex $a$ is not adjacent to any other vertices. Vertex $d$ is not adjacent to $a,b,$ or $c.$ Additionally, $N(b)\setminus\{a,c\}\subseteq N(d),$ and $N(c)\cap N(d)=\emptyset.$
        \item[(2.5)] $\ol\Gamma$ contains exactly one triangle $\{a,b,c\}$ and an isolated vertex $d$.The vertex $a$ is not adjacent to any other vertices.
    \end{enumerate}
    \end{enumerate} 
\end{theorem*}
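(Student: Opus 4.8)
The plan is to prove the two directions separately, and within each to separate the solvable case from the strictly $A_7$-solvable one. If $G$ is $A_7$-solvable but not strictly so, then $G$ is solvable, and \Cref{ThomasMichaelKeller} says $\ol{\Gamma(G)}$ is triangle-free and $3$-colorable, which is case (1); conversely, any graph whose complement is triangle-free and $3$-colorable is the prime graph of a solvable, hence $A_7$-solvable, group. So the content of the theorem lies in the strictly $A_7$-solvable case, where one must show that $\ol{\Gamma(G)}$ either still satisfies (1) or matches exactly one of the configurations (2.1)--(2.5).

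For necessity in the strictly $A_7$-solvable case, I would first apply \Cref{2.4 2022} --- valid since $\pi(\Aut(A_7)) = \pi(A_7) = \{2,3,5,7\}$ --- to obtain a subgroup $K \cong N.A_7 \le G$ with $N$ solvable and $\pi(K) = \pi(G)$, so that every edge of $\Gamma(K)$ is an edge of $\Gamma(G)$. Since $A_7$ has elements of order $6$ but none of order $10,14,15,21,35$, this forces the edge $2$--$3$ into $\Gamma(G)$ and shows that inside $\{2,3,5,7\}$ the complement $\ol{\Gamma(G)}$ is contained in $K_4$ minus that edge, whose only triangles are $\{2,5,7\}$ and $\{3,5,7\}$, which share the edge $\{5,7\}$. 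For each prime $p \in \pi(G)\setminus\{2,3,5,7\}$, \Cref{funny} applies with $|M(A_7)| = 6$, so the relevant odd primes of $\pi(A_7)$ coprime to $6$ are $5$ and $7$: either $p$--$5$ and $p$--$7$ both lie in $\Gamma(G)$ (so $p$ is non-adjacent to $5$ and to $7$ in $\ol{\Gamma(G)}$ and behaves like a vertex of the solvable part), or $G$ has a section $V.E$ with $V$ a nontrivial elementary abelian $p$-group and $E$ one of the four perfect central extensions $A_7, 2.A_7, 3.A_7, 6.A_7$. In the latter case I would use the $\F_p$-representation theory of these groups (the module data tabulated in Appendix B) together with the standard fact that $V\rtimes E$ has an element of order $p\cdot m$ precisely when some element of order $m$ of $E$ fixes a nonzero vector of $V$, in order to pin down which further edges at $p$ can appear and hence which triangle patterns can form around $p$. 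Assembling the local pictures at the new primes with the $\{2,3,5,7\}$-picture, verifying $3$-colorability, and checking that two triangle patterns can coexist only as in (2.1) is the combinatorial core; the ``same color'' clause in (2) records that every vertex outside a designated $4$-set attaches to the rest of $\ol\Gamma$ like a vertex of a solvable prime graph complement.

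For sufficiency, given $\Gamma$ satisfying (2.1)--(2.5), I would construct an explicit strictly $A_7$-solvable $G$ with $\Gamma(G)\cong\Gamma$ from three kinds of building block: (i) a copy of $A_7$, or a cover $n.A_7$, on the primes $\{2,3,5,7\}$ to realize the edge $2$--$3$ and one or both of the triangles $\{2,5,7\}, \{3,5,7\}$; (ii) semidirect products $V\rtimes (n.A_7)$ with carefully chosen faithful $\F_p$-modules $V$ to attach the special vertex $d$ --- and any additional neighbors of $c$ --- with exactly the prescribed incident edges, guided by the same module data as in the necessity argument; and (iii) solvable pieces (Frobenius groups and the constructions from the solvable classification) combined via direct and wreath products to realize the remaining triangle-free, $3$-colorable part of $\ol\Gamma$ without creating unwanted edges. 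The ``same color'' hypothesis is precisely what allows these pieces to be glued together consistently. As in earlier work, a handful of the realizations --- especially those needing specific small modules of $2.A_7$, $3.A_7$, or $6.A_7$ --- would be verified directly in GAP (see \Cref{section:GAP}).

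I expect the main obstacle to be the necessity analysis of the new primes: one must show that, over all primes $p\notin\{2,3,5,7\}$ and all four perfect central extensions of $A_7$, the only configurations produced are the tightly restricted local pictures of (2.2)--(2.5), and this depends on an essentially complete understanding of the small faithful $\F_p$-modules of $A_7$, $2.A_7$, $3.A_7$, and $6.A_7$ and of which of their elements have fixed points. Once that is available, sufficiency reduces largely to matching each of the five configurations to a construction assembled from those modules together with standard solvable gadgets.
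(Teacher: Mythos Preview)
Your overall architecture is right, but the proposal glosses over exactly the places where the paper does real work, and in one place it takes a direction that will not succeed.

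On the forward side, applying \Cref{funny} to the odd primes $5$ and $7$ gives only part of the picture: it tells you that for each $p\notin\pi(A_7)$ you cannot have both $5-p$ and $7-p$ in $\ol\Gamma(G)$ (this is \Cref{no 5-7-p}), but it says nothing about the edges $2-p$ and $3-p$. The edge $3-p$ is handled separately by \Cref{FC}, since the Sylow $3$-subgroups of $A_7$ fail the Frobenius criterion; you should record this, as it is what forces the vertex $a$ (playing the role of $3$) to have no neighbors outside $\{b,c\}$. The edge $2-p$ is much more delicate and is the source of the asymmetric conditions in (2.4). The paper proves (\Cref{edges from 2}) that if any $2-p$ edge exists then $K\cong L.E$ with $L$ a $2'$-group and $E=2.A_7$, hence $2-5,2-7\notin\ol\Gamma(G)$; and then (\Cref{a7doublecoverfun}) that in this situation $5-p\in\ol\Gamma(G)$ forces $2-p\in\ol\Gamma(G)$, while $7-p\in\ol\Gamma(G)$ forces $2-p\notin\ol\Gamma(G)$. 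These two implications are precisely the constraints $N(b)\setminus\{a,c\}\subseteq N(d)$ and $N(c)\cap N(d)=\emptyset$ in (2.4), and they do not fall out of module data for the covers of $A_7$ via \Cref{funny} alone---the second of them requires working with $G$ itself via \Cref{N.M} rather than just a subgroup $K$. Your proposal treats this as ``the combinatorial core''; it is in fact the group-theoretic core, and you need a concrete plan for it.

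On the backward side, your building block (ii) is too narrow. The paper realizes cases (2.2) and (2.3) using the extensions $64.A_7$ and $16.A_7$, which are perfect but \emph{not} central; restricting yourself to $A_7$, $2.A_7$, $3.A_7$, $6.A_7$ will leave those cases unconstructed. The paper packages the construction via \Cref{graphlizard}, and case (2.5) is obtained from a (2.4)-group by taking a direct product with $C_2$ rather than directly from module data.
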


%This paper is structured as follows.

%In Section \ref{section:preliminaries}, we develop important algebraic techniques needed for these classifications. Some of them are technical, but they are crucial as a foundation for the results in this paper as well as for future work. Section \ref{section:allresults} contains the classifications themselves, with each subsection grouped by the possible prime graphs that can occur. Many of these results cite GAP \cite{GAP4}, a useful group library and programming language. Information about the methods used can be seen in Section \ref{section:GAP}. Appendix A contains the corrected form of \cite[Lemma 4.5]{A5} and the updated proof of $A_5$-solvable groups. Appendix B contains tables of group-theoretic and representation-theoretic information relating to these groups.

%Furthermore, while working on this project, we noticed inaccuracies with some results in \cite{A5}, one of which is the important \cite[Lemma 4.5]{A5}. In Appendix A, we use a corrected version of the lemma along with techniques from Section \ref{section:preliminaries} to provide a complete classification of $A_5$-solvable groups in the language of this paper. Appendix B contains tables of group-theoretic and representation-theoretic information relating to these groups.

It is also important to discuss some notation commonly used throughout this paper. Every group $G$ will be finite. Nonabelian simple groups will often be denoted by $T$, and the trivial group will be denoted by 1. Furthermore, $\pi(G)$ will refer to the set of primes that divide $|G|$. If $\pi \subset \pi(G)$, then $\pi'$ will denote the complement of $\pi$ in $\pi(G)$. In the case that $\pi = \{p\}$, we simply let $p' = \pi'$. When $a,b$ are integers, $(a,b)$ will refer to their greatest common divisor. Given a nonabelian simple group $T$, the letters $r$ and $s$ will usually denote primes that belong to $\pi(T)$, and $p$ and $q$ will usually denote primes that do not belong to $\pi(T)$. The commutator subgroup of a group $G$ will be denoted $G'$. Additionally, we will follow the ATLAS convention and write $G \cong N.M$ if $N \lhd G$ and $G/N \cong M$.

For a graph $\Gamma$, $\ol{\Gamma}$ will denote its complement, $V(\Gamma)$ its vertex set, and $E(\Gamma)$ its edge set. By abuse of notation, we will write $p \in \Gamma$ if $p \in V(\Gamma)$, and we will write $p-q \in \Gamma$ if $\{p,q\} \in E(\Gamma)$. If $E \subseteq E(\Gamma)$ and $V \subseteq V(\Gamma)$, then $\Gamma \setminus E$ will denote the graph $\Gamma$ without the edges of $E$ and $\Gamma \setminus V$ will denote the induced subgraph on $V(\Gamma) \setminus V$. If $p-q, p-r, q-r \in \Gamma$, then we will say that $\{p,q,r\}$ forms a triangle in $\Gamma$. Alternatively, we may say that $\Gamma$ has a $\{p,q,r\}$-triangle. We usually use $\vv{\Gamma}(G)$ to refer to an orientation of $\ol \Gamma(G)$. In this case, we write $p \rightarrow q \in \vv{\Gamma}(G)$ if $\vv\Gamma(G)$ contains the directed edge $(p,q)$. A directed path on $n$ edges will be called an $n$-path.

Throughout, $\Gamma$ will usually denote the prime graph of some group. Because the graph theoretic conditions of our classification results are more conveniently stated in terms of the complement $\ol \Gamma$ (see \Cref{ThomasMichaelKeller} for example), we will, for consistency's sake, state all graph theoretic properties of $\Gamma$ in terms of $\ol \Gamma$. Most importantly, we will write $p-q \notin \ol \Gamma$ instead of $p-q \in \Gamma$, and if $V \subseteq V(\Gamma)$, then $N(V)$ will always denote the neighborhood of $V$ in the complement $\ol \Gamma$.

\section{Prime graphs of general $T$-solvable groups}\label{section:preliminaries}

\indent The goal of this section is to develop a multitude of general tools for studying prime graphs of $T$-solvable groups. Some of these methods have been covertly applied in previous papers, such as \cite{2022} and \cite{A5}. However, few have been stated as important results in their own right. In our case, taking the time to establish these results will streamline the next section massively and offer a clear foundation upon which future work can be done. 

First, we present a series of algebraic lemmas, providing insight into the structure of a general $T$-solvable group $G$. Then, we focus our attention on edges of the form $r-p \in \ol \Gamma(G)$ where $r \in \pi(T)$ and $p \in \pi(G) \setminus \pi(T)$, which turn out to be the most important edges to understand when proving a classification result. Finally, we discuss orientations on prime graphs and applications in the problems of 3-coloring and constructing groups associated with a given graph.

\subsection{The structure of $T$-solvable groups}

To classify the prime graphs of $T$-solvable groups, we must be proficient at identifying whether a given edge $p-q$ belongs to $\ol \Gamma(G)$. The most common way to approach this problem is by considering a section of $G$. It is easy to see that if $\ol \Gamma(G)$ contains the edge $p-q$ for some $p,q \in \pi(G)$ and $L$ is any section of $G$ with order divisible by $p$ and $q$, then $p-q \in \ol \Gamma(L)$ as well. Thus, when proving that a generic edge $p-q$ does not exist in $\ol \Gamma(G)$, it is enough to exhibit a section $L$ of $G$ such that $p-q \notin \ol \Gamma(L)$. For example, if $G$ is strictly $T$-solvable and $r,s \in \pi(T)$, then $\ol \Gamma(G)$ contains the edge $r-s$ only if $\ol \Gamma(T)$ also does. In other words, the subgraph of $\ol \Gamma(G)$ induced by $\pi(T)$ is a subgraph of $\ol \Gamma(T)$. This is convenient, but edges of the form $r -p$ for $r \in \pi(T)$ and $p \in \pi(G) \setminus \pi(T)$ will prove much more troublesome to rule out. To study these edges, we must develop a deeper understanding of the structure of $T$-solvable groups. 

The goal of this subsection is to prove basic results on the structure of $T$-solvable groups and prime graphs, as well as some miscellaneous algebraic results that did not fit anywhere else. The first lemma shows how under certain conditions, $G$ can be split into two parts: one solvable and the other a subgroup of $\Aut(T)$ containing $T$.

\begin{lemma}\label{N.M}
    Let $G$ be strictly $T$-solvable for some nonabelian simple group $T$, and assume that there exist two vertices of $\pi(T)$ such that $\ol \Gamma(G)$ contains an edge between them. Then, there exists a subgroup $M \leq \Aut(T)$ containing $\Inn(T) \cong T$ and a solvable normal subgroup $N$ of $G$ such that $G \cong N.M$.
\end{lemma}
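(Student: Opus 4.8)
The plan is to produce the decomposition via the \emph{solvable radical}. Let $R \lhd G$ be the largest solvable normal subgroup of $G$. Since $G$ is strictly $T$-solvable it has a composition factor isomorphic to $T$, hence is nonsolvable, so $R \ne G$ and $\bar G := G/R$ is a nontrivial finite group with trivial solvable radical. I would take $N = R$; the problem then reduces to showing that $\bar G$ is isomorphic to a subgroup $M \le \Aut(T)$ with $\Inn(T) \cong T \le M$.

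Because $\bar G$ has trivial solvable radical, every minimal normal subgroup of $\bar G$ is nonabelian, so its socle $\operatorname{Soc}(\bar G)$ is a direct product $T_1 \times \cdots \times T_k$ of nonabelian simple groups. Each $T_i$ is a composition factor of $\bar G$, hence of $G$; as $G$ is $T$-solvable and $T_i$ is nonabelian simple, $T_i \cong T$ for all $i$. The key step is to show $k = 1$. Suppose instead $k \ge 2$. By hypothesis there are distinct primes $r,s \in \pi(T)$ with $r-s \in \ol\Gamma(G)$, i.e.\ $G$ has no element of order $rs$. But $T_1 \times T_2$ is a section of $G$ and $\pi(T_i) = \pi(T)$ contains $r$ and $s$, so by Cauchy's theorem $T_1 \times T_2$ has an element of order $r$ in its first factor and one of order $s$ in its second, whose product has order $rs$. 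Since $rs$ is squarefree, a suitable power of any preimage of this element in $G$ has order exactly $rs$, contradicting $r-s \in \ol\Gamma(G)$. Hence $k=1$ and $\operatorname{Soc}(\bar G) \cong T$.

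Finally, $C_{\bar G}(\operatorname{Soc}(\bar G)) = 1$: this centralizer is normal in $\bar G$, and if it were nontrivial it would contain a minimal normal subgroup of $\bar G$, which lies in $\operatorname{Soc}(\bar G) \cong T$ and hence equals $T$, forcing $T$ to centralize itself — impossible since $Z(T) = 1$. Therefore the conjugation action gives an embedding $\bar G \hookrightarrow \Aut(\operatorname{Soc}(\bar G)) = \Aut(T)$ whose image $M$ contains the image $\Inn(T) \cong T$ of $\operatorname{Soc}(\bar G)$. Setting $N = R$, which is solvable and normal in $G$, we obtain $G \cong N.M$ with $\Inn(T) \cong T \le M \le \Aut(T)$, as desired.

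I expect the only genuine obstacle to be the step $k=1$, and within it the point that an element of squarefree order $rs$ in a section of $G$ lifts to an element of order exactly $rs$ in $G$ — this is precisely where the hypothesis "$r-s \in \ol\Gamma(G)$ for some $r,s \in \pi(T)$" is used, and it is clean only because $rs$ is squarefree. The remaining ingredients (socle of a trivial-solvable-radical group is a product of nonabelian simples, the socle is self-centralizing, and the group embeds into the automorphism group of its socle) are all standard structure theory.
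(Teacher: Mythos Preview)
Your proof is correct and arrives at the same conclusion by essentially the same mechanism as the paper: identify a solvable normal subgroup $N$, show that $G/N$ has $T$ as a normal subgroup with trivial centralizer, and embed $G/N$ into $\Aut(T)$ via conjugation. The organization differs slightly. The paper quotes \cite[Lemma~2.5]{2022} to conclude that $T$ occurs exactly once as a composition factor, writes $G \cong N_0.T.M_0$ with $N_0,M_0$ solvable, and then sets $N = N_0.C$ where $C = C_{G/N_0}(T)$. You instead take $N$ to be the solvable radical outright, identify $\operatorname{Soc}(G/N)$ as $T^k$, and prove $k=1$ directly by exhibiting an element of order $rs$ in $T\times T$; this makes the argument self-contained rather than relying on the cited lemma. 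In fact your $N$ coincides with the paper's (one checks that $G$ modulo the paper's $N$ has trivial solvable radical), so the two decompositions agree. Your worry about lifting an element of order $rs$ from a section is unfounded in exactly the way you suspected: the paper uses this freely throughout (see the first paragraph of Section~2.1), and it is immediate because the order of an element is a multiple of the order of its image in any quotient.
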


\begin{proof}
    Because the subgraph induced by $\pi(T)$ has at least one edge, we see from \cite[Lemma 2.5]{2022} that $T$ appears exactly once as a composition factor for $G$. Therefore, there exist solvable groups $N_0$ and $M_0$ such that $G \cong N_0.T.M_0.$ Now, let $H = G/N_0 \cong T.M_0$ and consider the centralizer $C=C_H(T)$. Taking $M=H/C$, we can find a solvable group $N \cong N_0.C$ such that $G \cong N.M$. The fact that $T$ is nonabelian and simple implies $T \cap C = 1$, so $M$ has a normal subgroup $TC/C \cong T/(T \cap C) \cong T$. Hence, the natural embedding $M \hookrightarrow \Aut(T)$ associated with the faithful action of $M$ on $T$ maps $TC/C \cong T$ isomorphically onto $\Inn(T)$.
\end{proof}

One simple consequence of this lemma is that when $\ol \Gamma(G)$ contains an edge between any two vertices of $\pi(T)$, then $G$ contains a subgroup $K \cong N.T$. Moreover, if $T$ satisfies $\pi(T) = \pi(\Aut(T))$ (see \Cref{Aut}), then we have $\pi(G) = \pi(K)$, and it follows that $\ol \Gamma(G)$ is obtained by removing edges from $\ol \Gamma(K)$. This is in fact true regardless of the edges that exist between vertices of $\pi(T)$. We quote a lemma from \cite{2022}.

\begin{lemma}\label{2.4 2022}
    \cite[Corollary 2.4]{2022} Let $G$ be strictly $T$-solvable, where $T$ is a nonabelian simple group and $\pi(T) = \pi(\Aut(T))$. Then $G$ has a subgroup $K \cong N.T$ with $N$ solvable and $\pi(G)=\pi(K).$
\end{lemma}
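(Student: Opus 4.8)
The plan is to induct on $|G|$. Throughout one uses that every section of $G$ is again $T$-solvable, that $\pi(T)\subseteq\pi(G)$ since $T$ is a composition factor, and --- this is where the hypothesis enters --- that any section of $G$ embedding into a direct power of $\Aut(T)$ contributes no prime outside $\pi(T)=\pi(\Aut(T))$.

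\emph{Reduction to $O_\infty(G)=1$.} If $G$ has a nontrivial solvable normal subgroup, pick a minimal normal subgroup $M$ of $G$ inside its solvable radical; then $M$ is an elementary abelian $p$-group, $G/M$ is again strictly $T$-solvable, and by induction $G/M$ has a subgroup $\bar K\cong\bar N.T$ with $\bar N$ solvable and $\pi(\bar K)=\pi(G/M)$. Its preimage $K\le G$ then satisfies $K\cong N.T$ with $N$ (the preimage of $\bar N$) solvable and $\pi(K)=\{p\}\cup\pi(G/M)=\pi(G)$. So we may assume $O_\infty(G)=1$. Then $F^*(G)=E(G)$ is a direct product $T_1\times\dots\times T_m$ of copies of $T$ --- the components are perfect with simple quotient $T$, and their joint centre, being abelian and normal in $G$, is trivial --- and $C_G(E(G))=1$, so $G$ embeds in $\Aut(T)\wr S_m$. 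Writing $P\le S_m$ for the image of $G$ acting on the $m$ components, the kernel $G_0$ of this action embeds in $\Aut(T)^m$ while containing $E(G)$, so $\pi(G_0)=\pi(T)$ and hence $\pi(G)=\pi(T)\cup\pi(P)$.

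\emph{The main construction.} If $P=1$ then $\pi(G)=\pi(T)$ and $K=T_1$ works, so assume $P\ne 1$. The aim is to build $K\cong N.T$ in which the ``$T$'' on top is a diagonally embedded copy of $T$ inside $E(G)$ and the solvable kernel $N$ accounts for all of $\pi(P)$. For each prime $p\in\pi(P)\setminus\pi(T)$ we have $p\nmid|\Aut(T)|$, so a Sylow $p$-subgroup of $G$ injects into $P$ and an element $x$ of order $p$ acts coprimely on $E(G)=T^m$, its image in $S_m$ being a nontrivial product of $p$-cycles; by the standard description of centralizers of coprime coordinate-permuting automorphisms, $x$ centralizes a diagonal copy of $T$ supported on those $p$-cycles. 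One then fixes a single diagonal $\Delta\cong T$ inside $E(G)$ and assembles a solvable subgroup $S\le N_G(\Delta)$ whose image in $P$ is divisible by every prime of $\pi(P)$; then $K:=S\Delta\cong S.T$ has $\pi(K)=\pi(S)\cup\pi(T)=\pi(G)$. (When $\Delta\lhd G$ one instead mimics the proof of Lemma \ref{N.M} using $C_G(\Delta)$, and when $N_G(\Delta)\subsetneq G$ one may extract $K$ from $N_G(\Delta)$ by induction, its component-fixing part being solvable.)

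\emph{Main obstacle.} The delicate point is precisely this last construction: verifying that a solvable subgroup of the right shape simultaneously normalizes one fixed diagonal copy of $T$ and realizes every prime dividing $|P|$, and that nothing ``vertical'' escapes into a second copy of $T$. This is exactly where $\pi(\Aut(T))=\pi(T)$ is indispensable --- it forces the component-preserving part $G_0$, and all the $\Aut(T)$-centralizers that appear when the component structure is peeled off, to be $\pi(T)$-groups, so that every new prime lands in the solvable factor $N$. The abelian reduction and the $P=1$ case, by contrast, are routine.
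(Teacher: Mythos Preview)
First, note that the paper does not give its own proof of this lemma; it is simply quoted from \cite[Corollary 2.4]{2022}. So there is no ``paper's proof'' to compare against, and your attempt must stand on its own.

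Your reduction to $O_\infty(G)=1$ is correct and standard, as is the structural description that follows ($F^*(G)=E(G)\cong T^m$, $G\hookrightarrow\Aut(T)\wr S_m$, $\pi(G)=\pi(T)\cup\pi(P)$). The genuine gap is exactly where you flag it. You show that for \emph{each} prime $p\in\pi(P)\setminus\pi(T)$ an element of order $p$ centralizes \emph{some} twisted diagonal $\Delta_p\cong T$; but these diagonals depend on $p$, and you give no argument that a single $\Delta$ can be chosen to work for all such primes simultaneously. Your inductive fallback on $N_G(\Delta)$ would require $\pi(N_G(\Delta))=\pi(G)$, which is precisely the unestablished point. (The case ``$\Delta\lhd G$'' never occurs for $m>1$: a diagonal of $T^m$ is not even normal in $T^m$.)

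The missing idea is to treat all primes outside $\pi(T)$ at once rather than one at a time. Observe that $G$ is $\pi(T)$-separable: the normal subgroup $G_0$ is a $\pi(T)$-group (it lies in $\Aut(T)^m$ and $\pi(\Aut(T))=\pi(T)$), and every chief factor of the $T$-solvable group $P=G/G_0$ is either an elementary abelian $p$-group or a power of $T$, hence a $\pi(T)$- or $\pi(T)'$-group. Take a Hall $\pi(T)'$-subgroup $H\le G$. Then $H$ is solvable (being $T$-solvable with $|T|$ coprime to $|H|$), and since $(|H|,|\Aut(T)|)=1$, the stabilizer in $H$ of any component $T_i$ acts trivially on it; a standard coprime-action computation gives $C_{T^m}(H)\cong T^r$ with $r$ the number of $H$-orbits on the components. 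Any diagonal $\Delta\cong T$ inside $C_{T^m}(H)$ is centralized by all of $H$, and $K:=H\times\Delta\cong H.T$ satisfies $\pi(K)=\pi(H)\cup\pi(T)=\pi(G)$.
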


Because we will not be classifying the prime graphs of $\Sz(8)$-solvable or $\PSL(2,q)$-solvable groups in this paper, all $T$-solvable groups under consideration will satisfy the hypotheses of \Cref{2.4 2022}. Let us provide further information on the structure of $K$.

\begin{lemma}\label{chief series}
    Let $K \cong N.T$ for some solvable group $N$ and nonabelian simple group $T$. Let
    \[1 = K_0 \unlhd K_1 \unlhd \cdots \unlhd K_n = K\]
    be a chief series such that $K_{n-1} = N$, $K_{n}/K_{n-1} \cong T$. Then each $K_\ell/K_{\ell-1}$ is an elementary abelian $p_\ell$-group for some prime $p_\ell$, $1 \leq \ell \leq n-1$. If $i,j \in \{1,\ldots, n-1\}$ are distinct integers such that $p_i$ is odd, $p_i-p_j \in \ol \Gamma(G)$, and $p_i - s \in \ol \Gamma(G)$ for some $s \in \pi(T)$, then $i< j$.
\end{lemma}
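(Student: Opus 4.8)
I would prove the two assertions in turn.

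\smallskip
\noindent\emph{The factors below $T$ are elementary abelian.} For $1\le\ell\le n-1$ the group $K_\ell/K_{\ell-1}$ is a chief factor of $K$ contained in the solvable normal subgroup $N=K_{n-1}$; equivalently, it is a minimal normal subgroup of $K/K_{\ell-1}$ lying inside the solvable group $N/K_{\ell-1}$. A minimal normal subgroup of a finite group is characteristically simple, hence a direct power of a simple group, and a solvable such factor forces that simple group to be cyclic of prime order. Thus $K_\ell/K_{\ell-1}$ is elementary abelian, of exponent a prime $p_\ell$. (The top factor $K_n/K_{n-1}\cong T$ is of course exempt, since $T\not\le N$.)

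\smallskip
\noindent\emph{The ordering.} I would argue by contradiction, assuming $j<i$ (the only alternative to $i<j$). The hypotheses say that $K$ has no element of order $p_ip_j$ and that, for some $s\in\pi(T)$, $K$ has no element of order $p_is$ (non-existence of an element of a given order only passes downward to subgroups, so these statements hold in $K$ whatever the ambient group). Write $V_i:=K_i/K_{i-1}$ and $C:=C_K(V_i)$. Since $j<i\le n-1$, the $p_j$-chief factor $K_j/K_{j-1}$ lies inside $K_i$, so $p_j\mid|K_i|$; and $p_i\mid|K_i|$ since $V_i$ is a nontrivial $p_i$-group. Note that $K_i\le C$ (it centralises the abelian section $V_i$), and that $K/C$ acts faithfully and irreducibly on $V_i$ because $V_i$ is a chief factor of $K$. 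The plan is: (1) show that $T$ is \emph{not} involved in $K/C$, hence is a composition factor of $C$, and so of $C/K_{i-1}$ (as $K_{i-1}$ is solvable); (2) observe that $V_i$ is a nontrivial $p_i$-subgroup of the centre of $C/K_{i-1}$, so for every $s\in\pi(T)\subseteq\pi(C/K_{i-1})$ with $s\ne p_i$ Cauchy's theorem gives an element of order $s$ in $C/K_{i-1}$, and multiplying it by an element of order $p_i$ of $V_i$ yields one of order $p_is$ in $C/K_{i-1}$, which lifts to an element of order $p_is$ in $K$. Since the prime $s$ supplied by the hypothesis satisfies $s\ne p_i$, step (2) contradicts that hypothesis.

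\smallskip
\noindent\emph{The main obstacle --- step (1).} Step (2) is routine, so the substance is step (1), and this is where ``$p_i$ odd'' and ``$j<i$'' enter. The argument is $p_i$-local and lives in the solvable group $K_i$: let $L$ be a Hall $\{p_i,p_j\}$-subgroup of $K_i$ and $P\in\Syl_{p_i}(L)=\Syl_{p_i}(K_i)$. Since $L$ has no element of order $p_ip_j$, the normal subgroups $O_{p_i}(L)$ and $O_{p_j}(L)$ cannot both be nontrivial (they meet trivially, so would otherwise centralise each other). If $P$ were noncyclic it would contain $A\cong C_{p_i}\times C_{p_i}$ (using $p_i$ odd), and coprime action of $A$ on the $p_i'$-group $O_{p_j}(L)$ would force $O_{p_j}(L)=1$ (else some index-$p_i$ subgroup of $A$ has a nontrivial fixed point, yielding an element of order $p_ip_j$); then $F(L)=O_{p_i}(L)$ and a Sylow $p_j$-subgroup of $L$ acts faithfully and fixed-point-freely on $O_{p_i}(L)$, hence is a Frobenius complement. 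Feeding in that $V_i$ sits on top of $K_i$ (so $L$ covers $V_i$, i.e.\ $V_i\cong L/(L\cap K_{i-1})$) together with the symmetric analysis on the $p_j$-side, the aim is to conclude that $P$ is cyclic, whence $\dim_{\F_{p_i}}V_i=1$, and then $K/C$ embeds in the abelian group $\F_{p_i}^{\times}$ and cannot involve the nonabelian simple group $T$. I expect the delicate cases to be $p_j=2$ (where a Sylow $2$-subgroup can be generalised quaternion, so the symmetric argument needs a separate treatment) and the possibility that the purely $p_i$-local analysis does not by itself pin down $\dim V_i$; in the latter event one would instead rule out $T\le K/C$ directly, using that $K/C\le\GL(V_i)$ still has no element of order $p_ip_j$ with $p_i$ odd, and possibly invoking the $s$-hypothesis a second time.
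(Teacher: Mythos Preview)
Your first paragraph and your step (2) are fine and match the paper. The overall shape of the ordering argument is also the paper's: set $C=C_K(V_i)$ and split on whether $T$ is involved in $K/C$ or in $C$. The difference is entirely in how you handle step (1).

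Your plan for step (1) is to force $\dim V_i=1$ by analysing a Hall $\{p_i,p_j\}$-subgroup $L$ of $K_i$ and concluding that $P\in\Syl_{p_i}(K_i)$ is cyclic. This is where the gap lies. Knowing only that $L$ has no element of order $p_ip_j$, $L$ can be Frobenius of type $(p_j,p_i)$ or $2$-Frobenius of type $(p_i,p_j,p_i)$ with the $p_i$-part sitting as a (possibly large) Frobenius kernel; in those cases $P$ need not be cyclic, and nothing about ``$V_i$ sits on top of $K_i$'' rules this out (it only forces the Sylow $p_j$-subgroup into $L\cap K_{i-1}$). You sensed this yourself in the ``delicate cases'' remark, but the difficulty is structural, not a matter of tidying up $p_j=2$.

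The paper's argument for step (1) runs in the opposite direction and is much shorter: suppose $T$ \emph{is} involved in $K/C$. Then $K/C$ embeds in $\Aut(V_i)$ and has the nonabelian quotient $T$, so $\Aut(V_i)$ is nonabelian and hence $V_i$ is noncyclic. Since $j<i$, the Sylow $p_i$-subgroup of $K_i/K_{j-1}$ surjects onto $V_i$ and is therefore a noncyclic group of odd order; such a group cannot act Frobeniusly on the $p_j$-group $K_j/K_{j-1}$, so some element of order $p_i$ centralises a nontrivial element of order $p_j$, giving an element of order $p_ip_j$ and contradicting $p_i-p_j\in\ol\Gamma(G)$. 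That is the whole of step (1); no Hall-subgroup classification and no cyclicity of $P$ are needed.
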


\begin{proof}
    For each $1 \leq \ell \leq n-1$, $K_\ell/K_{\ell-1}$ is a solvable minimal normal subgroup of $K/K_{\ell-1}$ and hence is an elementary abelian $p_\ell$-group for some prime $p_\ell$. To prove the last statement of the lemma, fix $i,j \in \{1, \ldots, n\}$. It suffices to show that if $p_i$ is odd and divides $|K_n/K_{j}|$, then either $p_i$ is not adjacent to $p_j$ or $p_i$ is not adjacent to any elements of $\pi(T)$.

    Let $\ell$ be the maximal index such that $K_\ell/K_{\ell-1}$ is a $p_i$-group, and set $V = K_\ell/K_{\ell-1}$, $H = K_{n-1}/K_{\ell-1}$, and $L \cong K/K_{\ell-1}$. Since $T$ is simple and $C_L(V)H/H \unlhd L/H \cong T$, we either have $C_L(V)H = L$ or $C_L(V) \leq H$. In the former case, we must have $\pi(T) \subseteq \pi(C_L(V))$ so that, in particular, an element of order $s$ commutes with an element of order $p_i$ in $L$ for each $s \in \pi(T)$. This implies that $p_i$ is not adjacent to any vertices in $\pi(T)$, so we may assume $C_L(V) \leq H$. Hence $L/C_L(V)$, which acts faithfully on $V$, has a nonabelian quotient:
    \[\frac{L/C_L(V)}{H/C_L(V)} \cong T.\]
    This means $\Aut(V)$ is nonabelian, and it follows that $V$ is noncyclic. In particular, the Sylow $p_i$-subgroups of $K_\ell/K_{j-1}$ are noncyclic. Moreover, these subgroups have odd order by hypothesis, so they cannot act Frobeniusly on $K_j/K_{j-1}$. Thus an order $p_i$ element commutes with an order $p_j$ element in $K_\ell/K_{j-1}$, giving $p_i-p_j \notin \ol \Gamma(G)$ and completing the proof.
\end{proof}

The most useful sections of $G$ for eliminating edges are those of the form $L \cong V.W.T$, where $V$ is a nontrivial elementary abelian $p$-group and $W$ is a solvable $p'$-group. The above lemma shows that such sections exist, and moreover that $W$ satisfies the following condition: If $r\divides |W|$ is an odd prime adjacent to some vertex in $\pi(T)$, then $r-p\notin \ol \Gamma(G)$. However, it is sometimes necessary to place even further restrictions on these sections.

\begin{lemma}\label{self-central}
    Let $T$ be a nonabelian simple group, $V$ an elementary abelian $p$-group, $M$ a solvable $p'$-group, and $L \cong V.M.T$. If $p$ is adjacent to some vertex in $\pi(T)$, then $G$ has a quotient $\ol L \cong V.W.T$ for some quotient $W$ of $M$ with $C_{\ol L}(V) = V$. Furthermore, if $r$ is a prime and there exists an odd prime $s \in \pi(T)$ such that $s-p,s-r \in \ol \Gamma(L)$, then $r \ndivides |W|$.
\end{lemma}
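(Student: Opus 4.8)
The plan is to obtain $\ol L$ by quotienting out the ``$p'$-part'' of $C=C_L(V)$, and then to get the divisibility statement by manufacturing a Frobenius section. First I would fix a normal series $1\unlhd V\unlhd U\unlhd L$ with $U/V\cong M$ and $L/U\cong T$, and note $C\unlhd L$ with $V\le C$. The crux is the claim $C\le U$. If it fails, then $CU=L$ by simplicity of $T$, so $C$ surjects onto $T$; since $V\le Z(C)$ and $(C\cap U)/V\le U/V\cong M$ is a $p'$-group, $V$ is a central normal Sylow $p$-subgroup of $C\cap U$, and hence of the preimage in $C$ of any order-$s$ subgroup of $T\cong C/(C\cap U)$. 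For $s\in\pi(T)$, $s\ne p$, Schur--Zassenhaus therefore splits that preimage as $V\times X$ with $s\divides|X|$, and multiplying an element of order $p$ in $V$ (which exists since $V\ne1$) by an element of order $s$ in $X$ gives an element of order $ps$ in $L$. As $p$ is adjacent in $\ol\Gamma(L)$ to some $s_0\in\pi(T)$ with $s_0\ne p$ (no loops), this contradicts the hypothesis; so $C\le U$. Then $C/V$ is a $p'$-group, so $V$ is a normal Sylow $p$-subgroup of $C$, and since $C$ centralizes $V$, Schur--Zassenhaus gives $C=V\times D$ with $D=O_{p'}(C)$ characteristic in $C$; thus $D\unlhd L$ and $D\cap V=1$.

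I would then take $\ol L=L/D$. The isomorphism theorems give $\ol L\cong V.W.T$ with $W=M/\ol D$ a quotient of $M$ (here $\ol D$ is the image of $D$ in $U/V\cong M$), and $|W|=|U:C|$. For the centralizer condition, if $gD$ centralizes the image $\ol V$ of $V$ then $[g,V]\subseteq V\cap D=1$, so $g\in C$, and conversely $C$ centralizes $V$; hence $C_{\ol L}(\ol V)=C/D=\ol V$. This gives the first assertion.

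For the ``furthermore'', suppose $r\divides|W|$; then $r\ne p$ (as $W$ is a $p'$-group) and $r,p\ne s$ (no loops). Since $|W|=|U:C_U(V)|$ with $C_U(V)=C\unlhd U$, a Sylow $r$-subgroup $R$ of $U$ has nontrivial image in $U/C_U(V)$, so $[V,R]\ne1$ and $V\rtimes R$ is not nilpotent. A Frattini argument with $R\in\Syl_r(U)$ and $U\unlhd L$ shows $N_L(R)$ covers $L/U\cong T$, so it contains an element $x$ of order $s$. As $L$ has no element of order $ps$, any nontrivial fixed point of $x$ on the $p$-group $V$ would be a nontrivial $p$-element centralized by an order-$s$ element, producing one; so $C_V(x)=1$, and similarly $C_R(x)=1$ since $L$ has no element of order $sr$. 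Because $x$ normalizes $V$ and $R$ and $V\cap R=1$, it follows that $C_{VR}(x)=1$, so $VR\langle x\rangle=(V\rtimes R)\rtimes\langle x\rangle$ is a Frobenius group with kernel $V\rtimes R$ --- contradicting that a Frobenius kernel is nilpotent. Hence $r\ndivides|W|$.

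The step I expect to be most delicate is the claim $C_L(V)\le U$: the Schur--Zassenhaus splitting must be carried out inside the right preimage so that it genuinely produces an element of \emph{composite} order $ps$ (not merely elements of orders $p$ and $s$ separately), which hinges on $V$ being simultaneously central in and a Sylow $p$-subgroup of that preimage --- and this is the one place where $V\ne1$ is used. The rest is bookkeeping with the normal series, plus two facts standard in this subject: an $s$-element centralizing a nontrivial $p$-element produces an element of order $ps$, and Frobenius kernels are nilpotent.
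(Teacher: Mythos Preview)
Your proof is correct. For the first assertion (constructing $\ol L$ with $V$ self-centralizing), your approach is essentially identical to the paper's: both show $C_L(V)\le U$ (the $V.M$ layer) by simplicity of $T$ and the adjacency hypothesis, then quotient by the Hall $p'$-part of $C_L(V)$.

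For the ``furthermore'' clause the route differs. The paper passes to $\ol L$, first invokes the chief-series lemma to conclude $s\nmid |W|$, and then extracts a Hall $\{p,r,s\}$-subgroup $V\rtimes R\rtimes C_s$ inside a solvable subgroup $V.W.C_s$ of $\ol L$; the Frobenius action of $C_s$ forces $V\rtimes R$ nilpotent, so $R$ centralizes $V$, contradicting self-centrality. You instead stay in $L$: a Frattini argument on $R\in\Syl_r(U)$ produces an order-$s$ element $x\in N_L(R)$, and the edges $s\text{--}p,\,s\text{--}r$ give $C_{VR}(x)=1$ directly, so $VR\langle x\rangle$ is Frobenius with non-nilpotent kernel $VR$. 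Both arguments culminate in the same contradiction (nilpotency of a Frobenius kernel versus $[V,R]\ne 1$), but your version sidesteps the intermediate step $s\nmid |W|$. That step in the paper's proof appeals to the chief-series lemma, which in turn needs $s$ adjacent to some vertex of $\pi(T)$---something the hypotheses of the lemma only guarantee when $r\in\pi(T)$; your Frattini approach works for arbitrary primes $r$ without this extra input, so it actually matches the stated generality more cleanly.
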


\begin{proof}
    Let $H$ denote the subgroup $V.M \leq L$. Because $T$ is simple and 
    \[C_L(V)H/H \unlhd L/H \cong T,\]
    we have $C_L(V)H = L$ or $C_L(V) \leq H$. In the former case, $\pi(T) \subseteq \pi(C_L(V))$, contradicting the fact that $p$ is adjacent to some vertex in $\pi(T)$. Hence $C_L(V) \leq H$. Furthermore, we can write $H \cong V \rtimes M$ by the Schur-Zassenhaus theorem, and since $C_L(V)$ contains $V$, we have $C_L(V) \cong V \rtimes A$ for some $A \unlhd H$. But since $A$ acts trivially on $V$, this product must be direct. Thus $A$ is a normal Hall $p'$-subgroup of $C_L(V)$, which means it is characteristic in $C_L(V)$, which in turn means it is normal in $G$. Now let $\ol L = L/A$, and observe that $\ol L \cong V.W.T$ where $W = M/A.$ Moreover, $C_{\ol L}(V) = V$.

    Finally, suppose $r \in \pi(T)$ is a prime adjacent to $p$ and some odd prime $s \in \pi(T)$. Refining $\ol L = V.W.T$ into a chief series, we see from \Cref{chief series} that $s \ndivides |W|$. Let $B\leq \ol L$ be a subgroup $V.W.C_r$, and $B_{psr}$ a Hall $\{p,s,r\}$-subgroup of $B$. Since $W$ is an $\{s,p\}'$-group, \cite[Lemma 2.1]{2022} and the Schur-Zassenhaus theorem gives $B_{psr} \cong V \rtimes R\rtimes C_s$ for some Sylow $r$-subgroup $R \leq W$. But since $s-p,s-r \in \ol \Gamma(\ol L)$, any cyclic subgroup of order $s$ acts Frobeniusly on $V \rtimes R$, so $V \rtimes R$ must be nilpotent, i.e., $R$ must centralize $V$. However, $V$ is self-central in $\ol L$, so this implies $R=1$. In other words, $r \ndivides |W|.$
\end{proof}

Let us summarize. We would like to find criteria for eliminating certain edges from $\ol \Gamma(G)$, and we have already hinted that edges of the form $r-p$ for $r \in \pi(T)$ and $p \in \pi(G) \setminus \pi(T)$ will be the main point of consideration. Using the above tools, we can produce sections of $G$ of the form $V.W.T$, where $V$ is a nontrivial self-centralizing elementary abelian $p$-group and $W$ is a $p'$-group satisfying various other conditions regarding its prime divisors. To prove that $r-p \notin \ol \Gamma(G)$, then, it suffices to show that $r-p \notin \ol \Gamma(L)$. We are not quite ready to prove criteria for this being the case -- that will be in the next subsection. First, we must prove two major lemmas. The first cites a powerful result of \cite{Flavell}.

\begin{definition}
    Let $G$ and $H$ be finite groups with $G$ acting via automorphisms on $H$, where the action is written exponentially. Given $g \in G$, an element $h \in H$ is said to be a \textit{fixed point} of $g$ if it is nontrivial and $h^g = h$.
\end{definition}

\begin{lemma}\label{truelizard}
    Let $r$ and $p$ be distinct primes, $R$ a cyclic group of order $r$ acting nontrivially on an $\{r,p\}'$-group $W$ by automorphisms, and $H \cong W \rtimes R$ the associated semidirect product. Let $H$ act faithfully on an elementary abelian $p$-group $V$, and assume that one of the following conditions hold.
    \begin{enumerate}
        \item[(1)] $r$ is not a Fermat prime.
        \item[(2)] $2 \ndivides |W|.$
        \item[(3)] There exists an odd prime $q$ and an $R$-invariant Sylow $q$-subgroup $Q$ of $W$ on which $R$ acts nontrivially.
    \end{enumerate}
    Then, $R$ has a fixed point in $V$.
\end{lemma}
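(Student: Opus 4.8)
The plan is to reduce Lemma~\ref{truelizard} to the theorem of Flavell on fixed-point-free actions, treating the three hypotheses as three cases. The overarching strategy: suppose for contradiction that $R$ acts without fixed points on $V$, i.e.\ the action of $R$ on $V$ is fixed-point-free (Frobenius). Since $H = W \rtimes R$ acts faithfully on $V$, and $R$ acts fixed-point-freely, we want to pull information back to the action of $R$ on $W$. The key classical input is that a fixed-point-free automorphism of prime order $r$ forces strong structural constraints; combined with Flavell's result (which presumably says: if a group $A$ of prime order acts coprimely and fixed-point-freely on a soluble group $B$, then certain Fermat-prime obstructions are the only way $[B,A]$ can fail to be nilpotent, or something of that flavor), we should be able to derive that $R$ acts trivially on $W$ in each of the three cases --- contradicting the hypothesis that $R$ acts nontrivially on $W$.

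More concretely, here is how I would organize the three cases. First, note $W$ is an $\{r,p\}'$-group and $R$ has order $r$, so the action of $R$ on $W$ is coprime, and $W$ is soluble (it is a section of a $T$-solvable group, or at any rate we may invoke that $\{r,p\}'$-subgroups here are soluble by the ambient hypotheses --- actually the statement only needs $W$ soluble, which I would either assume is implicit from context or argue from the Feit--Thompson theorem if $|W|$ is odd; in general $W$ need not be odd, so I would rely on $W$ being a section of a $T$-solvable group elsewhere, or simply take solubility of $W$ as given since that is how the lemma is applied). In case~(2), $2 \nmid |W|$, so $|W|$ is odd and $W$ is soluble by Feit--Thompson; here a classical theorem (Thompson, for $r$ prime) says a fixed-point-free automorphism of order $r$ makes $W$ nilpotent, but more to the point I expect Flavell's theorem handles the coprime fixed-point-free case cleanly and yields $[W,R]=1$. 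In case~(1), $r$ is not a Fermat prime, which is exactly the hypothesis under which the Fermat-prime exceptions in Flavell's theorem vanish, again forcing $[W,R]=1$. Case~(3) is the subtle one: here $r$ may be a Fermat prime and $|W|$ may be even, but we are handed an $R$-invariant Sylow $q$-subgroup $Q$ ($q$ odd) on which $R$ acts nontrivially; the idea is that $R$ acting nontrivially on the odd $q$-group $Q$ gives us, via Flavell applied to $Q \rtimes R$, a fixed point of $R$ inside $V$ directly --- essentially because an order-$r$ automorphism acting nontrivially on an odd $q$-group cannot then act Frobeniusly on $V$ once everything is assembled.

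The mechanism turning "$R$ acts nontrivially on $W$" into "$R$ has a fixed point on $V$" deserves a sentence. If $[W,R] \neq 1$ and (via Flavell) we can locate an $R$-invariant $q$-subgroup $Q_0 \leq W$ with $[Q_0,R]\neq 1$ for some odd prime $q$ (this is automatic in case~(3), and in cases~(1),(2) we would argue $[W,R]$ is nontrivial nilpotent hence has a nontrivial characteristic $q$-subgroup for some prime $q$, which we can take odd after ruling out $q=2$ using the non-Fermat or odd-order hypothesis, or by a direct Fermat-prime argument), then consider the subgroup $Q_0 \rtimes R$ acting on $V$. Since $q$ is odd and $R$ has prime order $r \neq q$, the coprime action of $Q_0 R$ on $V$ means $V = [V,Q_0R] \times C_V(Q_0R)$ type decompositions are available, and a standard argument (a $q$-group on which a cyclic $r$-group acts nontrivially, acting on $V$ with $R$ fixed-point-free, violates a counting/representation constraint --- e.g.\ Clifford theory or the fact that $R$ must then act fixed-point-freely on $C_V(Q_0)$, which is too small) produces the fixed point.

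The main obstacle I anticipate is case~(3) and, more generally, pinning down the precise statement of Flavell's theorem being invoked and checking that its hypotheses (coprimality, solubility of $W$, prime order of $R$) are all genuinely available --- in particular, whether $W$ is soluble is not stated in the lemma's hypotheses and must come from the way the lemma is used (sections of $T$-solvable groups), so I would either add that hypothesis or note it. A secondary subtlety is the Fermat-prime bookkeeping: making sure that in the "$r$ is a Fermat prime, $2 \mid |W|$" regime we really do need hypothesis~(3) and cannot do without it, and that the argument in case~(3) does not accidentally require $R$ to act nontrivially on \emph{all} of $W$ rather than just on $Q$. I would handle this by isolating a clean sublemma: if an elementary abelian $q$-group $Q$ ($q$ odd) admits a nontrivial action of $R \cong C_r$, and $QR$ acts faithfully (or just acts) on an elementary abelian $p$-group $V$ with $p \notin \{q,r\}$, then $C_V(R) \neq 1$ --- and then observe this sublemma, applied with $Q = Q_0$ the given Sylow subgroup in case~(3), immediately finishes that case, while cases~(1) and~(2) reduce to it after extracting an odd nilpotent chunk of $[W,R]$ via Flavell.
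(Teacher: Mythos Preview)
Your proposal misidentifies what Flavell's theorem actually says, and this confusion propagates into a genuine gap in your plan for case~(1). Flavell's Theorem~A is a Hall--Higman--Shult type result: given $H \cong W \rtimes R$ acting faithfully and irreducibly on a coprime module $V$, with $[R,W]\neq 1$, it asserts \emph{directly} that $C_V(R)\neq 0$ provided we are not in the exceptional configuration ($r$ a Fermat prime and the relevant image of $W$ a $2$-group). It is \emph{not} a theorem about the structure of $W$ under a fixed-point-free action of $R$; your suggestion that Flavell yields ``$[W,R]$ is nilpotent'' is conflating it with Thompson's theorem on fixed-point-free automorphisms, and that conflation is what drives your circuitous plan.

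The paper's proof is accordingly much more direct than yours. By Maschke one writes $V=\bigoplus V_i$ with each $V_i$ an irreducible $\F_pH$-module, and for $K_i=C_H(V_i)$ either some $K_i$ contains an element of order $r$ (giving a fixed point immediately) or every $K_i\leq W$. In the latter case $W\hookrightarrow \prod W/K_i$, so $R$ acts nontrivially on some $W/K_1$; then $H/K_1\cong (W/K_1)\rtimes R$ acts faithfully and irreducibly on $V_1$, and Flavell's Theorem~A applies at once under (1) or (2). For (3), one runs the same embedding argument with $Q$ in place of $W$ to find an index where the image $Q_1$ of $Q$ is nontrivial and $R$ still acts nontrivially on it; since $q$ is odd, $W/K_1$ is not a $2$-group, and Flavell again gives $C_{V_1}(R)\neq 0$.

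Concretely, your gap in case~(1) is this: you want to find an \emph{odd} prime $q$ and an $R$-invariant Sylow $q$-subgroup of $W$ on which $R$ acts nontrivially, then invoke your sublemma. But if $r$ is non-Fermat and $2\mid |W|$, it may well be that the only $R$-invariant Sylow subgroup on which $R$ acts nontrivially is the Sylow $2$-subgroup, and your sublemma (which requires $q$ odd) does not cover that. The non-Fermat hypothesis does not let you ``rule out $q=2$''; rather, it is precisely the hypothesis that makes the Hall--Higman conclusion hold even when the acted-upon group \emph{is} a $2$-group. Your worry about solubility of $W$ is also a red herring: Flavell's theorem is for arbitrary finite groups, and neither the lemma statement nor the paper's proof uses solubility of $W$. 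Finally, the Maschke reduction to an irreducible summand is essential even for your sublemma and is missing from your outline.
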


\begin{proof}
    Because $(|H||R|, |V|) = 1,$ we can apply Maschke's theorem to write $V$ as a direct sum of irreducible $H$-modules over $\F_p$: 
    \[V = V_1 \oplus \cdots \oplus V_\ell.\] 
    For $1 \leq i \leq \ell$, let $K_i = C_{H}(V_i)$. Then $K_iW/W \unlhd H/W \cong R$, so either $K_iW = H$ or $K_i \leq W$. In the former case, we have $r \divides |K_i|$, so an element of order $r$ acts trivially on $V_i$. In particular, this element has a fixed point in $V$, as needed. Hence, we may assume that $K_i \leq W$ for each index $i$. Because $V$ is a faithful $H$-module, we have $\bigcap_{i=1}^\ell K_i = 1$, and it follows that
    \[W \cong W/\bigcap_{i=1}^\ell K_i \lesssim \prod_{i=1}^\ell W/K_i,\]
    where the last expression is a direct product. Then, since $R$ acts nontrivially on $W$, it must act nontrivially on some $W/K_i$, say $W/K_1$. Hence $[R, W/K_1] \ne 1$ and \[H/K_1 \cong (W/K_1) \rtimes R\] acts faithfully on $V_1$. 
    
    If (1) or (2) holds, then it follows immediately from \cite[Theorem A]{Flavell} that $C_{V_1}(R) \neq 0,$ and we are done. So assume (3) holds, and let $Q_i = QK_i/K_i$ for each index $i$. Since $R$ acts nontrivially on $Q$ and
    \[Q \cong \frac{Q}{Q \cap \bigcap_{i=1}^\ell K_i} \lesssim \prod_{i=1}^\ell Q_i,\]
    $R$ must act nontrivially on some $Q_i$, say $Q_1.$ Hence $R$ acts nontrivially on $W/K_1$, or equivalently $[R, W/K_1] \ne 1$. In addition, we see that $Q_1 \ne 1$ and since $q\ne 2$, it follows that $W/K_1$ is not a 2-group. Because $H/K_1 \cong (W/K_1) \rtimes R$ acts faithfully on $V_1$, \cite[Theorem A]{Flavell} again implies that $C_{V_1}(R) \ne 0$, completing the proof.
\end{proof}

The next lemma makes use of techniques from representation theory. In the next subsection, when attempting to eliminate edges of the form $r-p$ for $r \in \pi(T)$ and $p \in \pi(G) \setminus \pi(T)$, we must consider the case in which the group $W$ is trivial. In other words, $G$ has a section $L\cong V.T$ for some nontrivial elementary abelian $p$-group $V$. In fact, because $(|V|, |T|) = 1$, the Schur-Zassenhaus theorem gives $L \cong V \rtimes T$, which means $r-p \in \ol \Gamma(L)$ if and only if an order $r$ element of $T$ has fixed points on $V$. Now, as an elementary abelian $p$-group, $V$ may be regarded as an $\mathbb F_pT$-module, and since $p \ndivides |T|$, it is shown in \cite{Curtis+Reiner} (as explained in \cite{Meierfrankenfeld}) that $V$ is the reduction modulo $p$ of some complex representation $\widetilde V$ of $T$. In particular, it is shown 
there that an element of $T$ acts fixed point freely on $V$ if and only if it acts fixed points freely on $\widetilde V$. Hence, to prove $r-p \notin \ol \Gamma(L)$ (and thus $r-p \notin \ol \Gamma(G)$), it suffices to show that in every complex irreducible representation of $T$, there exists an element of order $r$ with fixed points. More generally, any relationship between fixed points of elements of distinct orders in complex irreducible representations of $T$ may be exploited. For example, suppose that in every irreducible $\mathbb CT$-module $\widetilde V$ with an order $r$ element acting with fixed points, there exists an order $s$ element acting with fixed points as well. Then, if it is known that an order $r$ element of $T$ has fixed points in $V$, we can be certain that an order $s$ element does too. We have recorded several facts of this flavor for a variety of different groups in \Cref{table:3} of the Appendix and will make frequent use of them. For now, though, we conclude this subsection by recording the above claims in a lemma.

\begin{lemma}\label{reps}
    Let $L \cong V.T$, where $T$ is any finite group and $V$ is a nontrivial elementary abelian $p$-group for some $p \ndivides |T|$. Then, there exists a $\mathbb{C}T$-module $\widetilde V$ such that for all $r \in \pi(T)$, there exists an order $r$ element of $T$ with fixed points in $\widetilde V$ if and only if $L$ has an element of order $rp$.
\end{lemma}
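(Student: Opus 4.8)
\noindent The plan is to turn the three-step sketch preceding the statement into a proof.

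\emph{Step 1 (group theory).} Since $V$ is a $p$-group and $p \ndivides |T|$, we have $(|V|,|T|)=1$, so Schur--Zassenhaus gives $L \cong V \rtimes T$, and $V$ is the unique normal Sylow $p$-subgroup of $L$. Moreover, since $r \ndivides |V|$ for every $r \in \pi(T)$, a Sylow $r$-subgroup of $T$ is a Sylow $r$-subgroup of $L$, so every element of $L$ of order $r$ is $L$-conjugate into $T$. Using these facts I would establish, for each $r\in\pi(T)$, the equivalence
\[
\text{$L$ has an element of order $rp$}\ \Longleftrightarrow\ \text{some element of $T$ of order $r$ has a fixed point in $V$,}
\]
where $V$ is regarded as an $\F_p T$-module via conjugation. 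The implication ``$\Leftarrow$'' is immediate: if $t\in T$ has order $r$ and fixes $v\in V\setminus\{1\}$, then $v$ and $t$ commute and have coprime orders $p$ and $r$, so $vt$ has order $rp$. For ``$\Rightarrow$'', take $g\in L$ of order $rp$; then $g^r$ has order $p$ and hence lies in $V$, while $g^p$ has order $r$, and the two commute. Conjugating this commuting pair by an element of $L$ that carries $g^p$ into $T$ keeps the first coordinate inside the normal subgroup $V$, producing $v\in V\setminus\{1\}$ that is fixed by an order-$r$ element of $T$.

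\emph{Step 2 (representation theory).} Because $p \ndivides |T|$, the algebra $\F_p T$ is semisimple, and by \cite{Curtis+Reiner} (as explained in \cite{Meierfrankenfeld}) the module $V$ is the reduction modulo $p$ of a complex $T$-module $\widetilde V$; concretely one may take $\widetilde V = \bigoplus_i \widetilde{V_i}$, where $\widetilde{V_i}$ is the complex lift of the $i$-th $\F_p T$-composition factor $V_i$ of $V$. The key property, again from the cited sources, is that a $p'$-element of $T$ — and every element of $T$ is a $p'$-element here — has a fixed point in $V$ if and only if it has a nonzero fixed vector in $\widetilde V$. If one wishes to avoid citing this as a black box, it follows from
\[
\dim_{\F_p} C_V(t)=\tfrac1r\sum_{i=0}^{r-1}\varphi_V(t^i)=\tfrac1r\sum_{i=0}^{r-1}\chi_{\widetilde V}(t^i)=\dim_{\mathbb C} C_{\widetilde V}(t)
\]
for $t\in T$ of order $r$, the middle equality being the fact that the Brauer character $\varphi_V$ agrees with the ordinary character $\chi_{\widetilde V}$ in the non-modular case.

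\emph{Conclusion.} Chaining the two steps, for every $r\in\pi(T)$ we obtain: $L$ has an element of order $rp$ $\iff$ some order-$r$ element of $T$ has a fixed point in $V$ $\iff$ some order-$r$ element of $T$ has a nonzero fixed vector in $\widetilde V$; and the single module $\widetilde V$ produced in Step 2 serves all $r$ simultaneously, which is exactly the assertion. \textbf{The only point that needs genuine care} is the forward implication of Step 1 — arranging a single conjugation that simultaneously pushes the $p$-part of the order-$rp$ element into $V$ and its $r$-part into $T$ — since the substantive input, that fixed points are preserved under reduction modulo $p$, is furnished wholesale by the cited literature; I therefore do not anticipate a real obstacle.
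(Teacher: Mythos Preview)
Your proposal is correct and follows essentially the same approach as the paper: the paper does not give a separate proof of this lemma, instead treating it as a summary of the preceding discussion, which uses Schur--Zassenhaus to split $L$ and then invokes \cite{Curtis+Reiner} and \cite{Meierfrankenfeld} for the lift to characteristic zero with preservation of fixed points. Your write-up is simply a more explicit rendering of that discussion, and the extra care you take in Step~1 (conjugating the $r$-part into $T$ while the $p$-part stays in the normal subgroup $V$) and the character formula in Step~2 are welcome elaborations rather than departures.
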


\subsection{Edges of the form $r-p$, $r \in \pi(T)$, $p \in \pi(G) \setminus \pi(T)$}

We now shift to the main goal of this section: establishing criteria for the nonexistence of edges $r-p$ in the prime graph complement of a $T$-solvable group $G$, where $r \in \pi(T)$ and $p \in \pi(G) \setminus \pi(T)$. Such results will be referred to as \textit{$r-p$ criteria}. Our main tools will be the lemmas from the previous section and some theory of central extensions. 

We fix throughout this section a nonabelian simple group $T$, a strictly $T$-solvable group $G$, and a subgroup $K \cong N.T$ of $G$ where $N$ is solvable. The soon-to-be presented $r-p$ criteria will typically apply for all $p \in \pi(K)\setminus \pi(T)$. Thus if $\pi(K)$ can be chosen to contain most or all of the primes of $\pi(G)$, then these results can be very effective. Fortunately for us, every simple group in \Cref{section:allresults} satisfies $\pi(T) = \pi(\Aut(T))$, so \Cref{2.4 2022} gives us the best case scenario of $\pi(G) = \pi(K)$ in all of our applications. Still, these results are likely still useful when $\pi(T)  \ne \pi(\Aut(T)).$ For example, one could use them alongside \Cref{N.M} or some generalization thereof to study edges $r-p$ when $r \in \pi(T)$ and $p \in \pi(G) \setminus \pi(\Aut(T))$.

%Although these results are most powerful when $\pi(K) = \pi(G)$, . The group $\Sz(8)$ has an outer automorphism group isomorphic to $C_3$, even though $3 \nmmid |\Sz(8)|$. Hence, for a strictly $\Sz(8)$-solvable group $G$, we do not currently have a method of producing a subgroup $K \cong N.\Sz(8)$ satisfying $\pi(K) = \pi(G).$ However, \Cref{N.M} shows that a strictly $\Sz(8)$-solvable group $G$ with enough edges in its prime graph complement must contain a subgroup $K \cong N.T$ satisfying $\pi(K) \cup \{3\} = \pi(G).$ Then, because $|M(\Sz(8)| = 4$ and the Sylow 2-subgroup of $\Sz(8)$ is not cyclic, dihedral, Klein-4, or generalized quaternion, it follows from \Cref{FC} and \Cref{ThomasMichaelKeller} that $r-p \notin \ol \Gamma(G)$ for all $r \in \pi(T)$ and $p \in \pi(G) \setminus \pi(\Aut(T)).$

The first $r-p$ criterion is relatively simple and often the first technique one tries when attempting to eliminate edges between $\pi(T)$ and $\pi(K) \setminus \pi(T)$. It is most conveniently stated in terms of the following definition.
\begin{definition}
    Let $P$ be a $p$-group. $P$ is said to satisfy the {\it Frobenius Criterion} if $P$ is cyclic, dihedral, Klein-4, or generalized quaternion.
\end{definition}
Note that every quotient of a generalized quaternion or cyclic group satisfies the Frobenius Criterion. 

\begin{proposition}\label{FC}
    Assume the Sylow $r$-subgroups of $T$ do not satisfy the Frobenius criterion for some $r \in \pi(T)$. For all $p\in \pi(K) \setminus \pi(T)$, we have $r-p \notin \ol \Gamma(G)$.
\end{proposition}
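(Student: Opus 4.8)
The plan is to exhibit an explicit element of order $rp$ inside a suitable section of $K$; by the section principle discussed in \Cref{section:preliminaries}, the existence of such an element in any section of $G$ forces $r-p\notin\ol\Gamma(G)$. Fix $p\in\pi(K)\setminus\pi(T)$. Since $\pi(K)=\pi(N)\cup\pi(T)$ and $p\notin\pi(T)$, we have $p\mid |N|$, so in a chief series $1=K_0\lhd\cdots\lhd K_{n-1}=N\lhd K_n=K$ as in \Cref{chief series} at least one factor $K_i/K_{i-1}$ with $i\le n-1$ is a $p$-group. Let $\ell$ be the \emph{largest} such index and set $V=K_\ell/K_{\ell-1}$, $W=K_{n-1}/K_\ell$, $L=K/K_{\ell-1}$. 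Then $V$ is a nontrivial elementary abelian $p$-group; by maximality of $\ell$ every chief factor strictly above $K_\ell$ other than $T$ is a $p'$-group, so $W$ is a solvable $p'$-group; and $L\cong V.W.T$. It now suffices to find an element of order $rp$ in $L$.

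Since $p\nmid |W||T|=|L/V|$, the normal subgroup $V$ is a Sylow $p$-subgroup of $L$, so Schur--Zassenhaus provides a complement $A$ with $L=V\rtimes A$, $A\cong L/V\cong W.T$, and the conjugation action of $A$ on $V$ is coprime. If some $e\in A$ of order $r$ fixes a nontrivial $v\in V$, then $\langle e\rangle\times\langle v\rangle\cong C_r\times C_p$ lies in $L$ and $ev$ has order $rp$, which is what we want. So the problem reduces to the claim that \emph{the coprime action of $A\cong W.T$ on $V\ne 1$ admits an order-$r$ element with a nonzero fixed point.} To establish this, let $\phi\colon A\twoheadrightarrow T$ be the quotient map and $S\in\Syl_r(A)$; comparing the $r$-parts of the orders in $1\to W\to A\to T\to 1$ shows $\phi(S)\in\Syl_r(T)$, so $\phi(S)$ is a Sylow $r$-subgroup of $T$ and, by hypothesis, does not satisfy the Frobenius criterion. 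A homomorphic image of a cyclic group is cyclic, and, by the remark following the definition of the Frobenius criterion, a homomorphic image of a generalized quaternion group satisfies the Frobenius criterion; hence $S$ is neither cyclic nor generalized quaternion. A standard fact about $p$-groups --- a $p$-group that is neither cyclic nor (for $p=2$) generalized quaternion contains a subgroup isomorphic to $C_p\times C_p$ --- then gives $E\le S$ with $E\cong C_r\times C_r$. Finally, extending scalars to $\ol{\F}_p$ and decomposing $V\otimes\ol{\F}_p$ into common eigenspaces for the abelian $r'$-group $E$: at least one linear character $\chi\colon E\to\ol{\F}_p^{\times}$ occurs, its image is a finite (hence cyclic) subgroup of $\ol{\F}_p^{\times}$, so $\chi$ is not injective, and any $1\ne e\in\ker\chi$ acts trivially on a nonzero subspace, i.e.\ $C_V(e)\ne 0$.

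I expect the main obstacle to be organizational rather than conceptual: pinning down the section $L\cong V.W.T$ (choosing $V$ so that what lies above it, apart from $T$, is genuinely a $p'$-group) and then verifying that failure of the Frobenius criterion is inherited by $S$ from its quotient $\phi(S)$, for which the remark about quotients of cyclic and generalized quaternion groups is precisely what is needed. The remaining ingredients --- Schur--Zassenhaus, extracting a $C_r\times C_r$, and the eigenspace count --- are routine, and in particular no faithfulness of the $A$-action on $V$ is needed (a non-faithful action only makes a fixed point easier to find), so \Cref{self-central} is not required for this argument.
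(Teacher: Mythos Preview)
Your proof is correct and follows essentially the same approach as the paper: both produce the section $L\cong V.W.T$ via \Cref{chief series}, observe that a Sylow $r$-subgroup $S$ of $W.T$ surjects onto one of $T$ and hence is neither cyclic nor generalized quaternion, and conclude that $S$ cannot act fixed-point-freely on $V$. The only differences are cosmetic: the paper cites \cite[Corollary 6.17]{FGT} for this last step whereas you extract $C_r\times C_r\le S$ and give a direct eigenspace argument, and the paper works with the conjugation action of $L/V$ on $V$ rather than first splitting via Schur--Zassenhaus.
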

\begin{proof}
    By \Cref{chief series}, $G$ has a section $L \cong V.W.T$, where $V$ is a nontrivial elementary abelian $p$-group and $W$ is a $p'$-group. Then $L$ acts on $V$ by conjugation, and since $V$ is abelian, so does $L/V \cong W.T$. Let $S \in \Syl_r(L/V)$ and $\ol S \in \Syl_r(T)$, so that $S/(S \cap W) \cong \ol S$. Since $\ol S$ does not satisfy the Frobenius criterion, $S$ is not cyclic nor generalized quaternion and thus cannot act Frobeniusly on $P$ by \cite[Corollary 6.17]{FGT}. Accordingly, there exist nontrivial elements $s \in S$ of $r$-power order and $x \in V$ of $p$-power order such that $s^{-1}xs = x$. Letting $\widetilde s$ denote a preimage of $s$ in $L$, we see that $\widetilde s^{-1}x \widetilde s = x$ and so some power of $\widetilde s x$ has order $rp$, forcing $r-p \notin \ol \Gamma(G)$.
\end{proof}

Typically, the above lemma is used to rule out edges originating from the vertices 2 and 3. The only exception is $A_7$-solvable groups, as 2 can in fact be adjacent to vertices $\pi(G) \setminus \pi(A_7)$.

Several of the remaining $r-p$ criteria require another, more complicated, definition.
\begin{definition}\label{p-special}
    A prime $r \in \pi(T)$ is called {\it special} if at least one of the following conditions hold.
    \begin{enumerate}
        \item[(a)] $r \ndivides |N|$.
        \item[(b)] $r$ is odd and $r-s \in \ol \Gamma(G)$ for some $s \in \pi(T)$.
        %\item[(c)] There exists an odd prime $s \in \pi(T)$ such that $s-r,s-p \in \ol \Gamma(G)$
    \end{enumerate}
    In this subsection, $G,K,N$ are fixed. In the future, to avoid ambiguity, we will say that $r$ is special relative to the triple $(G,K,N)$.
\end{definition}

\begin{proposition}\label{Thanks TMK}
    Assume that the subgraph of $\ol \Gamma(G)$ induced by $\pi(T)$ has no isolated vertices. If $r \in \pi(T)$ is a special prime such that in every complex irreducible representation of $T$ there exists an element of order $r$ with fixed points, then $r-p \notin \ol \Gamma(G)$ for all $p \in \pi(K) \setminus \pi(T)$.
\end{proposition}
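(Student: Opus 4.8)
The plan is to prove that $G$ contains an element of order $rp$ for every $p \in \pi(K)\setminus\pi(T)$; since this property is inherited by $G$ from any of its sections, it suffices, for each such $p$, to exhibit a section of $G$ containing such an element. Fix $p$. If $p$ is adjacent in $\ol\Gamma(G)$ to no vertex of $\pi(T)$, then $r-p\notin\ol\Gamma(G)$ trivially, so I will assume $p$ is adjacent to some vertex of $\pi(T)$. I then apply \Cref{chief series} and the discussion following it to obtain a section $L\cong V.W.T$, where $V$ is a nontrivial elementary abelian $p$-group and $W$ is a solvable $p'$-group with the property that every odd prime dividing $|W|$ which is adjacent to a vertex of $\pi(T)$ is nonadjacent to $p$; and I then apply \Cref{self-central}, whose hypothesis holds by our assumption on $p$, to pass to a quotient in which additionally $C_L(V)=V$. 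Then $L/V\cong W.T$ acts faithfully on $V$, and coprimely since $p\ndivides|W.T|$.

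Next I reduce to the case $r\ndivides|W|$. If $r$ is special because $r\ndivides|N|$, this holds automatically, since $W$ is a section of $N$. If instead $r$ is odd with $r-s\in\ol\Gamma(G)$ for some $s\in\pi(T)$ and $r\divides|W|$, then $r$ is an odd prime dividing $|W|$ that is adjacent to a vertex of $\pi(T)$, whence $r-p\notin\ol\Gamma(G)$ by the choice of $L$ and there is nothing left to prove. So I may assume that $W$ is an $\{r,p\}'$-group. (It is the hypothesis that $\pi(T)$ has no isolated vertices in $\ol\Gamma(G)$ that makes this reduction, and the harder case below, work: it guarantees that every odd prime of $\pi(T)$ dividing $|W|$ is nonadjacent to $p$.)

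The core of the argument is a reduction to the case $W=1$. The coprime action of $W$ on $V$ gives an $L$-invariant decomposition $V=C_V(W)\oplus[V,W]$. If $C_V(W)\neq0$, I pass to $L/[V,W]$: there the image of $W$ centralizes the surviving $p$-part and hence, by coprimeness, splits off as a normal Hall $p'$-subgroup, so after quotienting it out I obtain a section $L'\cong C_V(W).T$ with $C_V(W)$ a nontrivial elementary abelian $p$-group. The remaining case $C_V(W)=0$ — equivalently, no irreducible constituent of $V$ as an $L/V$-module has $W$ in its kernel, so that Clifford theory over $W$ does not by itself exhibit a genuine $\mathbb C T$-submodule — is the main obstacle. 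Here my plan is to replace $L$ by a section built from a minimal supplement $R$ to $V.W$ in $L$: such an $R$ is a perfect group with $R/(R\cap(V.W))\cong T$ and $R\cap(V.W)=\Phi(R)$ nilpotent, and inspecting the $p$-part of $R\cap(V.W)$ should produce either a section of the form $V'.T$ directly, or a section of the form $V'.E$, where $V'$ is a nontrivial elementary abelian $p$-group and $E$ is a perfect extension of $T$ whose Frattini subgroup is an $\{r,p\}'$-group. In the latter situation I would transfer the fixed-point hypothesis from the irreducible $\mathbb C T$-modules to the irreducible modules of $E$ — via Clifford theory over $\Phi(E)$ and the representation theory of covering groups — using that $r$ is special so that the order-$r$ elements of $E$ correspond to those of $T$; this reduces the case to the previous one.

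It then remains to treat a section $L'\cong V'.T$ with $V'$ a nontrivial elementary abelian $p$-group and $p\ndivides|T|$. By the Schur--Zassenhaus theorem $L'\cong V'\rtimes T$, so \Cref{reps} supplies a $\mathbb C T$-module $\widetilde{V'}$ such that $L'$ has an element of order $rp$ if and only if some order-$r$ element of $T$ has a nonzero fixed space on $\widetilde{V'}$. Decomposing $\widetilde{V'}$ into irreducible $\mathbb C T$-modules, there is at least one summand because $V'\neq0$, and the hypothesis provides an order-$r$ element of $T$ with a nonzero fixed space on that summand, hence on $\widetilde{V'}$. Thus $L'$, and therefore $G$, contains an element of order $rp$, which is what we wanted. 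I expect the one genuinely delicate step to be the passage through the extension $E$ in the case $C_V(W)=0$ — namely, controlling which order-$r$ elements of a covering-type extension of $T$ have fixed points — whereas everything else is a chain of routine coprime-action and Frattini-argument reductions.
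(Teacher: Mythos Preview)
Your reduction to the case $r\ndivides|W|$ with $V$ self-central is fine, and so is your treatment of the easy case $C_V(W)\neq 0$. The gap is in the case $C_V(W)=0$: your plan to pass to a section $V'.E$ with $E$ a perfect extension of $T$ and then ``transfer the fixed-point hypothesis from the irreducible $\mathbb{C}T$-modules to the irreducible modules of $E$'' does not work. The hypothesis only controls representations of $T$ itself; faithful representations of a nontrivial perfect (even central) extension can behave quite differently. A concrete counterexample from the paper's own tables: for $T=A_7$ and $r=5$, every irreducible $\mathbb{C}A_7$-module has an order-$5$ element with fixed points, but $2.A_7$ has an irreducible module (the one tagged $(3,7)$ in \Cref{table:3}) in which all order-$5$ elements act fixed-point-freely. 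So Clifford theory over $\Phi(E)$ cannot rescue you here, and your $E$ is not even guaranteed to be a central extension. You also never actually deploy the ``no isolated vertices'' hypothesis in this hard case; your parenthetical remark that it ``makes the harder case below work'' is never cashed out.

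The paper's argument uses that hypothesis in an essential and rather different way. After reaching $L\cong V.W.T$ with $V$ self-central and $r\ndivides|W|$, it passes to a Hall $(\pi(T)\cup\{p\})$-subgroup $H\cong V.M.T$, so the middle layer $M$ is now a $\pi(T)$-group. If $M=1$ one finishes via \Cref{reps}. Otherwise a minimal normal subgroup $M_0$ of $H/V$ is an elementary abelian $s$-group for some $s\in\pi(T)\cap\pi(W)$. Assuming $r-p\in\ol\Gamma(L)$, a Frobenius/nilpotency argument forces $[R,M_0]=1$ for every order-$r$ subgroup $R$, whence $r\divides|C_H(M_0)|$; simplicity of $T$ then gives $C_H(M_0)VM=H$, so every $t\in\pi(T)$ divides $|C_H(M_0)|$ and $t-s\notin\ol\Gamma(G)$ for all $t$. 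This makes $s$ isolated in the $\pi(T)$-subgraph, contradicting the hypothesis. That is the missing idea: the hypothesis is used not to control the primes of $W$ adjacent to $p$, but to rule out the existence of a nontrivial $\pi(T)$-layer $M$ altogether.
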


\begin{proof}
    By \Cref{chief series}, $K$ has a quotient $L\cong V.W.T$ where $V$ is a nontrivial elementary abelian $p$-subgroup and $W$ is a solvable $p'$-group. If $r \divides |W|$, then $r \divides |N|$, and because $r$ is special it must be odd and adjacent to some vertex in $\pi(T)$. It then follows from \Cref{chief series} that $r-p \notin \ol \Gamma(G)$, as desired. Now suppose $r \ndivides |W|$. By \Cref{self-central}, we may assume in addition that $V$ is self-central. 
    
    Since $L$ is $(\pi(T) \cup \{p\})$-separable, it has a Hall $(\pi(T) \cup \{p\})$-subgroup $H$. Then, applying \cite[Lemma 2.1]{2022}, we can write $H \cong VM.T$ for some Hall $(\pi(T) \cap \pi(W))$-subgroup $M$ of $W$. If $M$ is trivial, then since $(|V|,|T|)=1,$ \cite[Lemma 2.1.7]{2015} 
    or \cite[Remark 2 following Theorem C]{Meierfrankenfeld} produces elements of order $rp$ in $H$ and it follows that $r-p\notin \ol\Gamma(G),$ as desired. We may assume then that $M$ is nontrivial, so that it contains a minimal normal subgroup $M_0$ of $H/V \cong M.T$. Because $M$ is solvable, $M_0$ is a nontrivial elementary abelian $s$-group for some $s \in \pi(T) \cap \pi(W)$. Now let $R \leq H$ be a cyclic subgroup of order $r$ and observe that $R \cong RV/V$ acts coprimely on $M_0 \cong M_0V/V$ by conjugation, so there is the Zassenhaus decomposition $M_0 = [R,M_0] \times C_{M_0}(R).$ In particular, $R$ acts nontrivially on $[R, M_0]$, and since $R$ is cyclic of prime order, this action must in fact be Frobenius. 
    
    Assume, for a contradiction, that $r-p \in \ol \Gamma(L)$. Then $R$ acts Frobeniusly on $V$ and thus also acts Frobeniusly on $V[R, M_0].$ Therefore, \cite[Theorem 6.24]{FGT} shows that $V[R, M_0]$ is nilpotent, and it follows that $V[R, M_0] = V \times [R, M_0].$ In other words, $[R, M_0]$ centralizes $V$, and the fact that $C_L(V) = V$ forces $[R, M_0] = 1.$ Therefore, $R$ acts trivially on $M_0$, and it follows that $r \divides |C_H(M_0)|$. Now, since 
    \[\frac{C_H(M_0) VM}{VM} \unlhd \frac{H}{VM }\cong T,\]
    we must have $C_H(M_0)VM = H$ or $C_H(M_0) \leq VM.$ However, the latter case is impossible since $r \divides |C_H(M_0)|$ and $r \ndivides |VM|,$ so we must have $C_H(M_0)VM = H$. Then for all $t \in \pi(T)$, it follows from order considerations that $t \divides |C_H(M_0)|.$ This means an element of order $t$ centralizes an element of order $s$ in $M_0$, so the edge $t-s$ does not appear in $\ol \Gamma(H)$ or in $\ol \Gamma(G)$. However, since $s \in \pi(W) \cap \pi(T)$, this contradicts the hypothesis that the subgraph of $\ol \Gamma(G)$ induced by $\pi(T)$ has no isolated vertices. Therefore, $r-p \notin \ol \Gamma(L)$.
\end{proof}

The astute reader will notice that the above proposition can be generalized. Instead of requiring that the subgraph induced by $\pi(T)$ have no isolated vertices, it is enough to assume that only the vertices in $\pi(T) \cap \pi(W)$ are not isolated. This is of course a more complicated condition to establish, likely requiring an application of the prime divisibility conditions from \Cref{chief series} or \Cref{self-central}. But it is certainly doable and possibly applicable to the study of certain $K_4$ groups. However, we do not come across any applications in this paper, so we have chosen to state the more convenient version of the criterion instead.

The next $r-p$ criterion is a corollary to the following proposition, which generalizes \cite[Lemma 7.4]{2022}.

\begin{proposition}\label{funny}
    For each $p \in \pi(K) \setminus \pi(T)$, one of the following holds:

    \begin{enumerate}
        \item[(1)] For each odd prime $r \in \pi(T)$ satisfying $(r, |M(T)|) = 1$ (where $M(T)$ is the Schur multiplier of $T$), we have $r-p \notin \ol \Gamma(G).$
        \item[(2)] $G$ has a section $V.E$, where $V$ is a nontrivial elementary abelian $p$-group and $E$ is a perfect central extension of $T$.
    \end{enumerate}
\end{proposition}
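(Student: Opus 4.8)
The plan is to prove the contrapositive of ``(1) or (2)''. Assume (2) fails, fix an odd prime $r\in\pi(T)$ with $(r,|M(T)|)=1$, and aim to show $G$ has an element of order $rp$ (that is, $r-p\notin\ol\Gamma(G)$); since $r$ is arbitrary subject to these constraints, (1) follows. So suppose in addition that $G$ has no element of order $rp$, and work toward a contradiction with the failure of (2).

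First I would reduce to a convenient section. If $p$ is adjacent in $\ol\Gamma(G)$ to no vertex of $\pi(T)$, then $G$ has an element of order $pt$ for every $t\in\pi(T)$ --- in particular one of order $pr$ --- contradiction; so $p$ is adjacent to some vertex of $\pi(T)$. By \Cref{chief series}, $G$ has a section $V_0.W_0.T$ with $V_0$ a nontrivial elementary abelian $p$-group and $W_0$ solvable; passing to a Hall $(\pi(T)\cup\{p\})$-subgroup and applying \cite[Lemma 2.1]{2022} I may assume $\pi(W_0)\subseteq\pi(T)$, and then \Cref{self-central} lets me take $V_0$ self-centralizing. As every prime involved except $p$ divides $|T|$, Schur-Zassenhaus turns this into a section $L\cong V\rtimes A$ with $A\cong W.T$ acting faithfully on the nontrivial elementary abelian $p$-group $V$, where $W$ is a solvable $p'$-group with $\pi(W)\subseteq\pi(T)$. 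Since $L$ is a section of $G$ it has no element of order $rp$; because $p\nmid|A|$, a norm-map computation in the coprime semidirect product $V\rtimes A$ shows this is equivalent to the statement that no element of $A$ of order $r$ has a fixed point on $V$. (Note $A\cong W.T$ does contain elements of order $r$, by a short Schur-Zassenhaus induction on $|W|$, so this is a genuine constraint.) Among all sections of $G$ of this shape having no element of order $rp$, I would then fix one with $|W|$ minimal.

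Next I would exploit the perfect residual $E:=A^{(\infty)}$: it is perfect, $EW=A$, the intersection $S:=E\cap W$ is the solvable radical of $E$, and $E/S\cong T$. Since $E$ contains an element of order $r$ and that element fixes $C_V(E)$ pointwise, $C_V(E)=0$. If $C_V(S)\ne0$, then $S$ acts trivially on the nonzero $E$-submodule $C_V(S)$ while $E/S\cong T$ must act nontrivially on it (otherwise $C_V(S)\le C_V(E)=0$), so Maschke's theorem splits off a nontrivial irreducible $T$-constituent and produces a section $\widehat V\rtimes T$ of $G$ with $\widehat V\ne1$ --- this is conclusion (2), contradicting our hypothesis. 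Hence $C_V(S)=0$; and $S\ne1$, since otherwise $E\cong T$ acts faithfully on $V$ and $V\rtimes E\le L$ already witnesses (2). Now I would pick $S_1\le S$ minimal normal in $A$, an elementary abelian $q$-group with $q\in\pi(T)\setminus\{p\}$. If $C_V(S_1)\ne0$, passing to the subgroup $C_V(S_1)\cdot A\le L$ and quotienting by $C_A(C_V(S_1))$ yields an admissible section in which $S_1$ has become trivial, hence with strictly smaller $|W|$, against minimality. Therefore $C_V(S_1)=0$, so $S_1$ acts fixed-point-freely and is cyclic, $S_1\cong C_q$; as $A$ acts on $C_q$ through the abelian group $\Aut(C_q)$, the perfect group $E$ centralizes $S_1$, so $S_1\le Z(E)$. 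Iterating this (using $C_V(S)=0$ to force $C_V(Z(E))=0$, hence $Z(E)$ cyclic and fixed-point-free) and appealing again to minimality drives the situation to $S=Z(E)$, whereupon $E$ is a perfect central extension of $T$ acting faithfully on $V$ and $V\rtimes E\le L$ is exactly the section forbidden by the failure of (2) --- the desired contradiction. The hypothesis $(r,|M(T)|)=1$ enters here to control the $r$-part: it keeps an element of order $r$ inside every perfect central extension that surfaces (so the fixed-point-free reformulation above stays in force), and, together with \Cref{truelizard} applied to a cyclic $C_r\le A$ normalizing such an $S_1$, it eliminates the residual configuration in which the recursion would manufacture a central element of order $r$, since $|Z(E)|$ divides $|M(T)|$.

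I expect the last step to be the main obstacle: keeping the successive reductions honest --- each smaller object must remain a section of $G$, self-centralizing, and free of elements of order $rp$ --- and in particular treating the two stubborn cases, namely when $A$ is itself perfect (so there is no slack inside $A^{(\infty)}$) and when a minimal normal subgroup of $A$ inside $S$ is an $r$-group. These are precisely the places where \Cref{truelizard} (the Flavell fixed-point input) and the coprimality $(r,|M(T)|)=1$ are indispensable; the representation-theoretic reduction underlying \Cref{reps} is what certifies that the module extracted at the end genuinely belongs to a perfect central extension of $T$, closing the proof.
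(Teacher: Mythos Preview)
Your outline follows the same broad strategy as the paper --- pass to a faithful $\mathbb F_p$-module for some $W.T$, impose a minimality hypothesis, and push the structure toward a perfect central extension of $T$ --- and the use of the perfect residual $E=A^{(\infty)}$ is a pleasant way to organize this. But the step you flag as ``the main obstacle'' is a genuine gap, not just a technicality to be filled in. You show that each minimal $A$-normal subgroup $S_1\le S$ is cyclic and lies in $Z(E)$, and then assert that ``iterating this \ldots\ drives the situation to $S=Z(E)$.'' No iteration is available: once $Z(E)$ is nontrivial it already acts fixed-point-freely on $V$, so there is no faithful action of $A/Z(E)$ on $V$ to which your argument can be reapplied, and knowing only that the $A$-socle of $S$ sits inside $Z(E)$ does not force $S=Z(E)$. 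The paper avoids this by proving the stronger statement that \emph{every} abelian normal subgroup $M\unlhd H$ with $M\le W$ is central (via the coprime decomposition $M=[R,M]\times C_M(R)$ and nilpotency of Frobenius kernels), and then finishing with a Fitting-subgroup/Gasch\"utz argument together with a Frobenius-complement result cited from \cite{atlas}.

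Two further issues. First, the case $r\mid|W|$ is only gestured at. The paper handles it separately: cyclic Sylow $r$-subgroups force a central chief factor $C_r$, and $(r,|M(T)|)=1$ makes the resulting extension of $T$ by $C_r$ split, contradicting cyclicity. Your remark that $(r,|M(T)|)=1$ ``eliminates the residual configuration \ldots\ since $|Z(E)|$ divides $|M(T)|$'' presupposes $S=Z(E)$, which is precisely what you are trying to prove, so it cannot substitute for this argument. Second, your proposed endgame via \Cref{truelizard} carries side hypotheses ($r$ non-Fermat, or $2\nmid|W|$, or an odd-prime witness) that are not secured here; the paper's final contradiction instead uses an unconditional fixed-point result about a cyclic $r$-group acting faithfully on a $q$-group which in turn acts faithfully on $V$.
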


\begin{proof}
    By \Cref{chief series}, it suffices to prove that if $W$ is a solvable $p'$-group, $H \cong W.T$, and $V$ is an $H$-module over the finite field $\mathbb F_p$, then either $r-p \notin \ol \Gamma(VH)$ or $H$ has a perfect central extension of $T$ as a subgroup. We do so by contradiction, letting $H \cong W.T$ and $V$ be a counterexample such that $|H|+|V|$ is minimal. Clearly, $V$ is faithful and irreducible.

    First, consider the case $r \divides |W|$. Let $M_1 \geq M_2$ be normal subgroups of $H$ contained in $W$ such that $G/M_1$ is an $r'$-group and $M_1/M_2$ is an elementary abelian $r$-group. Since we may assume that $r-p \in \ol \Gamma(VH)$, the Sylow $r$-subgroups of $H$ must act Frobeniusly on $V$ and accordingly must be cyclic. Hence $M_1/M_2$ is cyclic, and it follows that $M_1/M_2 \cong C_r$. In particular, $|(H/M_1)/C_{H/M_1}(M_1/M_2)|=|H/C_H(M_1/M_2)|$ divides $r-1$. Moreover, because $T$ is simple, we either have $C_H(M_1/M_2) \leq W$ or $C_H(M_1/M_2)W = H$. Now, letting $r^k$ denote the largest power of $r$ dividing $|T|$, we see that $r^k$ also divides $|C_H(M_1/M_2)|$, forcing $C_H(M_1/M_2)W = H$. Therefore, if $C_H(M_1/M_2) < H,$ then since 
    \[\frac{C_H(M_1/M_2)}{W \cap C_H(M_1/M_2)} \cong \frac{C_H(M_1/M_2)W}{W} \cong T,\] 
    the group $C_H(M_1/M_2)$ acting on $V$ constitutes a counterexample to the lemma with order $|C_H(M_1/M_2)| + |V| < |G| + |V|$. This contradicts our choice of $G$, so we can assume $C_H(M_1/M_2) = G$, i.e., $M_1/M_2$ is a central chief factor of $H$. In particular, a central extension of $T$ by $C_r$ appears as a quotient of $H$, but since $(r, |M(T)|) = 1$, this extension must be split, contradicting the fact that $G$ has cyclic Sylow $r$-subgroups.

    We have reduced to the case $r \ndivides |W|$. Now let us prove that every abelian normal subgroup $M \unlhd H$ with $M \leq W$ is central and cyclic. Let $R$ be a subgroup of order $r$ in $H$. Then the Zassenhaus decomposition for coprime actions tells us that
    $M = [R, M] × C_M(R)$, and $R$ acts Frobeniusly on $[R, M]$. Hence $R$ acts Frobeniusly on $V[R, M]$ which thus is nilpotent. Therefore $[R, M]$ acts trivially on $V$, but since the action of $H$ on $V$ is faithful, this forces $[R, M] = 1$. Since $R$ was chosen arbitrarily of order $r$, this shows that $C_H(M)$ contains all elements of order $r$ in $H$, and since $r$ divides $|T|$ but not $|W|$, we see that $C_H(M)$ has a factor group isomorphic to $T$. So by minimality, it follows that $C_H(M) = H$ and hence $M \leq Z(H)$. As a consequence, $M$ is cyclic.

    If an element of order $r$ centralizes $F(H) = F(W)$, then $C_H(F(W))W/W$ is a nontrivial normal subgroup of $T$, hence $C_H(F(W))$ has a factor group isomorphic to $T$. By the minimality of $H$ and $V$, we must have $H = C_H(F(W))$. Now Gaschutz's theorem implies that $W=F(W)=Z(G)$. In other words, $H$ is a central extension of $T$. Because $H'$ is also a central extension of $T$, minimality implies $H = H'$. Hence $H$ is a perfect central extension of $T$, and it follows from \Cref{reps} that $r-p \notin \ol \Gamma(VH)$, a contradiction. Therefore, no order $r$ element centralizes $F(H)$.

    Let $R_1$ be a Sylow $r$-subgroup of $H$ containing a subgroup $R_0 \cong C_r$ which, by the above, does not centralize $F(H)$. Then $R_0$ acts faithfully on some Sylow subgroup $Q$ of $F(H)$, and from the Zassenhaus decomposition, we know that $Q = [R_0, Q]C_Q(R_0)$ and $[R_0, Q] >1.$ Clearly $R_1$ normalizes $[R_0, Q]$, and since $R_1$ is cyclic of $r$-power order, $R_1$ acts faithfully on $[R_0, Q]$. So altogether we have that $R_1$ acts faithfully on $[R_0, Q]$, and $R_1[R_0, Q]$ acts faithfully on $V$, and $R_1$ acts Frobeniusly on $V$. This is not possible by \cite[Theorem 3.1 and Corollary 3.2]{atlas}, and this final contradiction completes the proof of the proposition.
\end{proof}

\begin{corollary}\label{funny2}
    Let $r \in \pi(T)$ be an odd prime such that $(r, |M(T)|) = 1$. Suppose that in every complex irreducible representation of a perfect central extension of $T$, some element of order $r$ has fixed points. Then $r-p \notin \ol \Gamma(G)$ for all $p \in \pi(K) \setminus \pi(T)$.
\end{corollary}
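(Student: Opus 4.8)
The plan is to deduce \Cref{funny2} directly from \Cref{funny} and \Cref{reps}. Fix $p \in \pi(K) \setminus \pi(T)$ and apply \Cref{funny}. If alternative (1) of that proposition holds for this $p$, then since $r$ is by hypothesis an odd prime with $(r, |M(T)|) = 1$, we immediately get $r - p \notin \ol\Gamma(G)$ and there is nothing more to prove. So the substance lies in the case that alternative (2) holds: $G$ has a section $L \cong V.E$ with $V$ a nontrivial elementary abelian $p$-group and $E$ a perfect central extension of $T$.

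In that case I would first check that $p \ndivides |E|$, so that \Cref{reps} applies to $L$. Since $E$ is a perfect central extension of $T$, the kernel of $E \to T$ is a quotient of the Schur multiplier $M(T)$, and as the prime divisors of the order of the Schur multiplier of any finite group lie among the prime divisors of that group's order, we get $\pi(E) = \pi(T)$; in particular $p \notin \pi(T)$ gives $p \ndivides |E|$, and also $r \in \pi(E)$. Now \Cref{reps}, applied with $E$ in place of the generic group, furnishes a $\mathbb C E$-module $\widetilde V$ with the property that $L$ has an element of order $rp$ if and only if some element of $E$ of order $r$ has a nonzero fixed vector in $\widetilde V$. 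Because $V$ is nontrivial, $\widetilde V$ is nonzero, and since $\mathbb C E$ is semisimple it has an irreducible submodule $\widetilde V_1$. The hypothesis of the corollary, applied to the perfect central extension $E$ and its irreducible representation $\widetilde V_1$, yields an element $x \in E$ of order $r$ with a nonzero fixed vector in $\widetilde V_1$, hence in $\widetilde V$. By \Cref{reps}, $L$ therefore has an element of order $rp$, i.e.\ $r - p \notin \ol\Gamma(L)$. Since $L$ is a section of $G$ of order divisible by both $r$ and $p$, the reduction-to-sections principle recorded near the start of \Cref{section:preliminaries} gives $r - p \notin \ol\Gamma(G)$, as claimed.

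The only mildly delicate point is the handling of the central extension $E$: one must be sure that $(r,|M(T)|)=1$, together with $\pi(M(T)) \subseteq \pi(T)$, guarantees both that $\widetilde V$ is a module over a group of order coprime to $p$ — so that \Cref{reps} is applicable — and that a genuine order-$r$ element of $E$ is available against which to invoke the hypothesis. The passage from ``some order-$r$ element of $E$ fixes a vector of the irreducible constituent $\widetilde V_1$'' to ``some order-$r$ element of $E$ fixes a vector of $\widetilde V$'' is immediate because $\widetilde V_1$ is a submodule of $\widetilde V$. Everything else is a direct citation of \Cref{funny}, \Cref{reps}, and the section principle, so I expect no serious obstacle.
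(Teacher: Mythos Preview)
Your argument is correct and is exactly the intended deduction: the paper states \Cref{funny2} as an immediate corollary of \Cref{funny} without proof, and your use of \Cref{reps} on the section $V.E$ (after checking $p\nmid |E|$ via $\pi(M(T))\subseteq\pi(T)$) is precisely the implicit step. If anything, you are more careful than the paper in spelling out why $\pi(E)=\pi(T)$ and in passing to an irreducible constituent of $\widetilde V$.
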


With that, we have established the only $r-p$ criteria to be used in this paper. Indeed, it just so happens that the primes dividing $|M(T)|$ tend to be the same primes whose Sylow subgroups do not satisfy the Frobenius criterion, and \Cref{Thanks TMK} turns out to be quite good at handling edge cases. Nevertheless, we present two more criteria. These can be useful for future research on prime graphs of finite groups, as there are theoretical situations where the above criteria do not apply but the upcoming ones do.

The first of these additional $r-p$ criteria supposes that $T$ contains a subgroup of convenient form.

\begin{proposition}\label{qr in T}
	Let $r\ne 2$ and $s$ be distinct primes and suppose $T$ contains a subgroup $S \rtimes R$ in which a cyclic $r$-subgroup $R$ acts nontrivially on an $s$-subgroup $S$. Further assume that $r$ is special and one of the following holds:
	\begin{enumerate}
		\item[(1)] $r$ is not a Fermat prime.
		\item[(2)] $s \ne 2.$
	\end{enumerate}
	Then $r-p \notin \ol \Gamma(G)$ for all $p \in \pi(N)$ (in particular, for all $p \in \pi(K) \setminus \pi(T)$).
\end{proposition}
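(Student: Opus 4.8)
The plan is to combine the structural reductions \Cref{chief series} and \Cref{self-central} with the fixed-point theorem \Cref{truelizard}. Fix $p\in\pi(N)$; the substantive case is $p\notin\pi(T)$ (if $p\in\pi(T)$, the edge $r-p$ lies inside $\pi(T)$ and is governed by $\ol\Gamma(T)$), so assume $p\notin\pi(T)$. By \Cref{chief series}, $G$ has a section $L\cong V.W.T$ with $V$ a nontrivial elementary abelian $p$-group and $W$ a solvable $p'$-group, and it suffices to show $r-p\notin\ol\Gamma(L)$. Suppose first that $r\mid|W|$. Then $r\mid|N|$, so part (a) of \Cref{p-special} fails, and since $r$ is special and $r\ne2$, part (b) yields a vertex $t\in\pi(T)$ with $r-t\in\ol\Gamma(G)$. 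In the chief series of $K$ underlying this section, the factor $V$ occupies the maximal $p$-position and the $W$-part lies strictly above it, so some chief factor above $V$ is an $r$-group; but \Cref{chief series} applied to that $r$-factor and $V$ (with $r$, $p$, $t$ in the roles of $p_i$, $p_j$, $s$) would place the $r$-factor strictly below $V$ if $r-p\in\ol\Gamma(G)$. Hence $r-p\notin\ol\Gamma(G)$ in this case; this is the argument used in the proof of \Cref{Thanks TMK}.

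So assume $r\nmid|W|$, and suppose for contradiction that $r-p\in\ol\Gamma(G)$, hence $r-p\in\ol\Gamma(L)$ since $L$ is a section of $G$. Then $p$ is $\ol\Gamma(L)$-adjacent to the vertex $r\in\pi(T)$, so \Cref{self-central} lets us replace $L$ by a quotient in which $V$ is self-central (the new $W$ is a quotient of the old, hence still a $p'$-group with $r\nmid|W|$). Writing $L=V\rtimes Y$ with $Y\cong W.T$ (Schur--Zassenhaus, as $(|V|,|L/V|)=1$), self-centrality gives $C_Y(V)=1$, i.e. $Y$ acts faithfully on $V$. Since $V$ is the unique (normal) Sylow $p$-subgroup of $L$, it now suffices to produce an $r$-element of $Y$ with a nonzero fixed point on $V$.

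The quotient $Y\to Y/W\cong T$ maps a Sylow $r$-subgroup of $Y$ isomorphically onto one of $T$ (as $r\nmid|W|$), so after conjugating $S\rtimes R$ inside $T$ we pull $R$ back to a cyclic $r$-subgroup $R_0\le Y$ with $R_0\cong R$. Let $Y_1\le Y$ be the preimage of $S\rtimes R$ and $U\lhd Y_1$ the preimage of $S$. Then $r\nmid|U|$ (since $r\nmid|W|$ and $S$ is an $s$-group), so $Y_1=U\rtimes R_0$; moreover $U$ is an $\{r,p\}'$-group, and $R_0$ acts nontrivially on $U$ because it does so on $U/W\cong S$. Because $Y_1$ acts faithfully on $V$, Maschke writes $V$ as a direct sum of irreducible $Y_1$-modules with $\bigcap_i C_{Y_1}(V_i)=1$; fixing an $R_0$-invariant Sylow $s$-subgroup $Q$ of $U$ (which exists by coprimality) on which $R_0$ acts nontrivially, and noting that $Q$ embeds into the product of its images in the $V_i$, we choose a constituent $V_1$ on which $R_0$ acts nontrivially on the image of $Q$. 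Passing to $\bar Y_1:=Y_1/C_{Y_1}(V_1)=\bar U\rtimes\bar R_0$ (faithful on $V_1$), we get $\bar R_0\cong R_0$ acting nontrivially on the $\bar R_0$-invariant Sylow $s$-subgroup $\bar Q$ of the $\{r,p\}'$-group $\bar U$. (If along the way a nontrivial $r$-element of $Y$ centralizes $V_1$, or if $\bar R_0$ does not act fixed-point-freely on $V_1$, then some $r$-element of $Y$ already fixes a nonzero vector of $V$ and we are done.) If $R$, equivalently $\bar R_0$, has order exactly $r$, then \Cref{truelizard} applied to $\bar U\rtimes\bar R_0$ acting on $V_1$ yields a fixed point of $\bar R_0$ on $V_1$: hypothesis (1) of the proposition feeds hypothesis (1) of \Cref{truelizard}, while hypothesis (2) ($s\ne2$) together with $\bar Q$ feeds hypothesis (3) (with $q=s$). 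This contradicts the Frobenius action of $\bar R_0$, finishing this case.

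The main obstacle is reducing the general case $|R|=r^k$, $k\ge2$, to the hypothesis $|R|=r$ required by \Cref{truelizard}. Passing to the order-$r$ subgroup $R_1\le R$ retains a Frobenius action on $V$, but the action of $R_1$ on $S$ (hence on $U$, on $Q$, on $\bar Q$) need not remain nontrivial, since the order-$r$ subgroup of a cyclic $r$-group can act trivially while the whole group acts nontrivially. Precisely when this happens, $R_1$ centralizes $S$, so $T$ --- and hence $G$ --- has an element of order $rs$, giving $r-s\notin\ol\Gamma(G)$; the plan is to combine this extra edge with the standing assumption $r-p\in\ol\Gamma(L)$ and the self-centrality of $V$, via a Hall-subgroup argument in the spirit of the proof of \Cref{self-central}, to force the contradiction (equivalently, to arrange in advance that $R$ may be taken of order $r$ acting nontrivially on a suitable $\{r,p\}'$-section). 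Pinning this down cleanly is the technical heart; everything else --- Schur--Zassenhaus, Maschke, coprime-action decompositions, and the existence of invariant Sylow subgroups --- is routine.
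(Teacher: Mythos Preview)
Your overall strategy matches the paper's: reduce via \Cref{chief series} to a section $L \cong V.W.T$, dispose of the case $r \mid |W|$ using the definition of special, make $V$ self-central via \Cref{self-central}, pass to the subgroup $V.W.(S \rtimes R)$, and invoke \Cref{truelizard}. But you over-complicate the final step. The paper simply sets $H = W.S$, observes that $R$ acts nontrivially on an $R$-invariant Sylow $s$-subgroup $\widetilde S \leq H$ (because it acts nontrivially on the quotient $\widetilde S/(\widetilde S \cap W) \cong S$), and applies \Cref{truelizard} once: hypothesis (1) of the proposition feeds hypothesis (1) of the lemma, and hypothesis (2) feeds hypothesis (3) with $Q = \widetilde S$. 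Your Maschke decomposition of $V$ and passage to an irreducible constituent $V_1$ are redundant --- that is exactly what the proof of \Cref{truelizard} already does internally, so you are re-deriving the lemma rather than citing it.

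As for your worry about $|R| = r^k$ with $k \geq 2$: the paper's proof does not address this either; it simply applies \Cref{truelizard} (which requires $|R| = r$) without comment. In every application appearing in this paper (see the subgroups listed in \Cref{table:1}, e.g.\ $C_{31} \rtimes C_3$, $C_{11} \rtimes C_5$), $R$ has prime order, so the intended hypothesis is almost certainly $|R| = r$; your attempted workaround is incomplete and, for the paper's purposes, unnecessary. One minor correction: your opening dismissal of the case $p \in \pi(N) \cap \pi(T)$ as ``governed by $\ol\Gamma(T)$'' is not right --- the conclusion $r-p \notin \ol\Gamma(G)$ does not follow from $\ol\Gamma(T)$ alone, and the paper's argument handles all $p \in \pi(N)$ uniformly via the chief factor $V$.
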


\begin{proof}
	Let $p \in \pi(N)$. By \Cref{chief series}, $K$ has a quotient $L\cong V.W.T$ where $V$ is a nontrivial elementary abelian $p$-subgroup and $W$ is a $p'$-group. If $r \divides |W|$, then in particular $r$ must divide $|N|$, so the definition of special implies that $r$ is odd and adjacent to some vertex in $\pi(T)$. Thus the last statement of \Cref{chief series} implies that $r-p \notin \ol \Gamma(G)$. Assume then that $r \ndivides |W|$. By \Cref{self-central}, we may assume in addition that $V$ is self-central. Now consider the subgroup $L \cong V.W.S.R$. Letting $H$ denote the section $W.S$, we see that because $R$ acts nontrivially on $S$, it must also act nontrivially on $H$. In fact, since the action is coprime, we can take an $R$-invariant Sylow $s$-subgroup $\widetilde S \leq H$, and note that since $R$ acts nontrivially on the quotient $\widetilde S/\widetilde S \cap W\cong S$, it also acts nontrivially on $\widetilde S$. Then, since $r$ is non-Fermat or $s \ne 2$, \Cref{truelizard} shows that $r-p \notin \ol \Gamma(G)$. 
\end{proof}

The setup of the above lemma is very convenient, as it puts us in the position to apply \Cref{truelizard} almost immediately. In most situations, however, we do not have the luxury of assuming that a cyclic group of order $r$ acts nontrivially on the intermediate subgroup $H$. Thus we also consider trivial actions using the theory of central extensions, yielding a result reminiscent of \Cref{funny}.

\begin{proposition}\label{perfect central ext}
	For each $p \in \pi(K) \setminus \pi(T)$, one of the following holds: 
	\begin{enumerate}
		\item[(1)] For all special primes $r \in \pi(T)$, if $r$ is non-Fermat or adjacent to 2, then $r-p \notin \ol \Gamma(G)$.
		\item[(2)] $G$ has a section $V.E$, where $V$ is a nontrivial elementary abelian $p$-group and $E$ is a perfect central extension of $T$.
	\end{enumerate}
\end{proposition}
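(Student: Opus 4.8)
The plan is to mirror the proof of \Cref{funny}, with \Cref{truelizard} and an elementary Frobenius-group argument taking over the roles played there by the central-extension machinery and the fixed-point input from \cite{atlas}. It suffices to show that if $G$ has no section of the form described in (2), then (1) holds. So I would fix $p \in \pi(K) \setminus \pi(T)$ and a special prime $r \in \pi(T)$ that is non-Fermat or adjacent to $2$, assume for contradiction that $r - p \in \ol\Gamma(G)$, and aim for a contradiction. By \Cref{chief series}, $K$ has a quotient $L_0 \cong V.W_0.T$ with $V$ a nontrivial elementary abelian $p$-group and $W_0$ a solvable $p'$-group. If $r \divides |W_0|$ then $r \divides |N|$, so specialness forces $r$ odd and adjacent to some vertex of $\pi(T)$, and then \Cref{chief series} gives $r - p \notin \ol\Gamma(G)$, a contradiction --- exactly the reduction used in the proofs of \Cref{Thanks TMK} and \Cref{qr in T}. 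So $r \ndivides |W_0|$; also $p$ must be adjacent to some vertex of $\pi(T)$ (else $r - p \notin \ol\Gamma(G)$ outright), so \Cref{self-central} lets me replace $L_0$ by a quotient $L \cong V.W.T$ with $W$ a solvable $p'$-group, $r \ndivides |W|$, and $C_L(V) = V$. Since $L$ is a section of $G$ with order divisible by $r$ and $p$, we have $r - p \in \ol\Gamma(L)$, so every order-$r$ subgroup of $L$ acts fixed-point-freely on the normal $p$-subgroup $V$.

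Next, using Schur--Zassenhaus and $C_L(V) = V$, I would write $L = V \rtimes U$ with $U \cong W.T$ acting faithfully on $V$ (so $W \unlhd U$ and $U/W \cong T$), and split on the action of $U$ on $W$. Case 1: $C_U(W)$ contains every element of $U$ of order $r$. Since $r$ divides $|T| = |U/W|$ but not $|W|$, the image $C_U(W)W/W$ is a nontrivial normal subgroup of $T$, so $C_U(W)W = U$, and thus $C_U(W) \cap W$ is central in $C_U(W)$ with quotient $T$; hence $C_U(W)$ is a central extension of $T$, the last term $E$ of its derived series is a perfect central extension of $T$ lying in $U$, and $V \rtimes E \leq L$ is a section of $G$ isomorphic to $V.E$ --- contradicting that $G$ has no such section. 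Case 2: some $R \cong C_r \leq U$ acts nontrivially on $W$; as an order-$r$ subgroup of $L$ it acts fixed-point-freely on $V$. If $r$ is non-Fermat, then $W \rtimes R \leq U$ acts faithfully on $V$ with $R$ acting nontrivially on the $\{r,p\}'$-group $W$, so hypothesis (1) of \Cref{truelizard} gives $R$ a nonzero fixed point in $V$ --- a contradiction. If $r$ is Fermat, it is adjacent to $2$ by hypothesis; since the $R$-invariant Sylow subgroups of $W$ generate $W$, I would take an $R$-invariant Sylow $q$-subgroup $Q \leq W$ on which $R$ acts nontrivially, so $Q \rtimes R \leq U$ acts faithfully on $V$. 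For $q$ odd, hypothesis (3) of \Cref{truelizard} again gives a nonzero fixed point of $R$ in $V$. For $q = 2$, adjacency of $r$ and $2$ rules out elements of order $2r$ in $L$, so $R$ acts fixed-point-freely on $Q$ and hence (by coprimality) on $V \rtimes Q$; then $(V \rtimes Q) \rtimes R$ is a Frobenius group with complement $R \cong C_r$, so its kernel $V \rtimes Q$ is nilpotent, $Q$ centralizes $V$, and faithfulness forces $Q = 1$ --- contrary to the choice of $Q$. In every case a contradiction is reached, so (1) holds.

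The hard part will be the last case, $r$ a Fermat prime with the obstruction carried by a $2$-subgroup: \Cref{truelizard} is silent there, so one must instead manufacture a genuine Frobenius group out of the adjacency of $r$ and $2$ (which forbids elements of order $2r$) and invoke Thompson's theorem on Frobenius kernels (see \cite[Theorem 6.24]{FGT}). The remaining work is bookkeeping: tracking, via Schur--Zassenhaus and coprime action, the copies of $R$, $W$, and $Q$ realized inside $L$ as actual subgroups, and checking that the perfect central extension produced in Case 1 genuinely embeds in a section of $G$ of the form required by (2).
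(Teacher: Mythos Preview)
Your proof is correct and takes a genuinely different (and in some respects more direct) route than the paper's.

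The paper proceeds by an inductive reduction through the chief factors $Q_1,\ldots,Q_n$ of $W$: starting with $T$ itself, at each step it checks whether the perfect central extension $E_i$ built so far acts nontrivially on the current chief factor $Q_i$. If the action is trivial, it replaces $E_i$ by $(Q_i.E_i)'$ (another perfect central extension, via \cite[(33.3),(33.5)]{aschbacher}) and descends. This either bottoms out at a section $V.E$ (giving (2)), or halts at some subgroup $V.M.Q.E$ with $E$ acting nontrivially on the abelian group $Q$. Only then does the paper fix $r$; it uses the perfectness of $E$ to argue that \emph{every} order-$r$ subgroup $R\leq E$ acts nontrivially on $Q$, and finishes with \Cref{truelizard}. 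For the Fermat case the paper does not run your Frobenius/nilpotent-kernel argument on an $R$-invariant Sylow $2$-subgroup; instead it invokes the second clause of \Cref{self-central} (with $s$ there equal to the Fermat prime $r$, using $r-p$ and $r-2$ in $\ol\Gamma$) to force $2\nmid |W|$ outright, so that hypothesis (2) of \Cref{truelizard} applies.

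Your argument replaces the chief-factor induction with a single global look at $C_U(W)$: either it already contains all order-$r$ elements, whence $C_U(W)'$ is a perfect central extension of $T$ sitting inside $U$ and $V\rtimes C_U(W)'$ supplies the section in (2) at once; or some $R\cong C_r$ acts nontrivially on $W$, and \Cref{truelizard} (or your Frobenius argument in the $q=2$ subcase) applies directly. This sidesteps both the inductive bookkeeping and the separate perfectness argument needed to show $R$ acts nontrivially on the specific $Q$. The cost is that your case split depends on the particular $r$, whereas the paper's reduction to $V.M.Q.E$ is carried out once, uniformly in $r$; but since your Case~1 already delivers the $r$-independent conclusion (2), this is harmless.
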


\begin{proof}
	By considering a chief series for $K$ and applying \Cref{self-central}, we can find a section $L \cong V.W.T$ where $V$ is a self-centralizing elementary abelian $p$-group and $W$ is a solvable $p'$-group (and a quotient of $N$). Also, the prime divisors of $|W|$ are limited by the last statement of \Cref{self-central}, but this will not be important until later. Our first goal is to reduce to the case that $L$ has a subgroup $L_0 \cong V.M.Q.E$, where $E$ is a perfect central extension of $T$, $Q$ is an abelian $\pi(W)$-group on which $E$ acts nontrivially, and $M$ is a solvable $\pi(W)$-group. 
	
	If $W$ is trivial, then (2) holds, so take $W \ne 1$. By considering a chief series of $L$, we can write $L = V.Q_1.Q_{2}.\cdots. Q_{n-1}.Q_n.T$ where each $Q_i$ is an elementary abelian $\pi(W)$-group (the chief factors of $V$ are not important). For $1\leq i \leq n$, let $L_i$ be the quotient $L_i = Q_i.Q_{i+1}. \cdots. Q_n.T$. If $L_n/Q_n \cong T$ acts nontrivially on $Q_n$, then the reduction is complete because $T$ is perfect. Otherwise, $L_n$ is a central extension of $T$, and it follows from \cite[(33.3)]{aschbacher} that $L_n'$ is perfect and 
	\[L_n'/Q_n \cap L_n' \cong L_n'Q_n/Q_n = L_n/Q_n \cong T.\]
	Consider the subgroup $V.Q_1.Q_2. \cdots. Q_{n-1}. L_n'\leq L$. If $L_n'$ acts nontrivially on $Q_{n-1}$, then the reduction is complete. Otherwise, $Q_{n-1}.L_n'$ is a central extension of $L_n'$, and the above reasoning shows that $(Q_{n-1}.L_n')'$ is a perfect central extension of $L_n'$. In fact, because $L_n'$ is a central extension of $T$, we see from \cite[(33.5)]{aschbacher} that $(Q_{n-1}.L_n')'$ is a perfect central extension of $T$. Now, we can examine the action of $(Q_{n-1}.L_n')'$ on $Q_{n-2}$ and apply the same argument as before. By induction, we can reduce to a subgroup of one of the following forms: $V.E$ for $E$ a perfect central extension of $T$, or $V.Q_1.Q_2.\cdots.Q_m.E$, where $E$ is a perfect central extension of $T$ acting nontrivially on $Q_m.$ In the former case, (2) holds. In the latter case, we have reduced to the situation described in the first paragraph.
	
	With the reduction complete, we show that (1) holds. Let $r \in \pi(T)$ be special and either non-Fermat or adjacent to 2. If $r \divides |W|$, then \Cref{chief series} and the definition of special implies that $r -p \notin \ol \Gamma(G)$, as desired. Now suppose $r \ndivides |W|$. By the above, $L$ has a subgroup $L_0 = V.M.Q.E$, where $E$ is a perfect central extension of $T$ by a group $A$, $Q$ is an abelian $\{r,p\}'$-group on which $E$ acts nontrivially, and $M$ is a solvable $\{r,p\}'$-group. Now, let $R$ be a subgroup of order $r$ in $E$ and consider the subgroup $(V.M.Q) \rtimes R$. We have $C_E(Q) A/A \unlhd E/A \cong T$, so either $C_E(Q)A = E$ or $C_E(Q)A = A.$ Suppose, for sake of contradiction, that $C_E(Q)A = E.$ Since $E$ is perfect and $A$ is central in $E$, we have
	\[E = E' = (C_E(Q)A)' = C_E(Q)'  \leq C_E(Q).\]
	Hence $C_E(Q) = E,$ contradicting the fact that $E$ acts nontrivially on $Q$. This proves $C_E(Q)A \ne E$, and it follows that $C_E(Q)A = A$. Then, since $r \ndivides |A|$, we see that $r\ndivides |C_E(Q)|$ as well. Consequently, $R$ is not a subgroup of $C_E(Q)$, which means $R$ must act nontrivially on $Q$. Letting $H = M.Q$, we see that $R$ also acts nontrivially on $H$. If $r$ is not Fermat, then since $V$ is self-central, we are done by \Cref{truelizard}. Assuming then that $r$ is adjacent to 2, we have two cases. If $2 \divides |W|$, then the terms of \Cref{self-central} and the fact that $2-r \in \ol \Gamma(G)$ (in fact $2-r \in \ol \Gamma(L_0)$ since 2 divides the order of every nonabelian simple group) force $r-p \notin \ol \Gamma(G).$  On the other hand, if $2 \ndivides |W|$, then $2 \ndivides |M||Q|$ and again we are done by \Cref{truelizard}.
\end{proof}

The most common special case of the above result is the following corollary.

\begin{corollary}\label{truestlizard}
	Let $r \in \pi(T)$ be special and assume that in every complex irreducible representation of every perfect central extension of $T$, some element of order $r$ has fixed points. If $r$ is non-Fermat or adjacent to 2, then $r-p \notin \ol \Gamma(G)$ for all $p \in \pi(K) \setminus \pi(T)$. 
\end{corollary}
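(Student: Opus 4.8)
The plan is to deduce this at once from \Cref{perfect central ext} and \Cref{reps}. Fix $p \in \pi(K) \setminus \pi(T)$ and apply \Cref{perfect central ext}. Its hypotheses on $r$ (namely, $r$ special and either non-Fermat or adjacent to $2$) are precisely what alternative (1) there requires, so if (1) holds we obtain $r-p \notin \ol\Gamma(G)$ directly and are done. Hence the substance of the proof lies in alternative (2): $G$ has a section $L \cong V.E$ with $V$ a nontrivial elementary abelian $p$-group and $E$ a perfect central extension of $T$. Since $L$ is a section of $G$, it suffices to exhibit an element of order $rp$ in $L$; that gives $r-p \notin \ol\Gamma(L)$ and hence $r-p \notin \ol\Gamma(G)$.

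To produce such an element I would argue as follows. Since $E$ is a perfect central extension of $T$, every prime dividing $|E|$ divides $|T|$ — the kernel of the projection $E \twoheadrightarrow T$ is isomorphic to a section of the Schur multiplier $M(T)$, and the primes dividing $|M(T)|$ all divide $|T|$ (standard for finite simple groups). In particular $p \ndivides |E|$, so the pair $(L,V)$ satisfies the hypotheses of \Cref{reps} with $E$ playing the role of ``$T$'' there. That lemma supplies a $\CC E$-module $\widetilde V$ with the property that, for every $t \in \pi(E)$, the group $L$ has an element of order $tp$ if and only if some order-$t$ element of $E$ has a fixed point in $\widetilde V$. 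Because $V \ne 1$ we have $\widetilde V \ne 0$, so $\widetilde V$ has an irreducible constituent $\widetilde V_0$; by hypothesis some order-$r$ element $g$ of $E$ has a fixed point in $\widetilde V_0$, and since $\widetilde V$ decomposes as a direct sum of $\CC E$-modules with $\widetilde V_0$ as a summand, that fixed vector already lies in $\widetilde V$. Feeding $t = r$ (legitimate, as $r \in \pi(T) = \pi(E)$) into the conclusion of \Cref{reps} then yields an element of order $rp$ in $L$, as wanted.

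I do not anticipate a genuine obstacle here: \Cref{perfect central ext} does all the heavy lifting, and what remains is largely bookkeeping. The points to watch are that the dichotomy of \Cref{perfect central ext} closes cleanly under exactly the stated hypotheses on $r$; that the representation-theoretic hypothesis — quantified over \emph{every} perfect central extension of $T$ — is being applied to the particular $E$ that \Cref{perfect central ext} hands us; and the passage from an irreducible constituent back to the whole module $\widetilde V$, which amounts to nothing more than the additivity $C_{\widetilde V}(g) = \bigoplus_i C_{\widetilde V_i}(g)$ of fixed-point subspaces under a direct-sum decomposition.
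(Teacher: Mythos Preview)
Your proof is correct and follows exactly the approach the paper intends: the corollary is stated immediately after \Cref{perfect central ext} without its own proof, and the deduction you spell out—using the dichotomy there together with \Cref{reps} applied to the section $V.E$—is precisely what is implicit. The only minor remark is that the kernel of $E \twoheadrightarrow T$ is in fact a \emph{quotient} (not merely a section) of $M(T)$, but this only strengthens your conclusion that $p \nmid |E|$.
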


\subsection{Orientations on $\ol \Gamma(G)$}

To help classify the prime graphs of solvable groups, the authors of \cite{2015} defined an orientation on the prime graph of a solvable group $G$ called the \textit{Frobenius digraph} of $G$. It was shown that the Frobenius digraph of a solvable group $G$ has no $3$-paths and then deduced from the Gallai-Hasse-Roy-Vitaver Theorem that $\ol \Gamma(G)$ is $3$-colorable. In \cite{2022}, a similar technique was used to show that prime graph complements of $K_3$-solvable groups are 3-colorable. In the next section, we will do the same for select $K_4$ groups (note that $\ol \Gamma(\Sz(8))$ and $\ol \Gamma(\PSL(3,4))$ are the complete graphs on four vertices and thus not 3-colorable). In this subsection, we prove important preliminary results that apply to all simple groups $T$ satisfing $\pi(T) = \pi(\Aut(T))$. Then, we use a similar philosophy to tackle the problem of constructing a group whose prime graph is isomorphic to a given graph. 

First, we quote the relevant definitions from \cite{2022}.
\begin{definition}
    A group $G = QP$ is {\it Frobenius of type $(p,q)$} if it is a Frobenius group where the Frobenius complement $P$ is a $p$-group and the Frobenius kernel $q$ is a $q$-group.
\end{definition}
\begin{definition}
    A group $H = P_1QP_2$ is called {\it $2$-Frobenius of type $(p, q, p)$} if it is a $2$-Frobenius group where the subgroup $P_1Q$ is Frobenius of type $(q, p)$ and the quotient group $QP_2$ is Frobenius of type $(p, q)$.
\end{definition}
By \cite[Theorem A]{1981}, if $G$ is solvable and $p-q \in \ol \Gamma(G)$, then the Hall $\{p,q\}$-subgroups of $G$ are either Frobenius or $2$-Frobenius. This crucial fact led naturally to the definition of the Frobenius digraph stated below.
\begin{definition}\label{frobenius digraph}
    Let $G$ be solvable. The {\it Frobenius digraph of G}, denoted $\vv \Gamma(G)$, is the orientation of $\ol \Gamma(G)$ where $p \to q$ if the Hall $\{p,q\}$-subgroup of $G$ is Frobenius of type $(p,q)$ or $2$-Frobenius of type $(p,q,p)$.
\end{definition}

The following observation lets us extend this orientation to the prime graph complements of $T$-solvable groups. If $T$ is a nonabelian simple group with $\pi(T) = \pi(\Aut(T))$ and $G$ is strictly $T$-solvable, then let $K \cong N.T$ be the subgroup granted by \Cref{2.4 2022} and note that $\vv \Gamma(N)$ induces an orientation on $\ol \Gamma(G) \setminus \pi(T)$ via the containment $\ol \Gamma(G) \setminus \pi(T) \subseteq \ol \Gamma(N)$. This orientation depends on $K$, and there could be a multitude of ways to extend this orientation to $\ol \Gamma(G)$. Regardless of our choices, we see that any 3-path in such an orientation of $\ol \Gamma(G)$ must include at least one vertex from $\pi(T)$. In fact, we can say more: by the next lemma, a 3-path whose initial vertex belongs to $\pi(T)$ must include at least one other vertex of $\pi(T)$. With that, one should hope that the remaining cases are disallowed by a carefully chosen orientation of the edges incident to $\pi(T)$, for then it would follow from the Gallai-Hasse-Roy-Vitaver Theorem that $\ol \Gamma(G)$ is 3-colorable. This is indeed possible for $T=A_7$ (see \Cref{section:A7}).
%odd primes?
\begin{lemma}\label{no 2-paths}
    Let $K \cong N.T$, where $N$ is a solvable group and $T$ is a nonabelian simple group. Let $r$ be a prime dividing the order of $T$. If $p,q \in \pi(N)$, $p$ is odd, and $r-p, p-q \in \ol\Gamma(K)$, then $q \to p \in \vv\Gamma(N)$.
\end{lemma}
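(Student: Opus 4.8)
The plan is to work inside the solvable group $N$ and use the structure theorem for Hall $\{p,q\}$-subgroups of solvable groups, together with the hypothesis $r-p \in \ol\Gamma(K)$ to control the action of the relevant $p$-subgroup. First I would observe that since $p-q \in \ol\Gamma(K)$ and $p,q \in \pi(N)$, we in fact have $p-q \in \ol\Gamma(N)$ (a Hall $\{p,q\}$-subgroup of $N$ is still a section of $K$ with order divisible by both primes, and conversely any element of order $pq$ in $K$ lying in... — more carefully, $N \unlhd K$ and a $\{p,q\}$-element of $K$ need not lie in $N$, but since $K/N \cong T$ and... actually the cleanest route: $p-q\in\ol\Gamma(K)$ just says $K$ has no element of order $pq$, which certainly implies $N$ has none, so $p-q\in\ol\Gamma(N)$). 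Thus $\vv\Gamma(N)$ orients the edge $p-q$, and by \Cref{frobenius digraph} the Hall $\{p,q\}$-subgroup $F$ of $N$ is either Frobenius of type $(p,q)$, Frobenius of type $(q,p)$, $2$-Frobenius of type $(p,q,p)$, or $2$-Frobenius of type $(q,p,q)$. The claim $q \to p$ amounts to ruling out the last case, $F$ being $2$-Frobenius of type $(q,p,q)$, and also the case $F$ Frobenius of type $(q,p)$ — equivalently, I must show $F$ is Frobenius of type $(p,q)$ or $2$-Frobenius of type $(p,q,p)$, i.e. that a Sylow $p$-subgroup $P$ of $N$ acts Frobeniusly (fixed-point-freely) on a suitable $q$-section.

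The key leverage is the hypothesis $r-p \in \ol\Gamma(K)$ with $r \mid |T|$. Using \Cref{chief series} (refining a chief series of $K$ through $N$ and then adjoining the $T$-layer), $K$ has a section $L \cong V.W.T$ with $V$ a nontrivial elementary abelian $p$-group and $W$ a solvable $p'$-group; the edge $r-p \notin \ol\Gamma(K)$ forces an order-$r$ element of $T$ to centralize an order-$p$ element, and tracing this back, a Sylow $p$-subgroup of $N$ — more precisely, of the relevant section — must be noncyclic (otherwise it would act Frobeniusly and no such commuting pair could exist; here one invokes that cyclic $p$-groups act Frobeniusly via \cite[Corollary 6.17]{FGT}, in the spirit of the proof of \Cref{FC}). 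Actually the cleaner statement to extract: because $r-p\in\ol\Gamma(K)$ but $r$ divides $|T|$, in the chief series of $K$ the Sylow $p$-subgroup of $N$ cannot be cyclic — if it were, a Sylow $p$-subgroup of all of $K$ would be cyclic (as $p\nmid|T|$), hence act Frobeniusly on every $p'$-chief factor, contradicting $r-p\notin\ol\Gamma(K)$ via the argument that an order-$r$ element of $T$ would then act fixed-point-freely in every relevant representation, which fails since $r$ divides $|T|$. The real content is therefore: \emph{a Sylow $p$-subgroup $P$ of $N$ is noncyclic}. Combined with $p$ being odd, $P$ cannot be cyclic or generalized quaternion, so $P$ cannot act Frobeniusly on any group by \cite[Corollary 6.17]{FGT}.

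With "$P$ noncyclic and odd, hence never a Frobenius complement on a $p'$-group" in hand, I finish by examining the four possibilities for the Hall $\{p,q\}$-subgroup $F$ of $N$. If $F$ were Frobenius of type $(q,p)$, its complement would be a $p$-group acting fixed-point-freely on the $q$-kernel — but the complement is a Sylow $p$-subgroup of $F$, which (being contained in a conjugate of $P$, or rather isomorphic to a subgroup of $P$) is noncyclic and odd; a Frobenius complement of odd order must have all Sylow subgroups cyclic, contradiction. If $F$ were $2$-Frobenius of type $(q,p,q)$, say $F = Q_1 P Q_2$ with $Q_1 P$ Frobenius of type $(p,q)$... no — type $(q,p,q)$ means $Q_1P$ is Frobenius of type $(p,q)$ and $PQ_2$ is Frobenius of type $(q,p)$; the second factor again exhibits the Sylow $p$-subgroup as a Frobenius complement on a $q$-group, the same contradiction. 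Hence $F$ is Frobenius of type $(p,q)$ or $2$-Frobenius of type $(p,q,p)$, which by \Cref{frobenius digraph} is exactly $q \to p \in \vv\Gamma(N)$.

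\textbf{Main obstacle.} The delicate step is the passage from the global hypothesis $r-p \in \ol\Gamma(K)$ (a statement about $K$, involving the simple group $T$ whose order is divisible by $r$) to the local structural fact that a Sylow $p$-subgroup of the \emph{solvable} part $N$ is noncyclic. This requires carefully setting up a section $V.W.T$ via \Cref{chief series}, noting $p\nmid|T|$ so that Sylow $p$-subgroups of $K$ and of $N$ coincide, and arguing that cyclicity of $P$ would make the Sylow $p$-subgroup of $K$ act Frobeniusly on every $p'$-chief factor — whereupon, since $r\mid|T|$, an order-$r$ element of $T$ must act with fixed points on $V$ by \Cref{reps}, producing an element of order $rp$ and contradicting $r-p\in\ol\Gamma(K)$. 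Pinning down exactly which chief factors and which centralizer arguments are needed — essentially a variant of the bookkeeping in \Cref{chief series} and \Cref{FC} — is where the care lies; the concluding Frobenius/$2$-Frobenius case analysis is then routine.
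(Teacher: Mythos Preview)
Your overall strategy matches the paper's: show a Sylow $p$-subgroup $S$ of $N$ is noncyclic, then use that $p$ is odd to exclude the Frobenius/2-Frobenius types in which $S$ would be a Frobenius complement. But two concrete problems keep the proposal from working as written.

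\textbf{The noncyclicity step is not justified.} You claim that if $S$ were cyclic then ``an order-$r$ element of $T$ must act with fixed points on $V$ by \Cref{reps}, producing an element of order $rp$.'' But \Cref{reps} says nothing of the sort; it only relates fixed points over $\F_p$ to fixed points over $\mathbb{C}$, and gives no reason an arbitrary order-$r$ element should have fixed points. You also assume $p\nmid|T|$, which is not part of the hypothesis. The paper's argument is different and cleaner: in the section $L=V.W.T$, set $H=V.W$ and $C=C_L(V)$. Since $CH/H\unlhd L/H\cong T$ and $T$ is simple, either $CH=L$ or $C\le H$. The first option would force $r\mid|C|$, giving an element of order $rp$ and contradicting $r-p\in\ol\Gamma(K)$. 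So $C\le H$, whence $L/C$ embeds in $\Aut(V)$ and has the nonabelian quotient $T$; thus $\Aut(V)$ is nonabelian and $V$ is noncyclic. Since $V$ is the top $p$-chief factor of $N$, it is a quotient of $S$, so $S$ is noncyclic.

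\textbf{The digraph direction is reversed.} By \Cref{frobenius digraph}, $q\to p$ means the Hall $\{p,q\}$-subgroup is Frobenius of type $(q,p)$ or $2$-Frobenius of type $(q,p,q)$---exactly the two cases you say must be \emph{ruled out}. Likewise, in Frobenius type $(q,p)$ the complement is a $q$-group, not a $p$-group as you assert. Once you straighten out the labels, the case analysis you want is: $S$ noncyclic and $p$ odd forces $S$ not to be a Frobenius complement, which eliminates type $(p,q)$ (where $S$ is the complement) and, together with the fact that $V$ is a quotient of $S$, the type $(p,q,p)$; what remains is type $(q,p)$ or $(q,p,q)$, i.e.\ $q\to p$. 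This is precisely the paper's conclusion.
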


\begin{proof}
    By \Cref{chief series}, $K$ has a section $L = V.W.T$, where $V$ is an elementary abelian $p$-group and $W$ is a $p'$-group. Let $H = V.W = V \rtimes W \leq L$, $C = C_L(V)$, and note that $CH/H \unlhd L/H \cong T$. Because $T$ is simple, it follows that either $CH = L$ or $C \leq H$. Assume, for a contradiction, that $CH = L$. Since $L$ has $T$ as a factor group, we have $r \divides |L|$. Then, order considerations force $r \divides |C|$, that is, some order $r$ element of $L$ commutes with an order $p$ element of $L$, eliminating the $r-p$ edge from $\ol \Gamma(L)$ and hence from $\ol \Gamma(K)$ as well. This contradicts our assumption that $r-p \in \ol \Gamma(K)$, so we must have $C \leq H$. 
    
    Next, observe that $L/C$ acts faithfully on $V$ and thus embeds into the automorphism group $\Aut(V)$. Moreover, $L/C$ has a nonabelian quotient 
    \[\frac{L/C}{H/C} \cong L/H \cong T,\] 
    so $L/C$ and hence $\Aut(V)$ must be nonabelian. In particular, $V$ must be noncyclic. Now, because $N$ is solvable, it has a Hall $\{p,q\}$ subgroup $H_{pq}.$ Since $p-q \in \ol \Gamma(G)$, we must also have $p-q \in \ol \Gamma(N)$, so $H_{pq}$ is Frobenius of type $(p,q)$ or $(q,p)$, or 2-Frobenius of type $(p,q,p)$ or $(q,p,q)$. But if $S$ is a Sylow $p$-subgroup of $N$, then $S$ has the noncyclic group $V$ as a quotient. Thus $S$ is noncyclic and hence (as $p$ is odd) cannot act Frobeniusly on any other groups. This means $H_{pq}$ is Frobenius of type $(q,p)$ or 2-Frobenius of type $(q,p,q)$ which means that 
    $q \to p \in \vv\Gamma(N)$, as desired.
\end{proof}

The next lemma can often be applied after our $r-p$ criteria to establish 3-colorability of a prime graph complement.

\begin{lemma}\label{easy3color}
Let $G$ be strictly $T$-solvable, where $T$ is a nonabelian simple group satisfying $\pi(T) = \pi(\Aut(T))$. Suppose the $\pi(T)$-subgraph of $\ol\Gamma(G)$ does not connect to the remainder of the graph or only connects to the remainder of the graph through a single vertex $r\in \pi(T)$. Then every triangle in $\ol \Gamma(G)$ is contained in $\ol \Gamma(T)$. Furthermore, if the $\pi(T)$-subgraph is $3$-colorable, then $\ol\Gamma(G)$ has a $3$-coloring for which all vertices adjacent to $r$ not in $\pi(T)$ have the same color. 
\end{lemma}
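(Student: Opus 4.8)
The plan is to reduce everything to the subgroup $K$ supplied by \Cref{2.4 2022} and then to glue a $3$-coloring of the $\pi(T)$-part onto the longest-path coloring coming from the Frobenius digraph of the solvable part. First I would fix, via \Cref{2.4 2022}, a subgroup $K \cong N.T \le G$ with $N$ solvable and $\pi(K) = \pi(G)$; since $K \le G$, any element of order $pq$ in $K$ also lies in $G$, so $E(\ol\Gamma(G)) \subseteq E(\ol\Gamma(K))$. Throughout I would use three facts: (i) the subgraph of $\ol\Gamma(G)$ induced on $\pi(T)$ is contained in $\ol\Gamma(T)$, since $T$ is a section of $G$; (ii) writing $\Delta := \ol\Gamma(G)\setminus\pi(T)$ for the subgraph induced on $\pi(G)\setminus\pi(T) \subseteq \pi(N)$, every edge of $\Delta$ is an edge of $\ol\Gamma(N)$, so $\Delta$ is triangle-free and $3$-colorable by \Cref{ThomasMichaelKeller}; and (iii) by hypothesis every edge of $\ol\Gamma(G)$ with one endpoint in $\pi(T)$ and the other in $\pi(G)\setminus\pi(T)$ is incident to $r$, and if there are no such edges at all both conclusions are immediate, so I assume there are. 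I would also note that $2 \in \pi(T)$ (a nonabelian simple group has even order), whence every prime of $\pi(G)\setminus\pi(T)$ is odd.

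For the triangle statement I would take a triangle $\{x,y,z\}$ of $\ol\Gamma(G)$ and split on how many of its vertices lie in $\pi(T)$. Three: then the triangle lies in the $\pi(T)$-subgraph, hence in $\ol\Gamma(T)$ by (i). Zero: impossible by (ii). Exactly two, say $x,y\in\pi(T)$ and $z\notin\pi(T)$: then $x-z$ and $y-z$ are both cross edges, so $x=y=r$ by (iii), which is absurd. The only substantive case is exactly one vertex in $\pi(T)$, say $x\in\pi(T)$ and $y,z\notin\pi(T)$: then $x-y$ and $x-z$ are cross edges, forcing $x=r$, and $y,z$ are odd. Here I would invoke \Cref{chief series} for $K$: fixing a chief series $1 = K_0 \unlhd \cdots \unlhd K_{n-1} = N \unlhd K_n = K$ and letting $t(\ell)$ denote, for a prime $\ell$ dividing $|N|$, the largest index at which an $\ell$-chief factor occurs, the edges $r-y$ and $y-z$ of $\ol\Gamma(K)$ together with $r\in\pi(T)$ and $y$ odd give $t(y)<t(z)$ by \Cref{chief series}, and symmetrically $t(z)<t(y)$, a contradiction. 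Hence every triangle of $\ol\Gamma(G)$ lies in $\ol\Gamma(T)$.

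For the coloring assertion, set $S := N(r)\setminus\pi(T) \subseteq \pi(N)$, a set of odd primes. I would pass to the Frobenius digraph $\vv\Gamma(N)$, which is well-defined by \cite[Theorem A]{1981} and has no directed $3$-path by \cite{2015}, and let $\vv\Delta$ be the orientation of $\Delta$ inherited from $\vv\Gamma(N)$; being a subdigraph, $\vv\Delta$ also has no directed $3$-path. The crux is to show that every $p\in S$ is a sink of $\vv\Delta$: an out-edge $p\to q$ of $\vv\Delta$ would force $p-q\in E(\Delta)\subseteq E(\ol\Gamma(G))\subseteq E(\ol\Gamma(K))$ with $q\in\pi(N)$, and combined with $r-p\in E(\ol\Gamma(K))$ and $p$ odd, \Cref{no 2-paths} would yield $q\to p$ in $\vv\Gamma(N)$, contradicting the orientation of the edge $\{p,q\}$. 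It follows that the standard longest-path coloring of $\vv\Delta$ -- color each vertex by the number of edges in a longest directed path of $\vv\Delta$ starting at it -- is a proper coloring of $\Delta$ with values in $\{0,1,2\}$ that is identically $0$ on $S$. I would then $3$-color the $\pi(T)$-subgraph (possible by hypothesis), permute its colors so that $r$ does not receive the color $0$, relabel the two palettes so that they agree, and take the union; since the only edges joining the two parts are incident to $r$ and $r$'s color differs from that of $S$, this is a proper $3$-coloring of $\ol\Gamma(G)$ in which every vertex of $N(r)\setminus\pi(T)$ has one color.

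The step I expect to be the main obstacle -- or at least the one requiring real care -- is keeping straight which graph the Frobenius digraph orients: $\vv\Gamma(N)$ is an orientation of $\ol\Gamma(N)$, and $\ol\Gamma(N)$ may carry strictly more edges than $\ol\Gamma(G)$ retains on the same vertices, so \Cref{no 2-paths} (whose hypotheses concern $\ol\Gamma(K)$-edges, equivalently $\ol\Gamma(G)$-edges) cannot be applied to an arbitrary out-edge of $\vv\Gamma(N)$. Restricting to $\vv\Delta$, supported precisely on the edges surviving in $\ol\Gamma(G)$, is what makes the sink argument work, and one must check -- routinely -- that both the absence of directed $3$-paths and the longest-path coloring descend to $\vv\Delta$. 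A lesser subtlety is that one prime may be the prime of several chief factors of $K$, which is why the triangle argument tracks the topmost occurrence $t(\ell)$ rather than a single index; beyond these points the argument is bookkeeping.
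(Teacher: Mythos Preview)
Your proof is correct and follows the same overall strategy as the paper: reduce to $K\cong N.T$ via \Cref{2.4 2022}, use the Frobenius digraph of the solvable part together with \Cref{no 2-paths} to force the primes in $N(r)\setminus\pi(T)$ to be sinks (hence monochromatic under the longest-path coloring), and then glue on a $3$-coloring of the $\pi(T)$-part. The one notable difference is in the triangle argument. Where you split by the number of vertices in $\pi(T)$ and invoke \Cref{chief series} to derive the contradictory inequalities $t(y)<t(z)$ and $t(z)<t(y)$ in the ``exactly one vertex in $\pi(T)$'' case, the paper instead observes that $T$ contains a subgroup $C_r$, so $K$ has a solvable subgroup $N.C_r$ with $(\pi(G)\setminus\pi(T))\cup\{r\}\subseteq\pi(N.C_r)$; any triangle of $\ol\Gamma(G)$ not entirely inside $\pi(T)$ then lies in $\ol\Gamma(N.C_r)$, which is triangle-free by \Cref{ThomasMichaelKeller}. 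This is a bit slicker and sidesteps the chief-series bookkeeping, but your argument is equally valid, and your explicit care about orienting only the edges of $\Delta$ (rather than all of $\ol\Gamma(N)$) is a point the paper treats somewhat implicitly.
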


\begin{proof}
If $N(\pi(T)) = \pi(T)$, then throughout the proof we take $r$ to be any element of $\pi(T)$. By \Cref{2.4 2022}, $G$ has a subgroup $K\cong N.T$ with $N$ solvable and $\pi(G)=\pi(K).$ Since $T$ has an order $r$ subgroup isomorphic to $C_{r}$, $K$ has a solvable subgroup isomorphic to $N.C_{r}$. The fact that $\pi(K)=\pi(G)$ implies $(\pi(G)\setminus\pi(T))\cup\{r\}) \subseteq \pi(N.C_{r})$, so all edges of $\ol\Gamma(G)$ besides possibly those in the $\pi(T)$-subgraph are contained in $\ol\Gamma(N.C_{r}).$ In addition, \Cref{ThomasMichaelKeller} implies that $\ol \Gamma(N.C_r)$ is triangle-free. Since vertices in $\pi(T)\setminus \{r\}$ are not adjacent to vertices in $\pi(G) \setminus \pi(T)$, it follows that every triangle in $\ol \Gamma(G)$ is contained in the subgraph induced by $\pi(T)$, which of course is contained in $\ol \Gamma(T)$.

Let $\vv\Gamma(N.C_{r})$ denote the Frobenius digraph (see \Cref{frobenius digraph}) of $N.C_{r},$ which by \cite[Corollary 2.7]{2015} has no directed $3$-paths. We orient the edges of $\ol\Gamma(G)\setminus \pi(T)$ according to 
their orientation in $\vv\Gamma(N.C_{r})$, and we denote this partial orientation of $\ol\Gamma(G)$ as $\vv\Gamma(G)$. Since $\vv\Gamma(G)$ has no directed $3$-paths, we can define a 3-coloring on $\ol \Gamma(G)$ as follows. Label all vertices in $\pi(G)\setminus \pi(T)$ with zero out-degree with $\mathcal{I},$ all vertices in $\pi(G)\setminus \pi(T)$ with zero in-degree and nonzero out-degree with $\mathcal{O},$ and all vertices in $\pi(G)\setminus \pi(T)$ with non-zero  in- and out-degree with $\mathcal{D}.$  Additionally, since the $\pi(T)$-subgraph is 3-colorable and only possibly connects to the remainder of the graph through the vertex $r$, we can easily extend this $3$-coloring to $\ol\Gamma(G).$ It follows from \Cref{no 2-paths} that for primes $p$ such that $r-p\in \ol\Gamma(G),$ $p$ has no outgoing edges in $\vv\Gamma(G).$ Thus all vertices in $N(r) \setminus \pi(T)$ have the same color, as desired.
\end{proof}

For a fixed nonabelian simple group $T$, one step towards a classification of the prime graphs of $T$-solvable groups is showing that the prime graphs of $T$-solvable groups all satisfy some set of conditions, one of which is typically 3-colorability. In this section so far, we have focused all of our attention on this step of the process, showing how relatively easy-to-access information such as fixed points in representations gives rise to restrictions on the structure of $\ol \Gamma(G)$. Now, we briefly turn our attention to the converse. Papers such as \cite{2022} utilized a variety of methods for this problem, but they all boiled down to the same argument. For the final lemma of this section, we state this argument in full generality. The proof is mostly quoted from \cite{2022}.

\begin{lemma}\label{graphlizard}
    Let $T$ be a nonabelian simple group, $\Gamma$ a graph, and $X$ a set of $|\pi(T)|$ vertices of $\Gamma$. Suppose $\ol \Gamma \setminus X$ is triangle-free and $\ol \Gamma$ has a 3-coloring $\{\mathcal{O}, \mathcal{D}, \mathcal{I}\}$ such that vertices in $N(X) \setminus X$ are all colored $\mathcal I$. Further suppose that $T$ has an extension $E$ by a solvable group such that $\pi(E) = \pi(T)$ and
    \begin{enumerate}
        \item[(1)] There exists a graph isomorphism $\varphi$ from the subgraph of $\ol \Gamma$ induced by $X$ to $\ol \Gamma(E)$. 
        \item[(2)] Given $v \in \ol \Gamma \setminus X$, there exists a complex irreducible representation $V$ of $E$ such that for all $x \in X$, $x-v \in \ol \Gamma$ if and only if elements of order $\varphi(x)$ of $E$ act without fixed points in $V$.
    \end{enumerate}
    Then $\Gamma$ is the prime graph of a $T$-solvable group.
\end{lemma}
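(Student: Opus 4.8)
The plan is to construct a strictly $T$-solvable group $G$ with $\ol\Gamma(G)$ isomorphic to $\ol\Gamma$. Identify $V(\Gamma)$ with $\pi(G)$, put $Y = V(\Gamma)\setminus X$, and use $\varphi$ to identify $X$ with $\pi(E)=\pi(T)$; the given $3$-coloring then restricts to a proper $3$-coloring $\{\mathcal O,\mathcal D,\mathcal I\}$ of $\ol\Gamma\setminus X$, and by hypothesis every vertex of $Y$ adjacent in $\ol\Gamma$ to some vertex of $X$ lies in $\mathcal I$. I will take $G = N\rtimes E$, where $N$ is a solvable group with $\pi(N)=Y$ and $\ol\Gamma(N)=\ol\Gamma\setminus X$, and $E$ acts on $N$ trivially on a complement to the ``$\mathcal I$-part'' of $N$. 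Such a $G$ is strictly $T$-solvable: its composition factors are those of $N$, all cyclic, together with those of $E$, which are cyclic or isomorphic to $T$, and $T$ does occur since it is a composition factor of the extension $E$. Also $\pi(G)=Y\sqcup X=V(\Gamma)$, so it remains only to realize exactly the edges of $\ol\Gamma$.

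First I would invoke the construction proving \Cref{ThomasMichaelKeller}: since $\ol\Gamma\setminus X$ is triangle-free and $3$-colored by $\{\mathcal O,\mathcal D,\mathcal I\}$ (with $\mathcal I$ taken as the class of sinks), the argument of \cite{2015} produces a solvable group $N = V\rtimes N_1$ realizing $\ol\Gamma\setminus X$, where $V=\prod_{v\in\mathcal I}V_v$ is a direct product of elementary abelian $v$-groups (the sink-colored primes occurring as Frobenius kernels), $\pi(N_1)=\mathcal O\cup\mathcal D$, and each $V_v$ is an $\F_v N_1$-module on which an element of $N_1$ of prime order $\ell$ acts without nonzero fixed points exactly when $\ell-v\in\ol\Gamma$. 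Next I graft $E$ on top. For $v\in\mathcal I$, condition (2) provides a complex irreducible $E$-module $\widetilde A_v$ in which an order-$\varphi(x)$ element of $E$ acts without nonzero fixed points iff $x-v\in\ol\Gamma$; since $v\nmid|E|$, the correspondence between complex and modular representations discussed before \Cref{reps} yields an $\F_{v^m}E$-module $A_v$ with the same fixed-point behaviour, for some $m$ which we also use to base-change $\widetilde B_v:=V_v$ up to $\F_{v^m}$. Replace $V_v$ by the external tensor product $A_v\otimes_{\F_{v^m}}\widetilde B_v$, letting $E$ act through the left factor and $N_1$ through the right one. The $N_1$-action still factors through $\widetilde B_v$, so its fixed-point pattern is unchanged and $\ol\Gamma(N)$ remains $\ol\Gamma\setminus X$. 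The $E$- and $N_1$-actions on each $V_v$ commute, so $E$ acts on $V$ fixing each $V_v$; extending by the trivial action of $E$ on $N_1$ gives an action of $E$ on $N$, and I set $G=N\rtimes E = V\rtimes(N_1\times E)$.

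Finally I would verify $\ol\Gamma(G)=\ol\Gamma$ one pair of primes at a time, using two elementary facts: (i) if a group $H$ of order prime to $v$ acts on $V_v$, then $V_v\rtimes H$ contains an element of order $\ell v$ (for a prime $\ell\mid|H|$) iff some order-$\ell$ element of $H$ has a nonzero fixed point on $V_v$; and (ii) on an external tensor product $A\otimes B$ with $H$ acting only on $A$, an element of $H$ has a nonzero fixed point iff it has one on $A$ (pick a basis of $B$). For $v_1,v_2\in Y$: an order-$v_1v_2$ element of $G$ has trivial image in $G/N\cong E$ (as $v_1v_2$ is prime to $|E|$), so lies in $N$, whence $\ol\Gamma(G)$ agrees with $\ol\Gamma(N)=\ol\Gamma\setminus X$ on $Y$. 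For $x_1,x_2\in X$: a $\{\varphi(x_1),\varphi(x_2)\}$-subgroup of $G$ maps isomorphically into $G/N\cong E$, and every such subgroup of $E$ lifts by Schur--Zassenhaus (its order being prime to $|N|$), so $\ol\Gamma(G)$ restricted to $X$ is $\ol\Gamma(E)$, which equals the subgraph of $\ol\Gamma$ on $X$ by (1). For $x\in X$, $v\in\mathcal O\cup\mathcal D$: $N_1\times E\leq G$ has commuting elements of orders $v$ and $\varphi(x)$, so $x-v\notin\ol\Gamma(G)$, matching $x-v\notin\ol\Gamma$ (which holds because $N(X)\setminus X\subseteq\mathcal I$). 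For $x\in X$, $v\in\mathcal I$: here $V_v$ is the normal Sylow $v$-subgroup of $G$, so by (i) and (ii), $G$ has an element of order $\varphi(x)v$ iff some order-$\varphi(x)$ element of $E$ has a nonzero fixed point on $A_v$, iff $x-v\notin\ol\Gamma$, as required.

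The step I expect to be the main obstacle is the first one: one must extract from \cite{2015} a realization of $\ol\Gamma\setminus X$ by a solvable group that genuinely splits as $V\rtimes N_1$ with the sink-colored primes isolated in a direct-factor decomposition $V=\prod_{v\in\mathcal I}V_v$ into honest $\F_v N_1$-modules, for only then can each $V_v$ be replaced by $A_v\otimes\widetilde B_v$. Verifying that the $3$-layer Frobenius construction of \cite{2015} really places every $\mathcal I$-colored prime in this position---and that passing to the external tensor product leaves all fixed-point computations among the $\mathcal O\cup\mathcal D$ primes, and hence the whole $Y$-internal prime graph, intact---is where the care goes; the remainder is routine coprime-action and Schur--Zassenhaus bookkeeping together with \Cref{reps}.
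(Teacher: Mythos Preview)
Your proposal is correct and follows essentially the same route as the paper: build a solvable group $K=Q\rtimes P$ realizing the $\mathcal O\cup\mathcal D$ part, attach for each $\mathcal I$-vertex an elementary abelian module carrying the desired $E$-action and Frobenius $K$-action, and take $G=J\rtimes(K\times E)$. The only packaging difference is that where the paper invokes \cite[Lemma~3.5]{2015} and \cite[Lemma~3.5]{A5} as black boxes producing a single $\F_{r_k}(E\times B_k)$-module with the prescribed fixed-point pattern, you construct that module explicitly as the external tensor product $A_v\otimes_{\F_{v^m}}\widetilde B_v$; this is exactly what those lemmas do under the hood, so the two arguments are equivalent. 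Your flagged ``main obstacle''---that the \cite{2015} construction really does place the $\mathcal I$-primes in a normal abelian direct factor $V=\prod_v V_v$---is precisely how that construction is set up, so no additional work is needed there.
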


\begin{proof}
We define a partial orientation of $\ol \Gamma$ as follows. In $\ol \Gamma \setminus X$, direct edges according to color: $\mathcal{O}\rightarrow\mathcal{D},$ $\mathcal{O}\rightarrow\mathcal{I},$ and $\mathcal{D}\rightarrow\mathcal{I}.$ For all edges $u-v\in \ol\Gamma$ where $u \in X$ and $v \in \ol \Gamma \setminus X$, define the orientation as $u\rightarrow v.$ Now let $n_o=|\mathcal{O}|$,  $n_i=|\mathcal{I}|$, and $n_d=|\mathcal{D}|.$ 

Choose $n_o$ distinct primes $p_1, \ldots, p_{n_o}$ not in $\pi(T).$ Define $p$ as the product of these primes. Using Dirichlet's theorem on arithmetic progressions, pick a set of distinct primes $q_1,\ldots, q_{n_d}$ other than those in $\pi(T)$ such that $q_i\equiv 1 \pmod{p_i}$ for all $i.$ Identify each vertex in $\mathcal{O}$ with one of the $p_i,$ and identify each vertex in $\mathcal{D}$ with one of the $q_i$. Define groups 
\[P=C_{p_1}\times \cdots \times C_{p_{n_o}} \quad \text{and} \quad Q=C_{q_1}\times \cdots \times C_{q_{n_d}}.\]
For all indices $i,j,$ if $p_j\rightarrow q_i$ is an edge in $\vv\Gamma,$ then let $C_{p_j}$ act Frobeniusly on $C_{q_i}$. This is possible because $q_i\equiv 1 \pmod{p_i}.$ Otherwise, if $p_j$ and $q_i$ are not adjacent in $\ol\Gamma,$ let $C_{p_j}$ act trivially on $C_{q_j}.$ This defines a group action of $P$ on $Q,$ so we obtain the induced semidirect product $K=Q\rtimes P$. Note that $K$ is solvable.

Now let $v_1,\ldots, v_n$ be the vertices in $\mathcal{I}.$ For each $k\in \{1,\ldots, n\}$, let $N^1(v_k),$ $N^2(v_k)$ denote the set of primes in $\ol\Gamma\setminus X$ with in-distance $1$ and $2$ to $v_k,$ respectively. If $N^1(v_k)$ is nonempty, let $B_k$ be the Hall $(N^1(v_k)\cup N^2(v_k))$-subgroup of $K$. By our definition of $K$, $\Fit(B_k)$ is a Hall $N^1(v_k)$-subgroup of $K.$ If $N^1(v_k)$ is empty, set $B_k=1.$ Now we divide into two cases for each $k$.

If $v_k$ is not adjacent to any vertex in $X$, consider the trivial complex representation of $T$ and pick a prime $r_k$ such that $|T\times B_k|\divides (r_k-1).$ By \cite[Lemma 3.5]{2015} there exists a modular representation $R_k$ of $T\times B_k$ over a finite field of characteristic $r_k$ such that $\Fit(B_k)$ acts Frobeniusly on $R_k$ and $T$ acts trivially on $R_k.$ 

If $v_k$ is adjacent to some vertex in $X$, let $V$ be the associated irreducible representation granted by the hypothesis. Applying Dirichlet's theorem on arithmetic progressions, take a prime $r_k$ such that $|T\times B_k|\divides (r_k-1).$ According to \cite[Lemma 3.5]{A5}, there exists a representation $R_k$ of $T\times B_k$ over a finite field of characteristic $r_k$ such that $\Fit(B_k)$ acts on $R_k$, and elements of $E$ act without fixed point in $R_k$ if and only if they act without fixed points in $V$. In other words, for all $x\in X$, we have $x-p \in \ol \Gamma(G)$ if and only if order $\varphi(x)$ elements of $E$ act without fixed points on $R_k.$ 

Let $J=R_1\times \cdots \times R_{n_i}.$ Note that by Dirichlet's theorem on arithmetic progressions, we can require each of the $r_k$ to be distinct from the $p_j$ and $q_i$ and additionally be mutually distinct. Thus, we have defined an action of each $T\times B_k$ on $R_k.$ This induces an action of $T\times K$ on $J,$ which induces the $T$-solvable semidirect product $G=J\rtimes(T\times K).$ By this construction, $\Gamma(G)\cong \Gamma$.
\end{proof}

\section{Classification results}\label{section:allresults}
We are finally ready to classify the prime graphs of $T$-solvable groups, where $T$ is any $K_4$ group except for $\Sz(8)$, $\Sz(32),$ and $\PSL(2,q)$. Some groups have similar or identical classification results, so we organize the subsections based on these similarities. Other groups, such as $A_7$, are unique and will be given their own subsections.

All simple groups $T$ studied in this section have the property $\pi(T) = \pi(\Aut(T))$. Therefore, when applying results from \Cref{section:preliminaries}, we implicitly use \Cref{2.4 2022} to produce a subgroup $K \leq G$ of the form $N.T$ where $N$ is solvable and $\pi(G) = \pi(K)$. In particular, quantified statements such as ''for each $p \in \pi(K) \setminus \pi(T)$" in \Cref{section:preliminaries} may instead be read as ''for each $p \in \pi(G) \setminus \pi(T)$".

\subsection{The alternating group $A_7$}\label{section:A7}

\indent In this subsection, we fully classify the prime graphs of $A_7$-solvable groups using the techniques developed in \Cref{section:preliminaries}. This is by far the most difficult classification result in this section. Throughout, it will be useful to remember that $\pi(A_7) = \{2,3,5,7\}$ and that $\ol \Gamma(A_7)$ is the complete graph on four vertices minus the edge $2-3$. 

The first lemma places further restrictions on the subgroup $K \cong N.A_7$ granted by \Cref{2.4 2022}. 

%Saskia
\begin{lemma}\label{W}
    Let $G$ be a strictly $A_7$-solvable group with a subgroup $K\cong N.A_7$ such that $5$ or $7$ divides $|N|$ and $\pi(G) = \pi(K)$. Then $\ol\Gamma(G)$ is $3$-colorable and triangle-free.
\end{lemma}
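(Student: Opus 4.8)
The plan is to combine the $r$--$p$ criteria and orientation machinery of \Cref{section:preliminaries} with the special feature of $A_7$ that it contains subgroups isomorphic to $C_6$ and to $C_7$, so that $K$ (hence $G$) contains solvable subgroups $N.C_6$ and $N.C_7$; by \Cref{ThomasMichaelKeller} the complement $\ol\Gamma(G)$ is therefore triangle-free on each of the vertex sets $\pi(N)\cup\{2,3\}$ and $\pi(N)\cup\{7\}$. First I would dispose of vertex $3$: a Sylow $3$-subgroup of $A_7$ is elementary abelian of order $9$ and so fails the Frobenius Criterion, whence \Cref{FC} gives $3-p\notin\ol\Gamma(G)$ for every $p\in\pi(G)\setminus\pi(A_7)$. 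Since $5$ and $7$ play symmetric roles (use $N.C_5$ in place of $N.C_7$), I would assume $5\mid|N|$, so $5\in\pi(N)$. If also $7\in\pi(N)$, then $\pi(G)=\pi(N)\cup\{2,3\}$ is the prime set of the solvable group $N.C_6$, and the conclusion is immediate; so the substantive case is $5\mid|N|$, $7\notin\pi(N)$, where a Sylow $7$-subgroup of $K$ is cyclic of order $7$ and, among the vertices of $\pi(A_7)$, only $2$ and possibly $5$ can be adjacent to primes outside $\pi(A_7)$.

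In this case the two local facts I would establish are: (i) the subgraph of $\ol\Gamma(G)$ induced on $\pi(A_7)$ is triangle-free, and (ii) $7-p\notin\ol\Gamma(G)$ for all $p\in\pi(G)\setminus\pi(A_7)$. For (i) I would run the dichotomy from the proof of \Cref{chief series} on the top $5$-chief factor $V$ of $K$ (in the notation there): either $C_L(V)H=L$, which puts $\pi(A_7)\subseteq\pi(C_L(V))$ and makes $5$ an isolated vertex of the $\pi(A_7)$-subgraph; or $V$ is a noncyclic elementary abelian $5$-group on which $L/C_L(V)$ acts faithfully and irreducibly with $A_7$ as a quotient. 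In the latter situation one uses that an order $2$ element of $A_7$ lifts to an element of $L/C_L(V)$ which can never act as $-I$ on the faithful module $V$ and hence fixes a nonzero vector; this produces an element of order $10$ in a section of $G$, so $2-5\notin\ol\Gamma(G)$, and running the analogous argument with order $3$ elements (together with a direct inspection of the possibilities for $V$) removes the second triangle $\{3,5,7\}$ of $\ol\Gamma(A_7)$ as well. Since $\ol\Gamma(A_7)=K_4\setminus\{2\text{-}3\}$ has only the triangles $\{2,5,7\}$ and $\{3,5,7\}$, this gives (i). For (ii), \Cref{funny} (applicable since $7$ is odd with $(7,|M(A_7)|)=(7,6)=1$) leaves only the possibility that $G$ has a section $V.E$ with $V$ a nontrivial elementary abelian $p$-group and $E$ a perfect central extension of $A_7$; there I would invoke \Cref{reps} and the fact that in any irreducible module of such an $E$ an order $2$ element still has a fixed space, so that $2-p\notin\ol\Gamma(G)$ --- which is exactly enough to block the triangle that a surviving edge $7-p$ could otherwise create.

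Granting (i) and (ii), triangle-freeness of $\ol\Gamma(G)$ is a short combinatorial check: by the solvable restrictions above, any triangle of $\ol\Gamma(G)$ must contain $7$ together with one of $2,3$; its third vertex cannot be the other of $2,3$ (the edge $2-3$ is absent from $\ol\Gamma(A_7)$, hence from $\ol\Gamma(G)$), cannot be $5$ by (i), and cannot lie outside $\pi(A_7)$ by (ii) and by \Cref{FC}. For $3$-colorability I would then apply \Cref{easy3color}: by the vertex-$3$ step and (ii), the $\pi(A_7)$-subgraph connects to the rest of $\ol\Gamma(G)$ only through $2$ (and possibly $5$); when that connection is through the single vertex $2$, \Cref{easy3color} applies verbatim, and otherwise one starts instead from a $3$-coloring of the solvable subgroup $N.C_2$ and extends it across $3$ and $7$, whose neighbourhoods lie entirely in $\pi(A_7)$, using \Cref{no 2-paths} to pin down the colors of the neighbours of $2$.

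The step I expect to be the main obstacle is the case analysis hidden inside (i) and (ii): both the noncyclic branch of (i) and the perfect-central-extension branch of (ii) require knowing exactly which elements of $A_7$, and of its covers $2.A_7$, $3.A_7$ and $6.A_7$, have fixed points in which irreducible modules. This is precisely where $A_7$ differs from the other $K_4$ groups treated in this section --- a $7$-cycle acts without fixed points on the $6$-dimensional module, so the edges $7-p$ and $5-7$ cannot be eliminated outright, and the argument must instead be routed through the auxiliary edges $2-5$, $3-5$ and $2-p$.
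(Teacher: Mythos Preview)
Your approach diverges substantially from the paper's, and two of its load-bearing steps do not hold.

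For (ii) you assert that ``in any irreducible module of such an $E$ an order $2$ element still has a fixed space.'' This is false: the double cover $2.A_7$ (and hence $6.A_7$) has faithful irreducible complex modules in which no involution has a nonzero fixed vector --- these are exactly the rows $(3,5,7)$ and $(3,7)$ recorded for $2.A_7$ in \Cref{table:3}. So \Cref{funny} does not let you conclude $2-p\notin\ol\Gamma(G)$, and your fallback for a surviving edge $7-p$ collapses. For (i), the sentence ``an order $2$ element of $A_7$ lifts to an element of $L/C_L(V)$ which can never act as $-I$ on the faithful module $V$ and hence fixes a nonzero vector'' hides a gap: the lift need not have order $2$, so the eigenvalue dichotomy $\pm1$ is unavailable, and at the level of Sylow subgroups nothing prevents the Sylow $2$-subgroup of $L/C_L(V)$ from being generalized quaternion (e.g.\ $Q_{16}$, which does have $D_8$ as a quotient) and acting Frobeniusly on $V$. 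Thus you have not excluded the triangle $\{2,5,7\}$. The ``analogous argument with order $3$ elements'' does work, but only because the Sylow $3$-subgroup of $A_7$ fails the Frobenius Criterion --- which is a different mechanism from the one you describe.

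The paper's proof sidesteps all of this by exploiting the $K_3$ subgroups $A_6,\ \PSL(2,7)\le A_7$. If $5\mid|N|$ then $K_1\cong N.\PSL(2,7)$ satisfies $\pi(K_1)=\pi(G)$, and the already-established $\PSL(2,7)$-solvable classification forces $\ol\Gamma(K_1)$ to be $3$-colorable with the only possible triangle being $\{2,3,7\}$; since $A_7$ contains an element of order $6$, the edge $2\text{-}3$ is absent from $\ol\Gamma(G)$ and the triangle dies. The case $7\mid|N|$ is symmetric via $N.A_6$. This is a two-line reduction to known $K_3$ results; your route through \Cref{FC}, \Cref{funny}, and module-by-module fixed-point analysis is both longer and, as it stands, incorrect at the two points above.
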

\begin{proof}
    Note that $A_7$ has subgroups isomorphic to $\PSL(2,7)$ and $A_6$ with prime divisors $\pi(\PSL(2,7)) = \{2,3,7\}$ and $\pi(A_6) = \{2,3,5\}$ respectively. Thus $G$ has subgroups $K_1 \cong N.\PSL(2,7)$ and $K_2 \cong N.A_6$. We now have two cases to consider.
    
    If $5$ divides $|N|,$ then $\pi(G) = \pi(K_1)$ and so $\ol{\Gamma}(G)$ is a subgraph of $\ol{\Gamma}(K_1)$. Since $K_1$ is $\PSL(2,7)$-solvable, \cite[Theorem 3.1 and Corollary 3.4]{2022} shows that $\ol{\Gamma}(K_1)$ is 3-colorable and has at most one triangle, which must be about vertices $\{2,3,7\}$ if it exists. However, $A_7$ is a section of $G$ with an element of order 6, so $2-3 \notin \ol\Gamma(G)$. Hence $\ol \Gamma(G)$ is triangle-free and 3-colorable.
    
    If $7$ divides $|N|,$ then $\ol{\Gamma}(G)$ is a subgraph of $\ol{\Gamma}(K_2).$ Since $K_2$ is $A_6$-solvable, by \cite[Corollary 4.3]{2022}, $\ol{\Gamma}(K_2)$ is 3-colorable and can only exhibit a triangle at the vertices $\{2,3,5\}.$ Again, since $2-3 \notin \ol \Gamma(G)$, it follows that $\ol{\Gamma}(G)$ is triangle-free and 3-colorable.
\end{proof}

The prime graph complements of solvable groups are always triangle-free, and in fact this feature is shared by $T$-solvable groups for certain values of $T$ (see the next subsection). However, this is certainly not the case for $A_7$-solvable groups.  Triangles on the vertices $\{2,5,7\}$ or $\{3,5,7\}$ are clearly possible, as they both appear in $\ol \Gamma(A_7)$. The next lemma shows that $\{2,5,7\}$ are $\{3,5,7\}$ are in fact the only possible triangles in $\ol \Gamma(G)$ for an $A_7$-solvable group $G$.

\begin{lemma}\label{no 5-7-p}
    Let $G$ be strictly $A_7$-solvable. Then $\ol \Gamma(G)$ has at most two triangles, which have vertices $\{2,5, 7\}$ or $\{3, 5, 7\}$ if they exist.
\end{lemma}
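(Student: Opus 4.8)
The plan is to show that any triangle in $\ol\Gamma(G)$ must lie inside $\pi(A_7) = \{2,3,5,7\}$, and then use the structure of $\ol\Gamma(A_7)$ (the complete graph on four vertices minus the edge $2-3$) to conclude that $\{2,5,7\}$ and $\{3,5,7\}$ are the only candidates. Since the subgraph of $\ol\Gamma(G)$ induced by $\pi(A_7)$ is contained in $\ol\Gamma(A_7)$, and $2-3 \notin \ol\Gamma(G)$ (as $A_7$ is a section of $G$ containing an element of order $6$), the only triangles available within $\pi(A_7)$ are precisely $\{2,5,7\}$ and $\{3,5,7\}$. So the whole content is the claim that no triangle can involve a vertex $p \in \pi(G) \setminus \pi(A_7)$.

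First I would rule out triangles with \emph{two} vertices outside $\pi(A_7)$: if $\{p,q,r\}$ is a triangle with $p,q \notin \pi(A_7)$, then by \Cref{2.4 2022} we have $K \cong N.A_7$ with $\pi(G) = \pi(K)$, and $A_7$ has a subgroup $C_r$ (any $r \in \pi(A_7)$), so $N.C_r$ is a solvable section whose prime set contains $\{p,q\}\cup(\pi(G)\setminus\pi(A_7))$; actually more simply, $N$ itself is solvable with $p,q \in \pi(N)$, and if the third vertex is also outside $\pi(A_7)$ then $\{p,q,r\} \subseteq \pi(N)$ gives a triangle in $\ol\Gamma(N)$, contradicting \Cref{ThomasMichaelKeller}. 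If instead exactly one vertex, say $r$, lies in $\pi(A_7)$, then $\{p,q\} \subseteq \pi(N)$ and $r-p, r-q \in \ol\Gamma(G)$ with $p-q \in \ol\Gamma(G)$; I would apply the $r-p$ criteria of \Cref{section:preliminaries} to derive a contradiction. The key observation is that for $r \in \{2,3\}$, the Sylow $r$-subgroups of $A_7$ do not satisfy the Frobenius Criterion (the Sylow $2$-subgroup of $A_7$ is dihedral of order $8$ — wait, this needs checking — and the Sylow $3$-subgroup is elementary abelian of order $9$, hence noncyclic), so \Cref{FC} already forbids $2-p$ and $3-p$ for $p \notin \pi(A_7)$... except the text explicitly warns that $2$ \emph{can} be adjacent to such $p$ for $A_7$. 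So I would instead argue: $3-p \notin \ol\Gamma(G)$ for all $p \notin \pi(A_7)$ by \Cref{FC} (Sylow $3$-subgroup of $A_7$ is noncyclic of order $9$), which kills any triangle through $3$ and an outside vertex; and for $r \in \{5,7\}$, one checks via \Cref{funny2} or \Cref{Thanks TMK} (using that $5,7$ are coprime to $|M(A_7)| = 6$ and examining fixed points in complex irreducible representations of $A_7$ and its perfect central extension $3.A_7$ — see \Cref{table:3}) that whenever $5-p$ or $7-p \in \ol\Gamma(G)$, the relevant representation-theoretic constraints force $5-7 \notin \ol\Gamma(G)$ or otherwise prevent a third edge, ruling out a triangle $\{5,7,p\}$; triangles $\{2,5,p\}$, $\{2,7,p\}$, $\{3,5,p\}$, $\{3,7,p\}$ are then excluded since $3-p$ is already gone and $\{2,5,p\}$, $\{2,7,p\}$ would need $2-5, 2-7 \in \ol\Gamma(G)$ together with $5-p$ or $7-p$, which I would rule out by exhibiting a section $V.W.A_7$ and analyzing the action using \Cref{self-central} and \Cref{chief series}.

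The main obstacle is the pair of vertices $\{5,7\}$ together with an outside prime $p$, because $2$ is genuinely allowed to connect to outside vertices in the $A_7$ case, so the crude Frobenius-Criterion argument does not apply uniformly. I expect the real work is a careful case analysis: reduce via \Cref{chief series} and \Cref{self-central} to a section $L \cong V.W.A_7$ with $V$ a self-centralizing elementary abelian $p$-group, split on whether $5$ or $7$ divides $|W|$ (handled by the divisibility clause of \Cref{chief series} together with the definition of \emph{special}, \Cref{p-special}, since $5-7 \in \ol\Gamma(A_7)$ makes both $5$ and $7$ special once they are adjacent to something in $\pi(A_7)$), and otherwise invoke the representation data for $A_7$ and $3.A_7$ recorded in \Cref{table:3} to show that an order-$5$ (resp. order-$7$) element with fixed points in a faithful module forces an order-$7$ (resp. order-$5$) element with fixed points, hence $5-7 \notin \ol\Gamma(L)$. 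Combined with \Cref{FC} killing all $3-p$ edges, this leaves $\{2,5,7\}$ and $\{3,5,7\}$ as the only surviving triangles, and since both already sit inside $\ol\Gamma(A_7)$, we are done.
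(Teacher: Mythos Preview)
Your overall strategy---reduce to showing that no triangle involves a prime $p \notin \pi(A_7)$---is sound, but the execution has two concrete gaps, and you miss the shortcut the paper actually takes.

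First, the paper does \emph{not} rule out triangles of the form $\{2,5,p\}$, $\{2,7,p\}$, $\{r,p,q\}$ one at a time via $r$--$p$ criteria. Instead it exploits the subgroups $A_6, \PSL(2,7) \le A_7$: the group $K \cong N.A_7$ from \Cref{2.4 2022} contains $K_1 \cong N.\PSL(2,7)$ and $K_2 \cong N.A_6$, and any triangle in $\ol\Gamma(G)$ that avoids $5$ (resp.\ $7$) lies in $\ol\Gamma(K_1)$ (resp.\ $\ol\Gamma(K_2)$). The known $K_3$ classifications then force such a triangle to be $\{2,3,7\}$ (resp.\ $\{2,3,5\}$), both impossible since $2-3 \notin \ol\Gamma(G)$. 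This single reduction handles every triangle not containing both $5$ and $7$, including the cases $\{2,5,p\}$ and $\{2,7,p\}$ where your proposal offers only the vague ``analyze the action using \Cref{self-central} and \Cref{chief series}.''

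Second, for the surviving triangle $\{5,7,p\}$ your representation-theoretic step is off in two ways. You name only $A_7$ and $3.A_7$, but since $|M(A_7)| = 6$, \Cref{funny} may produce a section $V.E$ with $E$ any of $A_7$, $2.A_7$, $3.A_7$, or $6.A_7$; all four must be checked. More importantly, the implication you write (``an order-$5$ element with fixed points forces an order-$7$ element with fixed points, hence $5-7 \notin \ol\Gamma(L)$'') is not the right statement and the conclusion concerns the wrong edge. What is actually needed, and what \Cref{table:3} confirms for all four covers, is that in \emph{every} complex irreducible representation some element of order $5$ \emph{or} some element of order $7$ has a fixed point. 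Via \Cref{reps} this yields $5-p \notin \ol\Gamma(G)$ or $7-p \notin \ol\Gamma(G)$, which is what kills the $\{5,7,p\}$ triangle.
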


\begin{proof}
    First, we show that a triangle in $\ol \Gamma(G)$ must include the vertices 5 and 7. Assume $\ol\Gamma(G)$ has a triangle $\{p,q,r\}$ for some $p,q,r\in \pi(G).$ By \Cref{2.4 2022}, $G$ has a subgroup $K \cong N.A_7$ where $N$ is solvable and $\pi(G) = \pi(K)$. Since $A_6, \PSL(2,7) \lesssim A_7,$ $G$ also has subgroups $K_1 \cong N.\PSL(2,7)$ and $K_2 \cong N.A_6$. Suppose, for a contradiction, that $5\notin \{p,q,r\}$ or $7\notin \{p,q,r\}$. If $5 \notin \{p,q,r\}$, then $\ol\Gamma(K_1)$ has a $\{p,q,r\}$-triangle and $\{p,q,r\}=\{2,3,7\}$ by \cite[Theorem 3.1 and Corollary 3.4]{2022}. Similarly, if $7\notin \{p,q,r\}$ then $\ol\Gamma(K_2)$ has a $\{p,q,r\}$ triangle and $\{p,q,r\}=\{2,3,5\}$ by \cite[Corollary 4.3]{2022}. As the edge $2-3$ cannot exist in $\ol \Gamma(G)$, we reach a contradiction in both cases. Hence $5,7\in \{p,q,r\}$.

    To complete the proof, it suffices to show that for all $p \in \pi(G) \setminus \pi(A_7),$ either $5-p \notin \ol \Gamma(G)$ or $7-p \notin \ol \Gamma(G).$ By \Cref{funny}, since $|M(T)| = 6$, we may assume that $G$ has a section $V.E$ where $V$ is a nontrivial elementary abelian $p$-group and $E$ is a perfect central extension of $A_7$. Then, a well-known classification result shows that $E$ is isomorphic to either $A_7$ itself, the double cover $2.A_7$, the triple cover $3.A_7$, or the Schur cover $6.A_7$. We verified in GAP \cite{GAP4} that for each complex irreducible representation of these covers, either order 5 or order 7 elements have fixed points (see \Cref{table:3} of the Appendix). Therefore, \Cref{reps} implies $5-p \notin \ol \Gamma(G)$ or $7-p \notin \ol \Gamma(G)$.
\end{proof}

The next two lemmas consider the situation in which $2$ is adjacent to some vertex outside of $\pi(A_7)$.

\begin{lemma}\label{edges from 2}
    Let $G$ be strictly $A_7$-solvable, $K \cong N.A_7$ the subgroup granted by \Cref{2.4 2022}, and $E$ the double cover $2.A_7$. If $2 - p \in \ol{\Gamma}(G)$ for some $p \in \pi(G) \setminus \pi(A_7)$, then there exists a solvable $2'$-group $L$ such that $K \cong L.E$. In particular, $2 - 5, 2 - 7 \notin \ol{\Gamma}(G)$.
\end{lemma}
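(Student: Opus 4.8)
The plan is to pin down the $2$-part of $N$ using the hypothesis $2-p\in\ol\Gamma(G)$, show that it must collapse to a central $C_2$ lying non-split over $A_7$, and then read off both conclusions. Since $p\notin\pi(A_7)$ we have $p\mid|N|$, and $2\in\pi(G)$. Because $|2.A_7|_2=2^4$ while $|A_7|_2=2^3$, the assertion ``$K\cong L.E$ with $L$ a solvable $2'$-group'' is equivalent to the two statements $|N|_2=2$ and $K/O_{2'}(N)\cong 2.A_7$: if $|N|_2=2$ then Burnside's normal $p$-complement theorem gives $N=O_{2'}(N)\rtimes C_2$, so $L:=O_{2'}(N)$ is characteristic in $N$ (hence normal in $K$) of index $2$ in $N$, and $K/L\cong(N/L).A_7\cong C_2.A_7$. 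Granting this, the final sentence is immediate: the central involution $z$ of $2.A_7$ commutes with its elements of order $5$ and $7$, producing elements of order $10$ and $14$ in $2.A_7$, and these lift along $K\twoheadrightarrow K/L\cong 2.A_7$ to elements of order $10$ and $14$ in $K\le G$; hence $2-5,\,2-7\notin\ol\Gamma(G)$.

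The engine of the proof is a Frobenius constraint. By \Cref{chief series}, $K$ has a section $L_1\cong V.W.A_7$ with $V$ a nontrivial elementary abelian $p$-group and $W$ a solvable $p'$-group; since $L_1$ is a section of $G$ we have $2-p\in\ol\Gamma(L_1)$, so $p$ is adjacent to the vertex $2\in\pi(A_7)$ in $\ol\Gamma(L_1)$ and \Cref{self-central} lets us assume $C_{L_1}(V)=V$. As $L_1$ has no element of order $2p$, no nontrivial $2$-element of $L_1$ fixes a nonzero vector of $V$, so a Sylow $2$-subgroup $S$ of $L_1$ acts fixed-point-freely on $V$ and is therefore cyclic or generalized quaternion. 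Since $S$ surjects onto $\Syl_2(A_7)\cong D_8$, which is neither, $S$ is generalized quaternion; consequently its normal subgroup $S\cap W\in\Syl_2(W)$, which has quotient $D_8$ inside a generalized quaternion group, is cyclic — and in particular if $W$ were a $2'$-group we would reach the contradiction $S\cong D_8$.

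It remains to upgrade this to the global statement. I would first observe that if $O_p(N)\ne1$ then every nontrivial $2$-element of $K$ acts fixed-point-freely on $\Omega_1(Z(O_p(N)))$, so a full Sylow $2$-subgroup of $K$ is generalized quaternion; the general case is reduced to this via the Frattini argument $K=N\,N_K(P)$ for $P\in\Syl_p(N)$, where $\Syl_2(N_K(P))$ surjects onto $\Syl_2(A_7)$ and acts fixed-point-freely on $P$. Thus $N$ is solvable with cyclic Sylow $2$-subgroup (a normal subgroup with quotient $D_8$ inside a generalized quaternion group), so by Burnside $N=O_{2'}(N)\rtimes S_0$ with $S_0$ cyclic, and $K/O_{2'}(N)$ is an extension of $A_7$ by the cyclic $2$-group $S_0$; since $A_7$ is perfect, this extension is central. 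One then checks that a central extension of $A_7$ by $C_{2^m}$ has generalized quaternion Sylow $2$-subgroup only for $m=1$ and the non-split cocycle, so $S_0\cong C_2$, $|N|_2=2$, and $K/O_{2'}(N)\cong 2.A_7$ (the split extension $C_2\times A_7$ is excluded because $\Syl_2(C_2\times A_7)\cong C_2\times D_8$ is not generalized quaternion). Setting $L:=O_{2'}(N)$ then gives $K\cong L.E$.

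The main obstacle is this last paragraph: translating ``a Sylow $2$-subgroup of one section from the chief series is generalized quaternion'' into the uniform bound $|N|_2=2$. A single $p$-chief factor controls only part of the $2$-part of $N$, and a generalized quaternion group $Q_{2^n}$ can carry a cyclic normal subgroup of arbitrary $2$-power order with quotient $D_8$, so $n=4$ cannot be read off directly. The argument has to combine the fixed-point-free restriction, Burnside's theorem, the triviality of any action of the perfect group $A_7$ on a cyclic (indeed abelian) group, and the structure of the relevant central extensions at once; in particular it must rule out $A_7$ acting faithfully on a larger elementary abelian $2$-section of $N$, which is where one invokes that the automorphism group of a $2$-group of order $<2^4$ has no composition factor isomorphic to $A_7$.
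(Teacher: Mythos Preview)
Your skeleton is exactly the paper's: force $Q\in\Syl_2(K)$ to be generalized quaternion, deduce via Burnside that $N=L.Q_0$ with $L$ a $2'$-group and $Q_0$ cyclic, observe that $A_7$ (being simple) acts trivially on the abelian group $Q_0$ so that $Q_0$ is central in $K/L$, and then use that the center of a generalized quaternion group has order $2$ to get $Q_0\cong C_2$ and the non-split extension $2.A_7$. Everything from ``Thus $N$ is solvable with cyclic Sylow $2$-subgroup'' onward is correct and matches the paper almost verbatim.

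The gap is precisely where you flag it: your Frattini step does not deliver the full Sylow $2$-subgroup of $K$. From $K=N\,N_K(P)$ you get $N_K(P)/N_N(P)\cong A_7$, so a Sylow $2$-subgroup $T$ of $N_K(P)$ surjects onto $D_8$ and acts fixed-point-freely on $P$; hence $T$ is generalized quaternion. But $|T|=|N_N(P)|_2\cdot 8$, whereas $|Q|=|N|_2\cdot 8$, and there is no reason for $|N_N(P)|_2=|N|_2$. So you have only shown that $\Syl_2(N_N(P))$ is cyclic, not $\Syl_2(N)$, and the passage ``Thus $N$ is solvable with cyclic Sylow $2$-subgroup'' is unjustified. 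Your detour through the section $L_1=V.W.A_7$ suffers from the same defect: it controls $\Syl_2(L_1)$, not $\Syl_2(K)$.

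The paper sidesteps this entirely by a one-line observation you are missing: the subgroup $NQ$ is \emph{solvable} (it is an extension of the solvable group $N$ by the $2$-group $Q/(Q\cap N)\cong D_8$), and it contains the full Sylow $2$-subgroup $Q$ of $K$. Inside this solvable group one takes a Hall $\{2,p\}$-subgroup $PQ$; since $2-p\in\ol\Gamma(NQ)$, Williams' dichotomy forces $PQ$ to be Frobenius or $2$-Frobenius of type $(2,p)$ or $(2,p,2)$. The $2$-Frobenius case is excluded because the top factor of a $2$-Frobenius group of type $(2,p,2)$ is cyclic (the middle factor is a cyclic $p$-group, so its automorphism group is abelian), while $PQ$ surjects onto $D_8$. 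Hence $Q$ acts Frobeniusly on $P$, and the \emph{actual} Sylow $2$-subgroup of $K$ is generalized quaternion. From there your argument and the paper's coincide.
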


\begin{proof}
    Let $Q \in \Syl_2(K)$ and $P \in \Syl_p(K)$. Note that $NQ$ is solvable, and we may assume that $PQ$ is a Hall $\{2,p\}$-subgroup of $NQ$. Since $2-p \in \ol \Gamma(G)$, we know that $PQ$ is Frobenius or 2-Frobenius of type $(2, p)$ or $(2,p,2)$, respectively. But the latter is not possible, since in a 2-Frobenius group the top Frobenius complement must be cyclic, but $PQ$ has a Klein-4 factor group. Thus $Q$ must act Frobeniusly on $P$. Hence, \cite[Corollary 6.17]{FGT} shows that $Q$ is either cyclic or generalized quaternion. Consider the Sylow 2-subgroup $Q_0 = Q \cap N$ of $N$. Because $Q/Q_0 \in \Syl_2(A_7)$ is isomorphic to $D_8$, $Q$ must in fact be generalized quaternion. Now, recall that generalized quaternion groups only have one normal subgroup of index 8, and this subgroup is always cyclic. Thus $Q_0$ is cyclic of order $2^m$ for some $m\geq1$.

    By Burnside's $p$-complement theorem, we can write $N = L.Q_0$, where $L$ is a normal Hall $2'$-subgroup of $N$. In fact, $L$ is characteristic in $N$, which implies it is normal in $G$. Observe that $N/L\cong Q_0$ is abelian, and thus $(K/L)/(N/L) \cong A_7$ acts on $N/L\cong Q_0$ by conjugation. %(something isomorphic to A_7 acts on something isomorphic to Q_0)%
    However, the simplicity of $A_7$ implies that this action is trivial.
    %automorphism group is abelian
    Hence all of $K/L \cong Q_0.A_7$ acts trivially on $Q_0$. In particular, $Q_0$ belongs to the center of some Sylow 2-subgroup of $K/L$, which is isomorphic to $Q$ because $2 \ndivides |L|$. We showed above that $Q$ is generalized quaternion, so this center has order 2; since $Q_0$ is nontrivial, it follows that $Q_0 \cong C_2.$ Therefore $Q \cong Q_{16}$, the generalized quaternion group of order $16$. It is well-known that the group $Q_{16}$ does not split over $C_2$, our extension $Q_0.A_7$ cannot split either. Therefore, $Q_0.A_7$ is the double cover of $A_7$, establishing the first statement of the lemma. In particular, $Z(Q_0.A_7) = Q_0$, so the order 5 and 7 elements of $Q_0.A_7$ commute with at least one order 2 element. This means $5-2, 7-2 \notin \ol \Gamma(G)$.
\end{proof}

The reader has probably noticed that many results of this paper are essentially of the form "if this edge exists, then that edge doesn't exist." Because the hypothesis remains valid for subgroups of $G$ and the conclusion is preserved when extending back to $G$, \Cref{2.4 2022} comes in very handy for the proofs of such results. The following lemma is a rare case where this strategy does not apply, for one of the statements is of the form "if this edge exists, then so does that edge." The existence of edges in the prime graph complement of a subgroup does not readily translate to the existence of edges in the prime graph complement of $G$. Hence, we must instead apply \Cref{N.M} to analyze the structure of $G$ as a whole. Fortunately, our hypotheses and the nature of $\Aut(A_7)$ allow us to do so without much trouble.

\begin{lemma}\label{a7doublecoverfun}
Let $G$ be strictly $A_7$-solvable such that 2 is adjacent to some vertex $q \in \pi(G) \setminus \pi(A_7)$, $\ol \Gamma(G)$ contains at least one triangle, and $5-3 \in \ol \Gamma(G)$. If $5-p\in \ol \Gamma(G)$ for some $p \in \pi(G) \setminus \pi(A_7)$, then $2-p\in \ol\Gamma(G)$. On the other hand, if $7-p \in \ol \Gamma(G)$, then $2-p \notin \ol \Gamma(G)$.
\end{lemma}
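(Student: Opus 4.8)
The plan is to prove the two implications by different routes: the ``$7-p$'' statement by a section/representation argument, and the harder ``$5-p$'' statement by analyzing $G$ globally through \Cref{N.M}. First I would record what the standing hypotheses give. Since $2$ is adjacent to some $q\in\pi(G)\setminus\pi(A_7)$, \Cref{edges from 2} yields a solvable $2'$-group $L\lhd G$ with $K\cong L.(2.A_7)$, and in particular $2-5,\,2-7\notin\ol\Gamma(G)$. By \Cref{no 5-7-p} the only possible triangles of $\ol\Gamma(G)$ are $\{2,5,7\}$ and $\{3,5,7\}$; since $2-5\notin\ol\Gamma(G)$ kills the first, the hypothesised triangle must be $\{3,5,7\}$, so $3-5,\,3-7,\,5-7\in\ol\Gamma(G)$ and $G$ has elements of orders $10$ and $14$ but none of orders $15$, $21$, or $35$.

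\textbf{The implication $7-p\in\ol\Gamma(G)\Rightarrow 2-p\notin\ol\Gamma(G)$.} Apply \Cref{funny} with $r=7$, which is odd and prime to $|M(A_7)|=6$: alternative (1) fails, so $G$ has a section $V.E'$ with $V$ a nontrivial elementary abelian $p$-group and $E'$ a perfect central extension of $A_7$, hence $E'\in\{A_7,\,2.A_7,\,3.A_7,\,6.A_7\}$, and $V.E'\cong V\rtimes E'$ by Schur--Zassenhaus. Since $V\rtimes E'$ is a section of $G$ and $7-p\in\ol\Gamma(G)$, we have $7-p\in\ol\Gamma(V\rtimes E')$, so by \Cref{reps} the associated $\mathbb CE'$-module $\widetilde V$ has no nonzero vector fixed by any order-$7$ element of $E'$, and the same holds in each complex irreducible constituent of $\widetilde V$. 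A finite computation (recorded in \Cref{table:3}) verifies that in every complex irreducible representation of $A_7$, $2.A_7$, $3.A_7$, or $6.A_7$ in which the order-$7$ elements are fixed-point-free, some order-$2$ element has a fixed point. Hence some order-$2$ element of $E'$ fixes a nonzero vector of $\widetilde V$, so by \Cref{reps} the section $V\rtimes E'$ -- and therefore $G$ -- has an element of order $2p$, i.e.\ $2-p\notin\ol\Gamma(G)$.

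\textbf{The implication $5-p\in\ol\Gamma(G)\Rightarrow 2-p\in\ol\Gamma(G)$.} Here \Cref{funny} cannot help, since it only produces sections and the conclusion is a non-edge of $G$ itself. Instead, by \Cref{no 5-7-p} the hypothesis $5-p\in\ol\Gamma(G)$ forces $7-p\notin\ol\Gamma(G)$, so $G$ has an element of order $7p$; assume for contradiction it also has one of order $2p$. As $5-3\in\ol\Gamma(G)$ is an edge inside $\pi(A_7)$, \Cref{N.M} gives $G\cong\widehat N.M$ with $\widehat N$ solvable and normal and $\Inn(A_7)\cong A_7\le M\le\Aut(A_7)=S_7$, so $M\cong A_7$ or $S_7$ and $\pi(M)\subseteq\{2,3,5,7\}$; in particular every $p$-element of $G$ lies in $\widehat N$. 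I would then study the centralizer $C_G(x)$ of the $p$-part $x$ of an order-$2p$ element (and likewise that of an order-$7p$ element): an order-$5$ element of $C_G(x)$ would produce an element of order $5p$, contradicting $5-p\in\ol\Gamma(G)$, so $5\nmid|C_G(x)|$; since $G$ is $A_7$-solvable this forces $C_G(x)$ to be solvable. As $x$ is a central element of order $p$ in $C_G(x)$, the presence there of order-$2$ (resp.\ order-$7$) elements gives the non-edges $2-p$ (resp.\ $7-p$) of $\ol\Gamma(C_G(x))$, and triangle-freeness of $\ol\Gamma(C_G(x))$ (\Cref{ThomasMichaelKeller}) then constrains these solvable centralizers further.

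\textbf{Main obstacle.} The step I expect to require the most care is turning this local information into a genuine contradiction. The plan is to pin down $\widehat N$ -- which is the solvable radical of $G$ and contains $L$ -- together with the prime-divisibility restrictions of \Cref{chief series} and \Cref{self-central} applied to a section $V.W.A_7$ of $G$ (note that $p$ is adjacent to $5\in\pi(A_7)$ in $\ol\Gamma(G)$, so those lemmas apply), in order to locate where elements of orders $10$, $14$, $2p$, and $7p$ can live; one then plays the non-edges $5-3$, $3-7$, $5-7$, $5-p$ against the edges forced above, since any order-$5$ element commuting with the wrong element yields a forbidden order $15$, $21$, $35$, or $5p$. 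I anticipate the casework splitting on whether $5\mid|\widehat N|$ and on $M\cong A_7$ versus $M\cong S_7$, with the $S_7$ case kept under control by the fact that $\Aut(A_7)$ contributes no new prime divisors.
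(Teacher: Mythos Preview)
Your argument for the $7-p$ implication contains a factual error. You claim that ``in every complex irreducible representation of $A_7$, $2.A_7$, $3.A_7$, or $6.A_7$ in which the order-$7$ elements are fixed-point-free, some order-$2$ element has a fixed point,'' and cite \Cref{table:3}. But for $6.A_7$ the table lists the pattern $(3,5)$: an irreducible representation where elements of order $3$ and $5$ have fixed points while elements of order $2$ and $7$ do not. So your assertion is false exactly for $E'=6.A_7$, and \Cref{funny} gives you no control over which central extension $E'$ appears. The fix is available to you but you do not use it: since $3-5\in\ol\Gamma(G)$ passes to the section $V.E'$, the extensions $3.A_7$ and $6.A_7$ (whose central order-$3$ element commutes with elements of order $5$) are ruled out, leaving $E'\in\{A_7,2.A_7\}$, for which the claim does hold.

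More seriously, your plan for the $5-p$ implication is incomplete and headed in an unnecessarily complicated direction, and the same issue affects your overall strategy. The paper's proof unifies both implications by first observing that $M\cong S_7$ is \emph{impossible}: the Sylow $2$-subgroups of $S_7$ are $D_8\times C_2$, which fail the Frobenius criterion, contradicting $2-q\in\ol\Gamma(G)$ via \Cref{FC}. Hence $G\cong N.A_7$ globally, and \Cref{edges from 2} applied with $K=G$ gives $G\cong L.(2.A_7)$ with $L$ a solvable $2'$-group. Using the triangle and \Cref{W} one gets $5,7\nmid|L|$; a Hall $\{2,3,5,7,p\}$-subgroup then has the form $L_{3p}.(2.A_7)$, where $L_{3p}$ is nilpotent (a cyclic $5$- or $7$-group acts Frobeniusly on it), and quotienting out the Sylow $3$-part yields $P\rtimes(2.A_7)$. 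Both implications now follow from the representation theory of $2.A_7$ alone (patterns $(2,3,5,7)$, $(2,3,5)$, $(3,5,7)$, $(3,7)$): if $2$ has a fixed point then so does $5$, and if $7$ is fixed-point-free then $2$ has a fixed point. Your centralizer approach and the anticipated casework on $M\cong A_7$ versus $S_7$ and on $5\mid|\widehat N|$ are all bypassed once the $S_7$ case is eliminated at the outset.
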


\begin{proof}
    Before assuming that $5-p \in \ol \Gamma(G)$ or $7-p \in \ol \Gamma(G)$, let us first lay some groundwork. According to \Cref{N.M} and the fact that $\Aut(A_7) \cong S_7 \cong A_7.C_2$, we can find a solvable group $N$ such that either $G \cong N.A_7$ or $G \cong N.S_7$. If $G \cong N.S_7$, then examine a quotient of $G$ isomorphic to $V.W.S_7$ for some solvable group $W$ and elementary abelian $q$-group $V$. Since the Sylow 2-subgroups of $S_7$ are isomorphic to $D_8 \times C_2$ and thus do not satisfy the Frobenius criterion, we see that any Sylow 2-subgroup of $W.S_7$ acts with fixed points on $V$. This implies $2-q \notin \ol \Gamma(G)$, a contradiction. Hence $G \cong N.A_7$, and by \Cref{edges from 2} (In this case $K=G$), we can in fact write $G \cong L.E$ for some solvable $2'$-group $L$ and $E$ the double cover of $A_7$. 
    
    By \Cref{W} and the assumption that $\ol \Gamma(G)$ contains a triangle, we see that $|L|$ is not divisible by 5 or 7. Also $G$ is clearly $\{2,3,5,7,p\}$-separable, so by \cite[Theorem 3.20]{FGT} there exists a Hall $\{2,3,5,7,p\}$-subgroup $H \leq G$. Then, according to \cite[Lemma 2.1]{2022}, we have $H \cong L_{3p}.C_2.A_7$ for some Hall $\{3,p\}$-subgroup $L_{3p} \leq L$. Since $5-3,5-p \in \ol \Gamma(G)$, $H$ has a cyclic subgroup of order 5 or 7 acting Frobeniusly on $L_{3p}$. Hence $L_{3p}$ is nilpotent, and we can write $L_{3p} \cong P \times Q$ for some Sylow $p$-subgroup $P$ and Sylow $3$-subgroup $Q$ of $L_{3p}$. Because $Q$ is a normal Sylow subgroup of $L_{3p}$, it is in fact characteristic in $L_{3p}$ and hence normal in $H$. By the Schur-Zassenhaus theorem, we then have $H/Q \cong P \rtimes E$.

    We prove the first of our desired implications by contrapositive. Suppose $2-p \notin \ol \Gamma(G)$, so there exists an element $x \in H$ of order $2p$. Replacing $x$ with some conjugate of itself, we may assume that the image of $x^p$ modulo $Q$ belongs to $E$. Now let $P_0 \leq P$ be a normal subgroup of $H/Q$ minimal with respect to the condition that it contains $x^2$. Passing $H$ to a smaller quotient if necessary, we may assume that $P_0$ is a minimal normal subgroup of $H$ and thus elementary abelian. Now, $E$ acts on $P_0$ in such a way that $x^p$ (modulo $Q$) fixes $x^2$. In other words, we have an order 2 element of $E$ fixing an order $p$ element of $P_0$. By \Cref{reps} and \Cref{table:3}, some order 5 element of $E$ also has fixed points in $P_0$, which means $5-p \notin \ol \Gamma(G)$, as needed.

    Now let us prove the second implication. Suppose $7-p \in \ol \Gamma(G)$, so that in particular $7-p \in \ol \Gamma(H/Q)$. Let $P_1$ denote a maximal normal subgroup of $H/Q$ (properly) contained in $P$. Then $(H/Q)/P_1 \cong (P/P_1) \rtimes E$ and every element of $E$ of order 7 acts fixed point freely on $P/P_1$. Applying \Cref{reps} and the information about the double cover of $A_7$ in \Cref{table:3}, we find that some order 2 elements of $E$ also acts with fixed points on $P/P_1$. This means $2-p \notin \ol \Gamma((H/Q)/P_1)$, and it follows that $2-p \notin \ol \Gamma(G)$, as desired.
\end{proof}

Together with the $r-p$ criteria from \Cref{section:preliminaries}, we almost have enough for our main result. The next lemma establishes the 3-colorability of $A_7$-solvable groups.

\begin{lemma}\label{harder3color}
Let $\Gamma$ be the prime graph of some strictly $A_7$-solvable group $G$. Then one of the following holds:
\begin{enumerate}
    \item $\ol\Gamma$ has a $3$-coloring where all neighbors of $2,5,$ and $7$ not in $\pi(A_7)$ have the same color.
    \item $\ol\Gamma$ is triangle-free and $3$-colorable.
\end{enumerate} 
\end{lemma}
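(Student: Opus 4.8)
The plan is to split into cases according to how the $\pi(A_7)$-subgraph of $\ol\Gamma$ interacts with the rest of the graph, so that \Cref{easy3color} and \Cref{harder3color}'s hypotheses can be applied directly where possible, and so that the only genuinely new work concerns configurations where the vertices $2$, $5$, $7$ each have neighbors outside $\pi(A_7)$. First I would produce the subgroup $K \cong N.A_7$ from \Cref{2.4 2022} with $\pi(G) = \pi(K)$, which makes every $r-p$ criterion from \Cref{section:preliminaries} available with $\pi(K)$ replaced by $\pi(G)$. If $|N|$ is divisible by $5$ or $7$, then \Cref{W} gives that $\ol\Gamma$ is triangle-free and $3$-colorable and we are in case (2), so I would assume from now on that $5,7 \nmid |N|$; in particular $5$ and $7$ are special relative to $(G,K,N)$ by condition (a) of \Cref{p-special}.

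Next I would try to eliminate as many edges from $\pi(A_7)$ to $\pi(G)\setminus\pi(A_7)$ as possible. The Sylow $3$-subgroup of $A_7$ is elementary abelian of order $9$, hence fails the Frobenius criterion unless... actually it is Klein-4? no, it has order $9$, so it does not satisfy the Frobenius criterion, so \Cref{FC} gives $3-p \notin \ol\Gamma(G)$ for all $p \in \pi(G)\setminus\pi(A_7)$. For the prime $5$: $|M(A_7)| = 6$, so $5$ is odd and coprime to $|M(A_7)|$, and one checks (via \Cref{table:3}, as already used in the proof of \Cref{no 5-7-p}) that in every complex irreducible representation of every perfect central extension of $A_7$, some order-$5$ element has fixed points; since $5$ is special and non-Fermat, \Cref{truestlizard} yields $5-p \notin \ol\Gamma(G)$ for all $p \in \pi(G)\setminus\pi(A_7)$. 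The same argument applies to $7$: it is odd, coprime to $6$, special, non-Fermat, and \Cref{table:3} records that order-$7$ elements have fixed points in every irreducible representation of every perfect central extension of $A_7$, so $7-p\notin\ol\Gamma(G)$ as well. Thus, after assuming $5,7\nmid|N|$, the only vertex of $\pi(A_7)$ that can possibly connect to $\pi(G)\setminus\pi(A_7)$ is the prime $2$, and moreover $3,5,7$ have no neighbors outside $\pi(A_7)$.

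Now the $\pi(A_7)$-subgraph connects to the rest of $\ol\Gamma$ through at most the single vertex $2$, and $\ol\Gamma(A_7) = K_4 \setminus \{2{-}3\}$ is visibly $3$-colorable (colour $2,3$ the same, $5$ and $7$ distinct). So \Cref{easy3color}, applied with $r = 2$ (or, if $2$ has no outside neighbors, with $r$ arbitrary), gives a $3$-coloring of $\ol\Gamma$ in which every vertex of $N(2)\setminus\pi(A_7)$ receives the same colour; since $5$ and $7$ have no neighbors outside $\pi(A_7)$ at all, the same coloring vacuously has all neighbors of $5$ and of $7$ outside $\pi(A_7)$ of one colour. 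That is exactly conclusion (1). The main obstacle is the verification that $5-p$ and $7-p$ never occur for $p\notin\pi(A_7)$ once $5,7\nmid|N|$: this is where one must invoke the representation-theoretic data about the covers $A_7$, $2.A_7$, $3.A_7$, $6.A_7$ in \Cref{table:3} and make sure \Cref{truestlizard} (equivalently \Cref{funny} together with \Cref{reps}) applies — the speciality hypothesis is the delicate point, and it is precisely condition (a) of \Cref{p-special} that saves us once $5,7$ are known not to divide $|N|$. Everything else is bookkeeping with \Cref{FC}, \Cref{easy3color}, and the structure of $\ol\Gamma(A_7)$.
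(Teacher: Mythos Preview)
Your argument contains a genuine gap at the crucial step. You claim that in every complex irreducible representation of every perfect central extension of $A_7$, some element of order $5$ has fixed points, and likewise for order $7$. This is false on both counts, and \Cref{table:3} says so explicitly. For $2.A_7$ (and $6.A_7$) the tuple $(3,7)$ appears, meaning there is an irreducible representation in which elements of order $5$ act without fixed points; for $A_7$ itself the tuple $(2,3,5)$ appears, so there is an irreducible representation in which elements of order $7$ act without fixed points. What \Cref{no 5-7-p} actually extracts from the table is the weaker statement that in every such representation \emph{either} an order-$5$ element \emph{or} an order-$7$ element has fixed points, which only gives you that $5-p$ and $7-p$ cannot both lie in $\ol\Gamma$ for the same $p$. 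Consequently neither \Cref{funny2} nor \Cref{truestlizard} applies to rule out all $5-p$ or all $7-p$ edges, and indeed the classification in \Cref{a7bigproof} (cases (2.4) and (2.5)) shows that $5$ and $7$ \emph{can} have neighbors in $\pi(G)\setminus\pi(A_7)$. (As a side note, $5 = 2^2+1$ is a Fermat prime, so your invocation of \Cref{truestlizard} for $r=5$ would also need the extra hypothesis $2-5 \in \ol\Gamma$, which need not hold.)

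Because $5$ and $7$ may both connect to the outside, \Cref{easy3color} does not apply, and one must work harder. The paper's proof instead builds a partial orientation on $\ol\Gamma$ by gluing together the Frobenius digraphs of the solvable subgroups $N.C_5$ and $N.C_7$ (and, when $2-p$ edges exist, using \Cref{edges from 2} to handle the prime $2$), then invokes \Cref{no 2-paths} to show the resulting digraph has no directed $3$-paths. The $3$-coloring by in/out-degree then automatically puts all external neighbors of $2$, $5$, and $7$ into the same color class $\mathcal I$. The key missing idea in your attempt is precisely this orientation argument, which is needed because the $\pi(A_7)$-subgraph can connect to the rest of $\ol\Gamma$ through more than one vertex.
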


\begin{proof}

By \Cref{2.4 2022}, $G$ has a subgroup of the form $K\cong N.A_7$ where $N$ is solvable and $\pi(G)=\pi(K).$ If $5$ or $7$ divide $|N|$ we are done by \Cref{W}, and for the remainder of the proof we will assume otherwise. For any solvable group $S$, let $\vv\Gamma(S)$ denote the Frobenius digraph of $S$ (see \Cref{frobenius digraph}). By \cite[Corollary 2.7]{2015}, $\vv\Gamma(S)$ has no directed $3$-paths. 
We induce a partial orientation $\vv\Gamma$ on edges of $\ol\Gamma$ according to the orientation they are given in $\vv\Gamma(N.C_5)$ or $\vv\Gamma(N.C_7).$  To see that this is well-defined, note that all edges of $\vv\Gamma(N.C_5)$ are either contained in $\vv\Gamma(N)$ or are of the form $5\rightarrow p.$ Likewise, all edges of $\vv\Gamma(N.C_7)$ are either contained in $\vv\Gamma(N)$ or are of the form $7\rightarrow p.$ Since $5,7\ndivides |N|,$ all edges shared by $\vv\Gamma(N.C_5)$ and $\vv\Gamma(N.C_7)$ are oriented according to $\vv\Gamma(N).$ Thus the orientations given by $\vv\Gamma(N.C_5)$ and $\vv\Gamma(N.C_7)$ align where they overlap in $\ol\Gamma(G)$. If $5-7\in \ol\Gamma(G),$ we additionally define the orientation $5\rightarrow 7.$ By \Cref{no 2-paths}, vertices adjacent to $5$ or $7$ have zero out-degree in $\vv\Gamma.$ Thus $5$ and $7$ have no outgoing $2$-paths, and it follows that $\vv\Gamma$ has no $3$-paths. 

We assign a $3$-coloring to vertices of $\pi(N)\cup\{5,7\}$ as follows: Label all vertices in $\pi(N)\cup\{5,7\}$ with zero out degree as $\mathcal{I},$ all vertices in $\pi(N)\cup\{5,7\}$ with zero in-degree and non-zero out-degree as $\mathcal{O},$ and all vertices in $\pi(N)\cup\{5,7\}$ with nonzero in- and out-degree with $\mathcal{D}.$ As $V(\ol\Gamma\setminus\{2,3\})\subseteq \pi(N)\cup \{5,7\}$, this procedure induces a 3-coloring on the subgraph of $\ol\Gamma$ induced by $V(\ol\Gamma\setminus\{2,3\})$. But observe that the Sylow $3$-subgroups of $A_7$ do not satisfy the Frobenius criterion, so by \Cref{FC} we have $3-p\notin \ol\Gamma$ for all $p\in \pi(G) \setminus \pi(A_7).$ Thus if  $2-p\notin \ol\Gamma(G)$ for all $p\in \pi(G)\setminus \pi(A_7),$ then the 3-coloring of the vertices $V(\ol\Gamma\setminus\{2,3\})$ can easily be extended to $\ol\Gamma.$ Additionally, all neighbors of $5$ and $7$ outside of $\pi(A_7)$ belong to the same color $\mathcal{I}$, as desired. 

Now suppose there exists an edge $2-p\in \ol\Gamma$ for some $p\in \pi(G) \setminus \pi(A_7).$ By \Cref{edges from 2}, we see that $5-2,7-2\notin \ol\Gamma$ and that Hall $\{2,p\}$-subgroups of $N$ must be Frobenius of type $(2,p)$ or $2$-Frobenius of type $(2,p,2),$ giving the orientation $2\rightarrow p$ in $\vv\Gamma.$ As $2 \divides |N|,$ $\mathcal{O},\mathcal{D},\mathcal{I}$ gives a $3$-coloring of $\ol\Gamma\setminus\{3\}.$ 
Moreover, by \Cref{no 2-paths}, neighbors of $2$, $5,$ and $7$ have zero out-degree, placing them in $\mathcal{I}.$ Because the vertex $3$ can only be adjacent to vertices $5$ and $7,$ this $3$-coloring can easily be extended to $\ol\Gamma,$ completing the proof.
\end{proof}

We arrive at a full classification of the prime graphs of $A_7$-solvable groups.
\begin{theorem}\label{a7bigproof}
     Let $\Gamma$ be an unlabeled simple graph. Then $\Gamma$ is isomorphic to the prime graph of an $A_7$-solvable group if and only if one of the following is satisfied:
    \begin{enumerate}
    \item[(1)] $\ol \Gamma$ is triangle-free and 3-colorable.
    \item[(2)] There exist vertices $a,b,c,d$ in $\ol \Gamma$ and a $3$-coloring for which all vertices adjacent to but not included in $\{a,b,c,d\}$ have the same color. Additionally, one of the following holds:
    \begin{enumerate}
        \item[(2.1)] $\ol\Gamma$ contains exactly two triangles $\{a,b,c\}$ and $\{b,c,d\}.$ All edges incident to $a,b,d$ are included in these two triangles.
        \item[(2.2)] $\ol\Gamma$ contains exactly one triangle $\{a,b,c\}$ and a vertex $d$ adjacent only to vertex $c.$ Vertices $a$ and $b$ are not adjacent to any other vertices.
        \item[(2.3)] $\ol\Gamma$ contains exactly one triangle $\{a,b,c\}$ and a vertex $d$ that is adjacent only to vertex $b.$ Vertices $a$ and $b$ are not adjacent to any other vertices.
        \item[(2.4)] $\ol\Gamma$ contains exactly one triangle $\{a,b,c\}$ and a vertex $d.$  Vertex $a$ is not adjacent to any other vertices. Vertex $d$ is not adjacent to $a,b,$ or $c.$ Additionally, $N(b)\setminus\{a,c\}\subseteq N(d),$ and $N(c)\cap N(d)=\emptyset.$
        \item[(2.5)] $\ol\Gamma$ contains exactly one triangle $\{a,b,c\}$ and an isolated vertex $d$.The vertex $a$ is not adjacent to any other vertices.
    \end{enumerate}
    \end{enumerate} 
\end{theorem}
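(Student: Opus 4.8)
The statement is a biconditional; the plan is to prove necessity (the prime graph complement of any $A_7$-solvable group satisfies (1) or (2)) and sufficiency (every such graph is realised) separately, leaning on the $A_7$-specific lemmas already proved.

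\emph{Necessity.} Let $\Gamma=\Gamma(G)$ with $G$ $A_7$-solvable. If $G$ is solvable, (1) holds by \Cref{ThomasMichaelKeller}; so take $G$ strictly $A_7$-solvable and fix $K\cong N.A_7$ with $\pi(G)=\pi(K)$ via \Cref{2.4 2022}. If $5$ or $7$ divides $|N|$, then (1) holds by \Cref{W}; so assume $5,7\nmid|N|$, making $5$ and $7$ special. I would then assemble the available facts: every triangle of $\ol\Gamma$ has vertex set $\{2,5,7\}$ or $\{3,5,7\}$ and no outside prime is adjacent to both $5$ and $7$ (\Cref{no 5-7-p}); $2-3\notin\ol\Gamma$ and $3$ has no neighbour outside $\pi(A_7)$ (\Cref{FC}, using that $A_7$ is a section with an element of order $6$); $2$ has a neighbour outside $\pi(A_7)$ only if $2-5,2-7\notin\ol\Gamma$ (\Cref{edges from 2}); when $\ol\Gamma$ has a triangle and $5-3\in\ol\Gamma$, an outside prime $p$ satisfies $5-p\in\ol\Gamma\Rightarrow 2-p\in\ol\Gamma$ and $7-p\in\ol\Gamma\Rightarrow 2-p\notin\ol\Gamma$ (\Cref{a7doublecoverfun}); if the $\pi(A_7)$-subgraph of $\ol\Gamma$ has no isolated vertex then $5$ has no neighbour outside $\pi(A_7)$ (\Cref{Thanks TMK} for $r=5$, since every complex irreducible representation of $A_7$ has an order-$5$ element with fixed points, see \Cref{table:3}); and \Cref{harder3color} yields either (1) or a $3$-colouring in which all neighbours of $2,5,7$ outside $\pi(A_7)$ share one colour.

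Put $\{a,b,c,d\}=\{2,3,5,7\}$. If $\ol\Gamma$ is triangle-free we are in (1). Otherwise, split by which admissible triangles occur and, within each, by which of $2,5,7$ carry edges to $\pi(G)\setminus\pi(A_7)$; vertex $3$ never does, so the colouring from \Cref{harder3color} is exactly the one in (2). When both triangles occur, the $\pi(A_7)$-subgraph has no isolated vertex, hence $5$ carries no outside edge, and after relabelling ($b=5$, $\{a,d\}=\{2,3\}$) one is in (2.1). When only $\{3,5,7\}$ occurs, \Cref{a7doublecoverfun} applies and, taking $a=3$, $d=2$: if $2$ carries outside edges one reads off $N(5)\setminus\{3,7\}\subseteq N(2)$ and $N(7)\cap N(2)=\emptyset$, which is (2.4); if not, $3$ is isolated or a pendant and one obtains (2.5), (2.2), or (2.3). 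The case of only $\{2,5,7\}$ is parallel ($2$ carries no outside edge automatically) and lands in (2.2), (2.3), or (2.5). The crux of this direction is organising the enumeration so that the cases are exhaustive and disjoint and that each maps onto exactly one of (2.1)--(2.5); in particular, settling which member of $\{5,7\}$ carries the outside edges in the pendant sub-cases forces one to push the $r-p$ criteria of \Cref{section:preliminaries} and the representation tables of Appendix~B to their limit.

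\emph{Sufficiency.} If (1) holds, \Cref{ThomasMichaelKeller} gives a solvable---hence $A_7$-solvable---group with prime graph $\Gamma$. If (2) holds, apply \Cref{graphlizard} with $T=A_7$, $X=\{a,b,c,d\}$, and $\mathcal I$ the common colour of $N(X)\setminus X$; since all triangles of $\ol\Gamma$ lie in $X$, the graph $\ol\Gamma\setminus X$ is triangle-free. For each of (2.1)--(2.5) one must produce an extension $E$ of $A_7$ with $\pi(E)=\{2,3,5,7\}$ and $\ol\Gamma(E)\cong\ol\Gamma[X]$, then check condition (2) of \Cref{graphlizard} vertex by vertex. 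Computing prime-graph complements of the natural candidates---$\ol\Gamma(A_7)$ is $K_4$ on $\{2,3,5,7\}$ with the edge $2-3$ deleted (triangles $\{2,5,7\}$ and $\{3,5,7\}$); $\ol\Gamma(S_7)$ is the triangle $\{3,5,7\}$ with a pendant edge $2-7$; $\ol\Gamma(3.A_7)$ is the triangle $\{2,5,7\}$ with $3$ isolated; $\ol\Gamma(2.A_7)$ is the triangle $\{3,5,7\}$ with $2$ isolated---yields a workable $E$ for each sub-case ($A_7$ for (2.1); $S_7$ for the pendant cases (2.2) and (2.3); $3.A_7$ or $2.A_7$ for (2.5); $S_7$ or $2.A_7$ for (2.4)), possibly after swapping $5\leftrightarrow 7$ or $2\leftrightarrow 3$ in $\varphi$. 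Condition (2) is then discharged by exhibiting, for each $v\in\ol\Gamma\setminus X$, a complex irreducible representation of $E$ in which exactly the order-$\varphi(x)$ elements with $x\in N(v)\cap X$ act without fixed points: the trivial representation for $v$ with no neighbour in $X$, the $6$-dimensional standard representation of $A_7$ (or $S_7$) for a neighbour at the vertex mapped to $7$ (order $7$ acts freely, orders $2,3,5$ with fixed points), and the remaining few patterns read off from \Cref{table:3}. The obstacle here is the bookkeeping: one must verify, sub-case by sub-case, that the finitely many fixed-point patterns actually demanded of the outside vertices are all realised by irreducibles of the chosen $E$---delicate precisely because $5$ and $7$, and likewise $2$ and $3$, behave asymmetrically (no irreducible representation of $A_7$ has an order-$5$ element acting fixed-point-freely, while the standard representation does for order $7$).
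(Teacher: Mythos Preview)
Your forward direction follows the paper's argument closely and is essentially correct, though the case enumeration is sketchy (for instance, in the ``only $\{3,5,7\}$'' branch with $2$ carrying no outside edges you write ``$3$ is isolated or a pendant'' when you mean $2$).

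The backward direction, however, has two genuine gaps in your choice of extensions $E$.

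\textbf{Case (2.3).} Using $E=S_7$ does not work. In $\ol\Gamma(S_7)$ the spoon has its handle at $7$, so any graph isomorphism forces $\varphi(b)=7$ and $\varphi(c)\in\{3,5\}$. For an outside vertex $v$ adjacent only to $c$, condition~(2) of \Cref{graphlizard} then demands an irreducible $\mathbb C S_7$-module in which some element of order $\varphi(c)\in\{3,5\}$ acts without fixed points. But every irreducible $\mathbb C S_7$-module restricted to $A_7$ is a sum of $A_7$-irreducibles, and by \Cref{table:3} every irreducible $\mathbb C A_7$-module admits order-$3$ and order-$5$ elements with fixed points; hence so does every $S_7$-irreducible. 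No swap $5\leftrightarrow 7$ or $2\leftrightarrow 3$ rescues this. The paper uses instead the perfect extension $16.A_7$ (PerfectGroup$(8!,2)$), whose prime graph complement is a spoon with $7$ at a non-handle triangle vertex, so that $\varphi(c)=7$ matches the only available fixed-point-free pattern $(2,3,5)$.

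\textbf{Case (2.5).} Neither $2.A_7$ nor $3.A_7$ suffices via \Cref{graphlizard}. Since $\ol\Gamma$ has exactly one triangle, no outside vertex is adjacent to both $b$ and $c$, but there is nothing preventing \emph{different} outside vertices adjacent to $b$ alone and to $c$ alone. You then need, with a single $E$ and a single isomorphism $\varphi$, one irreducible with only $\varphi(b)$ acting freely and another with only $\varphi(c)$ acting freely. For $3.A_7$ the only singleton-free pattern is $\{7\}$; for $2.A_7$ the singleton-free patterns are $\{7\}$ and $\{2\}$, but $\varphi(d)=2$ already. Hence at most one of $b,c$ can map to $7$, and the other outside-edge pattern is unrealisable. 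The paper avoids this by not applying \Cref{graphlizard} directly: it adds edges $d\text{--}v$ for each outside $v\in N(b)$ to obtain a graph $\ol\Gamma_0$ satisfying (2.4), realises $\Gamma_0=\Gamma(G)$ with $d\mapsto 2$, and then observes $\Gamma(G\times C_2)\cong\Gamma$.

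Your suggestion of $S_7$ for (2.4) is also off (the $X$-subgraph there is a triangle with an isolated vertex, not a spoon), but since you also list $2.A_7$, which is correct and is what the paper uses, this is harmless.
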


\begin{proof}
We will first prove the forward direction. Suppose that $\Gamma$ is the graph of an $A_7$-solvable group $G$. By \Cref{ThomasMichaelKeller}, we may assume that $G$ is strictly $A_7$-solvable. Then by \Cref{harder3color}, $\ol \Gamma$ is 3-colorable. If $\ol\Gamma(G)$ is also triangle-free, then $\Gamma$ satisfies (1) and we are done. Thus we will proceed assuming $\ol \Gamma$ has at least one triangle. 

Let us get a few miscellaneous observations out of the way before proceeding. Since the Sylow $3$-subgroups of $A_7$ do not satisfy the Frobenius criterion, we know that $3-p\notin \ol\Gamma$ for all $p\in \pi(G) \setminus \pi(A_7).$ Moreover, another application of \Cref{harder3color} shows that $\ol \Gamma(G)$ has a 3-coloring for which all neighbors of $2,5,7$ not in $\pi(A_7)$ have the same color. Finally, according to \Cref{no 5-7-p}, the only possible triangles in $\ol\Gamma$ are $\{2,5,7\}$ and $\{3,5,7\}.$ This means in particular that $5-7\in \ol\Gamma$ and one of $5-2 \in \ol \Gamma$ or $5-3 \in \ol \Gamma$ must hold. With that, we split into three cases.

\textbf{Case 1:} $5-2,5-3\in \ol\Gamma:$ As $5-7,5-2,5-3\in \ol\Gamma,$ by \Cref{Thanks TMK} and \Cref{edges from 2}, there are no edges $5-p$ or $2-p$ in $\ol\Gamma$ for $p\notin \pi(A_7).$  Moreover, as $\ol\Gamma$ has at least one triangle, we have $7-2\in \ol\Gamma$ or $7-3\in \ol\Gamma.$ If $7-2$ and $7-3\in \ol\Gamma,$ then (2.1) holds. Otherwise, (2.3) is satisfied.

\textbf{Case 2:} $5-2\in \ol\Gamma, 5-3\notin \ol\Gamma:$ By \Cref{edges from 2} there are no edges $2-p$ in $\ol\Gamma$ for any $p\notin \pi(T).$ In the case that $7-3\in \ol\Gamma,$ it follows from \Cref{Thanks TMK} that $5-p\notin \ol\Gamma$ for $p\notin \pi(A_7).$ Thus (2.2) holds. In the case that $7-3\in \ol\Gamma,$ \Cref{no 5-7-p} shows that $\ol\Gamma$ has exactly one triangle $\{2,5,7\}.$ Thus in this case, $\ol\Gamma$ satisfies (2.5).

\textbf{Case 3:} $5-3\in \ol\Gamma, 5-2 \notin \ol\Gamma:$ In the case that $7-2\in \ol\Gamma,$ similar to Case $1$, we have $5-p$ and $2-p\notin \ol\Gamma$ for all $p\notin \pi(T).$ As such, $\ol\Gamma$ satisfies (2.2). In the case that $7-2\notin \ol\Gamma$ and $2-p\in \ol\Gamma$ for some $p\notin \pi(A_7),$ \Cref{edges from 2} and \Cref{a7doublecoverfun} show that $N(5)\setminus\{3,7\}\subseteq N(2)$ and $N(7)\cap N(2)=\emptyset.$ As such $\ol\Gamma$ satisfies (2.4). In the case that $7-2\notin \ol\Gamma$ and $2-p\notin \ol\Gamma$ for all $p\notin \pi(A_7),$ $\ol\Gamma$ has exactly one triangle $\{3,5,7\},$ thus satisfying (2.5).

This completes the forwards direction. For the backwards direction, if $\ol\Gamma$ is triangle-free and $3$-colorable, then $\Gamma$ is the prime graph of a solvable group by \Cref{ThomasMichaelKeller}. Hence we need only consider $\Gamma$ satisfying claims (2.1)-(2.5).

The cases (2.1)-(2.4) are a straightforward application of \Cref{graphlizard} and the information from \Cref{table:3}. We will go through the process for the most complicated case (2.4) and leave the rest to the reader. Suppose (2.4) holds and let $X = \{a,b,c,d\}$. Making the assignments $a\mapsto 3$, $b\mapsto 5$, $c\mapsto 7$, and $d\mapsto 2$, we obtain a graph isomorphism from the subgraph of $\ol \Gamma$ induced by $X$ to $\ol \Gamma(E)$, where $E$ is the double cover of $A_7$ (see table). For each $v \in N(X) \setminus X$, our hypothesis states that one of the following holds: (1) $v$ is adjacent to $b$ and $d$ but not $a$ or $c$; (2) $v$ is adjacent to $d$ but not $a$, $b$, or $c$; or (3) $v$ is adjacent to $c$ but not $a$, $b$ or $d.$ Now consider the complex irreducible representations of $E$ described in \Cref{table:3}. If (1) holds, note that there is a representation in which order 3 and 7 elements have fixed points while order 2 and 5 do not. If (2) holds, take the representation in which elements of order 3, 5, and 7 have fixed points and order 2 elements do not. Finally, if (3) holds, select the representation in which elements of order 2, 3, and 5 have fixed points while order 7 elements do not. These representations obey the conditions laid out in \Cref{graphlizard} with respect to our chosen graph isomorphism, so $\Gamma$ is the prime graph of an $A_7$-solvable group. For cases (2.1), (2.2), and (2.3), apply a similar argument with $E = A_7,$ $64.A_7,$ and $16.A_7$ respectively.

Finally, we consider the case that (2.5) holds. Let $\ol \Gamma_0$ be the graph obtained from $\ol \Gamma$ by introducing the edge $d-v$ whenever $b-v$ exists for some $v \in \ol \Gamma \setminus X$. By this construction, $\ol \Gamma_0$ satisfies (2.4) so, by the above, $\Gamma_0$ is the prime graph of some $A_7$-solvable group $G$ (where $a$ is identified with 3, $b$ with 5, $c$ with 7, and $d$ with 2). Thus $\Gamma(G\times C_2)\cong \Gamma,$ showing that $\Gamma$ is also the prime graph of an $A_7$-solvable group. This completes the proof.
\end{proof}

\begin{figure}[h!]
\centering
\begin{tikzpicture}
\tikzset{myarrow/.style={postaction={decorate},
decoration={markings,% switch on markings
mark=at position #1 with {\arrow{Stealth[scale=1.3,angle'=45]},semithick},
}}}

\tikzset{my2arrow/.style={postaction={decorate},
decoration={markings,% switch on markings
mark=at position #1 with {\arrow{Stealth[scale=1.3,angle'=45]},semithick},
}}}

\node (31) at (0, -.35) {$\bullet$};
\node (5) at (-1.3, .55) {$\bullet$};
\node (3) at (0, 1.55) {$\bullet$};
\node (2) at (1.3, .55) {$\bullet$};

\node (r1) at (-5, -2) {$\bullet$};
\node (r2) at (-3, -2) {$\bullet$};
\node (r3) at (-1, -2) {$\bullet$};
\node (r4) at (1, -2) {$\bullet$};
\node (r5) at (3, -2) {$\bullet$};
\node (r6) at (5, -2) {$\bullet$};

\node (q1) at (-5, -4) {$\bullet$};
\node (q2) at (-3, -4) {$\bullet$};
\node (q3) at (-1, -4) {$\bullet$};
\node (q4) at (1, -4) {$\bullet$};
\node (q5) at (3, -4) {$\bullet$};
\node (q6) at (5, -4) {$\bullet$};

\node (p1) at (-5, -6) {$\bullet$};
\node (p2) at (-3, -6) {$\bullet$};
\node (p3) at (-1, -6) {$\bullet$};
\node (p4) at (1, -6) {$\bullet$};
\node (p5) at (3, -6) {$\bullet$};
\node (p6) at (5, -6) {$\bullet$};

\draw (3) -- (5);

\draw [my2arrow=.6]  (5) -- (2);
\draw (3) -- (2);

\draw [my2arrow=.6] (5) -- (r1);
\draw [my2arrow=.6] (5) -- (r2);

\draw [my2arrow=.6] (31) -- (r1);
\draw [my2arrow=.6] (31) -- (r2);
\draw [my2arrow=.6] (31) -- (r3);

\draw [my2arrow=.6] (2) -- (r5);
\draw [my2arrow=.6] (2) -- (r6);

\draw [my2arrow=.6] (q2) -- (r2);

\draw [my2arrow=.6](p1) -- (q4);
\draw [my2arrow=.6](p1) -- (q2);
\draw [my2arrow=.6](p6) -- (q4);
\draw [my2arrow=.6](q4) -- (r6);
\draw [my2arrow=.6](q6) -- (r6);
\draw [my2arrow=.6](q1) -- (r2);

\draw [my2arrow=.6](p3) -- (q4);
\draw [my2arrow=.6](q5) -- (r6);
\draw [my2arrow=.6](p2) -- (r1);
\draw [my2arrow=.6](q1) -- (r1);
\draw [my2arrow=.6](p1) -- (q1);

\draw [my2arrow=.6](q3) -- (r4);

\draw [my2arrow=.6](p5) -- (r6);
\draw [my2arrow=.6](p2) -- (q2);
\draw [my2arrow= .6](p3) -- (q3);
\draw [my2arrow= .6](p4) -- (q5);
\draw [my2arrow= .6](p3) -- (q3);
\draw [my2arrow= .6](p6) -- (q6);

\draw[fill=red!20!white] (r1) circle (0.35);
\draw[fill=red!20!white] (r2) circle (0.35);
\draw[fill=red!20!white] (r3) circle (0.35);
\draw[fill=red!20!white] (r4) circle (0.35);
\draw[fill=red!20!white] (r5) circle (0.35);
\draw[fill=red!20!white] (r6) circle (0.35);

\draw[fill=green!20!white] (q1) circle (0.35);
\draw[fill=green!20!white] (q2) circle (0.35);
\draw[fill=green!20!white] (q3) circle (0.35);
\draw[fill=green!20!white] (q4) circle (0.35);
\draw[fill=green!20!white] (q5) circle (0.35);
\draw[fill=green!20!white] (q6) circle (0.35);

\draw[fill=blue!20!white] (p1) circle (0.35);
\draw[fill=blue!20!white] (p2) circle (0.35);
\draw[fill=blue!20!white] (p3) circle (0.35);
\draw[fill=blue!20!white] (p4) circle (0.35);
\draw[fill=blue!20!white] (p5) circle (0.35);
\draw[fill=blue!20!white] (p6) circle (0.35);

\draw[fill=green!20!white] (31) circle (0.35);
\draw[fill=red!20!white] (3) circle (0.35);
\draw[fill=blue!20!white] (5) circle (0.35);
\draw[fill=green!20!white](2) circle (0.35);

\node at (5) {$5$};
\node at (3) {$3$};
\node at (31) {$2$};
\node at (2) {$7$};

\node at (r1) {};
\node at (r2) {};
\node at (r3) {};
\node at (r4) {};
\node at (r6) {};

\node at (q1) {};
\node at (q2) {};
\node at (q3) {};
\node at (q4) {};
\node at (q6) {};

\node at (p1) {};
\node at (p2) {};
\node at (p3) {};
\node at (p4) {};
\node at (p6) {};

\node at (6.25, -2) {$\cal{I}$};
\node at (6.25, -4) {$\cal{D}$};
\node at (6.25, -6) {$\cal{O}$};
\node at (-6.25, -2) {$\;$};

\end{tikzpicture}
    %\caption{If $G$ is a pseudo $\PSL(2, 7)$-solvable group, and if $\graphcomp(G)$ contains a triangle, then $\graphcomp(G)$ follows this general structure. $\graphcomp(G)$ contains only one triangle, the vertex $7$ is only connected to red vertices, and the graph is $3$-colorable.}
    \caption{This graph is the prime graph complement of a $A_7$-solvable group satisfying \Cref{a7bigproof} under condition (2.4) with partial orientation and resulting $3$-coloring as in the proof of \Cref{easy3color}.}
    \label{fig:a7_construction}
\end{figure}

\subsection{Prime graph complements that are triangle-free and 3-colorable}

\indent There are some $K_4$ groups $T$ for which the prime graphs of $T$-solvable groups are no more complicated than that of solvable groups. The $T$ of this type are $A_{10}$, $J_2$, $O_5(4)$, $O_5(5)$, $O_5(7)$, $O_5(9)$, $O_8^+(2)$, $U_3(9)$, ${}^3D_4(2)$, $A_9$, and $O_7(2)$. As it happens, the prime graphs of $T$-solvable groups for such $T$ are always triangle-free and 3-colorable.

\begin{lemma}\label{3 prime groups} Let $G$ be a $T$-solvable group where $T$ is one of $A_{10}$, $J_2$, $O_5(4)$, $O_5(5)$, $O_5(7)$, $O_5(9)$, $O_8^+(2)$, $U_3(9)$, ${}^3D_4(2)$, $A_9$, and $O_7(2)$. Then $\ol \Gamma(G)$ is $3$-colorable and triangle-free.
	
\end{lemma}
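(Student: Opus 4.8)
The plan is to run the machinery of \Cref{section:preliminaries} uniformly across the eleven groups. As always, we may assume $G$ is strictly $T$-solvable: if not, $G$ is solvable and $\ol\Gamma(G)$ is triangle-free and $3$-colorable by \Cref{ThomasMichaelKeller}. Each of the eleven $T$ satisfies $\pi(T)=\pi(\Aut(T))$ (it is not $\Sz(8)$ nor any $\PSL(2,q)$, so \Cref{Aut} applies), hence \Cref{2.4 2022} produces a subgroup $K\cong N.T$ with $N$ solvable and $\pi(G)=\pi(K)$, and every $r-p$ criterion of \Cref{section:preliminaries} is available for the triple $(G,K,N)$.

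The heart of the proof is to show that $r-p\notin\ol\Gamma(G)$ for every $r\in\pi(T)$ and every $p\in\pi(G)\setminus\pi(T)$; equivalently, that the $\pi(T)$-subgraph of $\ol\Gamma(G)$ is not joined to the rest of $\ol\Gamma(G)$. I would do this one prime $r$ at a time. When the Sylow $r$-subgroups of $T$ fail the Frobenius criterion --- this holds for $r=2$ and $r=3$ in all eleven cases (the Sylow $2$-subgroups are far too large, and the Sylow $3$-subgroups always contain a copy of $C_3\times C_3$ or are themselves large), and also whenever the Sylow $5$-subgroup of $T$ is noncyclic --- \Cref{FC} immediately gives $r-p\notin\ol\Gamma(G)$. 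For the remaining primes, which are odd and have cyclic Sylow subgroups (in particular the two largest prime divisors of $|T|$), one notes that $|M(T)|$ is a power of $2$ (or trivial) for each $T$ on the list, so $(r,|M(T)|)=1$, and applies \Cref{funny2}: it suffices to verify, using the representation-theoretic data collected in \Cref{table:3}, that in every complex irreducible representation of every perfect central extension of $T$ some element of order $r$ has a fixed point. In the handful of borderline cases not settled by \Cref{FC} or \Cref{funny2}, \Cref{Thanks TMK} is the backup, after checking that $r$ is special and that the $\pi(T)$-subgraph of $\ol\Gamma(G)$ has no isolated vertices (which is controlled by the shape of $\ol\Gamma(T)$, itself computed below). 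Carrying out this check for all eleven groups and all their prime divisors is the only real work, and it is essentially the content of the appendix tables; I expect it to be the main obstacle, though a computational rather than a conceptual one.

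Once $N(\pi(T))\subseteq\pi(T)$ is established, \Cref{easy3color} applies in its ``does not connect to the remainder'' case: every triangle of $\ol\Gamma(G)$ is contained in $\ol\Gamma(T)$, and since the $\pi(T)$-subgraph of $\ol\Gamma(G)$ is a subgraph of $\ol\Gamma(T)$, it is $3$-colorable provided $\ol\Gamma(T)$ is. So it remains only to record that $\ol\Gamma(T)$ is triangle-free for each of the eleven $T$ --- which is forced anyway, since $T$ itself is $T$-solvable --- and this one reads off from the well-documented element orders of these simple groups: in every case $\ol\Gamma(T)$ turns out to be a subgraph of a star $K_{1,3}$ (for the Suzuki-type, exceptional, classical, and alternating groups on the list, the prime isolated or nearly isolated in $\Gamma(T)$ is typically the largest prime divisor of $|T|$, coming from a cyclic maximal torus of prime order), hence triangle-free and even bipartite. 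Combining: all triangles of $\ol\Gamma(G)$ lie inside the triangle-free graph $\ol\Gamma(T)$, so $\ol\Gamma(G)$ is triangle-free, and it is $3$-colorable by \Cref{easy3color}. This completes the argument.
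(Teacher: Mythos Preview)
Your overall architecture is right, but the ``heart of the proof'' you announce---that $r-p\notin\ol\Gamma(G)$ for \emph{every} $r\in\pi(T)$---is false, and the tools you list cannot establish it. Concretely, take $T=A_9$. \Cref{FC} handles only $r=2,3$ here (\Cref{table:2} records that the Sylow $5$- and $7$-subgroups of $A_9$ are cyclic). \Cref{funny2} does work for $r=7$, but it \emph{fails} for $r=5$: by \Cref{table:3}, the double cover $2.A_9$ has a complex irreducible representation in which every element of order $5$ acts fixed-point-freely (the vector $(2,3,7)$), so the hypothesis of \Cref{funny2} is violated. Your backup \Cref{Thanks TMK} does not rescue this either, since its hypotheses (no isolated vertex in the $\pi(T)$-subgraph of $\ol\Gamma(G)$, and $5$ special) need not hold when $5-7\notin\ol\Gamma(G)$ and $5\divides |N|$. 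In fact the claim itself is wrong: for a suitable $\F_p$-module $V$ realizing that representation of $2.A_9$, the group $G=V\rtimes 2.A_9$ is strictly $A_9$-solvable with $5-p\in\ol\Gamma(G)$. The same obstruction occurs for $O_7(2)$, and for the other nine groups the analogous check on the unique prime with cyclic Sylow subgroup is neither recorded in \Cref{table:3} nor expected to succeed.

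The paper's proof avoids this by aiming for less: it shows that at most \emph{one} vertex of $\pi(T)$ can connect to the rest of $\ol\Gamma(G)$ (eliminating three primes via \Cref{FC} for the nine groups with ``\#''$=3$ in \Cref{table:2}, and eliminating $2,3,7$ via \Cref{FC} and \Cref{funny2} for $A_9$ and $O_7(2)$). That is exactly the hypothesis of \Cref{easy3color} in its ``single vertex $r$'' form, which already yields both conclusions once one observes that $\ol\Gamma(T)$ itself is triangle-free for all eleven $T$. So the fix is simply to weaken your target from $N(\pi(T))\subseteq\pi(T)$ to $|N(\pi(T))\setminus\pi(T)|$ being reached through at most one vertex of $\pi(T)$, and then invoke \Cref{easy3color} as you do.
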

\begin{proof}
	One can check from \Cref{table:5} that $\ol \Gamma(T)$ is triangle-free and 3-colorable. Thus, by \Cref{easy3color}, it suffices to show that at most one vertex in $\pi(T)$ connects to the remainder of $\ol \Gamma(G)$. Let $s<r$ denote the largest primes dividing  $|T|,$ that is, the primes other than 2 or 3. We see from \Cref{table:2} that the Sylow $2$- and $3$-subgroups of $T$ do not satisfy the Frobenius criterion. Therefore, \Cref{FC} shows that $2-p, 3-p\notin \ol\Gamma(G)$ for all $p\in\pi(G) \setminus \pi(T).$ If we assume $T \ne A_9, O_7(2)$, then the same argument shows that $s -p \notin \ol \Gamma(G)$ for all $p \in \pi(G) \setminus \pi(T)$, as needed. On the other hand, if $T = A_9$, then $r=7$, $|M(T)| = 2$, and in every complex irreducible representation of a perfect central extension of $T$, there exists an element of order 7 with fixed points (see \Cref{table:3}). Similarly, if $T= O_7(2)$, then $r=7$, $|M(T)| = 2$, and in every complex irreducible representation of $T$, there exists an element of order 7 with fixed points. Hence, an application of \Cref{funny2} with $r=7$ completes the proof.
\end{proof}

We now present our classification result.

\begin{theorem}
    Let $T$ be one of $A_{10}$, $J_2$, $O_5(4)$, $O_5(5)$, $O_5(7)$, $O_5(9)$, $O_8^+(2)$, $U_3(9)$, ${}^3D_4(2)$, $A_9$, or $O_7(2)$. Then $\Gamma$ is the prime graph of a $T$-solvable group if and only if $\ol\Gamma$ is triangle-free and $3$-colorable.
\end{theorem}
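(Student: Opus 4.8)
The plan is to observe that the theorem is essentially a packaging of two results already established, so very little new work is required: the forward implication is literally the statement of \Cref{3 prime groups}, and the converse falls out of \Cref{ThomasMichaelKeller} once one notes that solvable groups are $T$-solvable.

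For the forward direction, suppose $\Gamma \cong \Gamma(G)$ for some $T$-solvable group $G$ with $T$ in the listed family. This is precisely the hypothesis of \Cref{3 prime groups}, which yields at once that $\ol\Gamma(G)$, and hence $\ol\Gamma$, is triangle-free and $3$-colorable. So there is nothing further to prove here: all the substantive content — the Frobenius-criterion arguments of \Cref{FC} ruling out edges from $2$ and $3$ (and from the largest prime $s$), together with the appeal to \Cref{funny2} with $r=7$ in the two exceptional cases $T = A_9$ and $T = O_7(2)$ — has already been carried out.

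For the converse, suppose $\ol\Gamma$ is triangle-free and $3$-colorable. By \Cref{ThomasMichaelKeller}, $\Gamma$ is isomorphic to the prime graph of some solvable group $G$. Every composition factor of $G$ is then cyclic, so in particular each composition factor is cyclic or isomorphic to $T$; that is, $G$ is $T$-solvable. Hence $\Gamma$ is the prime graph of a $T$-solvable group, as required.

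I expect no real obstacle here: the theorem is a clean corollary, and the only point that needs care — trivial, but worth spelling out — is that the notion of $T$-solvability does not require $T$ to occur as a composition factor, so solvable groups qualify. One could instead realize $\Gamma$ directly via the semidirect-product construction of \Cref{graphlizard} (taking $X$ to induce $\ol\Gamma(T)$ and choosing representations of an extension $E$ of $T$ appropriately), but since $\ol\Gamma$ here is triangle-free we would in the end only be reconstructing a solvable witness, so invoking \Cref{ThomasMichaelKeller} is both shorter and cleaner.
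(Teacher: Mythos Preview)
Your proof is correct and follows essentially the same route as the paper: the forward direction is exactly \Cref{3 prime groups}, and the converse invokes \Cref{ThomasMichaelKeller} together with the observation that a solvable group is trivially $T$-solvable. The paper's argument is just a terser version of yours.
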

\begin{proof}
The forward direction is shown in \Cref{3 prime groups}. For the backwards direction, we note that by \Cref{ThomasMichaelKeller}, for $\Gamma$ such that $\ol\Gamma$ is triangle-free and $3$-colorable, we can find a solvable group $N$ such that $\Gamma(N)\cong \Gamma.$ As $N$ is trivially $T$-solvable, the proof is complete. 
\end{proof} 

\subsection{The groups $\PSL(3,7)$, $U_3(5)$, ${}^2F_4(2)'$, $\PSL(4,3)$, $A_8$, and $U_3(8)$}

Now, we consider groups $T$ such that when $G$ is $T$-solvable, the subgraph of $\ol \Gamma(G)$ induced by $\pi(T)$ is a union of connected components of $\ol \Gamma(G)$. Unless $\ol \Gamma(G)$ is triangle-free and 3-colorable, this induced subgraph will always consist of a triangle plus at most one extra edge. The groups $\PSL(3,7)$, $U_3(5)$, ${}^2F_4(2)'$, $\PSL(4,3)$, $A_8$, and $U_3(8)$ are all like this, and their classification proofs are very similar. First, we prove that for $T$ one of these groups and $G$ a $T$-solvable group, $\ol \Gamma(G)$ is 3-colorable and either $\ol \Gamma(G)$ is triangle-free or $r-p\notin \ol\Gamma(G)$ for all $r\in \pi(T)$ and $p\in \pi(G) \setminus \pi(T).$ This result is split into two lemmas.

 %Hi don't worry about this section im going to shuffle everything in a sec

\begin{lemma}\label{trash}
	Let $T$ be one of $\PSL(3,7)$, $U_3(5)$, ${}^2F_4(2)'$, $\PSL(4,3)$, and $U_3(8)$, and let $G$ be a strictly $T$-solvable group. Then $\ol\Gamma=\ol\Gamma(G)$ is $3$-colorable and $r-p \notin \ol \Gamma(G)$ for all $r \in \pi(T)$, $p \in \pi(G) \setminus \pi(T)$.
\end{lemma}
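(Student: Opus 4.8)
The plan is to prove both assertions — 3-colorability of $\ol\Gamma(G)$ and the nonexistence of any edge $r-p$ with $r \in \pi(T)$, $p \in \pi(G) \setminus \pi(T)$ — by a uniform application of the $r-p$ criteria from \Cref{section:preliminaries}, handling the four primes of each $T$ one at a time. First I would consult \Cref{table:2} and \Cref{table:5} in the Appendix to record, for each of the five groups, the prime set $\pi(T)$, the Schur multiplier $|M(T)|$, which Sylow subgroups fail the Frobenius criterion, and the relevant fixed-point data in complex irreducible representations of $T$ (and of its perfect central extensions). The expectation, based on the framing in the text, is that for two of the four primes (typically $2$ and $3$) the Sylow subgroups of $T$ do not satisfy the Frobenius criterion, so \Cref{FC} immediately gives $2-p, 3-p \notin \ol\Gamma(G)$ for all $p \in \pi(G) \setminus \pi(T)$. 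For the two larger primes $r$, I would split according to whether $r \mid |M(T)|$: if $(r, |M(T)|) = 1$, apply \Cref{funny2}, checking from \Cref{table:3} that every complex irreducible representation of every perfect central extension of $T$ has an order-$r$ element with fixed points; if $r \mid |M(T)|$ (which can only happen for the groups here if the Sylow $r$-subgroup is small), fall back on \Cref{Thanks TMK} or \Cref{truestlizard}, using that $r$ is special because $\ol\Gamma(T)$ is connected (so every vertex of $\pi(T)$ is adjacent to another in $\ol\Gamma(G)$, verifying condition (b) of \Cref{p-special}) and that the subgraph induced by $\pi(T)$ has no isolated vertices.

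Once all four $r-p$ criteria are in place, we have $r-p \notin \ol\Gamma(G)$ for every $r \in \pi(T)$ and every $p \in \pi(G)\setminus\pi(T)$, which is the second assertion and also shows that the $\pi(T)$-subgraph of $\ol\Gamma(G)$ is a union of connected components of $\ol\Gamma(G)$ — in particular it does not connect to the rest of the graph at all. Then 3-colorability follows from \Cref{easy3color}: its hypothesis (the $\pi(T)$-subgraph connects to the remainder through at most one vertex) is satisfied vacuously, and one checks from \Cref{table:5} that $\ol\Gamma(T)$ itself is $3$-colorable (it is a triangle plus at most one pendant edge, hence $3$-colorable). Since $\pi(T) = \pi(\Aut(T))$ holds for all groups in this list (by \Cref{Aut}, none of them is $\Sz(8)$ or a relevant $\PSL(2,q)$), the hypotheses of \Cref{easy3color} are met, and we conclude $\ol\Gamma(G)$ is $3$-colorable, completing the proof.

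The main obstacle I anticipate is the case analysis for the larger primes $r$ when $r \mid |M(T)|$ — for instance, verifying the precise fixed-point behavior in the irreducible representations of the perfect central extensions (the triple cover of $\PSL(3,7)$ or $U_3(5)$, etc.), which is exactly the kind of data that must be extracted carefully from \Cref{table:3} and which underlies whether \Cref{Thanks TMK} or \Cref{truestlizard} can be invoked. A secondary subtlety is confirming that $r$ is genuinely \emph{special} relative to the triple $(G, K, N)$: condition (b) of \Cref{p-special} requires $r$ odd and $r-s \in \ol\Gamma(G)$ for some $s \in \pi(T)$, so one must know that $\ol\Gamma(T)$ has no isolated vertex and that edges of $\ol\Gamma(T)$ between primes of $\pi(T)$ persist in $\ol\Gamma(G)$ — but the latter is automatic since $T$ is a section of $G$, and the former is read off \Cref{table:5}. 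Everything else is a routine bookkeeping exercise across the five groups.
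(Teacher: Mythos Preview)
Your approach is essentially the same as the paper's and is correct. The paper streamlines your case analysis for the two larger primes by observing once that for all five groups $|M(T)|$ is a $\{2,3\}$-group (it is $1$, $2$, or $3$ in every case), so the larger primes $s<r$ are automatically coprime to $|M(T)|$ and \Cref{funny2} applies directly to both; your anticipated fall-back to \Cref{Thanks TMK} or \Cref{truestlizard} never triggers. With that observation in hand, the proof collapses to: \Cref{FC} handles $2$ and $3$, \Cref{funny2} plus \Cref{table:3} handles $s$ and $r$, and \Cref{easy3color} gives $3$-colorability.

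One logical slip worth flagging (harmless here since the fall-back case is vacuous, but wrong in general): you write that ``edges of $\ol\Gamma(T)$ between primes of $\pi(T)$ persist in $\ol\Gamma(G)$ --- but the latter is automatic since $T$ is a section of $G$.'' The containment runs the other way: because $T$ is a section of $G$, elements of order $rs$ in $T$ give elements of order $rs$ in $G$, so the $\pi(T)$-induced subgraph of $\ol\Gamma(G)$ is a \emph{subgraph} of $\ol\Gamma(T)$, not a supergraph. Hence $\ol\Gamma(T)$ being connected does \emph{not} guarantee that $r$ is adjacent to some $s\in\pi(T)$ in $\ol\Gamma(G)$, and condition~(b) of \Cref{p-special} would need a separate argument if you actually needed it.
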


\begin{proof}
	Note that $6 \divides |T|$ and, by \Cref{table:2}, the Sylow 2- and 3-subgroups of $T$ do not satisfy the Frobenius Criterion. It follows from \Cref{FC} that $2-p, 3-p\notin \ol\Gamma(G)$ for all $p\in \pi(G) \setminus \pi(T).$
	
	Now let $s < r$ be the two remaining primes in $\pi(T)$. Note that $M(T)$ is always a $\{2,3\}$-group, so that $(r, |M(T)|) = (s, |M(T)|) = 1$. Moreover, \Cref{table:3} shows that in every complex irreducible representation of a perfect central extension of $T$, there exist elements of order $r$ and $s$ with fixed points. Hence, it follows from \Cref{funny2} that $r-p, s-p \notin \ol \Gamma(G)$ for all $p \in \pi(G) \setminus \pi(T)$, as desired. 
\end{proof}

\begin{lemma}\label{more A8 stuff}
Let $G$ be a strictly $A_8$-solvable group. Then $\ol\Gamma(G)$ is $3$-colorable and one of the following holds: 
\begin{enumerate}
    \item[(1)] $\ol\Gamma(G)$ is triangle-free.
    \item[(2)] $r-p\notin \ol\Gamma(G)$ for all $r\in \pi(A_8)$ and $p\in \pi(G) \setminus \pi(A_8).$  
\end{enumerate}
\end{lemma}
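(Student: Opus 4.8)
The plan follows the template developed for $A_7$ in \Cref{W,no 5-7-p,harder3color}, which is considerably shorter here because $A_8$ has a small Schur multiplier ($|M(A_8)|=2$) and the $\pi(A_8)$-subgraph of every $A_8$-solvable prime graph complement is tame. Throughout, $\pi(A_8)=\{2,3,5,7\}$, and inspecting element orders (or \Cref{table:5}) shows that $\ol\Gamma(A_8)$ is the triangle $\{2,5,7\}$ together with a pendant edge $3-7$; since $A_8$ has an element of order $6$ and is a section of $G$, we get $2-3\notin\ol\Gamma(G)$. Fix $K\cong N.A_8\le G$ with $N$ solvable and $\pi(G)=\pi(K)$, granted by \Cref{2.4 2022}. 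The first move is a reduction in the style of \Cref{W}: $A_8$ has subgroups isomorphic to $A_6$ and $\PSL(2,7)$, so $G$ has subgroups $N.A_6$ and $N.\PSL(2,7)$, and if $7\divides|N|$ then $\pi(G)=\pi(N.A_6)$, whence (as $N.A_6$ is $A_6$-solvable) \cite[Corollary 4.3]{2022} together with $2-3\notin\ol\Gamma(G)$ forces $\ol\Gamma(G)$ triangle-free and $3$-colorable, i.e.\ (1); the case $5\divides|N|$ is identical via $\PSL(2,7)$ and \cite[Theorem 3.1 and Corollary 3.4]{2022}. Hence we may assume $5,7\ndivides|N|$, so that $5$ and $7$ are special relative to $(G,K,N)$ by \Cref{p-special}(a). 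Finally, by \Cref{table:2} the Sylow $2$- and $3$-subgroups of $A_8$ fail the Frobenius criterion, so \Cref{FC} gives $2-p,\,3-p\notin\ol\Gamma(G)$ for every $p\in\pi(G)\setminus\pi(A_8)$.

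Next I would locate the triangles of $\ol\Gamma(G)$. The induced subgraph on $\pi(G)\setminus\pi(A_8)$ sits inside $\ol\Gamma(N)$ (since $N\le G$ and $\pi(G)\setminus\pi(A_8)\subseteq\pi(N)$), hence is triangle-free by \Cref{ThomasMichaelKeller}; a triangle with exactly one vertex $r\in\pi(A_8)$ would have $r$ adjacent to two primes outside $\pi(A_8)$, forcing $r\in\{5,7\}$ and producing a triangle of $\ol\Gamma(N.A_6)$ or $\ol\Gamma(N.\PSL(2,7))$ other than $\{2,3,5\}$ or $\{2,3,7\}$ --- impossible by the same citations. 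Thus every triangle of $\ol\Gamma(G)$ is either $\{2,5,7\}$ or $\{5,7,q\}$ with $q\notin\pi(A_8)$, and each of these contains the edge $5-7$. To exclude the triangles $\{5,7,q\}$, I would invoke \Cref{funny}: since $5$ is odd and coprime to $|M(A_8)|=2$, if $5-q\in\ol\Gamma(G)$ then $G$ has a section $V.E$ with $V$ a nontrivial elementary abelian $q$-group and $E$ a perfect central extension of $A_8$, hence $E\cong A_8$ or $E\cong 2.A_8$; the edges $5-q$ and $7-q$ descend to $\ol\Gamma(V.E)$, so by \Cref{reps} there is a nonzero $\mathbb{C}E$-module in which neither an order-$5$ nor an order-$7$ element has a fixed point, contradicting the fixed-point data for $A_8$ and $2.A_8$ recorded in \Cref{table:3}. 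Consequently the only triangle that $\ol\Gamma(G)$ can contain is $\{2,5,7\}$.

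Now I would split on whether $\ol\Gamma(G)$ has a triangle. If it has none, then (1) holds (modulo $3$-colorability, treated below). If $\{2,5,7\}$ is a triangle, then $2-5,\,2-7,\,5-7\in\ol\Gamma(G)$, and it remains to show $5-p,\,7-p\notin\ol\Gamma(G)$ for every $p\notin\pi(A_8)$, which yields (2). Since $5-7\in\ol\Gamma(G)$, the primes $5$ and $7$ remain special, and $2,5,7$ are non-isolated in the $\pi(A_8)$-subgraph; the only vertex of $\pi(A_8)$ that could be isolated there is $3$, and this occurs exactly when $3-7\notin\ol\Gamma(G)$, in which case (using $3-p\notin\ol\Gamma(G)$ for outside $p$ and $3-5,\,2-3\notin\ol\Gamma(A_8)$) the vertex $3$ is isolated in all of $\ol\Gamma(G)$ and this degenerate configuration can be dispatched separately --- e.g.\ via the refined form of \Cref{Thanks TMK} noted in the remark following it (requiring only $\pi(A_8)\cap\pi(W)$ to be free of isolated vertices, together with the constraints on $\pi(W)$ coming from \Cref{self-central}), or a direct centralizer computation in a section of $G$. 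In the remaining case the $\pi(A_8)$-subgraph of $\ol\Gamma(G)$ has no isolated vertex, and I would feed the specialness of $5$ and $7$ together with the relevant entries of \Cref{table:3} into \Cref{Thanks TMK} (or into \Cref{funny2}) to conclude $5-p,\,7-p\notin\ol\Gamma(G)$ for all outside $p$. I expect the main obstacle to be exactly this step: identifying precisely which fixed-point relations among elements of orders $2$, $5$, $7$ in the complex irreducible representations of $A_8$ and $2.A_8$ are available in \Cref{table:3}, and verifying that they simultaneously rule out the $\{5,7,q\}$-triangles and, in the presence of $\{2,5,7\}$, force the $\pi(A_8)$-subgraph to be a union of connected components of $\ol\Gamma(G)$ --- together with cleanly handling the isolated-$3$ degeneracy. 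Everything else is bookkeeping with the lemmas of \Cref{section:preliminaries}.

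Finally, $3$-colorability. If no edge of $\ol\Gamma(G)$ joins $\pi(A_8)$ to the rest of the graph --- in particular whenever (2) holds --- then, since $\ol\Gamma(A_8)$ (a triangle with a pendant edge) is $3$-colorable, \Cref{easy3color} supplies a $3$-coloring. Otherwise $\ol\Gamma(G)$ is triangle-free by the above and only $5$ and $7$ meet the remainder of the graph, and I would argue as in \Cref{harder3color}: orient the edges of $\ol\Gamma(G)\setminus\pi(A_8)$ according to the Frobenius digraphs of $N.C_5$ and $N.C_7$ (these agree on their overlap since $5,7\ndivides|N|$), direct every edge out of $5$ and $7$ toward $\pi(G)\setminus\pi(A_8)$, and orient the $\pi(A_8)$-internal edges so that no directed path escapes into the rest of the graph; by \Cref{no 2-paths} every neighbor of $5$ or $7$ lying outside $\pi(A_8)$ has out-degree $0$, so the resulting orientation has no directed $3$-path, and the Gallai-Hasse-Roy-Vitaver theorem produces a $3$-coloring, which then extends over $2$ and $3$ (which are adjacent to the rest of $\ol\Gamma(G)$ only within $\pi(A_8)$).
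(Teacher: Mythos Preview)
Your proposal has a genuine gap at the step you yourself flag as the obstacle: eliminating $5-p$ from $\ol\Gamma(G)$ when $\{2,5,7\}$ is a triangle but $3$ is isolated in the $\pi(A_8)$-subgraph. Neither of your suggested fixes works. \Cref{funny2} cannot be used for $r=5$, because $2.A_8$ has a complex irreducible representation (the profile $(2,3,7)$ in \Cref{table:3}) in which no element of order $5$ has a fixed point. And the refined version of \Cref{Thanks TMK} from the remark still fails: you would need $3\notin\pi(W)$, but \Cref{self-central} only gives $r\nmid|W|$ when some odd $s\in\pi(T)$ satisfies $s-p,\,s-3\in\ol\Gamma(L)$, and here $5-3\notin\ol\Gamma(L)$ (since $A_8$ contains $15$-cycles) while $7-3\notin\ol\Gamma(G)$ by hypothesis. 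The ``direct centralizer computation'' is not specific enough to count as an argument.

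The paper sidesteps this entirely by reversing the order of attack. First it applies \Cref{funny2} with $r=7$: since order-$7$ elements have fixed points in every complex irreducible of both $A_8$ and $2.A_8$ (\Cref{table:3}), one gets $7-p\notin\ol\Gamma(G)$ for all $p\notin\pi(A_8)$ unconditionally. Then only the vertex $5$ can connect $\pi(A_8)$ to the outside, so \Cref{easy3color} already gives $3$-colorability and forces every triangle into $\ol\Gamma(A_8)$, hence equal to $\{2,5,7\}$; your separate reduction to $5,7\nmid|N|$ and the \Cref{harder3color}-style orientation argument become unnecessary. For $5-p$ the paper applies \Cref{funny} with $r=5$ directly: if the section is $V.A_8$ then \Cref{reps} and the $A_8$ row of \Cref{table:3} give $5-p\notin\ol\Gamma(G)$, while if the section is $V.(2.A_8)$ then the central involution forces $2-5,\,2-7\notin\ol\Gamma(G)$, destroying the only possible triangle. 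This dichotomy is exactly what replaces your missing isolated-$3$ argument.
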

\begin{proof}
Assume that $\ol \Gamma(G)$ contains a triangle. Because the Sylow 2- and 3-subgroups of $T$ do not satisfy the Frobenius Criterion (see \Cref{table:2}), it follows from \Cref{FC} that $2-p, 3-p\notin \ol\Gamma(G)$ for all $p\in \pi(G) \setminus \pi(T).$ In addition, since $|M(A_8)| = 2$, \Cref{funny2} and the information of \Cref{table:3} imply $7-p \notin \ol \Gamma(G)$ for all $p \in \pi(G) \setminus \pi(T)$. Then, we see from \Cref{easy3color} and \Cref{table:1} that $\ol \Gamma(G)$ is 3-colorable and every triangle in $\ol \Gamma(G)$ is contained in $\ol \Gamma(A_8)$. Since $\ol \Gamma(A_8)$ only has one triangle, namely $\{2, 5, 7\}$ (see \Cref{table:5}, this is the only possible triangle in $\ol \Gamma(G)$.

Now let $p \in \pi(G) \setminus \pi(T)$ be arbitrary. Applying \Cref{funny} with $r=5$, we may assume that $G$ has a section $V.E$ where $V$ is a nontrivial elementary abelian $p$-subgroup and $E$ is a perfect central extension of $A_8$. A well-known classification result shows that $E$ is either $A_8$ or the double cover $2.A_8$. If $E = A_8$, then \Cref{reps} and \Cref{table:3} give $5-p \notin \ol \Gamma(G)$, establishing (2). On the other hand, if $E = 2.A_8$, then $2-5, 2-7 \notin \ol \Gamma(G)$. Since every triangle in $\ol \Gamma(G)$ must also be contained in $\ol \Gamma(A_8)$, this forces $\ol \Gamma(G)$ to be triangle-free, which is (1).
\end{proof}

We now present the main theorem of this subsection.

\begin{theorem}
Let $T$ be one of $\PSL(3,7)$, $U_3(5)$, ${}^2F_4(2)'$, $\PSL(4,3)$, $A_8$, and $U_3(8)$ and let $\Gamma$ be an unlabeled simple graph. Then $\Gamma$ is isomorphic to the prime graph of a $T$-solvable group $G$ if and only if $\ol\Gamma$ is $3$-colorable and one of the following holds:

\begin{enumerate}
    \item[(1)] $\ol\Gamma$ is triangle-free.
    \item[(2)] $\ol\Gamma$ has exactly one isolated triangle $\{a,b,c\}$ and an isolated vertex $d$. 
    \item[(3)] $\ol\Gamma$ has exactly one triangle $\{a,b,c\},$ the vertices $a,b$ are not adjacent to any other vertices in $\ol\Gamma,$ vertex $c$ is adjacent to exactly one other vertex $d,$ and $d$ is not adjacent to vertices other than $c.$ %this is spoon
\end{enumerate}
\end{theorem}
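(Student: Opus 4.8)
The proof will split into the two directions. The forward direction rests on Lemmas~\ref{trash} and \ref{more A8 stuff} together with the shape of $\ol\Gamma(T)$, and the backward direction on \Cref{graphlizard}. Throughout, write $\pi(T)=\{r_1,r_2,r_3,r_4\}$, labelled so that (by \Cref{table:5}) $\ol\Gamma(T)$ is the ``paw'': the triangle $\{r_1,r_2,r_3\}$ together with the single pendant edge $r_3-r_4$; in particular $\ol\Gamma(T)$ has exactly one triangle.

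\emph{Forward direction.} Let $\Gamma\cong\Gamma(G)$ for a $T$-solvable group $G$. If $G$ is not strictly $T$-solvable it is solvable, so \Cref{ThomasMichaelKeller} gives (1); assume then that $G$ is strictly $T$-solvable. By \Cref{trash} (if $T\ne A_8$) or \Cref{more A8 stuff} (if $T=A_8$), $\ol\Gamma(G)$ is $3$-colorable, and either it is triangle-free -- so (1) holds -- or $r-p\notin\ol\Gamma(G)$ for all $r\in\pi(T)$, $p\in\pi(G)\setminus\pi(T)$. In that case no edge of $\ol\Gamma(G)$ joins $\pi(T)$ to its complement, so the induced subgraphs $A$ on $\pi(T)$ and $B$ on $\pi(G)\setminus\pi(T)$ are unions of connected components of $\ol\Gamma(G)$. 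Since $T$ is a composition factor of $G$, $A\subseteq\ol\Gamma(T)$, the paw above; and by \Cref{2.4 2022} there is $K\cong N.T\le G$ with $N$ solvable and $\pi(K)=\pi(G)$, so every prime of $\pi(G)\setminus\pi(T)$ divides $|N|$, whence $B\subseteq\ol\Gamma(N)$ is triangle-free by \Cref{ThomasMichaelKeller}. Therefore every triangle of $\ol\Gamma(G)$ lies in $A$ and equals $\{r_1,r_2,r_3\}$. If $A$ has no triangle, $\ol\Gamma(G)$ is triangle-free and (1) holds. Otherwise $A\supseteq\{r_1,r_2,r_3\}$, and since $A\subseteq\ol\Gamma(T)$ and $A$ is a union of components, $A$ is either the whole paw (if $r_3-r_4\in\ol\Gamma(G)$) or the triangle $\{r_1,r_2,r_3\}$ plus the isolated vertex $r_4$ (if $r_3-r_4\notin\ol\Gamma(G)$). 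In the first case $\{r_1,r_2,r_3,r_4\}$ is a paw component of $\ol\Gamma(G)$, which is precisely (3) with $(a,b,c,d)=(r_1,r_2,r_3,r_4)$; in the second case $\{r_1,r_2,r_3\}$ is an isolated triangle and $r_4$ an isolated vertex, which (as $B$ is triangle-free) is precisely (2).

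\emph{Backward direction.} If $\ol\Gamma$ is triangle-free and $3$-colorable, \Cref{ThomasMichaelKeller} realizes $\Gamma$ by a solvable, hence $T$-solvable, group. Suppose (3) holds and set $X=\{a,b,c,d\}$. The hypotheses make $X$ a connected component of $\ol\Gamma$, so $\ol\Gamma\setminus X$ is triangle-free, $N(X)\setminus X=\emptyset$, and $\ol\Gamma[X]$ is a paw, isomorphic to $\ol\Gamma(T)$ via a graph isomorphism $\varphi$ with $(a,b,c,d)\mapsto(r_1,r_2,r_3,r_4)$. Take $E=T=1.T$; for each $v\in\ol\Gamma\setminus X$, since $v$ has no neighbour in $X$, the trivial irreducible representation of $T$ -- in which every element has fixed points -- satisfies condition (2) of \Cref{graphlizard}, and any $3$-coloring of $\ol\Gamma$ colors the empty set $N(X)\setminus X$ with $\mathcal I$. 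Thus \Cref{graphlizard} applies and $\Gamma$ is the prime graph of a $T$-solvable group. Finally, suppose (2) holds, with isolated triangle $\{a,b,c\}$ and isolated vertex $d$. Let $\ol\Gamma_0$ be $\ol\Gamma$ with the single edge $c-d$ added; then $\ol\Gamma_0$ satisfies (3), so by the previous case $\Gamma_0\cong\Gamma(H)$ for a $T$-solvable group $H$ in which $d$ is identified with the prime $r_4$. In $H\times C_{r_4}$ the prime $r_4$ becomes adjacent to every other prime while no other adjacency changes, so $\Gamma(H\times C_{r_4})\cong\Gamma$, and $H\times C_{r_4}$ is $T$-solvable.

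\emph{Main obstacle.} Once \Cref{table:5} pins down $\ol\Gamma(T)$ as a paw and Lemmas~\ref{trash}, \ref{more A8 stuff} are available, the forward direction is essentially bookkeeping; the substance is the backward direction, and in particular realizing (2). The product trick with $C_{r_4}$ sidesteps the need to exhibit an extension $E$ of $T$ with $\pi(E)=\pi(T)$ and $\ol\Gamma(E)$ a triangle plus an isolated vertex (which would otherwise require building a suitable module for $T$); the points needing care are checking that the product $H\times C_{r_4}$ disturbs none of the remaining adjacencies, and that case (3) genuinely satisfies the hypotheses of \Cref{graphlizard} with the trivial representation.
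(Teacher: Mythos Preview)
Your proof is correct and follows essentially the same route as the paper. The forward direction is identical. For the backward direction in case~(3), the paper simply takes $N\times T$ (with $N$ solvable, $(|N|,|T|)=1$, and $\ol\Gamma(N)\cong\ol\Gamma\setminus\{a,b,c,d\}$), whereas you invoke \Cref{graphlizard} with $E=T$ and the trivial representation; since $N(X)\setminus X=\emptyset$, graphlizard collapses to exactly such a product, so the two are the same construction in different clothing. For case~(2), both you and the paper use the $\times\,C_{r_4}$ trick to delete the pendant edge; the paper writes it directly as $N\times T\times C_{r_4}$ rather than first realizing the (3) graph.
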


\begin{proof}
We will first prove the forward direction. Assume that $\Gamma$ is the prime graph of a $T$-solvable group $G.$ We may assume that $G$ is strictly $T$-solvable, otherwise (1) holds. Then \Cref{trash} and \Cref{more A8 stuff} show that $\ol\Gamma$ is $3$-colorable and either $\ol \Gamma(G)$ is triangle-free or $r-p \notin \ol \Gamma(G)$ for all $r \in \pi(T), p \in \pi(G) \setminus \pi(T)$. In the former case, (1) holds, so we may assume that $\ol \Gamma(G)$ contains a triangle and that $r-p \notin \ol \Gamma(G)$ for all $r \in \pi(T), p \in \pi(G) \setminus \pi(T)$. 
 As $T$ is a section of $G,$ the subgraph of $\ol\Gamma$ induced by $\pi(T)$ must be a subgraph of $\ol\Gamma(T)$, which can be found in \Cref{table:5}. In particular, the subgraph of $\ol\Gamma$ induced by $\pi(T)$ must consist of a triangle $\{a,b,c\}$ and a vertex $d$, which is either isolated or adjacent only to $c$. In the former case, $(2)$ holds, and in the latter case, $(3)$ holds. 

To prove the backwards direction, consider any 3-colorable graph $\Gamma$ satisfying one of the claims (1)-(3). If $\ol\Gamma$ satisfies $(1),$ then by \Cref{ThomasMichaelKeller} we can find a solvable group $N$ with $\ol\Gamma(N)=\ol\Gamma.$ Otherwise,  $\ol\Gamma\setminus\{a,b,c,d\}$ is $3$-colorable and triangle-free, and using techniques from \cite[Theorem 2.8]{2015}, we can find a solvable group $N$ with $(|N|,|T|)=1$ and $\ol\Gamma(N)=\ol\Gamma\setminus\{a,b,c,d\}.$ If $\ol\Gamma$ satisfies $(3),$ then $\ol\Gamma$ must be isomorphic to $\ol\Gamma(N\times T).$ Suppose then that $\ol\Gamma$ satisfies (2).
If $T$ is $A_8$ then $\ol\Gamma(T\times C_3)$ consists of a triangle and an isolated vertex and $\ol\Gamma$ must be isomorphic to $\ol\Gamma(N\times T\times C_3).$ If $T$ is one of $\PSL(3,7),$ $U_3(5),$ $^2F_4(2)',$ $\PSL(4,3),$ or $U_3(8),$ then $\ol\Gamma(T\times C_2)$ consists of a triangle and an isolated vertex and $\ol\Gamma$ must be isomorphic to $\ol\Gamma(N\times T\times C_2).$ This completes the proof.
\end{proof}

\subsection{The groups $\PSL(3,5)$, $\PSL(3,8)$, $\PSL(3,17)$, $U_3(4)$, $U_3(7)$, and $U_5(2)$}

The classification result for these groups is similar to the one in the previous subsection, except that one vertex in $\pi(T)$ is now allowed to be adjacent to vertices in $\pi(G) \setminus \pi(T)$.

\begin{theorem}\label{PSL Frenzy}
Let $T$ be one of $\PSL(3,5),$ $\PSL(3,8),$ $\PSL(3,17),$ $U_3(4),$ $U_3(7),$ or $U_5(2)$ and let $\Gamma$ be an unlabeled simple graph. Then $\Gamma$ is isomorphic to the prime graph of a $T$-solvable group if and only if $\ol\Gamma$ is $3$-colorable and one of the following holds.

\begin{enumerate}
    \item[(1)] $\ol\Gamma$ is triangle-free.
    \item[(2)] $\ol\Gamma$ contains exactly one triangle $\{a,b,c\}$ and an isolated vertex $d$ such that $a,b$ are not adjacent to other vertices in $\ol\Gamma.$ Additionally, $\ol\Gamma$ has a $3$-coloring for which all neighbors of $c$ other than $a,$ $b,$ and $d$ have the same color.
    \item[(3)] $\ol\Gamma$ contains exactly one triangle $\{a,b,c\}$ and the vertices $a,b$ are not adjacent to any other vertices in $\ol\Gamma.$ $\ol\Gamma$ has a vertex $d$ adjacent only to vertex $c.$ Additionally, $\ol\Gamma$ has a $3$-coloring for which all neighbors of $c$ other than $a,$ and $b,$ and $d$ have the same color.     %this is spoon
\end{enumerate}

\end{theorem}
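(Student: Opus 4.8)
The plan is to mirror the template of the preceding subsections: prove that for strictly $T$-solvable $G$ the complement $\ol\Gamma(G)$ is $3$-colorable and that the only vertex of $\pi(T)$ that can meet the rest of the graph is the largest prime $r\in\pi(T)$, then read off the possible shapes of $\ol\Gamma(G)$ from $\ol\Gamma(T)$, and finally build the groups for the converse with \Cref{graphlizard}. For the forward direction: if $G$ is solvable, \Cref{ThomasMichaelKeller} gives (1), so assume $G$ is strictly $T$-solvable and write $\pi(T)=\{2,3,s,r\}$ with $s<r$. First I would show $2-p,3-p\notin\ol\Gamma(G)$ for every $p\in\pi(G)\setminus\pi(T)$ via \Cref{FC}, since by \Cref{table:2} the Sylow $2$- and $3$-subgroups of $T$ fail the Frobenius criterion; and $s-p\notin\ol\Gamma(G)$ for all such $p$ via \Cref{funny2}, since $M(T)$ is a $\{2,3\}$-group, so $(s,|M(T)|)=1$, and by \Cref{table:3} order-$s$ elements have fixed points in every complex irreducible representation of every perfect central extension of $T$. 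Thus only $r$ can be adjacent to $\pi(G)\setminus\pi(T)$. Since $\pi(T)=\pi(\Aut(T))$ by \Cref{Aut}, \Cref{easy3color} applies and gives: $\ol\Gamma(G)$ is $3$-colorable, every triangle of $\ol\Gamma(G)$ lies in $\ol\Gamma(T)$, and there is a $3$-coloring in which all neighbors of $r$ outside $\pi(T)$ share a single color. From \Cref{table:5}, $\ol\Gamma(T)$ has exactly one triangle, supported on three primes including $r$; write it $\{a,b,c\}$ with $c=r$, and let $d$ be the remaining prime, which in $\ol\Gamma(T)$ is either isolated or adjacent only to $r$. If $\ol\Gamma(G)$ is triangle-free, (1) holds; otherwise the unique triangle of $\ol\Gamma(G)$ must be $\{a,b,c\}$, the vertices $a,b\in\{2,3,s\}$ have no neighbor outside $\pi(T)$ and (as $\ol\Gamma(G)$ restricted to $\pi(T)$ is a subgraph of $\ol\Gamma(T)$) no neighbor inside $\pi(T)$ besides $\{a,b,c\}$, and $d$ likewise has no outside neighbor and at most the one neighbor $c$. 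Hence either $d-c\in\ol\Gamma(G)$, giving (3), or $d$ is isolated, giving (2); in both cases the coloring above is the required one since $\pi(T)\setminus\{a,b,d\}=\{c\}$.

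For the backward direction, (1) follows from \Cref{ThomasMichaelKeller}. For (2) and (3) I would apply \Cref{graphlizard} with $X=\{a,b,c,d\}$ (so $|X|=|\pi(T)|=4$): the unique triangle of $\ol\Gamma$ lies in $X$, so $\ol\Gamma\setminus X$ is triangle-free, and since $a$, $b$, and (in case (2)) $d$ have no neighbors outside $X$, the hypothesized $3$-coloring colors all of $N(X)\setminus X=N(c)\setminus X$ with one color, which we rename $\mathcal I$. The extension $E$ demanded by \Cref{graphlizard} is taken to be $T$ in case (3) (there $\ol\Gamma[X]$ is a triangle with a pendant at $c$, isomorphic to $\ol\Gamma(T)$ via $c\mapsto r$), and $C_d\times T$ in case (2) (an extension of $T$ by the solvable group $C_d$ with $\pi(E)=\pi(T)$, whose prime graph complement is $\ol\Gamma(T)$ with all edges at $d$ deleted, i.e.\ a triangle plus an isolated vertex). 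For hypothesis (2) of \Cref{graphlizard}, each $v\in\ol\Gamma\setminus X$ is adjacent to at most $c$ among $X$, so for each such $v$ one needs a complex irreducible representation of $E$ in which elements of order $\varphi(a),\varphi(b),\varphi(d)$ have fixed points while elements of order $\varphi(c)=r$ act fixed-point-freely exactly when $v\sim c$: the trivial representation handles $v\not\sim c$, and \Cref{table:3} supplies a representation with an order-$r$ element without fixed points but the other three primes of $\pi(T)$ with fixed points for $v\sim c$. Feeding these into \Cref{graphlizard} produces a $T$-solvable group with prime graph $\Gamma$.

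The step I expect to be the crux is the representation-theoretic bookkeeping underlying both directions: one must confirm, group by group from \Cref{table:3}, that $s$ has fixed points in every complex irreducible representation of every perfect central extension of $T$ (so \Cref{funny2} kills the edges $s-p$), that $r$ does not (so that $r$ is genuinely permitted an outside neighbor), and—most importantly for the converse—that there is an irreducible representation of $T$ in which order-$r$ elements act fixed-point-freely while order-$2$, order-$3$, and order-$s$ elements do not; this last fact is precisely what makes the ``$a,b$ have no other neighbors'' clause of (2) and (3) realizable, and without it \Cref{graphlizard} could not produce condition (3). A secondary point to pin down, again from \Cref{table:5}, is that for each of the six groups the unique triangle of $\ol\Gamma(T)$ contains $r$ and the fourth prime attaches (if at all) only to $r$; were this to fail for some group, that group would have to be split off into a separate statement.
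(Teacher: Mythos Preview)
Your overall template is correct and matches the paper's approach, but there is a concrete factual error in the forward direction. You assert that the Sylow $2$- and $3$-subgroups of $T$ fail the Frobenius criterion for all six groups. This is false for five of them: in $\PSL(3,5)$, $\PSL(3,8)$, $\PSL(3,17)$, $U_3(4)$, and $U_3(7)$ the Sylow $3$-subgroup is cyclic (of order $3$ or $9$), so it \emph{does} satisfy the Frobenius criterion, and \Cref{FC} gives you nothing about the edges $3-p$. Only in $U_5(2)$, where $3^5$ divides the order, does the Sylow $3$-subgroup fail the criterion. Consequently your elimination of $3-p$ edges, as written, does not go through.

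The paper does not pair off the primes as $\{2,3\}$ versus $\{s,r\}$. Instead it identifies, group by group, the two primes whose Sylow subgroups fail the Frobenius criterion (in the paper's notation $s_3$ and $s_4$; for instance $\{2,5\}$ in $\PSL(3,5)$, $\{2,7\}$ in $\PSL(3,8)$, $\{2,3\}$ in $U_5(2)$) and applies \Cref{FC} to those, then uses \Cref{funny} or \Cref{funny2} on the remaining small prime $s_2$ (which is $3$ except in $U_5(2)$, where it is $5$). Your argument can be repaired in exactly this way, or alternatively by noting that since $|M(T)|=1$ for all six groups and, by \Cref{table:3}, order-$3$ elements have fixed points in every complex irreducible representation of $T$, \Cref{funny2} itself already kills the $3-p$ edges without any appeal to \Cref{FC}. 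Either fix is short, but the proof as you wrote it has a gap at this step. The rest of the forward direction and the backward direction (including your direct use of $E=C_{\varphi(d)}\times T$ for case~(2), where the paper instead reduces (2) to (3) by adding an edge and then crossing with a cyclic group) are sound.
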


\begin{proof}
We will first prove the forward direction. Assume that $\Gamma$ is the prime graph of a $T$-solvable group $G.$ By \Cref{ThomasMichaelKeller}, we may assume that $G$ is strictly $T$-solvable. Let $s_1, s_2, s_3, s_4$ be the prime divisors of $|T|$ in the order prescribed by the table below. Note that in each case, $\ol \Gamma(T)$ consists of the triangle $\{s_1,s_2,s_3\}$ and the edge $s_1-s_4.$

\begin{table}[h!]
    \centering
    \begin{tabular}{|c|c|c|c|c|} \hline
        $T$ & $s_1$ & $s_2$ & $s_3$ & $s_4$ \\ \hline
        $\PSL(3,5)$ & 31 & 3 & 5 & 2 \\
        $\PSL(3,8)$ & 73 & 3 & 2 & 7 \\
        $\PSL(3,17)$ & 307 & 3 & 17 & 2 \\
        $U_3(4)$ & 13 & 3 & 2 & 5\\
        $U_3(7)$ & 43 & 3 & 7 & 2\\
        $U_5(2)$ & 11 & 5 & 2 & 3 \\ \hline
    \end{tabular}
\end{table}

Since the Sylow $s_3$- and $s_4$-subgroups of $T$ do not satisfy the Frobenius criterion by \Cref{table:2}, \Cref{FC} eliminates the edges $s_3-p$ and $s_4-p$ from $\ol\Gamma$ for all $p\in \ol \Gamma(G) \setminus \pi(T).$ Furthermore, since $|M(T)| = 1$, the information of \Cref{table:3} alongside \Cref{funny} shows that $s_2 -p \notin \ol \Gamma$ for all $p \in \pi(G) \setminus \pi(T)$. Thus by \Cref{easy3color}, we see that $\ol\Gamma$ has a $3$-coloring for which all vertices adjacent to $s_1$ and not in $\pi(T)$ have the same color. Now if $\ol \Gamma$ is triangle-free, then (1) holds. Otherwise, an additional consequence of \Cref{easy3color} is that $\ol \Gamma$ contains exactly one triangle $\{s_1, s_2, s_3\}$. Thus (2) holds if $s_1-s_4\notin \ol\Gamma$, and (3) holds if $s_1-s_4\in \ol\Gamma.$ 

We now proceed with the reverse direction, in each case finding a $T$-solvable group $G$ such that $\Gamma(G)\cong \Gamma.$ If $\Gamma$ satisfies (1), then \cite[Theorem 3.8]{2015} produces a solvable group $G$ with $\Gamma\cong \Gamma(G).$ Note that $G$ is trivially $T$-solvable in this case.

Now suppose $\Gamma$ satisfies (3) and let $X=\{a,b,c,d\}.$ We proceed with a straightforward application of \Cref{graphlizard} and information from \Cref{table:3}.  Making the assignments $a \mapsto s_4,$ $b \mapsto s_2,$ $c \mapsto s_1,$ $d\mapsto s_3,$ we obtain a graph isomorphism from the subgraph of $\ol\Gamma$ induced by $X$ to $\ol\Gamma(T).$ For each $v\in N(X)\setminus X,$ our hypothesis states that $v$ is adjacent to $c$ but not $a,$ $b,$ or $d.$ Now consider the complex irreducible representations of $T$ described in \Cref{table:3}. Note that there is a representation in which order $s_2,s_3,$ and $s_4$ elements have fixed points but order $s_1$ elements do not. This representation obeys the conditions laid out in \Cref{graphlizard} with respect to our chosen graph isomorphism, so $\Gamma$ is the prime graph of a $T$-solvable group.

Now we consider the case that (2) holds. Let $\ol\Gamma_0$ be the graph obtained from $\ol\Gamma$ by introducing the edge $s_3-s_1.$ By this construction, $\ol\Gamma_0$ satisfies (3) and as such is the prime graph of some $T$-solvable group $G$ with the same assignments as above. Thus $\Gamma(G\times C_{s_3})\cong \Gamma,$ showing that $\Gamma$ is the prime graph of a $T$-solvable group. This completes the proof.
\end{proof}

\begin{figure}[h!]
	\centering
	\begin{tikzpicture}
		\tikzset{myarrow/.style={postaction={decorate},
				decoration={markings,% switch on markings
					mark=at position #1 with {\arrow{Stealth[scale=1.3,angle'=45]},semithick},
		}}}
		
		\tikzset{my2arrow/.style={postaction={decorate},
				decoration={markings,% switch on markings
					mark=at position #1 with {\arrow{Stealth[scale=1.3,angle'=45]},semithick},
		}}}
		
		\node (31) at (0, 0) {$\bullet$};
		\node (5) at (-1.7, .9) {$\bullet$};
		\node (3) at (0, 1.8) {$\bullet$};
		\node (2) at (1.7,.9) {$\bullet$};
		
		\node (r1) at (-5, -2) {$\bullet$};
		\node (r2) at (-3, -2) {$\bullet$};
		\node (r3) at (-1, -2) {$\bullet$};
		\node (r4) at (1, -2) {$\bullet$};
		\node (r5) at (3, -2) {$\bullet$};
		\node (r6) at (5, -2) {$\bullet$};
		
		\node (q2) at (-3, -4) {$\bullet$};
		\node (q3) at (-1, -4) {$\bullet$};
		\node (q4) at (1, -4) {$\bullet$};
		\node (q5) at (3, -4) {$\bullet$};
		
		\node (p2) at (-3, -6) {$\bullet$};
		\node (p3) at (-1, -6) {$\bullet$};
		\node (p4) at (1, -6) {$\bullet$};
		\node (p5) at (3, -6) {$\bullet$};
		\node (p6) at (5, -6) {$\bullet$};

		\draw(31) -- (3);
		\draw (31) -- (5);
		\draw (3) -- (5);
		\draw(31) -- (2);
		
		\draw [my2arrow=.6] (31) -- (r1);
		\draw [my2arrow=.6] (31) -- (r3);
		\draw [my2arrow=.6] (31) -- (r6);
		
		\draw [my2arrow=.6] (q2) -- (r2);
		
		\draw [my2arrow=.6](q2) -- (r1);
		\draw [my2arrow=.6](p6) -- (q4);
		\draw [my2arrow=.6](q4) -- (r6);
		
		\draw [my2arrow=.6](p3) -- (q4);
		\draw [my2arrow=.6](q5) -- (r6);
		
		\draw [my2arrow=.6](q3) -- (r4);
		
		\draw [my2arrow=.6](p5) -- (r6);
		
		\draw [my2arrow= .6](p3) -- (q3);
		\draw [my2arrow= .6](p4) -- (q5);
		\draw [my2arrow= .6](p3) -- (q3);

        \draw [my2arrow= .6]
        (p2) -- (q2);
        \draw [my2arrow= .6]
        (q2) -- (r3);
        \draw [my2arrow= .6]
        (p3) -- (q2);

		\draw[fill=red!20!white] (r1) circle (0.35);
		\draw[fill=red!20!white] (r2) circle (0.35);
		\draw[fill=red!20!white] (r3) circle (0.35);
		\draw[fill=red!20!white] (r4) circle (0.35);
		\draw[fill=red!20!white] (r5) circle (0.35);
		\draw[fill=red!20!white] (r6) circle (0.35);

        \draw[fill=green!20!white] (q2) circle (0.35);
		\draw[fill=green!20!white] (q3) circle (0.35);
		\draw[fill=green!20!white] (q4) circle (0.35);
		\draw[fill=green!20!white] (q5) circle (0.35);
		
		\draw[fill=blue!20!white] (p2) circle (0.35);
		\draw[fill=blue!20!white] (p3) circle (0.35);
		\draw[fill=blue!20!white] (p4) circle (0.35);
		\draw[fill=blue!20!white] (p5) circle (0.35);
		\draw[fill=blue!20!white] (p6) circle (0.35);
		
		\draw[fill=green!20!white] (31) circle (0.35);
		\draw[fill=blue!20!white] (3) circle (0.35);
		\draw[fill=red!20!white] (5) circle (0.35);
		\draw[fill=red!20!white](2) circle (0.35);
		
		\node at (5) {$5$};
		\node at (3) {$3$};
		\node at (31) {$31$};
		\node at (2) {$2$};
		
		\node at (r1) {};
		\node at (r2) {};
		\node at (r3) {};
		\node at (r4) {};
		\node at (r6) {};
		
		\node at (q2) {};
		\node at (q3) {};
		\node at (q4) {};
		
		\node at (p2) {};
		\node at (p3) {};
		\node at (p4) {};
		\node at (p6) {};
		
		\node at (6.25, -2) {$\cal{I}$};
		\node at (6.25, -4) {$\cal{D}$};
		\node at (6.25, -6) {$\cal{O}$};
		\node at (-6.25, -2) {$\;$};
		
	\end{tikzpicture}
	%\caption{If $G$ is a pseudo $\PSL(2, 7)$-solvable group, and if $\graphcomp(G)$ contains a triangle, then $\graphcomp(G)$ follows this general structure. $\graphcomp(G)$ contains only one triangle, the vertex $7$ is only connected to red vertices, and the graph is $3$-colorable.}
	\caption{This graph is the prime graph complement of a $\PSL(3,5)$-solvable group $G$ with partial orientation and resulting $3$-coloring as in the proof of \Cref{easy3color}.}
	\label{fig:psl_construction}
\end{figure}
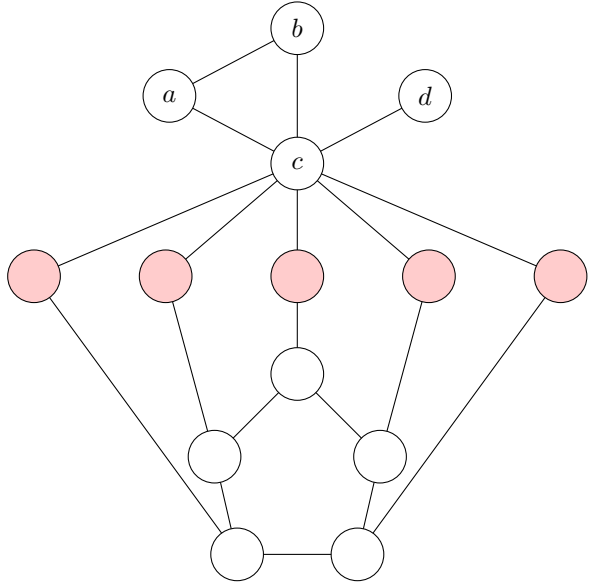
\begin{figure}[h!]
	\centering
	\begin{tikzpicture}
		\node (31) at (0, 0) {$\bullet$};
		\node (5) at (-1.7, .9) {$\bullet$};
		\node (3) at (0, 1.8) {$\bullet$};
		\node (2) at (1.7,.9) {$\bullet$};

		\node (r1) at (-3.5, -1.5) {$\bullet$};
		\node (r2) at (-1.75, -1.5) {$\bullet$};
		\node (r3) at (0, -1.5) {$\bullet$};
		\node (r4) at (1.75, -1.5) {$\bullet$};
		\node (r5) at (3.5, -1.5) {$\bullet$};
		
		\node (p3) at (0, -2.8) {$\bullet$};
		\node (p2) at (-1.1, -3.9) {$\bullet$};
		\node (p4) at (1.1, -3.9) {$\bullet$};
		\node (p1) at (-0.8, -5.2) {$\bullet$};
		\node (p5) at (0.8, -5.2) {$\bullet$};
		
		\draw (31) -- (5);
		\draw (31) -- (3);
		\draw (5) -- (3);
		\draw (31) -- (2);
		
		\draw (31) -- (r1);
		\draw (31) -- (r2);
		\draw (31) -- (r3);
		\draw (31) -- (r4);
		\draw (31) -- (r5);
		
		\draw (p1) -- (r1);
		\draw (p2) -- (r2);
		\draw (p3) -- (r3);
		\draw (p4) -- (r4);
		\draw (p5) -- (r5);
		
		\draw (p1) -- (p2);
		\draw (p2) -- (p3);
		\draw (p3) -- (p4);
		\draw (p4) -- (p5);
		\draw (p5) -- (p1);
		
		\draw[fill=white] (3) circle (0.35);
		\draw[fill=white] (5) circle (0.35);
		\draw[fill=white] (31) circle (0.35);
		\draw[fill=white] (2) circle (0.35);
		
		\draw[fill=red!20!white] (r1) circle (0.35);
		\draw[fill=red!20!white] (r2) circle (0.35);
		\draw[fill=red!20!white] (r3) circle (0.35);
		\draw[fill=red!20!white] (r4) circle (0.35);
		\draw[fill=red!20!white] (r5) circle (0.35);
		
		\draw[fill=white] (p1) circle (0.35);
		\draw[fill=white] (p2) circle (0.35);
		\draw[fill=white] (p3) circle (0.35);
		\draw[fill=white] (p4) circle (0.35);
		\draw[fill=white] (p5) circle (0.35);
		
		\node at (5) {$a$};
		\node at (3) {$b$};
		\node at (31) {$c$};
		\node at (2) {$d$};
	\end{tikzpicture} 
	\caption{This graph is not the prime graph complement of any $\PSL(3,5)$-solvable group as there is no $3$-coloring for which all neighbors of $c$ other than $a$, $b,$ and $d$ have the same color.}
\end{figure}

\newpage

\subsection{The groups $M_{11}$ and $M_{12}$}
We now turn our attention to the prime graphs of $M_{11}$- and $M_{12}$-solvable groups. For reference, note that $\ol \Gamma(M_{11})$ is the complete graph on $\{2, 3, 5, 11\}$ minus the $2-3$ edge, and $\ol \Gamma(M_{12}) = \ol \Gamma(M_{11}) \setminus \{2-5\}$ (see \Cref{table:5}).

\begin{lemma}\label{Ms}
	Let $T$ be $M_{11}$ or $M_{12}$ and let $G$ be strictly $T$-solvable. Then $2-p, 3-p, 5-p \notin \ol \Gamma(G)$ for all $p \in \pi(G) \setminus \pi(T)$. Moreover, every triangle in $\ol \Gamma(G)$ also lies in $\ol \Gamma(T)$, and $\ol \Gamma(G)$ has a 3-coloring such that every neighbor of 11 outside of $\pi(T)$ has the same color.
\end{lemma}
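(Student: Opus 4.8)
The plan is to run the standard pipeline from \Cref{section:preliminaries}. Since $\pi(M_{11}) = \pi(M_{12}) = \{2,3,5,11\}$ and both groups satisfy $\pi(T) = \pi(\Aut(T))$ by \Cref{Aut}, \Cref{2.4 2022} furnishes a subgroup $K \cong N.T$ of $G$ with $N$ solvable and $\pi(G) = \pi(K)$; every edge-elimination below is then stated for $p \in \pi(K)\setminus\pi(T) = \pi(G)\setminus\pi(T)$. The goal is to kill every edge $r-p$ with $r \in \{2,3,5\}$, which leaves $11$ as the only vertex of $\pi(T)$ possibly joined to the outside, and then to invoke \Cref{easy3color} for the triangle and coloring statements.

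For $r = 2$ and $r = 3$: by \Cref{table:2} the Sylow $2$- and $3$-subgroups of $T$ are not cyclic, dihedral, Klein-4, or generalized quaternion (for instance, the Sylow $2$-subgroup of $M_{11}$ is semidihedral of order $16$ and its Sylow $3$-subgroup is elementary abelian of order $9$), so they fail the Frobenius criterion and \Cref{FC} gives $2-p, 3-p \notin \ol\Gamma(G)$ for all $p \in \pi(G)\setminus\pi(T)$. For $r = 5$: the Schur multiplier of $M_{11}$ is trivial and that of $M_{12}$ is $C_2$, so in both cases $5$ is odd with $(5,|M(T)|)=1$; the perfect central extensions of $T$ are $M_{11}$ (respectively $M_{12}$ and $2.M_{12}$), and \Cref{table:3} records that in every complex irreducible representation of each of these, some element of order $5$ has a fixed point. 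Then \Cref{funny2} with $r=5$ yields $5-p \notin \ol\Gamma(G)$ for all $p \in \pi(G)\setminus\pi(T)$.

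With $2$, $3$, and $5$ disconnected from $\pi(G)\setminus\pi(T)$, the $\pi(T)$-subgraph of $\ol\Gamma(G)$ meets the remainder of $\ol\Gamma(G)$ in at most the single vertex $11$. That subgraph is an induced subgraph of $\ol\Gamma(T)$, which by \Cref{table:5} is the complete graph on $\{2,3,5,11\}$ with the edge $2-3$ removed (for $M_{11}$), or this graph with $2-5$ further removed (for $M_{12}$); either way it is $3$-colorable, e.g.\ by assigning $2$ and $3$ the same color. Hence \Cref{easy3color}, applied with $r = 11$, shows simultaneously that every triangle of $\ol\Gamma(G)$ lies in the $\pi(T)$-subgraph (and hence in $\ol\Gamma(T)$) and that $\ol\Gamma(G)$ has a $3$-coloring in which all neighbors of $11$ outside $\pi(T)$ receive the same color, completing the proof.

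Every step is a direct application of the machinery of \Cref{section:preliminaries} together with the appendix tables, so I expect no real obstacle; the one non-structural ingredient --- and therefore the step most deserving of care --- is the fixed-point data for order-$5$ elements in the irreducible representations of $M_{11}$, $M_{12}$, and $2.M_{12}$, which is exactly the content of \Cref{table:3} that drives the elimination of the $5-p$ edges.
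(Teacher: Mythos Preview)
Your proof is correct and follows essentially the same route as the paper: eliminate the $2-p$ and $3-p$ edges via \Cref{FC} and the Sylow data of \Cref{table:2}, eliminate the $5-p$ edges via \Cref{funny2} and the fixed-point data of \Cref{table:3}, and then finish with \Cref{easy3color}. You supply slightly more detail (explicitly verifying the $3$-colorability of the $\pi(T)$-subgraph before invoking \Cref{easy3color}), but the argument is the same.
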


\begin{proof}
	\Cref{FC} immediately shows that $\ol \Gamma(G)$ has no edges $2-p$ and $3-p$ for any prime $p \in \pi(G) \setminus \pi(T)$. Now, observe that $|M(T)|$ is  $1$ or $2$ and in every complex irreducible representation of a perfect central extension of $T$, there exist order 5 elements with fixed points. By \Cref{funny2}, it follows that $5-p \notin \ol \Gamma(G)$ for all $p \in \pi(G) \setminus \pi(T)$. Then, \Cref{easy3color} finishes the proof.
\end{proof}

\begin{lemma}\label{fancy}
    Let $G$ be a strictly $M_{11}$-solvable group such that $2-5,3-5 \in \ol \Gamma(G)$. Additionally, assume that $\ol \Gamma(G)$ contains a triangle. Then $2-11,3-11 \in \ol \Gamma(G)$.
\end{lemma}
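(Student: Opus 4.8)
The plan is to reduce to the case that $G$ is essentially $M_{11}$ itself, and then read off the conclusion from $\ol\Gamma(M_{11})$.

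First I would set up the structure. Since $2-5$ is an edge of $\ol\Gamma(G)$ joining two vertices of $\pi(M_{11})$, \Cref{N.M} applies, and because $\Aut(M_{11})=M_{11}$ (so $\Out(M_{11})=1$) we get $G\cong N.M_{11}$ with $N$ a solvable normal subgroup. As $\ol\Gamma(G)$ has a triangle, \Cref{Ms} forces it to lie inside $\ol\Gamma(M_{11})$; since the $2-3$ edge is absent and $2-5,3-5\in\ol\Gamma(G)$, this triangle is $\{2,5,11\}$ or $\{3,5,11\}$, so in particular $5-11\in\ol\Gamma(G)$. Now pass to a Hall $\{2,3,5,11\}$-subgroup $H\leq G$: it has the form $N_0.M_{11}$ with $N_0$ solvable by \cite[Lemma 2.1]{2022}, and $\ol\Gamma(H)=\ol\Gamma(G)\big|_{\{2,3,5,11\}}$ because an element of order $pq$ with $p,q\in\{2,3,5,11\}$ lies in a conjugate of $H$. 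Since the hypotheses and the two target edges all concern primes in $\{2,3,5,11\}$, I may assume henceforth $\pi(G)=\{2,3,5,11\}$. Then $2-5,3-5,5-11\in\ol\Gamma(G)$ says precisely that $5$ is isolated in $\Gamma(G)$, so $\Gamma(G)$ is disconnected.

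Next I would invoke the Gruenberg--Kegel structure theorem for groups with disconnected prime graph. The Frobenius and $2$-Frobenius cases cannot occur: $G$ is non-solvable, and in either case the only possibly non-nilpotent factor is a Frobenius complement, whose composition factors have cyclic or generalized quaternion Sylow subgroups — but $M_{11}$ does not (its Sylow $2$-subgroup is semidihedral of order $16$), so $M_{11}$ could not be a composition factor there. Hence $G$ falls into the remaining case, and as $\Out(M_{11})=1$ this forces the solvable radical $N$ to be \emph{nilpotent}; write $N=N_2\times N_3\times N_{11}$ for its Sylow subgroups. I would then eliminate each $N_p$. If $N_p\ne1$, then quotienting out the other two Sylow factors together with $\Phi(N_p)$ yields a section $V_p.M_{11}$ with $V_p:=N_p/\Phi(N_p)$ a nonzero $\F_p M_{11}$-module on which $M_{11}$ acts faithfully (a trivial action would put $V_p$ in the centre and produce an element of order $5p$, contradicting $2-5$, $3-5$, or $5-11$). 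For $p\in\{2,3\}$: since $2-5,3-5\in\ol\Gamma(G)$, every order-$5$ element of $M_{11}$ must act fixed-point-freely on $V_p$ (lifting an order-$5$ element of $M_{11}$ to one acting as it does on $V_p$ via Schur--Zassenhaus, a fixed point gives an element of order $5p$); but the order-$5$ class of $M_{11}$ is rational, and inspecting the $2$- and $3$-modular Brauer character values of $M_{11}$ at this class shows every $\F_2M_{11}$- and every $\F_3M_{11}$-module has a nonzero fixed space for each order-$5$ element — a contradiction. So $N_2=N_3=1$. For $p=11$: a faithful action of $M_{11}$ on the $\F_{11}$-space $V_{11}$ cannot have every involution fixed-point-free, since over $\F_{11}$ an involution is fixed-point-free only if it acts as $-I$, and faithfulness permits at most one such element while $M_{11}$ has many involutions; so some involution has a fixed point on $V_{11}$, and lifting via Schur--Zassenhaus gives $G$ an element of order $22$. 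Similarly no nonzero $\F_{11}$-module can have all four order-$3$ subgroups of a Sylow $3$-subgroup $C_3\times C_3$ act fixed-point-freely (those four subgroups partition the nontrivial elements, and every nontrivial irreducible constituent is trivial on its kernel $C_3$), so some order-$3$ element has a fixed point and $G$ has an element of order $33$. Thus $2-11,3-11\notin\ol\Gamma(G)$, contradicting that $\ol\Gamma(G)$ contains $\{2,5,11\}$ or $\{3,5,11\}$. Hence $N_{11}=1$ too, $N=1$, $G\cong M_{11}$, and $2-11,3-11\in\ol\Gamma(G)=\ol\Gamma(M_{11})$.

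The main obstacle I anticipate is the modular-representation input for $p=2,3$: one needs the $2$- and $3$-modular Brauer character tables of $M_{11}$ in enough detail to conclude that no nonzero module in these characteristics has an order-$5$ element acting without fixed points (precisely the kind of fact the paper records in its appendix tables and verifies in GAP). A secondary point needing care is the bookkeeping in the Gruenberg--Kegel step — confirming that non-solvability genuinely rules out the Frobenius and $2$-Frobenius cases and that the nilpotent normal subgroup produced is the radical $N$ — together with the repeated use of coprime Schur--Zassenhaus liftings to turn "some group element fixes a nonzero vector of $V_p$" into "$G$ has an element of order $5p$, $2\cdot 11$, or $3\cdot 11$".
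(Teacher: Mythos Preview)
Your approach is correct and takes a genuinely different route from the paper. You use the fact that $5$ becomes an isolated vertex in $\Gamma(G)$ (after passing to a Hall $\{2,3,5,11\}$-subgroup, which is legitimate since $G$ is $\{2,3,5,11\}$-separable and hence every $\{2,3,5,11\}$-subgroup lies in a Hall subgroup) to invoke the Gruenberg--Kegel/Williams structure theorem, forcing the solvable radical $N$ to be nilpotent; you then kill $N_2$ and $N_3$ via Brauer-character computations showing that order-$5$ elements of $M_{11}$ always fix a nonzero vector in characteristic $2$ and $3$, and kill $N_{11}$ by the clean linear-algebra observation that over $\F_{11}$ a fixed-point-free involution must be $-I$ (and the analogous argument for the $C_3\times C_3$ Sylow). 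The paper never invokes Gruenberg--Kegel or modular characters: it shows $11\nmid|N|$ by passing to the subgroup $N.A_6$ (using $A_6\leq M_{11}$) and quoting the $A_6$-solvable classification, and then rules out $2\mid|N|$ (and symmetrically $3\mid|N|$) by locating the Frobenius subgroup $C_{11}\rtimes C_5\leq M_{11}$ inside a chief-series section $V.W.M_{11}$ and applying \Cref{truelizard} (Flavell's theorem) to produce an element of order $10$, contradicting $2-5\in\ol\Gamma(G)$. Your argument is self-contained and independent of the Section~\ref{section:preliminaries} machinery, but needs the $2$- and $3$-modular Brauer tables of $M_{11}$ as input (which you rightly flag); the paper's argument stays entirely within the ordinary-character framework used elsewhere and recycles \Cref{truelizard}, at the cost of relying on that deeper result. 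One small omission: before writing $N=N_2\times N_3\times N_{11}$ you should note that $5\nmid|N|$; this is part of the conclusion of Williams' theorem (the nilpotent kernel is a $\pi_1$-group for the component $\pi_1$ containing $2$), or alternatively a central order-$5$ element would connect $5$ to every other prime.
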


\begin{proof}
    Since $\Out(M_{11}) = 1$, \Cref{N.M} shows that $G \cong N.M_{11}$ for some solvable group $N$. To show that $11 \ndivides |N|$, let us assume the contrary. Note that $M_{11}$ contains an isomorphic copy of $A_6$, and so $G$ has a subgroup $K \cong N.A_6$ satisfying $\ol \Gamma(G) \subseteq \ol \Gamma(K)$. Because $\ol \Gamma(G)$ contains a triangle, \cite[Corollary 4.3]{2022} forces it to have the vertices $\{2,3,5\},$ contradicting the fact that $2-3 \notin \ol \Gamma(M_{11})$. Hence $11 \ndivides |N|$, as claimed. Then, if $2,3 \ndivides |N|$, then the fact that $2-11,3-11 \in \ol \Gamma(M_{11})$ implies the same for $\ol \Gamma(G)$, as needed. Thus we can assume either $2 \divides |N|$ or $3 \divides |N|$. The remainder of the proof in each case is identical, so we only consider the case $2 \divides |N|.$

    By considering a chief series of $G$, we can find a section $L= V.W.M_{11}$ where $V$ is a nontrivial elementary abelian $2$-group and $W$ is a solvable $2'$-group. Let $R$ be a cyclic subgroup of $M_{11}$ of order 11. If $5 \divides |W|$, then since $R$ acts coprimely on $W$, there exists an $R$-invariant Sylow 5-subgroup $S$ of $W$. Now \Cref{Ms} shows that $5-11 \in \ol \Gamma(G)$, so $R$ in fact must act nontrivially on $S$. Since $M_{11}$ contains a Frobenius subgroup $C_{11} \rtimes C_5$, it follows that
    the group $SR.C_5$ is a 2-Frobenius group. Therefore,
    $SR$ has a complement $Q \cong C_5$,
    and $RQ$ is a Frobenius group acting faithfully on $V$.
    By \Cref{truelizard}, it follows that $2-5 \notin \ol \Gamma(G)$, a contradiction. Hence $5 \ndivides |W|$. 
    
    Since $M_{11}$ contains a Frobenius subgroup $C_{11} \rtimes C_5$, $L$ has a subgroup $L_0 \cong V.W.C_{11}.C_5,$ which, by Schur-Zassenhaus, splits as $(V.W.C_{11}) \rtimes C_5.$ Because $C_5$ acts Frobeniusly on the quotient $C_{11}$ in this case, it acts nontrivially on a $C_5$-invariant Sylow 11-subgroup of $W.C_{11}$ (this is all possible because the action is coprime). By \Cref{truelizard}, $2-5 \notin \ol \Gamma(G)$, a contradiction.
    %First, we will show that $5 \ndivides |N|$ and $2,3 \divides |N|$. Suppose for a contradiction that $5 \divides |N|$. By considering a chief series for $G$, we can find a section $L=P.W.M_11$, where $P$ is an elementary abelian $5$-group and $W$ is a solvable group. Because the Sylow 2-subgroups of $M_11$ do not satisfy the Frobenius criterion, the Sylow 2-subgroups of $L$ are not cyclic nor generalized quaternion and hence cannot act Frobeniusly on $P$. Consequently, $L$ contains an element of order $10$, and it follows that $2-5 \notin \ol \Gamma(G)$, a contradiction. This proves $5 \ndivides |N|$. If $3 \ndivides |N|$ as well, then the fact that $3-5 \in \ol \Gamma(G)$ implies 
\end{proof}

In the above proof, it is worth emphasizing the necessity of \Cref{N.M} as opposed to the usual \Cref{2.4 2022}. If we instead took an arbitrary subgroup $K \cong N.T$ with $\pi(G) = \pi(K)$, we would not have been able to reduce to the case that $2$ or $3$ divides $|N|$. 

We now provide a classification theorem for the groups $M_{11}$ and $M_{12}$. Since $\ol \Gamma(M_{11})$ is the complete graph minus one edge, the result for $M_{11}$ is reminiscent of that of $A_7$.

\begin{theorem}
     Let $\Gamma$ be an unlabeled simple graph. Then $\Gamma$ is isomorphic to the prime graph of an $M_{11}$-solvable group if and only if one of the following is satisfied:
    \begin{enumerate}
    \item[(1)] $\ol \Gamma$ is triangle-free and 3-colorable.
    \item[(2)] There exist vertices $a,b,c,d$ in $\ol \Gamma$ and a $3$-coloring for which all vertices adjacent to but not included in $\{a,b,c,d\}$ have the same color. Additionally, one of the following holds:
    \begin{enumerate}
        \item[(2.1)] $\ol\Gamma$ contains exactly two triangles $\{a,b,c\}$ and $\{b,c,d\}.$ All edges incident to $a,b,d$ are included in these two triangles.
        \item[(2.2)] $\ol\Gamma$ contains exactly one triangle $\{a,b,c\}$ and a vertex $d$ adjacent only to vertex $c.$ Vertices $a$ and $b$ are not adjacent to any other vertices.
        \item[(2.3)] $\ol\Gamma$ contains exactly one triangle $\{a,b,c\}$ and an isolated vertex $d.$  All edges incident to $a,b$ are included in this triangle.
    \end{enumerate}
    \end{enumerate} 
\end{theorem}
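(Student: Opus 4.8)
The plan is to mirror the structure of the proof of \Cref{a7bigproof}, since $\ol\Gamma(M_{11})$ is, like $\ol\Gamma(A_7)$, the complete graph on four vertices minus one edge (here $2-3$). For the forward direction, suppose $\Gamma = \Gamma(G)$ for an $M_{11}$-solvable group $G$; by \Cref{ThomasMichaelKeller} we may assume $G$ is strictly $M_{11}$-solvable, and by \Cref{Ms} we immediately get that $\ol\Gamma$ is $3$-colorable, that $2-p,3-p,5-p\notin\ol\Gamma$ for all $p\in\pi(G)\setminus\pi(M_{11})$, that every triangle of $\ol\Gamma$ lies in $\ol\Gamma(M_{11})$, and that $\ol\Gamma$ has a $3$-coloring in which all neighbors of $11$ outside $\pi(M_{11})$ share a color. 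If $\ol\Gamma$ is triangle-free we are in case (1), so assume it has a triangle. Since the only triangles available in $\ol\Gamma(M_{11})$ are $\{2,5,11\}$ and $\{3,5,11\}$, we have $5-11\in\ol\Gamma$ and at least one of $2-5,3-5\in\ol\Gamma$. The remaining work is to enumerate which edges among $\{2-5,\,3-5,\,2-11,\,3-11\}$ are present and match each configuration to (2.1), (2.2), or (2.3), taking $X=\{a,b,c,d\}$ to be $\{2,3,5,11\}$ (with $c=5$, $d=11$ in the one-triangle cases, say). Here \Cref{fancy} is the key input: if $2-5$ and $3-5$ are both present (and a triangle exists), then $2-11$ and $3-11$ are both present, which is exactly configuration (2.1) — both triangles $\{2,5,11\}$ and $\{3,5,11\}$, with $2,3$ having no edges outside $\pi(M_{11})$ by \Cref{Ms}. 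If exactly one of $2-5$, $3-5$ is present, say $2-5$, then $\{3,5,11\}$ is not a triangle so $\ol\Gamma$ has exactly one triangle $\{2,5,11\}$; whether $3-11$ is present or not then gives (2.2) (with $d=3$ adjacent only to $c=11$) or (2.3) (with $3$ isolated). In all cases the color condition on $N(X)\setminus X$ follows from the $3$-coloring supplied by \Cref{Ms}, since $2,3,5$ have no neighbors outside $\pi(M_{11})$.

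For the backward direction, case (1) is \Cref{ThomasMichaelKeller}. For cases (2.1)--(2.3) the tool is \Cref{graphlizard}, applied with $T=M_{11}$ and $X=\{a,b,c,d\}$, using the representation-theoretic data for $M_{11}$ in \Cref{table:3} (note $\Out(M_{11})=1$, so $E$ can be taken to be $M_{11}$ itself, which is the needed extension with $\pi(E)=\pi(T)$; for (2.3) one can instead argue as in the $(2.5)$ case of \Cref{a7bigproof}, building the one-triangle-plus-isolated-vertex graph from a (2.2)-type graph by taking a direct product with a cyclic group). Concretely: for (2.1), identify $\{a,b,c,d\}$ with $\{2,3,5,11\}$ via the graph isomorphism onto $\ol\Gamma(M_{11})$ and observe the induced subgraph is already all of $\ol\Gamma(M_{11})$ so no vertices of $N(X)\setminus X$ exist; for (2.2), identify $a,b\mapsto 2,3$, $c\mapsto 11$, $d\mapsto 5$ (so that $d$ is adjacent only to $c$, matching the $5-11$ edge) and for each $v\in N(X)\setminus X$ pick the irreducible representation of $M_{11}$ in which order-$2,3,5$ elements have fixed points but order-$11$ elements do not (this is the representation that forces $v$ adjacent only to $11$); verify these satisfy the hypotheses of \Cref{graphlizard}. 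For (2.3), build $\ol\Gamma_0$ from $\ol\Gamma$ by adding edges $d-v$ whenever $c-v$ exists, observe $\ol\Gamma_0$ satisfies (2.2), obtain $G$ with $\Gamma(G)\cong\Gamma_0$, and then $\Gamma(G\times C_d)\cong\Gamma$ where $d$ is the relevant prime.

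I expect the main obstacle to be the bookkeeping in the backward direction: one must confirm that for each of (2.1)--(2.3) the required extension $E$ of $M_{11}$ by a solvable group with $\pi(E)=\pi(M_{11})$ exists realizing the correct induced subgraph on $X$, and that \Cref{table:3} supplies, for every neighborhood pattern of a vertex $v\in N(X)\setminus X$ that the hypothesis permits, a complex irreducible representation of $E$ with exactly the right fixed-point behavior. Since \Cref{Ms} already forces all such $v$ to be adjacent to $11$ and to none of $2,3,5$, there is essentially only one pattern to handle, so this should reduce to a single table lookup — but it must be checked that the representation in question exists and that its fixed-point data for all four primes is the one claimed. For $M_{12}$, the same argument runs with the additional relation $\ol\Gamma(M_{12})=\ol\Gamma(M_{11})\setminus\{2-5\}$, which eliminates the possibility of the triangle $\{2,5,11\}$ and hence collapses the case analysis further; the corresponding classification theorem (stated for $M_{12}$ alongside $M_{11}$) would have a strictly simpler list of configurations, and I would handle it by the same forward/backward template, again citing \Cref{Ms}, \Cref{N.M} or \Cref{2.4 2022}, and \Cref{graphlizard}.
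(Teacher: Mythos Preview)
Your forward direction is essentially the paper's argument and is correct: reduce via \Cref{Ms} to the edges inside $\pi(M_{11})$, then use \Cref{fancy} to force (2.1) when both $2-5$ and $3-5$ are present, and otherwise sort the single-triangle possibilities into (2.2) or (2.3). (Your parenthetical ``with $c=5$, $d=11$'' conflicts with your later, correct, labeling $c=11$; this is only a cosmetic inconsistency.)

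The backward direction has a genuine gap in case (2.2). You assert that since $\Out(M_{11})=1$ one may take $E=M_{11}$ in \Cref{graphlizard}, but \Cref{graphlizard} requires a graph isomorphism from the subgraph of $\ol\Gamma$ induced by $X$ onto $\ol\Gamma(E)$. In (2.2) that induced subgraph is a triangle with a pendant edge (four edges total), whereas $\ol\Gamma(M_{11})$ is $K_4$ minus the $2$--$3$ edge (five edges, two triangles), so no such isomorphism exists. Your explicit assignment $a,b\mapsto 2,3$ fails immediately because $a$--$b$ is a triangle edge while $2$--$3\notin\ol\Gamma(M_{11})$, and no relabeling helps: any $E$ with $\ol\Gamma(E)\cong\ol\Gamma(M_{11})$ has the wrong shape. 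The paper resolves this by taking $E=3^5.M_{11}$ (the perfect group of order $1924560$), an extension of $M_{11}$ by an elementary abelian $3$-group in which an element of order $15$ appears, killing the $3$--$5$ edge and leaving exactly the spoon graph with triangle $\{2,5,11\}$ and pendant $3$--$11$. This is the missing ingredient, and it is not a mere bookkeeping check: one has to know (or find) such an extension.

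A smaller oversight: in (2.1) you claim $N(X)\setminus X=\emptyset$, but condition (2.1) restricts only the edges at $a,b,d$; the vertex $c$ (which corresponds to $11$) may have neighbors outside $X$. The paper handles these neighbors via the same representation of $M_{11}$ with fixed-point pattern $(2,3,5)$ that you invoke for (2.2), so the fix is easy, but as written your (2.1) argument is incomplete. Your treatment of (2.3) via a direct product with a cyclic group is correct and matches the paper.
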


\begin{proof}
    For the forward direction, let $\Gamma$ be the prime graph of an $M_{11}$-solvable group $G$. We may assume $G$ is strictly $M_{11}$-solvable, otherwise (1) holds by \Cref{ThomasMichaelKeller}. Now, \Cref{Ms} implies the following. First, we have $2-p, 3-p, 5-p \notin \ol \Gamma$ for all $p \in \pi(G) \setminus \pi(T)$. Moreover, $\ol \Gamma$ has a 3-coloring such that all neighbors of 11 outside of $\pi(M_{11})$ have the same color, and $\ol\Gamma$ contains at most two triangles, which must be $\{2,5,11\}$ or $\{3,5,11\}$ triangles if they exist (see \Cref{table:5}). We may assume that at least one of these triangles exist, otherwise (1) holds.

    In what follows, we make constant use of the fact that $2-3 \notin \ol \Gamma(M_{11})$, which implies $2-3 \notin \ol \Gamma$. If $2-5,3-5 \in \ol \Gamma$, then \Cref{fancy} implies that both a $\{2,5,11\}$-triangle and a $\{3,5,11\}$-triangle exist, so (2.1) holds. If $2-5 \in \ol \Gamma$ but $3-5 \notin \ol \Gamma$, then $\ol \Gamma$ must contain a $\{2,5,11\}$-triangle. From there, if $3-11 \in \ol \Gamma$, then (2.2) holds, whereas if $3-11 \notin \ol \Gamma$, then (2.3) holds. Similarly, if $2-5 \notin \ol \Gamma$ but $3-5 \in \ol \Gamma$, then either (2.2) or (2.3) holds depending on if $2-11 \in \ol \Gamma$. Because we are assuming that at least one triangle exists, we have exhausted all cases of the forward direction.

    For the reverse direction, suppose $\Gamma$ is a graph satisfying (2) (if it satisfies (1) then we are done by the classification theorem for solvable groups). Then $\ol\Gamma\setminus\{a,b,c,d\}$ is $3$-colorable and triangle-free, so by the techniques of \cite[Theorem 2.8]{2015} we can find a solvable group $N$ with $(|N|,|M_{11}|)=1$ and $\ol\Gamma(N)=\ol\Gamma\setminus\{a,b,c,d\}.$ If (2.1) or (2.2) holds, then we are done according to an application of \Cref{graphlizard} with $E=M_{11}, 3^5.M_{11}$ respectively (specifically, $3^5.M_{11}$ is PerfectGroup(1924560,1) in the GAP library of perfect groups, see \cite{GAP4}). In each case, the graph isomorphism makes the assignments $a \mapsto 2$, $b \mapsto 5$, $c \mapsto 11$, and $d \mapsto 3$. See \Cref{table:3} as usual for representation-theoretic justification. On the other hand, if (2.3) holds, then by the previous argument we can find an $M_{11}$-solvable group $G$ such that $\ol \Gamma(G)$ is equal to $\ol \Gamma$ plus the additional edge $c-d.$ Now, we simply have $\ol \Gamma \cong \ol \Gamma(G \times C_d).$
\end{proof}

Because $\ol \Gamma(M_{12})$ contains only one triangle, the following classification is closer to the ones in the last couple subsections as opposed to $M_{11}$ or $A_7$.

\begin{theorem}
Let $\Gamma$ be an unlabeled simple graph. Then $\Gamma$ is isomorphic to the prime graph of a $M_{12}$-solvable group if and only if $\ol\Gamma$ is $3$-colorable and one of the following holds.

\begin{enumerate}
    \item[(1)] $\ol\Gamma$ is triangle-free.
    \item[(2)] $\ol\Gamma$ has exactly one triangle $\{a,b,c\},$  vertex $c$ is adjacent to exactly one other vertex $d,$ and $d$ is not adjacent to vertices other than $c.$  Vertices $a,b,c,d$ are not adjacent to any other vertices.
    \item[(3)] $\ol\Gamma$ has exactly one triangle $\{a,b,c\}$ and an isolated vertex $d$ such that $a,b$ are not adjacent to other vertices in $\ol\Gamma.$ Additionally, $\ol\Gamma$ has a $3$-coloring for which all neighbors of $c$ other than $a$ and $b$ have the same color.
\end{enumerate}
\end{theorem}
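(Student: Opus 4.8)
The plan is to mirror the arguments used for the other Mathieu and $K_4$ groups: \Cref{Ms} drives the forward direction and \Cref{graphlizard} the backward direction. Two facts I will lean on about extensions of $M_{12}$ are that $\ol\Gamma(M_{12})$ is the triangle $\{3,5,11\}$ together with the pendant edge $2-11$, whereas $\ol\Gamma(2.M_{12})$ is that same triangle together with the \emph{isolated} vertex $2$ (the preimage in $2.M_{12}$ of an order-$11$ element is cyclic of order $22$, which kills the $2-11$ edge, while neither $M_{12}$ nor $2.M_{12}$ has an element of order $15$, $33$, or $55$). Recall also that the perfect central extensions of $M_{12}$ are exactly $M_{12}$ and $2.M_{12}$, since $M(M_{12})\cong C_2$.

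For the forward direction, take $G$ strictly $M_{12}$-solvable (otherwise (1) holds by \Cref{ThomasMichaelKeller}). By \Cref{Ms}, $2-p,3-p,5-p\notin\ol\Gamma(G)$ for all $p\in\pi(G)\setminus\pi(M_{12})$, every triangle of $\ol\Gamma(G)$ lies in $\ol\Gamma(M_{12})$, and $\ol\Gamma(G)$ admits a $3$-coloring that is monochromatic on the neighbors of $11$ outside $\pi(M_{12})$. If $\ol\Gamma(G)$ is triangle-free, (1) holds; otherwise its unique triangle is $\{3,5,11\}$, the vertices $3,5$ have degree $2$, and $11$ is the only vertex of $\pi(M_{12})$ that can meet the rest of the graph. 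If $2-11\notin\ol\Gamma(G)$, then $2$ is isolated and $(a,b,c,d)=(3,5,11,2)$ gives (3), the coloring condition being exactly what \Cref{Ms} supplies. The crux is the case $2-11\in\ol\Gamma(G)$, where I must show $11$ has no neighbor outside $\pi(M_{12})$, so that $\{2,3,5,11\}$ spans an isolated triangle-with-pendant and (2) holds. Suppose toward a contradiction that $11-p\in\ol\Gamma(G)$ for some $p\notin\pi(M_{12})$. Since $11$ is special (odd and adjacent to $3$), non-Fermat, and coprime to $|M(M_{12})|=2$, \Cref{funny} produces a section $V\rtimes E$ (a semidirect product by Schur--Zassenhaus) with $V$ a nontrivial elementary abelian $p$-group and $E\cong M_{12}$ or $E\cong 2.M_{12}$. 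If $E\cong 2.M_{12}$, then $E$, and hence $V\rtimes E$, has an element of order $22$, so $2-11\notin\ol\Gamma(G)$, a contradiction. If $E\cong M_{12}$, then by \Cref{table:3} every complex irreducible $M_{12}$-module has an order-$11$ element fixing a nonzero vector, hence so does every nonzero complex module, so \Cref{reps} gives an element of order $11p$ in $V\rtimes M_{12}$ and thus $11-p\notin\ol\Gamma(G)$, again a contradiction. This settles the forward direction.

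For the backward direction, suppose $\ol\Gamma$ is $3$-colorable and satisfies (1), (2), or (3). Case (1) is \Cref{ThomasMichaelKeller}. For (2), the graph $\ol\Gamma\setminus\{a,b,c,d\}$ is triangle-free and $3$-colorable, so \cite[Theorem 2.8]{2015} yields a solvable $N$ with $(|N|,|M_{12}|)=1$ and $\ol\Gamma(N)=\ol\Gamma\setminus\{a,b,c,d\}$; then $\Gamma(N\times M_{12})\cong\Gamma$ because $\ol\Gamma(M_{12})$ is precisely the triangle-with-pendant on $\{3,5,11,2\}$ (equivalently, one applies \Cref{graphlizard} with $E=M_{12}$, $X=\{a,b,c,d\}$, and $\varphi\colon a,b,c,d\mapsto 3,5,11,2$, the hypothesis on $N(X)\setminus X$ being vacuous). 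For (3), I apply \Cref{graphlizard} with $E=2.M_{12}$, $X=\{a,b,c,d\}$, and the same $\varphi$: the induced subgraph on $X$ matches $\ol\Gamma(2.M_{12})$ (triangle plus isolated vertex), $\ol\Gamma\setminus X$ is triangle-free, the hypothesized $3$-coloring is monochromatic on $N(X)\setminus X$ as required, and each $v\in N(X)\setminus X$ is adjacent to $c=11$ only, for which \Cref{table:3} provides a complex irreducible representation of $2.M_{12}$ in which order-$11$ elements act fixed-point-freely while order-$2$, $3$, and $5$ elements fix a vector. Hence $\Gamma(G)\cong\Gamma$ for the $M_{12}$-solvable $G$ produced by \Cref{graphlizard}.

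The main obstacle is exactly the forward-direction claim highlighted above: it goes through only because the two perfect central extensions of $M_{12}$ behave oppositely --- $2.M_{12}$ carries an element of order $22$, which rules it out the instant $2-11\in\ol\Gamma(G)$, whereas $M_{12}$ itself forces every order-$11$ element to fix a vector in every module, which rules out $11-p\in\ol\Gamma(G)$ --- so both branches of \Cref{funny} close off simultaneously. I also expect some care to be needed in confirming (in GAP, as in the computational section) the representation-theoretic entries of \Cref{table:3}, notably the $2.M_{12}$-representation used in case (3), and in verifying the element-order data for $M_{12}$ and $2.M_{12}$ underlying the recollections in the first paragraph.
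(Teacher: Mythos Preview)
Your proof is correct and largely parallels the paper's, but the forward-direction argument when $2-11\in\ol\Gamma(G)$ diverges. The paper dispatches this case in one stroke with \Cref{Thanks TMK}: since the $\pi(M_{12})$-subgraph then has no isolated vertices, $11$ is special, and every complex irreducible $M_{12}$-module has an order-$11$ element with a fixed point (\Cref{table:3}), that proposition yields $11-p\notin\ol\Gamma(G)$ directly. You instead invoke \Cref{funny} and split on whether the resulting perfect central extension is $M_{12}$ or $2.M_{12}$, using the order-$22$ element in the latter to contradict $2-11\in\ol\Gamma(G)$. Both routes work; yours is a bit more hands-on and makes the role of the Schur cover explicit, while the paper's is shorter. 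Note that the hypotheses ``special'' and ``non-Fermat'' you list before citing \Cref{funny} are not needed there (that proposition only requires $r$ odd and $(r,|M(T)|)=1$); they are hypotheses of \Cref{Thanks TMK} and \Cref{truestlizard}, so you may want to trim them. The backward direction is essentially identical to the paper's.
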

\begin{figure}[h!]
	\centering
	\begin{tikzpicture}
		\tikzset{myarrow/.style={postaction={decorate},
				decoration={markings,% switch on markings
					mark=at position #1 with {\arrow{Stealth[scale=1.3,angle'=45]},semithick},
		}}}
		
		\tikzset{my2arrow/.style={postaction={decorate},
				decoration={markings,% switch on markings
					mark=at position #1 with {\arrow{Stealth[scale=1.3,angle'=45]},semithick},
		}}}
		
		\node (5) at (0, 0) {$\bullet$};
		\node (x) at (-1.7, .9) {$\bullet$};
		\node (11) at (0, 1.8) {$\bullet$};
		\node (y) at (1.7,.9) {$\bullet$};
		
		\node (r1) at (-5, -2) {$\bullet$};
		\node (r2) at (-3, -2) {$\bullet$};
		\node (r3) at (-1, -2) {$\bullet$};
		\node (r4) at (1, -2) {$\bullet$};
		\node (r5) at (3, -2) {$\bullet$};
		\node (r6) at (5, -2) {$\bullet$};
		
		\node (q2) at (-3, -4) {$\bullet$};
		\node (q3) at (-1, -4) {$\bullet$};
		\node (q4) at (1, -4) {$\bullet$};
		\node (q5) at (3, -4) {$\bullet$};
		
		\node (p3) at (-1, -6) {$\bullet$};
		\node (p4) at (1, -6) {$\bullet$};
		\node (p5) at (3, -6) {$\bullet$};

		\draw(5) -- (11);
		\draw (5) -- (x);
		\draw (11) -- (x);
		
		\draw [my2arrow=.6] (5) -- (r2);
		\draw [my2arrow=.6] (5) -- (r4);
        \draw [my2arrow=.6] 
        (5) -- (r5);
		\draw [my2arrow=.6] (5) -- (r6);
		
		\draw [my2arrow=.6] (q2) -- (r3);
		
		\draw [my2arrow=.6](q3) -- (r3);
		
		\draw [my2arrow=.6](p3) -- (q4);
		
		\draw [my2arrow=.6](q3) -- (r4);
		
		\draw [my2arrow= .6](p3) -- (q2);
        \draw [my2arrow= .6]
        (q4) -- (r4);
        \draw [my2arrow= .6]
        (q5) -- (r6);
        \draw [my2arrow= .6]
        (p4) -- (q5);
        \draw [my2arrow= .6]
        (p3) -- (q3);
        \draw [my2arrow= .6]
        (p4) -- (q4);
        \draw [my2arrow= .6]
        (p4) -- (r5);
        \draw [my2arrow= .6]
        (p5) -- (q5);
        \draw [my2arrow= .6]
        (q5) -- (r4);

		\draw[fill=red!20!white] (r1) circle (0.35);
		\draw[fill=red!20!white] (r2) circle (0.35);
		\draw[fill=red!20!white] (r3) circle (0.35);
		\draw[fill=red!20!white] (r4) circle (0.35);
		\draw[fill=red!20!white] (r5) circle (0.35);
		\draw[fill=red!20!white] (r6) circle (0.35);
		
		\draw[fill=green!20!white] (q2) circle (0.35);
		\draw[fill=green!20!white] (q3) circle (0.35);
		\draw[fill=green!20!white] (q4) circle (0.35);
		\draw[fill=green!20!white] (q5) circle (0.35);

		\draw[fill=blue!20!white] (p3) circle (0.35);
		\draw[fill=blue!20!white] (p4) circle (0.35);
		\draw[fill=blue!20!white] (p5) circle (0.35);
		
		\draw[fill=green!20!white] (5) circle (0.35);
		\draw[fill=blue!20!white] (11) circle (0.35);
		\draw[fill=red!20!white] (x) circle (0.35);
		\draw[fill=red!20!white](y) circle (0.35);
		
		\node at (x) {$x$};
		\node at (11) {$11$};
		\node at (5) {$5$};
		\node at (y) {$y$};
		
		\node at (r1) {};
		\node at (r2) {};
		\node at (r3) {};
		\node at (r4) {};
		\node at (r6) {};
		
		\node at (q2) {};
		\node at (q3) {};
		\node at (q4) {};
		
		\node at (p3) {};
		\node at (p4) {};
            \node at (p5) {};
		
		\node at (6.25, -2) {$\cal{I}$};
		\node at (6.25, -4) {$\cal{D}$};
		\node at (6.25, -6) {$\cal{O}$};
		\node at (-6.25, -2) {$\;$};
		
	\end{tikzpicture}
	%\caption{If $G$ is a pseudo $\PSL(2, 7)$-solvable group, and if $\graphcomp(G)$ contains a triangle, then $\graphcomp(G)$ follows this general structure. $\graphcomp(G)$ contains only one triangle, the vertex $7$ is only connected to red vertices, and the graph is $3$-colorable.}
	\caption{This graph is simultaneously the prime graph complement of an $M_{11}$-solvable group ($x \mapsto 2, y \mapsto 3$) and an $M_{12}$-solvable group ($x \mapsto 3, y \mapsto 2$), with partial orientation and $3$-coloring as in the proof of \Cref{easy3color}.}
	\label{fig:M_construction}
\end{figure}

\begin{proof}
For the forward direction, assume that $\Gamma$ is the prime graph of an $M_{12}$-solvable group $G.$ By \Cref{Ms}, we have $2-p, 3-p, 5-p \notin \ol \Gamma$ for all $p \in \pi(G) \setminus \pi(T)$. In addition, $\ol \Gamma$ has a 3-coloring such that all neighbors of 11 outside of $\pi(T)$ have the same color, and $\ol \Gamma$ contains at most one triangle $\{3,5,11\}$. If this triangle does not exist, then (1) holds. Assume then that $\ol \Gamma$ possesses a $\{3,5,11\}$-triangle. In the case that $2-11\in \ol\Gamma,$ \Cref{Thanks TMK} shows that $11-p\notin \ol\Gamma(G)$ for $p\in \pi(G) \setminus \pi(T).$ Then, since $2-3,2-5 \notin \ol \Gamma(M_{12})$, we see that $\ol\Gamma$ satisfies (2). If, on the other hand, $2-11\notin \ol\Gamma,$ then (3) holds, completing the forward direction.

We now proceed with the backward direction. If $\Gamma$ satisfies (1), then we are done by \Cref{ThomasMichaelKeller}. If $\Gamma$ satisfies (2) or (3), then $\ol\Gamma\setminus\{a,b,c,d\}$ is $3$-colorable and triangle-free, so using techniques from \cite[Theorem 2.8]{2015} we can find a solvable group $N$ with $(|N|,|M_{12}|)=1$ and $\ol\Gamma(N)=\ol\Gamma\setminus\{a,b,c,d\}.$ If (2) holds, then $\ol\Gamma\cong \ol\Gamma(N\times T).$ Next, suppose $\Gamma$ satisfies (3) and let $X=\{a,b,c,d\}.$ We proceed with a straightforward application of \Cref{graphlizard} and information from \Cref{table:3}. Making assignments $a\mapsto 3,$ $b\mapsto 5,$ $c\mapsto 11,$ $d\mapsto 2$ gives a graph isomorphism from the subgraph of $\ol\Gamma$ induced by $X$ to $\vv\Gamma(E)$ where $E$ is the double cover $2.M_{12}.$ For each $v\in N(X)\setminus X,$ (3) states that $v$ is adjacent to $c$ but not $a,$ $b,$ or $d.$ Now consider the complex irreducible representations of $E$ described in \Cref{table:3}. Note that there is a representation in which order $2,3,$ and $5$ elements have fixed points but order $11$ elements do not. This representation obeys the conditions laid out in \Cref{graphlizard} with respect to our chosen graph isomorphism, so $\Gamma$ is the prime graph of an $M_{12}$-solvable group. This completes the proof.
\end{proof}

\subsection{The groups $G_2(3)$, $\PSL(3,4)$, and $U_4(3)$}

In this subsection we once again consider groups $T$ such that the subgraph of $\ol\Gamma(G)$ induced by $\pi(T)$ is a union of connected components. Unlike before, this induced subgraph will have more complicated structure.

First, we consider $G_2(3).$ It will be useful to note that $\ol \Gamma(G_2(3))$ is the complete graph on the vertices $\{2,3,7,13\}$ minus the edge $2-3$ (see \Cref{table:5}). In particular, the prime graph complement of a strictly $G_2(3)$-solvable group $G$ never contains the edge $2-3.$

\begin{lemma}\label{G2(3)}
    Let $G$ be a strictly $G_2(3)$-solvable group. Then $N(\pi(T)) \subseteq \pi(T)$. Moreover, $\ol \Gamma(G)$ is 3-colorable and has at most two triangles, which must be $\{2,7,13\}$- or $\{3,7,13\}$-triangles if they exist.
\end{lemma}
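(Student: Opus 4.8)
The plan is to first prove $N(\pi(T)) \subseteq \pi(T)$ by eliminating every edge of the form $r-p$ with $r \in \pi(T) = \{2,3,7,13\}$ and $p \in \pi(G)\setminus\pi(T)$, and then to read off the $3$-colorability and the triangle restriction from \Cref{easy3color} together with the explicit shape of $\ol\Gamma(G_2(3))$. Since $\Out(G_2(3)) \cong C_2$ and $2 \mid |G_2(3)|$, we have $\pi(T) = \pi(\Aut(T))$, so \Cref{2.4 2022} supplies a subgroup $K \cong N.T$ with $N$ solvable and $\pi(G) = \pi(K)$; thus the $r-p$ criteria of \Cref{section:preliminaries} apply to every $p \in \pi(G)\setminus\pi(T)$.

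First I would dispose of the primes $2$ and $3$. By \Cref{table:2}, the Sylow $2$- and Sylow $3$-subgroups of $G_2(3)$ (of orders $2^6$ and $3^6$) do not satisfy the Frobenius criterion, so \Cref{FC} gives $2-p, 3-p \notin \ol\Gamma(G)$ for all $p \in \pi(G)\setminus\pi(T)$. Next I would handle $r \in \{7, 13\}$ using \Cref{funny2}. The Schur multiplier $M(G_2(3))$ is a $3$-group (in fact $M(G_2(3)) \cong C_3$), so $(7, |M(T)|) = (13, |M(T)|) = 1$ and the only perfect central extensions of $G_2(3)$ are $G_2(3)$ itself and its triple cover $3.G_2(3)$. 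According to \Cref{table:3}, in every complex irreducible representation of each of these two groups some element of order $7$ has a fixed point, and likewise some element of order $13$ has a fixed point. Applying \Cref{funny2} once with $r = 7$ and once with $r = 13$ then yields $7-p, 13-p \notin \ol\Gamma(G)$ for all $p \in \pi(G)\setminus\pi(T)$. Altogether, no vertex of $\pi(T)$ is adjacent to a vertex outside $\pi(T)$, which is exactly the assertion $N(\pi(T)) \subseteq \pi(T)$.

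With this established, the $\pi(T)$-subgraph of $\ol\Gamma(G)$ does not connect to the remainder of the graph, so \Cref{easy3color} applies. Because $T = G_2(3)$ occurs as a section of $G$, the subgraph of $\ol\Gamma(G)$ induced by $\pi(T)$ is a subgraph of $\ol\Gamma(G_2(3))$, which by \Cref{table:5} is the complete graph on $\{2,3,7,13\}$ with the single edge $2-3$ removed. This graph is $3$-colorable --- give $2$ and $3$ the same color and $7$, $13$ the other two --- so \Cref{easy3color} tells us that $\ol\Gamma(G)$ is $3$-colorable and that every triangle of $\ol\Gamma(G)$ already lies inside this induced subgraph. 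Finally I would enumerate the triangles of $\ol\Gamma(G_2(3))$: the only $3$-element subsets of $\{2,3,7,13\}$ that span a triangle are $\{2,7,13\}$ and $\{3,7,13\}$, since $\{2,3,7\}$ and $\{2,3,13\}$ both miss the edge $2-3$. Hence $\ol\Gamma(G)$ has at most these two triangles.

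I expect the only part requiring genuine care to be the representation-theoretic claim that order $7$ and order $13$ elements always act with fixed points in the complex irreducible representations of $G_2(3)$ and of $3.G_2(3)$; this is the GAP computation tabulated in \Cref{table:3}. Everything else in the argument is either a direct invocation of the general machinery of \Cref{section:preliminaries} or an elementary inspection of a four-vertex graph, so the write-up should be short.
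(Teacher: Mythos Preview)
Your proposal is correct and follows essentially the same approach as the paper's proof: eliminate $2-p$ and $3-p$ via \Cref{FC}, eliminate $7-p$ and $13-p$ via \Cref{funny2} using that $|M(G_2(3))|=3$ and the fixed-point data from \Cref{table:3}, then invoke \Cref{easy3color} and inspect $\ol\Gamma(G_2(3))$. Your write-up includes a bit more detail than the paper's (explicitly verifying $\pi(T)=\pi(\Aut(T))$ and the $3$-colorability of the induced subgraph), but the argument is the same.
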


\begin{proof}
	Because the Sylow $2$- and $3$-subgroups of $G_2(3)$ do not satisfy the Frobenius criterion, \Cref{FC} gives $2-p, 3-p \notin \ol \Gamma(G)$ for all $p \in \pi(G) \setminus \pi(T)$. Furthermore, since $|M(G_2(3))| = 3$ and order 7 and 13 elements have fixed points in every complex irreducible representation of a perfect central extension of $G_2(3)$ (see \Cref{table:3}), \Cref{funny2} shows that $7-p, 13-p \notin \ol \Gamma(G)$ for all $p \in \pi(G) \setminus \pi(T)$. Then, we are done by \Cref{easy3color} and the fact that $\ol \Gamma(G_2(3))$ consists only of the triangles $\{2, 7, 13\}$ and $\{3, 7, 13\}$.
\end{proof}

\begin{theorem}
    Let $\Gamma$ be an unlabeled simple graph. Then $\Gamma$ is isomorphic to the prime graph of a $G_2(3)$-solvable group if and only if $\ol\Gamma$ is $3$-colorable and one of the following holds:
    \begin{enumerate}
        \item[(1)] $\ol \Gamma$ is triangle-free.
        \item[(2)] There exists a subset $X = \{a,b,c,d\} \subseteq V(\ol\Gamma)$ 
        such that $N(X) \subseteq X$, $\ol \Gamma \setminus X$ is triangle-free, and the subgraph induced by $X$ is not complete. %contains an induced subgraph $\Lambda$ on vertices $\{a,b,c,d\}$ such that $\Lambda$ is composed of isolated components in $\ol\Gamma$ is a subgraph of the complete graph minus one edge. Additionally $\ol\Gamma$ contains no triangles outside of $\Lambda.$
    \end{enumerate}
\end{theorem}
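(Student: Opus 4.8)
The plan is to prove the two implications separately. The forward direction is essentially a repackaging of \Cref{G2(3)}: everything needed has already been proved there, and it only remains to choose $X$ correctly. The reverse direction is the substantive part; it follows the template used repeatedly in this paper, namely realize the ``generic'' part of the graph by a solvable group via \Cref{ThomasMichaelKeller}, realize the $\pi(G_2(3))$-part by a $G_2(3)$-solvable group on the four primes $\{2,3,7,13\}$, and glue the two together by a direct product (equivalently, invoke \Cref{graphlizard}, which applies with almost no hypotheses in this situation).

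\textbf{Forward direction.} Suppose $\Gamma\cong\Gamma(G)$ for a $G_2(3)$-solvable group $G$. If $G$ is not strictly $G_2(3)$-solvable then $G$ is solvable, so \Cref{ThomasMichaelKeller} gives that $\ol\Gamma$ is triangle-free and $3$-colorable, which is (1). Otherwise $G$ is strictly $G_2(3)$-solvable, and \Cref{G2(3)} yields that $\ol\Gamma$ is $3$-colorable, that $N(\pi(G_2(3)))\subseteq\pi(G_2(3))$, and that every triangle of $\ol\Gamma$ lies in $\{2,7,13\}$ or $\{3,7,13\}$. If $\ol\Gamma$ is triangle-free we are in case (1). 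Otherwise put $X=\pi(G_2(3))=\{2,3,7,13\}$, a four-element subset of $V(\ol\Gamma)$ since $G_2(3)$ is a composition factor. Then $N(X)\subseteq X$ and $\ol\Gamma\setminus X$ is triangle-free by the above, and since $G_2(3)$ is a section of $G$ containing an element of order $6$, the edge $2-3$ is missing from $\ol\Gamma$; hence the subgraph induced by $X$ is not $K_4$. So (2) holds.

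\textbf{Reverse direction.} If $\ol\Gamma$ is triangle-free and $3$-colorable, apply \Cref{ThomasMichaelKeller} to get a solvable (hence $G_2(3)$-solvable) group realizing $\Gamma$. So assume (2) holds for some $X=\{a,b,c,d\}$. Because $N(X)\subseteq X$, the graph $\ol\Gamma$ is the disjoint union, with no edges in between, of the induced subgraph $H$ on $X$ and the graph $\Delta:=\ol\Gamma\setminus X$, which is triangle-free and (being an induced subgraph of a $3$-colorable graph) $3$-colorable. Using the construction of \cite[Theorem 2.8]{2015}, I would produce a solvable group $N_0$ with $\pi(N_0)\cap\{2,3,7,13\}=\emptyset$ and $\ol\Gamma(N_0)\cong\Delta$. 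Separately, I claim there is a $G_2(3)$-solvable group $E$ with $\pi(E)=\{2,3,7,13\}$ and $\ol\Gamma(E)\cong H$. Granting the claim, $G:=E\times N_0$ is $G_2(3)$-solvable and, since the two prime sets are coprime, $\ol\Gamma(G)=\ol\Gamma(E)\sqcup\ol\Gamma(N_0)=H\sqcup\Delta\cong\ol\Gamma$; alternatively one may phrase this as an application of \Cref{graphlizard} with $T=G_2(3)$ and this $X$, where condition (1) is the isomorphism $H\cong\ol\Gamma(E)$ and condition (2) is vacuous because $N(X)\setminus X=\emptyset$.

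\textbf{The claim, and the main obstacle.} It remains to realize $H$. Since $H$ is a four-vertex graph that is not complete, it embeds into $K_4$ minus an edge, which is precisely $\ol\Gamma(G_2(3))$ (``the diamond''); choosing the isomorphism so that the missing edge becomes $2-3$, it suffices to realize every subgraph of the diamond as $\ol\Gamma(E)$ for a $G_2(3)$-solvable $E$ on the primes $\{2,3,7,13\}$. Many cases are cheap: for a solvable group $A$ with $\pi(A)\subseteq\{2,3,7,13\}$ one checks that $\ol\Gamma(G_2(3)\times A)$ is the diamond with all edges meeting $\pi(A)$ deleted, which produces (up to isomorphism) the diamond, a triangle with an isolated vertex, a path $P_3$ with an isolated vertex, a single edge with two isolated vertices, and the empty graph. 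The remaining isomorphism types — the $4$-cycle, the ``paw'', the path $P_4$, the claw $K_{1,3}$, and the two-edge matching — are not obtained this way and require explicit extensions of $G_2(3)$ by solvable groups, which I would search for in GAP \cite{GAP4}, using the representation-theoretic data of \Cref{table:3} to control exactly which elements of order $pq$ appear (exactly as in the analogous steps for $A_7$ and the Mathieu groups). I expect this finite realizability check to be the hardest part of the proof and the one most dependent on computation — in particular the cases requiring an element of order $7\cdot 13$ to be created while no element of order $14$ or $26$ is introduced; if some of these targets turn out not to be achievable, condition (2) would instead have to list the achievable induced subgraphs explicitly, in the style of the $A_7$ theorem. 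Once all subgraphs of the diamond are realized, the reverse direction, and hence the theorem, follows.
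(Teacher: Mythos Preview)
Your forward direction is correct and essentially matches the paper's. The reverse direction has the right architecture (realize $X$ and $\ol\Gamma\setminus X$ separately and take a direct product), but you have overcomplicated the casework to the point of leaving the proof unfinished.

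The key observation you are missing is this: once you have dispatched case~(1), you may assume $\ol\Gamma$ contains a triangle. Since $N(X)\subseteq X$ and $\ol\Gamma\setminus X$ is triangle-free, every triangle of $\ol\Gamma$ lies inside $X$. Hence the induced subgraph $H$ on $X$ contains a triangle and is not $K_4$. Up to isomorphism there are only \emph{three} such four-vertex graphs: the diamond $K_4\setminus\{e\}$, the paw (a triangle with a pendant edge), and a triangle together with an isolated vertex. All of your other ``remaining'' cases (the $4$-cycle, $P_4$, $K_{1,3}$, the matching) are triangle-free and therefore already covered by case~(1); you do not need to realize them via a $G_2(3)$-solvable group on $\{2,3,7,13\}$ at all.

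For the three genuine cases, no GAP search is needed. You already have the diamond ($E=G_2(3)$) and the triangle with an isolated vertex ($E=G_2(3)\times C_2$ or $G_2(3)\times C_3$). For the paw, the paper simply takes $E=\Aut(G_2(3))=G_2(3).2$: the outer automorphism introduces an element of order $14$, so the $2$--$7$ edge of $\ol\Gamma(G_2(3))$ disappears and $\ol\Gamma(E)$ becomes the triangle $\{3,7,13\}$ with the pendant edge $2$--$13$. With this single observation your proof is complete, and your speculation that condition~(2) might need to be refined is unfounded.
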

\begin{proof}
    We first prove the forward direction. Suppose that $\Gamma$ is the prime graph of a $G_2(3)$-solvable group $G$. We may assume that  $G$ is strictly $G_2(3)$-solvable (otherwise (1) holds by \Cref{ThomasMichaelKeller}), so that the conclusion of \Cref{G2(3)} holds. In particular, $N(\pi(T)) \subseteq \pi(T)$ and the subgraph induced by $\pi(G_2(3))$ is not complete because $\ol \Gamma(G_2(3))$ is not complete. This establishes (2).

    For the reverse direction, if $\ol \Gamma$ is triangle-free and $3$-colorable, then $\Gamma$ is the prime graph of a solvable group by \Cref{ThomasMichaelKeller}. Otherwise, $\Gamma$ satisfies $(2)$ and we note that $\ol\Gamma \setminus \{a,b,c,d\}$ is $3$-colorable and triangle-free. Therefore, using techniques from \cite[Theorem 2.8]{2015}, we can find a solvable group $N$ with $(|N|,|T|)=1$ and $\ol\Gamma(N)=\ol\Gamma \setminus \{a,b,c,d\}.$ Now we split into cases for the subgraph of $\ol\Gamma$ induced by $X$, which we call $\Lambda.$ If $\Lambda \cong K_4\setminus \{e\},$ we have that $\Gamma\cong \Gamma(N\times G_2(3)).$ If $\Lambda$ is a triangle connected to a vertex via a single edge, we have $\Gamma \cong \Gamma(N\times \Aut(G_2(3))).$ Finally if $\Lambda$ is a triangle with an isolated vertex, $\Gamma \cong \Gamma(N\times C_3 \times G_2(3)).$ As we have found in each case a group $G$ such that $\Gamma\cong \Gamma(G),$ the result is established.
\end{proof}

Next, we shift our focus to the groups $\PSL(3,4)$ and $U_4(3)$. It will be useful to note that $\ol \Gamma(\PSL(3,4))$ is the complete graph on the primes $\{2, 3, 5, 7\}$ and $\ol \Gamma(U_4(3)) = \ol \Gamma(\PSL(3,4)) \setminus \{2-3\}$ (see \Cref{table:5}). The proofs that $\pi(\PSL(3,4))$ and $\pi(U_4(3))$ induce isolated subgraphs in their respective prime graph complements are identical; they are both established by the following lemma.

\begin{lemma}\label{isolated}
    Let $T$ be $\PSL(3,4)$ or $U_4(3)$ and let $\Gamma$ be the prime graph of a $T$-solvable group $G$. Then one of the following conditions hold:
    \begin{enumerate}
        \item[(1)] $\ol \Gamma$ is triangle-free and 3-colorable.
        \item[(2)] $N(\pi(T)) \subseteq \pi(T)$ and $\ol \Gamma \setminus \pi(T)$ is triangle-free and 3-colorable.
    \end{enumerate}
\end{lemma}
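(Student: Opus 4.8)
The plan is to split on whether $G$ is strictly $T$-solvable. If it is not, then $G$ is solvable, and \Cref{ThomasMichaelKeller} gives directly that $\ol\Gamma$ is triangle-free and $3$-colorable, so (1) holds. Hence I would assume from now on that $G$ is strictly $T$-solvable and aim to prove (2). Since $\pi(T) = \pi(\Aut(T))$ for both $T = \PSL(3,4)$ and $T = U_4(3)$, \Cref{2.4 2022} furnishes a subgroup $K \cong N.T$ with $N$ solvable and $\pi(G) = \pi(K)$, which I fix for the rest of the argument.

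The first and central step is to show that $N(\pi(T)) \subseteq \pi(T)$, i.e.\ that $r-p \notin \ol\Gamma(G)$ for every $r \in \pi(T) = \{2,3,5,7\}$ and every $p \in \pi(G) \setminus \pi(T)$. For $r \in \{2,3\}$ I would read off from \Cref{table:2} that the Sylow $2$- and $3$-subgroups of $T$ do not satisfy the Frobenius criterion and conclude via \Cref{FC}. For $r \in \{5,7\}$ I would first note that $M(T)$ is a $\{2,3\}$-group in both cases — the Schur multipliers are $\Z/4 \times \Z/4 \times \Z/3$ for $\PSL(3,4)$ and $\Z/3 \times \Z/3 \times \Z/4$ for $U_4(3)$ — so that $(5,|M(T)|) = (7,|M(T)|) = 1$; then \Cref{funny2} applies, using the fact recorded in \Cref{table:3} that in every complex irreducible representation of every perfect central extension of $T$ some element of order $5$ (resp.\ order $7$) has a fixed point. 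Together, these four applications give $N(\pi(T)) \subseteq \pi(T)$.

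The second step is a short deduction of the remaining content of (2). Because $K \leq G$ and $\pi(K) = \pi(G)$, we have $\ol\Gamma(G) \subseteq \ol\Gamma(K)$, and because $N \leq K$, the subgraph of $\ol\Gamma(K)$ induced by $\pi(N)$ is contained in $\ol\Gamma(N)$. Now $\pi(G) \setminus \pi(T) = \pi(K) \setminus \pi(T) \subseteq \pi(N)$, so chaining these containments shows that $\ol\Gamma(G) \setminus \pi(T)$ is a subgraph of $\ol\Gamma(N)$, which is triangle-free and $3$-colorable by \Cref{ThomasMichaelKeller} since $N$ is solvable. This establishes (2).

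I expect the only real obstacle to be the representation-theoretic input behind the two applications of \Cref{funny2}: one must check, for every perfect central extension of $\PSL(3,4)$ and of $U_4(3)$ — and there are several, indexed by the quotients of the exceptional Schur multipliers above — that each complex irreducible character is nonzero on some conjugacy class of order $5$ and on some conjugacy class of order $7$. This is precisely the kind of data collected in \Cref{table:3} via GAP; once it is in hand, everything else is routine bookkeeping with the structural lemmas of \Cref{section:preliminaries}.
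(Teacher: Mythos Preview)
Your argument for $r \in \{2,3,5\}$ is fine and matches the paper, but the application of \Cref{funny2} for $r=7$ does not go through. Look carefully at \Cref{table:3}: for the perfect central extensions $n.\PSL(3,4)$ with $6 \mid n$, and for $6_1, 12_1, 18_1, 36.U_4(3)$, the row $(2,3,5)$ appears. By the caption, this means there exist complex irreducible representations of these covers in which every element of order $7$ acts without fixed points. Hence the hypothesis of \Cref{funny2} fails for $r=7$, and you cannot conclude $7-p \notin \ol\Gamma(G)$ for all $p$. In particular, (2) need not hold for a strictly $T$-solvable $G$.

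The paper repairs this exactly where your argument breaks. It applies \Cref{funny} (not \Cref{funny2}) with $r=7$: for each $p$, either $7-p \notin \ol\Gamma(G)$, or $G$ has a section $V.E$ with $E = A.T$ a perfect central extension. If the former holds for all $p$, then (2) follows as you describe. If the latter occurs for some $p$ with $7-p \in \ol\Gamma(G)$, then \Cref{reps} and \Cref{table:3} force $6 \mid |A|$; but then $2$ and $3$ lie in the centre of $E$, so $2-5, 2-7, 3-5, 3-7 \notin \ol\Gamma(G)$, making $2$ and $3$ isolated vertices. At that point the $\pi(T)$-subgraph is triangle-free and connects to the rest of $\ol\Gamma(G)$ only through the vertex $7$, and \Cref{easy3color} yields (1). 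So the dichotomy in the statement is genuine: (2) is not automatic, and the work is in showing that its failure forces (1).
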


\begin{proof}
    We may assume that $G$ is strictly $T$-solvable. Because the Sylow 2- and 3-subgroups of $T$ do not satisfy the Frobenius criterion, we automatically have $2-p$, $3-p \notin \ol \Gamma$ for all $p \in \pi(G) \setminus \pi(\PSL(3,4)).$ 
    %In addition, it can be verified in GAP \cite{GAP4} that $T$ is generated by its elements of order 5 and 
    As noted in \Cref{table:3}, in every complex irreducible representation of a perfect central extension of $\PSL(3,4)$, some element of order 5 has fixed points. Hence, by \Cref{funny2}, we have $5-p \notin \ol \Gamma(G)$ for any $p \in \pi(G) \setminus \pi(T)$. Next, we apply \Cref{funny} with $r=7$ to get that for each $p \in \pi(G) \setminus \pi(T)$, either $7-p \notin \ol \Gamma(G)$ or $G$ has a section $V.E$ for some nontrivial elementary abelian $p$-group $V$ and perfect central extension $E$ of $T$. 
    
    Suppose first that $7-p \notin \ol \Gamma(G)$ for all $p \in \pi(G) \setminus \pi(T)$. Then we have $N(2,3,5,7) = \{2,3,5,7\}$, and if we let $K \cong N.T$ denote the subgroup granted by \Cref{2.4 2022}, we see that $\ol \Gamma(G) \setminus \pi(T)$ is contained in the 3-colorable and triangle-free graph $\ol \Gamma(N)$. This establishes (2).
    
    Now suppose there exists $p \in \pi(G) \setminus \pi(T)$ such that $7-p \in \ol \Gamma(G)$, so that $G$ has a section $V.E$ as described above. Write $E = A.T$ for some abelian group $A$. Consulting \Cref{table:3}, we see that if $6 \ndivides |A|$, \Cref{reps} implies that $7-p \notin \ol \Gamma(G)$, a contradiction. Hence $6 \divides |A|$, and it follows that 2 and 3 are completely isolated in $\ol \Gamma(G)$. In particular, the subgraph of $\ol \Gamma(G)$ induced by $\pi(T)$ is 3-colorable, triangle-free, and only connects to the remainder of the graph through the vertex 7. From \Cref{easy3color}, then, we see that $\ol \Gamma(G)$ is triangle-free and 3-colorable.
\end{proof}

The classification result for $\PSL(3,4)$ is the first in which $\ol \Gamma$ is not required to be 3-colorable.

\begin{theorem}
    Let $\Gamma$ be an unlabeled simple graph. Then $\Gamma$ is isomorphic to the prime graph of a $\PSL(3,4)$-solvable group if and only if one of the following holds:
    \begin{enumerate}
        \item[(1)] $\ol \Gamma$ is triangle-free and 3-colorable.
        \item[(2)] There exists a subset $X = \{a,b,c,d\} \subseteq V(\ol\Gamma)$ 
        such that $N(X) \subseteq X$ and $\ol \Gamma \setminus X$ is triangle-free and 3-colorable.
    \end{enumerate}
\end{theorem}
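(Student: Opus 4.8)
The plan is to split along the iff: the forward direction will be an immediate consequence of \Cref{isolated}, while the reverse direction will require exhibiting, for each of the finitely many possible shapes of the ``exceptional'' four-vertex piece, a $\PSL(3,4)$-solvable group realizing it, then gluing on a solvable group for the rest via a direct product. The only genuinely new work is in the reverse direction.

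\textbf{Forward direction.} Suppose $\Gamma = \Gamma(G)$ for a $\PSL(3,4)$-solvable group $G$. I would apply \Cref{isolated} with $T = \PSL(3,4)$. It gives one of two alternatives: either $\ol\Gamma$ is triangle-free and $3$-colorable --- which is exactly condition (1) --- or $N(\pi(\PSL(3,4))) \subseteq \pi(\PSL(3,4))$ and $\ol\Gamma \setminus \pi(\PSL(3,4))$ is triangle-free and $3$-colorable. In the latter case, since $|\pi(\PSL(3,4))| = |\{2,3,5,7\}| = 4$, taking $X = \pi(\PSL(3,4))$ shows that condition (2) holds. (If $G$ is solvable rather than strictly $\PSL(3,4)$-solvable we are in the first case by \Cref{ThomasMichaelKeller}.) So the forward direction is essentially a one-line reduction to a lemma already proved.

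\textbf{Reverse direction.} Suppose $\Gamma$ satisfies (1) or (2). If (1) holds, \Cref{ThomasMichaelKeller} produces a solvable (hence trivially $\PSL(3,4)$-solvable) group with prime graph $\Gamma$. So assume (2) holds, fix $X = \{a,b,c,d\}$, and let $\Lambda$ denote the subgraph of $\ol\Gamma$ induced by $X$. Because $N(X) \subseteq X$, the graph $\ol\Gamma$ is the disjoint union of $\Lambda$ and $\ol\Gamma \setminus X$; in particular every triangle of $\ol\Gamma$ lies inside $\Lambda$. If $\Lambda$ is triangle-free then $\ol\Gamma$ is triangle-free and $3$-colorable and we are back in case (1). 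Otherwise, since $|X| = 4$, the graph $\Lambda$ is one of: a triangle plus an isolated vertex, the paw (a triangle with one pendant edge), $K_4$ minus an edge, or $K_4$. Now I would build the group in two pieces. First, using the techniques of \cite[Theorem 2.8]{2015}, realize $\ol\Gamma \setminus X$ as $\ol\Gamma(N)$ for a solvable $N$ with $(|N|,|\PSL(3,4)|)=1$. Second, realize $\Lambda$ as $\ol\Gamma(E)$ for a $\PSL(3,4)$-solvable group $E$ with $\pi(E) = \{2,3,5,7\}$: for $\Lambda = K_4$ take $E = \PSL(3,4)$ itself (recall $\ol\Gamma(\PSL(3,4)) = K_4$); for a triangle plus an isolated vertex take, e.g., $E = \PSL(3,4)\times C_2$ (a central involution deletes the edges from $2$ to $3$, $5$, $7$); and for the paw and $K_4 - e$ take suitable extensions $V.\PSL(3,4)$ by a solvable group, with $V$ a module over a prime dividing $|\PSL(3,4)|$ chosen so that precisely the required mixed-order elements appear --- the relevant fixed-point data being read off \Cref{table:3}. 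Since $\pi(N)$ and $\pi(E)$ are disjoint, $\ol\Gamma(N\times E) = \ol\Gamma(N) \sqcup \ol\Gamma(E) = (\ol\Gamma\setminus X)\sqcup\Lambda = \ol\Gamma$, so $N \times E$ is the desired $\PSL(3,4)$-solvable group. (For the three $3$-colorable choices of $\Lambda$ one could instead invoke \Cref{graphlizard}, but that reduces to the same task of producing $E$.)

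\textbf{Main obstacle.} The hard part is the reverse direction, and within it the realization of the paw and the diamond $K_4 - e$. Since $\ol\Gamma(\PSL(3,4))$ is already the complete graph $K_4$, realizing these shapes means producing a prescribed small, nonempty, proper set of order-$pq$ elements --- and no others --- inside a $\PSL(3,4)$-solvable group on $\{2,3,5,7\}$. Central extensions are too coarse (a central element of order $t$ isolates the vertex $t$, killing three edges at once), so one is forced into noncentral extensions $V.\PSL(3,4)$, and then one must know exactly which elements of $\PSL(3,4)$ of a given order have nonzero fixed points on which modules $V$ (over $\mathbb F_2$, $\mathbb F_3$, $\mathbb F_5$, or $\mathbb F_7$); this is precisely the computational content supplied by \Cref{table:3} and the GAP work of \Cref{section:GAP}. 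Everything else --- the disjoint-union bookkeeping, the reduction to four graphs, and the forward direction --- is routine.
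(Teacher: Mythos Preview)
Your forward direction and overall architecture for the reverse direction match the paper. The gap is in how you realize the paw and the diamond $K_4-e$.

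You propose downward extensions $V.\PSL(3,4)$ with $V$ a module over $\mathbb F_p$ for $p\in\{2,3,5,7\}$, claiming the required fixed-point data can be read off \Cref{table:3}. That table, however, records fixed points in \emph{complex} irreducible representations (equivalently, via \Cref{reps}, in characteristic coprime to $|T|$); it says nothing about $\mathbb F_p$-modules when $p$ divides $|\PSL(3,4)|$. So the specific claim that \Cref{table:3} supplies the data you need is wrong. Your approach is not conceptually absurd---with a $p$-group $V$ the only edges of $\ol\Gamma(\PSL(3,4))=K_4$ that can be removed are those incident to $p$, which is exactly the right shape for the paw and the diamond---but you would have to produce, by separate computation, explicit $\mathbb F_p\PSL(3,4)$-modules on which precisely the desired subset of the remaining primes has fixed points. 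That is new work, not contained in \Cref{table:3} or \Cref{section:GAP}.

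The paper sidesteps this entirely by going \emph{upward} rather than downward: since $|\Out(\PSL(3,4))|=12$, there are several nonisomorphic almost-simple groups $\PSL(3,4)\rtimes C_2$, and the paper simply verifies in GAP that among these one finds $E_1$ with $\ol\Gamma(E_1)\cong K_4-e$ and $E_2$ with $\ol\Gamma(E_2)$ isomorphic to the paw, then takes $E_i\times N$. This is both easier to verify and avoids any modular representation theory.
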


\begin{proof}
    The forward direction is handled by \Cref{isolated}. For the reverse direction, suppose $\Gamma$ satisfies (2) (if it satisfies (1), then we are done by the classification of the prime graphs of solvable groups). By the techniques established in \cite[Theorem 2.8]{2015}, we can find a solvable group $N$ such that $(|N|, |T|) = 1$ and $\ol \Gamma(N) \cong \ol \Gamma \setminus \{a,b,c,d\}.$ Now, we split into cases based on the isomorphism type of the subgraph induced by $X$, which we shall denote by $\Lambda$. If $\Lambda$ is the complete graph, then $\Gamma \cong \Gamma(T \times N)$. If $\Lambda$ is the complete graph minus one edge, then take a semidirect product $E_1 \cong \PSL(3,4) \rtimes C_2$ such that $\Lambda \cong \ol \Gamma(E_1)$ and note that $\Gamma \cong \Gamma(E_1 \times N)$. Next, if $\Lambda$ is a triangle connected to the remaining vertex via exactly one edge, then take a semidirect product $E_2 \cong \PSL(3,4) \rtimes C_2$ such that $\Lambda \cong \ol \Gamma(E_2)$ and observe that $\Gamma \cong \Gamma(E_2 \times N).$ The existence of the groups $E_1$ and $E_2$ can be verified using GAP \cite{GAP4} (for more details, see Section \ref{section:GAP}). Finally, if $\Lambda$ is an isolated triangle together with an isolated vertex, then $\ol \Gamma \cong \ol \Gamma(\PSL(3,4) \times N \times C_2)$. We have considered all graphs on four vertices that contain a triangle, so the proof is complete.
\end{proof}

We finish this subsection with a classification for $U_4(3).$ Note that $\ol \Gamma(U_4(3))$ is the complete graph on $\{2, 3, 5, 7\}$ minus the edge $2-3$ (see \Cref{table:5}).

\begin{theorem}
    Let $\Gamma$ be an unlabeled simple graph. Then $\Gamma$ is isomorphic to the prime graph of a $U_4(3)$-solvable group if and only if $\ol \Gamma$ is 3-colorable and one of the following holds:
    \begin{enumerate}
        \item[(1)] $\ol \Gamma$ is triangle-free.
        \item[(2)] There exists a subset $X = \{a,b,c,d\} \subseteq V(\Gamma)$ 
        such that $N(X) \subseteq X$, $\ol \Gamma \setminus X$ is triangle-free, and the subgraph induced by $X$ is not complete.
    \end{enumerate}
\end{theorem}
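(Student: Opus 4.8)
The plan is to follow the template used for $G_2(3)$, since \Cref{isolated} already contains all the real work. For the forward direction, let $G$ be a $U_4(3)$-solvable group with $\Gamma(G)\cong\Gamma$. By \Cref{isolated}, either $\ol\Gamma$ is triangle-free and $3$-colorable, in which case (1) holds, or $N(\pi(U_4(3)))\subseteq\pi(U_4(3))$ and $\ol\Gamma\setminus\pi(U_4(3))$ is triangle-free and $3$-colorable. In the second case I would take $X=\pi(U_4(3))$, which is a set of four vertices because $U_4(3)$ is $K_4$. Then $N(X)\subseteq X$, $\ol\Gamma\setminus X$ is triangle-free, and the subgraph induced by $X$ is a subgraph of $\ol\Gamma(U_4(3))=K_4\setminus\{2-3\}$ and hence is not complete, which is exactly condition (2). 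Finally, $\ol\Gamma$ is the disjoint union of its $X$-subgraph (which lies in $K_4\setminus\{e\}$ and is therefore $3$-colorable) and the $3$-colorable graph $\ol\Gamma\setminus X$, so $\ol\Gamma$ is $3$-colorable.

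For the reverse direction, assume $\ol\Gamma$ is $3$-colorable and satisfies (1) or (2). If (1) holds, \Cref{ThomasMichaelKeller} realizes $\Gamma$ as the prime graph of a solvable, hence trivially $U_4(3)$-solvable, group. So assume (2) holds and write $\Lambda$ for the subgraph of $\ol\Gamma$ induced by $X=\{a,b,c,d\}$. If $\Lambda$ is triangle-free, then since $N(X)\subseteq X$ and $\ol\Gamma\setminus X$ is triangle-free, $\ol\Gamma$ is triangle-free and we are back in case (1). Hence we may assume $\Lambda$ contains a triangle, so $\Lambda$ is one of: $K_4\setminus\{e\}$ (which equals $\ol\Gamma(U_4(3))$), a triangle connected to the fourth vertex by a single edge, or a triangle together with an isolated vertex. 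Since $\ol\Gamma\setminus X$ is triangle-free and $3$-colorable, the techniques of \cite[Theorem 2.8]{2015} produce a solvable group $N$ with $(|N|,|U_4(3)|)=1$ and $\ol\Gamma(N)\cong\ol\Gamma\setminus X$.

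To finish, I would dispatch the three shapes of $\Lambda$ by taking a direct product $N\times E_0$, where $E_0$ is a $U_4(3)$-solvable group with $\pi(E_0)=\{2,3,5,7\}$ (so that $(|N|,|E_0|)=1$) and $\ol\Gamma(E_0)\cong\Lambda$; then $\ol\Gamma(N\times E_0)=\ol\Gamma(N)\sqcup\ol\Gamma(E_0)\cong\ol\Gamma$. If $\Lambda\cong K_4\setminus\{e\}$, take $E_0=U_4(3)$. If $\Lambda$ is a triangle with an isolated vertex, take $E_0=U_4(3)\times C_2$: its prime set is still $\{2,3,5,7\}$, and the central involution yields elements of orders $6,10,14$, so in $\Gamma(E_0)$ the vertex $2$ is adjacent to $3,5,7$, whence $\ol\Gamma(E_0)$ is the triangle $\{3,5,7\}$ together with the isolated vertex $2$. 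If $\Lambda$ is a triangle connected to the fourth vertex by a single edge, take $E_0$ to be an extension of $U_4(3)$ by a subgroup of $\Out(U_4(3))$ with $\ol\Gamma(E_0)\cong\Lambda$; its existence is confirmed by a GAP computation (see Section \ref{section:GAP}). Since these exhaust all four-vertex graphs with a triangle other than $K_4$, the proof is complete. I expect the only genuine obstacle to be producing the last $E_0$: one must identify the overgroup of $U_4(3)$ inside $\Aut(U_4(3))$ that introduces exactly one new adjacency among $\{2-5,\,2-7,\,3-5,\,3-7\}$ while not creating the edge $5-7$ (which would give a $4$-cycle rather than a triangle with a pendant). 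Every other step is routine graph bookkeeping or a direct appeal to \Cref{isolated}.
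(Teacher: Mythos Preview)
Your proof is correct and follows essentially the same route as the paper: \Cref{isolated} for the forward direction, then \cite[Theorem 2.8]{2015} to build the solvable factor $N$ and a case split on the shape of $\Lambda$ using $U_4(3)$, $U_4(3)\times C_2$, and a suitable $U_4(3)\rtimes C_2\le\Aut(U_4(3))$ (checked in GAP) for the reverse. Your treatment is in fact slightly more explicit than the paper's in two places---you spell out the 3-colorability of $\ol\Gamma$ in case (2) and you dispose of triangle-free $\Lambda$ by reducing to (1)---but the argument is the same; your closing remark about ``introducing a new adjacency'' is phrased from the $\Gamma$ side rather than the $\ol\Gamma$ side, which is momentarily confusing but not incorrect.
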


\begin{proof}
    Because $\pi(U_4(3))$ is triangle-free and 3-colorable, \Cref{isolated} establishes the forward direction. For the reverse direction, we may assume that $\Gamma$ satisfies (2). By the proof of \cite[Theorem 2.8]{2015}, we can find a solvable group $N$ such that $(|N|, |T|) = 1$ and $\ol \Gamma(N) \cong \ol \Gamma \setminus \{a,b,c,d\}.$ Now, like in the above theorem, we split into cases based on the isomorphism type of the subgraph $\Lambda$ induced by $X$. If $\Lambda$ is the complete graph minus one edge, then $\Gamma \cong \Gamma(U_4(3) \times N)$. If $\Lambda$ is a triangle connected to the remaining vertex via exactly one edge, then take a semidirect product $E \cong U_4(3) \rtimes C_2$ such that $\Lambda \cong \ol \Gamma(E)$ and observe that $\Gamma \cong \Gamma(E \times N).$ GAP \cite{GAP4} can once again be used to verify the existence of such an $E$ (for more details, see Section \ref{section:GAP}). Finally, if $\Lambda$ is an isolated triangle together with an isolated vertex, then $\ol \Gamma \cong \ol \Gamma(U_4(3) \times N \times C_2)$. There is no need to consider isomorphism types of $\Lambda$ that do not contain a triangle, so the proof is complete.
\end{proof}

\section{Outlook}
We have classified the prime graphs of $T$-solvable groups for all $K_4$ groups $T$ except for $\Sz(8)$, $\Sz(32)$, and the potentially infinite family of $\PSL(2,q)$. A natural direction to proceed is to finish the classification for $\Sz(8)$ and $\Sz(32)$. However, one must overcome certain challenges that were not considered in this paper. For $\Sz(8)$, one must modify the results of \Cref{section:preliminaries} to allow for $\pi(T) \ne \pi(\Aut(T))$. For $\Sz(32)$, there are memory limitations to overcome in GAP in order to find extensions of $\Sz(32)$ realizing each possible configuration of the graph complement. For the infinite family of $\PSL(2,q)$, it is likely possible to solve them one at a time, but a more general technique would be needed to solve the family completely. In all of these cases, many of the Sylow subgroups do satisfy the Frobenius Criterion, making these classifications more difficult.

It would also be important to consider groups that have multiple nonisomorphic $K_4$ composition factors. We conjecture that no new graph structures can occur, as is the case with $K_3$ groups in \cite{2022}. Yet another approach is to find versions of our classification results for strictly $T$-solvable groups. This would require a more careful study of the triangle-free and $3$-colorable case.

Additionally, the question of Maslova's conjecture in \cite[Section 5]{Maslova} about whether a prime graph complement can be triangle-free but not $3$-colorable remains unresolved. However, none of the graphs appearing in our classification results have allowed for this structure. It may be useful to study the prime graph of $A_n$-solvable groups, $\Sz(2^{2n+1})$-solvable groups, or $K_n$-solvable groups for larger $n$ in order to realize a triangle-free but not $3$-colorable graph as a prime graph complement. 

\section{A note about GAP}\label{section:GAP}

The GAP system \cite{GAP4} was used extensively throughout this project. It contains (and has the ability to construct) many finite groups, including the $K_4$ groups and many helpful $K_4$-solvable groups used in this paper. The contents of the tables of Appendix B, including information about Sylow subgroups, prime graphs, and fixed points of complex irreducible representations were all computed with GAP. 

%The techniques introduced in Section \ref{section:preliminaries} demonstrate the necessity of knowing the perfect central extensions of some group $G$, as well as being able to compute the fixed points of elements in complex irreducible representations of $G$. In groups with comparatively smaller order (like $A_7$), constructing the groups can be done with the GAP library of perfect groups, which has perfect groups of order up to $2 \cdot 10^6$ and methods to search those groups. However, for larger groups like $U_4(3)$, other methods are needed, like using the Schur multiplier. In computing the fixed points of elements, we utilized the formula in \Cref{table:3}, the only issues of which were time and memory. In the case of $\Sz(32)$, memory proved to be an issue and we could not obtain this information. 

The techniques introduced in Section \ref{section:preliminaries} demonstrate the necessity of constructing the perfect central extensions of some group $G$, as well as being able to compute the fixed points of elements in complex irreducible representations of these extensions. This is straightforward for groups of smaller order, but certain computational issues occur for groups with comparatively larger order. For example, for $U_4(3)$, constructing the perfect central extensions in GAP proved difficult. Instead, we needed to use the {\it ATLAS of Finite Groups} \cite{atlas} to find the characters and manually apply the formula in Table \ref{table:3}. Afterward, Dr. Alex Hulpke was able to provide a permutation representation of $36.U_4(3)$, which was quite helpful in verifying our results. Similarly, we were unable to construct the irreducible characters of $\PSL(3,17)$ in GAP. Instead, we needed to use information from prior work by Simpson and Frame in \cite{PSL}, which lists the character tables for $\PSL(3,q)$, and we manually applied the formula as well. We expect similar issues to occur for further classifications, especially as the sizes of groups increase. 

Further details and the exact algorithms used are included on this \href{https://github.com/abiteofdata/K4-Groups}{GitHub page}. The page gives detailed documentation of the techniques used throughout,  constructions of important groups in Section \ref{section:allresults}, and important techniques used to reduce computation time. The reader is encouraged to verify the validity of the statements in the tables and the existence of the groups.

\appendix

\section{}

In \cite{REU} and \cite{A5}, the prime graphs of $A_5$-solvable groups were classified. Then \cite{2022} classified the prime graphs of all the remaining $T$-solvable groups for $K_3$ groups $T$, and the current paper extended this to (in some sense) most $K_4$ groups. Both \cite{2022} and the current paper use a uniform standard of presenting these classification results, whereas the presentation in \cite{A5} differs markedly from that. But $A_5$ is a very prominent group, and it thus makes sense to revisit $A_5$ here and present the classification of the prime graphs of $A_5$-solvable groups, including a full proof, in the same way as for the other groups. This will also serve two other worthwhile purposes: not only will it demonstrate the power of the techniques developed in Section \ref{section:preliminaries} in that the new proof will be much shorter than the original proof, but it will also correct a serious flaw in the proof of an important result, \cite[Lemma 4.5]{A5}, which appears below as Lemma \ref{kmain}. By necessity, the corrected proof makes use of some deep results in the first edition of Aschbacher's well-known book {\it Finite Group Theory} \cite{aschbacher}. In hindsight, it seems noteworthy that the case of $A_5$-solvable groups is -- not on the computational end, but on the theoretical side -- the hardest of all $T$-solvable groups for $K_3$ or $K_4$ groups $T$ studied up to this point.

%and redo the classification of $A_5$-solvable groups with the new methods developed in Section \ref{section:preliminaries}. 
%Here, we correct an important lemma, \cite[Lemma 4.5]{A5}.

\begin{lemma}\label{kmain}
Let $G$ be a finite group and $N$ a normal subgroup of $G$ such that $G/N\cong A_5$ and $N$ is solvable. Also assume that 5 does not divide the order of $N$. Now let $V$ be a finite $G$-module over $\F_q$, where $q$ is a prime not dividing $|G|$. Suppose that here exist no elements of order $3q$ in the semidirect product $GV$. Then there also exist no elements of order $5q$ in $GV$.
\end{lemma}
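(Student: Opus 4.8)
\emph{The plan.} The first move is the standard translation of the statement into a fixed-point statement. Since $q\nmid|G|$, the module $V$ is the unique Sylow $q$-subgroup of $GV$, and a short argument with complements (Schur--Zassenhaus) shows that for $r\in\{3,5\}$ the group $GV$ has an element of order $rq$ if and only if some order-$r$ element of $G$ has a nonzero fixed point on $V$. So I must show: if every order-$3$ element of $G$ acts without fixed points on $V$, then so does every order-$5$ element. Assume not, and choose a counterexample with $|G|\cdot\dim V$ minimal. By Maschke's theorem I may replace $V$ by the irreducible constituent carrying the bad fixed point, so $V$ is irreducible. If $C:=C_G(V)\ne 1$ then $3\nmid|C|$ (an order-$3$ element of $C$ fixes all of $V$), which forces $C\le N$ (if $CN=G$ then $C$ covers $A_5$, so $3\mid|C|$); passing to $G/C$ then contradicts minimality, so $V$ is faithful. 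Finally, since $q\nmid|G|$, the irreducible $\overline{\F_q}G$-modules lift to complex irreducibles with the same character values on all elements, and whether a given element acts without fixed points is a Galois-invariant condition read off the character; replacing $V$ by a complex irreducible constituent $U$ of $V\otimes\overline{\F_q}$, I may work with a faithful complex irreducible $G$-module $U$ on which every order-$3$ element of $G$ is fixed-point-free but some order-$5$ element $g_0$ fixes a nonzero vector.

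The key reduction is that $U|_N$ is homogeneous. By Clifford's theorem $U|_N$ splits into equidimensional isotypic blocks indexed by a transitive $A_5$-set $\Omega\cong A_5/(I/N)$, and $G$ permutes the blocks through the $A_5$-action on $\Omega$. If some $3$-cycle $\overline h\in A_5$ moved a block, pick an order-$3$ preimage $h\in G$ (one exists: the preimage of $\langle\overline h\rangle$ in $G$ is solvable of order divisible by $3$); then the three blocks in a $\langle\overline h\rangle$-orbit are cyclically permuted by $h$, and since $h^3=1$ the map $v\mapsto(v,hv,h^2v)$ embeds one such block into $C_U(h)$, contradicting fixed-point-freeness of $h$. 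As $3$-cycles generate $A_5$, the action on $\Omega$ is trivial, $|\Omega|=1$, and $U|_N=eW$ for a $G$-invariant (hence faithful) irreducible $\CC N$-module $W$. Standard Clifford theory then gives $U\cong\widehat W\otimes Y$, where $\widehat W$ is a projective extension of $W$ to $G$ and $Y$ is a projective irreducible representation of $G/N\cong A_5$ with the inverse cocycle; in particular $Y$, and $\widehat W$ on the relevant subgroups, is an honest representation of $A_5$ or of $\SL(2,5)$.

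Next I rule out components. If $G$ has a component $L$, then, $A_5$ being simple and $N$ solvable, $L$ covers $A_5$, so $L\cong A_5$ or $\SL(2,5)$; the three-subgroups lemma gives $[L,N]=1$, so $G=L*N$ is a central product with $C_G(L)=N$. Clifford theory (using $L\,C_G(L)=G$) forces $U|_L\cong eW'$ with $W'$ faithful, and since $5\nmid|N|$ the image of $g_0$ in $G/Z(L)\cong A_5\times(N/Z(L))$ lies in the $A_5$-factor, so $g_0\in L$. Now the character tables decide it: every order-$3$ element of $A_5$ has a fixed point on every nontrivial $\CC A_5$-module, so $L\not\cong A_5$; and in $\SL(2,5)$ the only faithful irreducibles on which order-$3$ elements act fixed-point-freely are the two $2$-dimensional ones, on which order-$5$ elements are fixed-point-free as well (eigenvalues are primitive $5$th roots of unity), contradicting the existence of $g_0$. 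Hence $F^*(G)=F(G)=:F$, so $C_G(F)\le F\le N$ and $A_5$ is a quotient of $G/F$.

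It remains to treat this last configuration, and here the deeper structure theory of Aschbacher's \emph{Finite Group Theory} \cite{aschbacher} enters. After reducing $U$ to the primitive case --- an imprimitive $U$ is induced from a proper subgroup, and the transitive $G$-action on the set of cosets (of size at least $2$), since $A_5$ has no nontrivial permutation representation of degree below $5$, either factors through a solvable quotient (allowing a reduction) or confines enough structure to the point stabilizer to permit an induction --- one knows $F=Z(F)*E$ with $E$ a central product of extraspecial (or, at the prime $2$, almost-extraspecial) groups $E_p$, that $U|_E$ is a multiple of the unique faithful irreducible of $E$, and that $G/F$ embeds, compatibly with its quotient $A_5$, into a product of symplectic similitude groups acting on the spaces $E_p/Z(E_p)$. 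Since $5\nmid|N|$ no $E_p$ is a $5$-group, so $A_5$ (or $\SL(2,5)$) carries a faithful projective symplectic representation over some $\F_p$ with $p\ne q,5$ of small dimension, which pins $(p,\dim)$ to a short explicit list --- essentially $\SL(2,5)\le\SL(2,p)=\mathrm{Sp}(2,p)$ for $p\equiv\pm1\pmod 5$, together with the few embeddings in dimension $4$. In each case $U$ is, up to a $1$-dimensional twist, assembled from the corresponding Weil-type representation tensored with a representation of $A_5$, so the fixed-space dimensions of order-$3$ and order-$5$ elements on $U$ are computable and can be compared, and one checks directly that ``every order-$3$ element fixed-point-free'' forces ``every order-$5$ element fixed-point-free'' --- the contradiction completing the proof. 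The main obstacle is exactly this last step: the primitivity reduction must be arranged with care, and the symplectic-type case analysis, which relies on the results of \cite{aschbacher}, is the part of the original argument in \cite[Lemma 4.5]{A5} that requires correction.
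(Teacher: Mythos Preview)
Your reductions through the component case are sound and bring you to $F^*(G)=F(G)\le N$, which corresponds to the paper's ``$N$ noncyclic'' case. The genuine gap is the final paragraph: you sketch a primitivity reduction and a symplectic-type classification followed by a Weil-representation fixed-point computation, but none of this is carried out. Homogeneity over $N$ does not by itself give primitivity over $G$, and your parenthetical about the coset action ``either factoring through a solvable quotient or confining enough structure to the point stabiliser'' is not an argument; you do not establish that characteristic abelian subgroups of $F$ are cyclic (needed for the decomposition $F=Z(F)*E$), you do not produce the claimed short list of pairs $(p,\dim)$, and you do not perform the promised fixed-point comparison. You yourself flag this as ``the main obstacle,'' and indeed the proposal does not close it.

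The paper's route through this case is different and avoids the general symplectic-type machinery. It first passes to a Hall $\{2,3,5\}$-subgroup, then shows $3\nmid|N|$: the Frobenius action of order-$3$ elements on $V$ forces Sylow $3$-subgroups of $G$ to be cyclic, so any $3$-chief factor inside $N$ is central of order $3$, and since the Schur multiplier of $A_5$ has order $2$ the resulting extension splits, contradicting cyclicity. Thus $N$ is a $2$-group. Abelian normal subgroups are then shown to be central and cyclic (again via the Frobenius action of order-$3$ subgroups together with minimality), giving $N=EZ(G)$ with $E$ extraspecial. The decisive new ingredient is Flavell's Hall--Higman--Shult theorem \cite{Flavell}: for each order-$3$ subgroup $R$ the commutator $[R,N]$ is nonabelian special, and \cite[(24.7), (36.1)]{aschbacher} then force $[R,N]\cong Q_8$. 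Since $A_5$ is generated by two $3$-cycles, a dimension count on $W=N/Z(G)$ yields $\dim_{\F_2}(W/C_W(G))\le 4$, so $A_5$ acts faithfully and irreducibly on a $4$-dimensional $\F_2$-module; Burichenko's splitting theorem \cite{burichenko} then produces a complement and hence a smaller counterexample. These specific tools---Flavell's theorem, the $Q_8$ reduction, the dimension count, and Burichenko's result---are exactly what replace the flawed argument of \cite[Lemma~4.5]{A5}, and none of them appear in your outline.
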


\begin{proof}
Suppose the result is not true and let the pair of groups $(G, V)$ be a minimal counterexample, i.e., $|G|+|V|$ is minimal. First, observe that $G$ is $\{2,3,5\}$-separable and thus has a Hall $\{2,3,5\}$-subgroup $H$. If $H<G$, then clearly $HV$ is a smaller counterexample, so we may assume that $G$ is a $\{2,3,5\}$-group. Moreover, by the minimality condition, we may assume that the action of $G$ on $V$ is faithful and irreducible.

Our first goal is to show that $|N|$ is not divisible by 3. Assume that 3 divides $|N|$. Then there exist normal subgroups $S$ and $T$ of $G$ with $T<S\leq N$ such that $S/T$ is a chief factor of $G$ of order divisible by 3, and we may also choose $S,T$ such that 3 does not divide $|N/S|$. Since $GV$ has no elements of order $3q$, any Sylow 3-subgroup acts Frobeniusly on $V$ and thus is cyclic. This shows that $S/T$ is cyclic of order 3.

Now $(G/T)/C_{G/T}(S/T)$ has order 1 or 2. If it has order 2, then $G/C_G(S/T)$ has order 2 and so $C_G(S/T)$ acting on $V$ will be a smaller counterexample, contradicting our minimal choice of the pair $(G, V)$. Thus it has order 1, meaning that $S/T$ is a central chief factor and that $N/T$ is 3-nilpotent. So $G$ has a normal subgroup $R$ such that $G/R$ is a central extension of $A_5$ by $C_3$ (the cyclic group of order 3). Since the Schur multiplier of $A_5$ is 2, it is clear that this extension must be split, and hence the Sylow 3-subgroups of $G$ cannot be cyclic, a contradiction. So we know that 3 does not divide the order of $N$, and hence $N$ is a 2-group. In particular, $F(G)=N$, where $F(G)$ is the Fitting subgroup of $G$.

Next, let $M$ be an abelian normal subgroup of $G$. Let $R \leq G$ with $|R| = 3$. Then the Zassenhaus decomposition
for coprime actions tells us that $M=[R,M]\times C_M(R)$, and $R$ acts Frobeniusly on $[R,M]$. Hence $R$ acts Frobeniusly on $[R,M]V$ which thus is nilpotent. Therefore $[R,M]$  acts trivially on $V$, but since the action of $G$ on $V$ is faithful, this forces $[R,M]=1$. Since $R$ was chosen arbitrarily of order 3, this shows that $C_G(M)$ contains all elements of order 3 in $G$, and since $3$ divides $|A_5|$ but not $|N|$, $C_G(M)$ has a factor group isomorphic to $A_5$. So by minimality, it follows that $C_G(M)=G$ and hence $M\leq Z(G)$. As a consequence, $M$ is cyclic. This shows that all abelian normal subgroups of $G$ are cyclic and central in $G$. 

So every characteristic abelian subgroup of $N$ is cyclic and central in $G$, and thus by \cite[Corollary 1.4 (vii)]{Manz_Wolf_1993} we obtain that $N$ is a central product of a (possibly cyclic) extraspecial $2$-group $E$ and a cyclic $2$-group which is the center of $G$. If $N$ is cyclic, then $N=Z(G)$, and hence $G$ is a central extension of $A_5$. By minimality, this extension is non-split, and thus either $G=A_5$ or $G=\mbox{SL}(2,5)$. In both cases, one can check Table \ref{table:4} that in all complex representations, if elements of order 3 act Frobeniusly, then so do elements of order 5, and this carries over to the action on $V$, since $(|G|,|V|)=1$. This is a contradiction. 

So $N$ is not cyclic. Thus $E>Z(E)$ and $N=EZ(G)$  with $E\cap Z(G)=Z(E)=:Z$ being cyclic of order 2. Now let $R\leq G$ such that $|R|=3$. Then $N=[R,N]C_N(R)$. Since $R$ acts Frobeniusly on $V$, from Flavell's result \cite{Flavell} we know that $[R,N]$ is a nonabelian special 2-group. Also note that by coprime action we have $[R,N]=[N,R]=[N,R,R]$. Moreover, since $[R, N]$ is nonabelian, it contains $Z=N'$, and hence $[R,N]$ is normal in $N$. Hence every characteristic abelian subgroup of $[R,N]$ is normal in $G$ and thus, as seen above, central in $G$. Therefore we can apply \cite[(24.7)]{aschbacher} to the action of $R$ on $[R,N]$ which then shows that $[R,N]$ is extraspecial. Write $|[R,N]|=2^{2m+1}$. Then by \cite[(36.1(1))]{aschbacher} we know that $m=1$. This implies that
$[R,N]\cong Q_8$.

Now write $|E|=2^{2n+1}$ and put $W=N/Z(G)$. 
%where $\Phi(N)$ denotes the Frattini subgroup of $N$. Note that
%if $|Z(G)|>2$, then $|V|=2^{2n+1}$, whereas if $|Z(G)|=2$, then $|V|=2^{2n}$. 
Since $A_5$ can be generated by
two (suitable) 3-cycles, there exist two subgroups $R_1$, $R_2$ of $G$ which are of order 3 such that $G=\langle R_1, R_2, N\rangle$.
By the previous paragraph, we know that  $[R_i,N]\cong Q_8$ for $i=1,2$. Hence we have $\dim (C_W(R_i))=\dim(W)-2$. Thus 
\begin{align*}
    \dim(W) &\geq \dim(C_W(R_1)+C_W(R_2)) \\
    &= \dim(C_W(R_1))+\dim(C_W(R_2))  - \dim(C_W(R_1)\cap C_W(R_2))
\end{align*}

Since $C_W(R_1)\cap C_W(R_2)=C_W(G/N)=C_W(G)$, we further obtain
    \[\dim(W)\geq \dim(W)-2 + \dim(W)-2 -\dim(C_W(G))\] 
and therefore $\dim(C_W(G))\geq \dim(W)-4$. As $A_5$ is simple, $G/N$ acts faithfully on $X:=W/C_W(G)$. Now since elements of order 5 of $G/N$ act faithfully on $X$, we see that if $S$ is a Sylow 5-subgroup of $G$, then $X$ must be irreducible as $S$-module and of dimension 4 (since $4$ is the smallest integer $n$ such that $5|2^n-1$, and since $\dim(X)\leq 4$), and $S$ acts Frobeniusly on $X$.  Hence altogether $X$ is a faithful, irreducible $G/N$-module, and so if $M\leq N$ is the inverse image of $C_W(G)$ in $N$ (i.e., $M/Z(G)=C_W(G))$, then $M$ is normal in $G$ and $N/M\cong X$ is a faithful, irreducible $G/N$-module over $\F_2$ of dimension 4. By Burichenko's results in 
\cite{burichenko} and using the fact that $A_5\cong \PSL(2,5)$, we conclude that $G/M$ splits over $N/M$. So let $K\leq G$ such that $M\leq K$ and $K/M$ is a complement of $N/M$ in $G/M$. Then $K$ has a factor group isomorphic to $A_5$ and acts faithfully on $V$ and thus the pair $(K,V)$ is a smaller counterexample, contradicting the minimal choice of $(G,V)$.

This final contradiction shows that the case of $N$ being noncyclic is impossible, which concludes the proof of the lemma.
\end{proof}

\begin{lemma}\label{Wa5}
Let $G$ be a strictly $A_5$-solvable group with a subgroup $K\cong N.A_5$ such that $3$ or $5$ divides $|N|$ and $\pi(G)=\pi(K).$ Then $\ol\Gamma(G)$ is $3$-colorable and triangle-free.
\end{lemma}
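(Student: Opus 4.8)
The plan is to follow the strategy of \Cref{W}: exploit the hypothesis that one of the primes $3,5$ already divides $|N|$ in order to absorb the simple composition factor $A_5$ into a \emph{solvable} overgroup of $N$ whose set of prime divisors is still all of $\pi(G)$, and then invoke the solvable classification \Cref{ThomasMichaelKeller}. Throughout I use that $N$ is solvable, and that for any subgroup $H \le A_5$ the preimage $K_H \le K$ of $H$ under the projection $K \to K/N \cong A_5$ is an extension of $N$ by $H$; in particular $K_H$ is solvable whenever $H$ is, $K_H \le G$, and $\pi(K_H) = \pi(N) \cup \pi(H)$.

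First I would treat the case $5 \divides |N|$. Here $A_5$ contains a point stabilizer isomorphic to $A_4$, with $\pi(A_4) = \{2,3\}$, so $K_1 := K_{A_4}$ is a solvable subgroup of $G$ with $\pi(K_1) = \pi(N) \cup \{2,3\}$. Since $5 \in \pi(N)$, this equals $\pi(N) \cup \{2,3,5\} = \pi(K) = \pi(G)$. Because every edge of $\ol\Gamma(G)$ has both endpoints in $\pi(K_1)$ and $K_1 \le G$, the basic observation that sections inherit edges of the prime graph complement shows that $\ol\Gamma(G)$ is a spanning subgraph of $\ol\Gamma(K_1)$. By \Cref{ThomasMichaelKeller}, $\ol\Gamma(K_1)$ is triangle-free and $3$-colorable; both properties are inherited by subgraphs, so $\ol\Gamma(G)$ is triangle-free and $3$-colorable.

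For the case $3 \divides |N|$ I would run the identical argument with $H$ the normalizer of a Sylow $5$-subgroup of $A_5$. This subgroup has order $10$, and since $A_5$ has no element of order $10$ it is isomorphic to $D_{10}$, so $\pi(H) = \{2,5\}$. Then $K_2 := K_H$ is a solvable subgroup of $G$ with $\pi(K_2) = \pi(N) \cup \{2,5\} = \pi(N) \cup \{2,3,5\} = \pi(G)$, using $3 \in \pi(N)$, and as before $\ol\Gamma(G)$ embeds as a spanning subgraph of the triangle-free, $3$-colorable graph $\ol\Gamma(K_2)$.

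I do not expect a genuine obstacle: the whole point is that $A_5$ happens to possess solvable subgroups — $A_4$ covering $\{2,3\}$ and $D_{10}$ covering $\{2,5\}$ — whose prime sets together with $\pi(N)$ already exhaust $\{2,3,5\}$ under the assumption that $3$ or $5$ divides $|N|$. The only thing requiring a moment's care is the bookkeeping $\pi(K_H) = \pi(G)$, which is immediate from $\pi(K) = \pi(G)$ and the divisibility hypothesis. (If both $3$ and $5$ divide $|N|$, either choice of $H$ works.)
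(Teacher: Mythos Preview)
Your proposal is correct and takes essentially the same approach as the paper: use the solvable subgroups $A_4$ and $D_{10}$ of $A_5$ to form solvable subgroups $N.A_4$ and $N.D_{10}$ of $K$ whose prime sets equal $\pi(G)$ under the divisibility hypothesis, and then invoke \Cref{ThomasMichaelKeller}. The only difference is cosmetic (your labels $K_1,K_2$ are swapped relative to the paper's).
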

\begin{proof}
Note that $A_5$ has solvable subgroups isomorphic to $D_{10}$ and $A_4$. Thus $G$ has solvable subgroups $K_1 \cong N.D_{10}$ and $K_2 \cong N.A_4.$ If $5$ divides $|N|,$ then $\ol\Gamma(G)$ is a subgraph of $\ol\Gamma(K_2)$ and if $3$ divides $|N|,$ then $\ol\Gamma(G)$ is a subgraph of $\ol\Gamma(K_1).$ As $K_1$ and $K_2$ are solvable, we conclude that $\ol\Gamma(G)$ is $3$-colorable and triangle-free.
\end{proof}

\begin{lemma}\label{4.6 2022}
Let $G$ be a strictly $A_5$-solvable group and $K\cong N.A_5$ the subgroup granted by \Cref{2.4 2022}. Suppose $\ol\Gamma(G)$ has a $\{p,3,5\}$ triangle for some prime $p\neq 2.$ For any $q\in \pi(G)\setminus \pi(A_5),$ if $3-q\in \ol\Gamma(G)$ then $5-q\in \ol\Gamma(G).$
\end{lemma}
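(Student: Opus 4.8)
The plan is to show that the triangle hypothesis pins down the $5$-local structure of $G$ and then that, after a reduction carried out on $G$ itself, $G$ falls into exactly the situation governed by \Cref{kmain}. First, if $3$ or $5$ divided $|N|$, then \Cref{Wa5} would force $\ol \Gamma(G)$ to be triangle-free, contrary to the presence of the $\{p,3,5\}$-triangle; hence $3,5 \nmid |N|$. Since $\pi(K)=\pi(G)$ and $5 \nmid |\Out(A_5)|$, the Sylow $5$-subgroups of $G$ have order $5$, and since $3-5 \in \ol \Gamma(G)$ we get from \cite[Lemma 2.5]{2022} that $A_5$ is the only noncyclic composition factor of $G$. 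We may also assume $p \ne q$, for otherwise the triangle already contains the edge $5-q$ and there is nothing to prove.

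The core of the argument is a minimal-counterexample argument carried out on $G$ itself. This is forced on us: as in the remark following \Cref{fancy}, neither passing to the subgroup $K$ nor to a section of $G$ helps for a conclusion of the form ``$5-q \in \ol \Gamma(G)$'', since the absence of an element of order $5q$ is not inherited upward. So suppose, for contradiction, that $G$ has an element $x$ of order $5q$, and take $G$ of minimal order among strictly $A_5$-solvable groups with a unique $A_5$ composition factor, Sylow $5$-subgroups of order $5$, no element of order $3q$, but an element of order $5q$. Now $G$ is $\{2,3,5,q\}$-separable, and a \emph{proper} Hall $\{2,3,5,q\}$-subgroup $H$ would be a strictly smaller member of the same class (it surjects onto an extension of $A_5$, so has a unique $A_5$ composition factor; its Sylow $5$-subgroups have order $5$; it inherits the absence of elements of order $3q$; and it contains a conjugate of the $\{5,q\}$-element $x$), so $G$ is a $\{2,3,5,q\}$-group. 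By \Cref{N.M} and $\Aut(A_5) \cong S_5$, replacing $G$ by the index-$\le 2$ preimage of $\Inn(A_5)$ if necessary — which still contains the odd-order element $x$ — we may assume $G \cong \hat N.A_5$ with $\hat N$ solvable, $5 \nmid |\hat N|$, and $q \mid |\hat N|$ (the last since $q \notin \pi(S_5)$).

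Next, consider a minimal normal subgroup $V$ of $G$. If $V$ is a $q'$-group (necessarily a $\{2,3\}$-group, as $V \le \hat N$ and $5 \nmid |\hat N|$), then $G/V$ lies in the same class and still has an element of order $5q$, contradicting minimality; if $V \cong A_5$, then uniqueness of the $A_5$ composition factor forces $G \cong V \times \hat N$, and a $3$-element of $V$ commutes with a $q$-element of $\hat N$, producing an element of order $3q$, a contradiction. Hence every minimal normal subgroup of $G$ is an elementary abelian $q$-group, and for any such $V$, minimality applied to $G/V$ shows the $q$-part of $x$ lies in $V$, so $G$ has a \emph{unique} minimal normal subgroup $V$. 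In particular $O_{q'}(G)=1$, so that $C_G(O_q(G)) \le O_q(G)$; using this together with \Cref{chief series} one then squeezes $G$ into the shape $G \cong V \rtimes (\hat N_0.A_5)$ with $V$ a normal elementary abelian Sylow $q$-subgroup, $\hat N_0$ solvable, $5 \nmid |\hat N_0|$, and $q \nmid |\hat N_0 A_5|$. Once this is done, \Cref{kmain}, applied to the group $\hat N_0.A_5$ and the $\F_q(\hat N_0.A_5)$-module $V$ whose semidirect product is $G$, yields that $G$ having no element of order $3q$ implies $G$ has no element of order $5q$, contradicting $x$; hence no such $x$ exists and $5-q \in \ol \Gamma(G)$. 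The main obstacle is exactly this last squeeze: a priori $O_q(G)=F(G)$ could be a larger (for instance extraspecial) $q$-group carrying a nontrivial $A_5$-action, and ruling this out — so that the Sylow $q$-subgroup is abelian, normal, and complemented by a group of $A_5$-shape with solvable $5'$-part — requires the same kind of Fitting-subgroup-and-Flavell analysis that appears inside the proof of \Cref{kmain} itself, so one should expect to reuse part of that machinery rather than merely cite the statement.
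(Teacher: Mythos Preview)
Your core worry---that passing to the subgroup $K$ cannot yield a conclusion of the form ``$5-q\in\ol\Gamma(G)$''---is misplaced, and this is what drives all the extra complexity in your outline. Because $3-5\in\ol\Gamma(G)$, \Cref{N.M} writes $G\cong N_0.M$ with $N_0$ solvable and $M\in\{A_5,S_5\}$; the subgroup $N_0.A_5$ has index at most $2$ in $G$, and since $5q$ is odd, every element of order $5q$ in $G$ already lies in it. So the problem \emph{does} reduce to a subgroup $K\cong N.A_5$, and the paper's proof is then a direct chief-series argument on $K$: for each $q$-chief-factor $Q_i=K_{i+1}/K_i$, a Hall $q'$-subgroup $M_i$ of $K/K_i$ has the shape $(\text{solvable }5'\text{-group}).A_5$ (since $5\nmid|N|$ by \Cref{Wa5}), so \Cref{kmain} applied to $M_i$ acting on $Q_i$ shows $Q_iM_i$ has no element of order $5q$; Sylow conjugacy then places a conjugate of any hypothetical order-$5q$ element of $K$ inside some $Q_iM_i$. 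No minimal-counterexample reduction is needed at all.

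Even within your own framework, the ``squeeze'' you flag as the main obstacle---forcing $O_q(G)$ to be an elementary abelian Sylow $q$-subgroup so that $G=V\rtimes(\hat N_0.A_5)$ globally---is unnecessary. Once you know the $q$-part of $x$ lies in the unique minimal normal subgroup $V$, take any Hall $q'$-subgroup $M$ of $G$: this is a $\{2,3,5\}$-group of shape $\hat N_{\{2,3\}}.A_5$ with $5\nmid|\hat N_{\{2,3\}}|$, so $VM\cong V\rtimes M$ satisfies the hypotheses of \Cref{kmain}, and a conjugate of $x$ lies in $VM$ by conjugating its $5$-part into $M$. This finishes immediately. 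The extraspecial-$q$-group scenario you anticipate never obstructs anything, because \Cref{kmain} only needs \emph{some} $\F_qM$-module containing the $q$-part of $x$, not the full $O_q(G)$.
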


\begin{proof}
This follows directly from \Cref{kmain}. As $\ol\Gamma(G)$ has a triangle, \Cref{Wa5} shows $5\ndivides |N|.$ Consider a chief series 
\[1=K_0 \unlhd K_1 \unlhd \cdots \unlhd K_n=K\]
such that $K_{n-1}=N$ and $K_n/K_{n-1}\cong A_5.$ Take any $q$ such that $3-q\in \ol\Gamma(G),$ that is $G$ and thus $K$ have no order $3q$ elements. For any $i$ such that $K_{i+1}/K_i=Q_i$ is an elementary abelian $q$-group, let $M_i$ be a Hall $\{2,3,5,p\}$-subgroup of $K/K_i.$ Considering the action of $M_i$ on $Q_i,$ \Cref{kmain} shows $Q_i\rtimes M_i$ has no order $5q$ elements. Thus $K$ has no order $5q$ elements, showing $\ol\Gamma(G)$ has an edge $5-q.$
\end{proof}

\begin{lemma}\label{A5doublecoverfun}
Let $G$ be strictly $A_5$-solvable, $K\cong N.A_5$ the subgroup granted from \Cref{2.4 2022}, and $E$ the double cover $\text{SL}(2,5)\cong 2.A_5.$ If $2-p\in \ol\Gamma(G)$ for some $p\in \pi(G)\setminus \pi(A_5),$ then there exists a solvable $2'$-group $L$ such that $K\cong L.E.$ In particular, $2-3,2-5\notin \ol\Gamma(G)$.
\end{lemma}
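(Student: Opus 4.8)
The plan is to mirror the proof of \Cref{edges from 2}, with $A_5$ in place of $A_7$. The relevant facts about $A_5$ are: a Sylow $2$-subgroup of $A_5$ is Klein four; $\SL(2,5)\cong 2.A_5$ has a quaternion Sylow $2$-subgroup of order $8$; and the Schur multiplier of $A_5$ is $C_2$, so the only central extensions of $A_5$ by $C_2$ are the split one $C_2\times A_5$ and the nonsplit one $\SL(2,5)$.

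First I would fix $Q\in\Syl_2(K)$, $P\in\Syl_p(K)$, note that $NQ$ is solvable with $NQ/N\cong Q/(Q\cap N)\cong C_2\times C_2$, and (after replacing $P$ by a conjugate) take a Hall $\{2,p\}$-subgroup $H$ of $NQ$ containing $Q$. Since $2-p\in\ol\Gamma(G)$, the group $H$ contains no element of order $2p$, so by \cite[Theorem A]{1981} it is Frobenius or $2$-Frobenius on $\{2,p\}$. The key step is to rule out every configuration except the one in which $Q$ acts Frobeniusly on $P$, which I would do using two observations. First, since $p\neq 3$ (as $p\notin\pi(A_5)$) and $\Aut(C_2\times C_2)\cong S_3$, every $p$-group acts trivially on $H/(H\cap N)\cong C_2\times C_2$; hence no quotient of $H$ of the form (normal Klein-four)$\rtimes$(nontrivial $p$-group) can be Frobenius, which eliminates the Frobenius type whose kernel is $Q$ and the $2$-Frobenius type with outer kernel a $2$-group. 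Second, a Frobenius group whose kernel is a $2$-group and whose complement is a $p$-group is generated by the conjugates of its complement, so it equals its own $O^2$ and in particular has no subgroup of index $2$; since $H$ surjects onto the noncyclic group $C_2\times C_2$, this rules out the remaining $2$-Frobenius type, in which the ``top'' complement would be a cyclic group forced to surject onto $C_2\times C_2$. So $Q$ is a Frobenius complement; being a $2$-group it is cyclic or generalized quaternion, and since it surjects onto $C_2\times C_2$ it is generalized quaternion.

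The remainder runs as in \Cref{edges from 2}. I would set $Q_0=Q\cap N\in\Syl_2(N)$: it is the unique normal subgroup of index $4$ in the generalized quaternion group $Q$, hence cyclic, and nontrivial since $Q\not\cong C_2\times C_2$. As $2$ is the least prime dividing $|N|$ and $Q_0$ is cyclic, Burnside's normal $p$-complement theorem yields $N=L.Q_0$ with $L$ a characteristic (hence normal in $K$) solvable Hall $2'$-subgroup. Then $A_5\cong(K/L)/(N/L)$ acts on the abelian $2$-group $N/L\cong Q_0$ through $\Aut(Q_0)$, a $2$-group, so the action is trivial and $N/L\le Z(K/L)$. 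Thus $Q_0$ lies in the center of a Sylow $2$-subgroup of $K/L$, which is isomorphic to $Q$; since $Z(Q)\cong C_2$ this forces $Q_0\cong C_2$, so $|Q|=8$ and $Q\cong Q_8$. Hence $K/L$ is a central extension of $A_5$ by $C_2$ with nonabelian Sylow $2$-subgroup, and as the split extension $C_2\times A_5$ has elementary abelian Sylow $2$-subgroup, we get $K/L\cong\SL(2,5)=E$, i.e.\ $K\cong L.E$ with $L$ a solvable $2'$-group. Finally, $Z(E)\cong C_2$ and $E$ contains elements of orders $3$ and $5$, hence elements of orders $6$ and $10$; since $E\cong K/L$ is a section of $G$, the group $G$ has elements of orders $6$ and $10$, so $2-3,2-5\notin\ol\Gamma(G)$.

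The main obstacle is the Frobenius/$2$-Frobenius analysis: because the Sylow $2$-subgroup of $A_5$ is Klein four rather than $D_8$, the structure is less rigid than in \Cref{edges from 2}, and one must lean on $p\neq3$ and on the ``no index-$2$ subgroup'' property of the relevant Frobenius groups to force $Q$ to be generalized quaternion. Once that is done, pinning down $Q\cong Q_8$ and identifying $K/L$ with $\SL(2,5)$ is a routine adaptation of the corresponding part of \Cref{edges from 2}.
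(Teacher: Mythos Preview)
Your proof is correct and follows essentially the same approach as the paper's, mirroring \Cref{edges from 2} step by step. The one notable difference is that you give a more explicit case analysis for the Frobenius/$2$-Frobenius structure of the Hall $\{2,p\}$-subgroup: you separately eliminate the types $(p,2)$ and $(p,2,p)$ via the observation that in those cases the abelianization is a $p$-group (so no $V_4$ quotient), and then eliminate $(2,p,2)$ by showing the cyclic top complement would have to surject onto $V_4$. The paper's version is terser on this point, jumping directly to ``type $(2,p)$ or $(2,p,2)$'' and dispatching the latter with the same cyclic-top observation; your treatment makes the omitted cases explicit but reaches the identical conclusion that $Q$ is generalized quaternion, after which the two arguments are essentially word-for-word the same.
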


\begin{proof}
Let $Q\in \text{Syl}_2(K)$ and $P\in \text{Syl}_p(K).$ Note that $NQ$ is solvable, and we may assume that $PQ$ is a Hall $\{2,q\}$-subgroup of $NQ.$ Since $2-p\in \ol\Gamma(G),$ we may assume that $PQ$ is Frobenius or $2$-Frobenius of type $(2,p)$ or $(2,p,2)$ respectively. But the latter is not possible, since in a $2$-Frobenius group the top Frobenius complement must be cyclic, but $PQ$ has a Klein-$4$ quotient. Hence \cite[Corollary 6.17]{FGT} shows that $Q$ is either cyclic or generalized quaternion. Consider the Sylow-$2$ subgroup $Q_0=Q\cap N$ of $N.$ Because $Q/Q_0 \in \text{Syl}_2(A_5)$ is isomorphic to $V_4,$ $Q$ must in fact be generalized quaternion. Now, recall that generalized quaternion groups have only one normal subgroup of index $4,$ and this subgroup is always cyclic. Thus $Q_0$ is cyclic of order $2^m$ for some $m\geq 1.$

By Burnside's $p$-complement theorem, we can write $N=L.Q_0,$ where $L$ is a normal Hall $2'$ subgroup of $N.$ In fact, $L$ is characteristic in $N$ and thus normal in $G.$ Observe that $N/L\cong Q_0$ is abelian, and thus $(K/L)(N/L)\cong A_5$ acts on $N/L\cong Q_0$ by conjugation. However, the simplicity of $A_5$ implies that this action is trivial. Hence all of $K/L\cong Q_0.A_5$ acts trivially on $Q_0.$ In particular, $Q_0$ belongs to the center of some Sylow $2$-subgroup of $K/L,$ which is isomorphic to $Q$ because $2\ndivides |L|.$ We showed above that $Q$ is generalized quaternion, so this center has order $2$; since $Q_0$ is non-trivial, it follows that $Q_0\cong C_2.$ Therefore $Q\cong Q_8,$ the quaternion group of order $8$. It is well-known that $Q_8$ does not split over $C_2,$ thus our extension $Q_0.A_5$ cannot split either. Therefore $Q_0.A_5$ is the double cover of $A_5,$ establishing the first statement of the lemma. In particular, $Z(Q_0.A_5)=Q_0,$ so the order $3$ and order $5$ elements of $Q_0.A_5$ commute with at least one order $2$ element, showing $2-3,2-5\notin \ol\Gamma(G).$
\end{proof}

\begin{lemma}\label{no3p}
Let $G$ be a strictly $A_5$-solvable group such that $2-3\in \ol\Gamma(G)$ or $2-5\in \ol\Gamma(G)$. Then $3-p\notin \ol\Gamma(G)$ for any $p\notin \pi(A_5).$
\end{lemma}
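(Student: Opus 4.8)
The plan is to derive this from \Cref{funny} together with the very special structure of the Schur cover $\SL(2,5) = 2.A_5$. First I would use \Cref{2.4 2022} (legitimate since $\pi(A_5) = \pi(\Aut(A_5)) = \{2,3,5\}$) to fix a subgroup $K \cong N.A_5$ with $N$ solvable and $\pi(G) = \pi(K)$; any prime $p \notin \pi(A_5)$ dividing $|G|$ then lies in $\pi(K)\setminus\pi(A_5)$. Fix such a $p$ and apply \Cref{funny}. Since $M(A_5) \cong C_2$ and $3$ is an odd prime of $\pi(A_5)$ coprime to $|M(A_5)|$, alternative (1) of that proposition already gives $3-p \notin \ol\Gamma(G)$. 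So everything reduces to alternative (2): $G$ has a section $V.E$ with $V$ a nontrivial elementary abelian $p$-group and $E$ a perfect central extension of $A_5$. Because $|M(A_5)| = 2$, the only candidates for $E$ are $A_5$ itself and $\SL(2,5)$.

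Next I would eliminate the case $E \cong \SL(2,5)$ using the hypothesis. The quotient map $V.E \twoheadrightarrow E$ exhibits $\SL(2,5)$ as a section of $G$. Its central involution $-I$ commutes with an element of order $3$ and with an element of order $5$, so $\SL(2,5)$ contains an element of order $6$ and an element of order $10$; as recorded in Section~\ref{section:preliminaries}, a section possessing an element of order $mn$ (with $m \ne n$ primes) forces one in $G$, so $G$ has elements of order $6$ and $10$. Thus $2-3 \notin \ol\Gamma(G)$ and $2-5 \notin \ol\Gamma(G)$, contradicting the hypothesis that at least one of these edges lies in $\ol\Gamma(G)$. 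Hence $E \cong A_5$.

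Finally, with $E \cong A_5$ the section $V.A_5$ splits as $V \rtimes A_5$ by Schur--Zassenhaus (as $(|V|,|A_5|)=1$), and I would invoke \Cref{reps}: there is a $\CC A_5$-module $\widetilde V$ for which an order-$3$ element of $A_5$ has a fixed point if and only if $V \rtimes A_5$ has an element of order $3p$. All order-$3$ elements of $A_5$ are conjugate, and for each of the five irreducible characters $\chi$ the multiplicity $\tfrac13(\chi(1) + \chi(g) + \chi(g^{-1}))$ of the eigenvalue $1$ at an order-$3$ element $g$ is positive (it equals $1,1,1,2,1$ on the irreducibles of dimension $1,3,3,4,5$); hence an order-$3$ element fixes a nonzero vector in every $\CC A_5$-module, in particular in $\widetilde V$. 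Therefore $V\rtimes A_5$ — and so $G$ — has an element of order $3p$, i.e. $3-p \notin \ol\Gamma(G)$, completing the proof. The only genuinely nontrivial point is recognizing that the obstructing configuration allowed by \Cref{funny}(2) is precisely a section with $E = \SL(2,5)$, and that the central involution of $\SL(2,5)$ is exactly what the hypothesis forbids; the rest is bookkeeping and a one-line character check.
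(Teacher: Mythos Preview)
Your proof is correct and follows essentially the same approach as the paper's: apply \Cref{funny} with $r=3$ (using $(3,|M(A_5)|)=1$), and in alternative (2) split on whether the perfect central extension $E$ is $A_5$ or $\SL(2,5)$, handling the former via \Cref{reps} and the latter via the central involution producing elements of order $6$ and $10$. The only cosmetic differences are that you argue directly rather than by contrapositive and carry out the order-$3$ fixed-point character computation explicitly instead of citing the table.
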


\begin{proof}
We first note that $(3,M(A_5))=1. $ Thus for each $p\in \pi(G)\setminus \pi(A_5)$, \Cref{funny} shows that either $3-p\notin \ol\Gamma(G)$ or $G$ has a section $V.E$ where $E$ is a perfect central extension of $A_5$ and $V$ is a nontrivial elementary abelian $p$-group.

To prove the contrapositive, suppose there exists $p\in \pi(G)\setminus\pi(A_5)$ such that $3-p\in \ol\Gamma(G)$ so that $G$ has such a section $V.E.$ By Table \ref{table:4}, we have two cases. If $E$ is not the double cover $2.A_5\cong SL(2,5),$ then \Cref{reps} implies $3-p\notin \ol\Gamma(G),$ a contradiction. If $E$ is the double cover $2.A_5\cong \text{SL}(2,5),$ then $2-3,2-5\notin \ol\Gamma(G).$
\end{proof}

\begin{lemma}\label{3p2p5p}
Let $G$ be a strictly $A_5$-solvable group and $K\cong N.A_5$ the subgroup granted from \Cref{2.4 2022}. If $3-p\in \ol\Gamma(G)$ and $2-q\in \ol\Gamma(G)$ for some primes $p,q\in \pi(G)\setminus \pi(A_5).$ Then one of the following holds:
\begin{enumerate}
\item $\ol\Gamma(G)$ is triangle-free and $3$-colorable.
\item $2-p\in \ol\Gamma(G)$ and $5-p\in \ol\Gamma(G)$
\end{enumerate}
\end{lemma}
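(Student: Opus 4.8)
The plan is to run the same kind of argument used in Lemma \ref{A5doublecoverfun} and Lemma \ref{no3p}, combining the $r-p$ criteria of Section \ref{section:preliminaries} with the structure of $K$. We are given $3-p \in \ol\Gamma(G)$ and $2-q \in \ol\Gamma(G)$ for some $p,q \in \pi(G)\setminus\pi(A_5)$. Since $2-q \in \ol\Gamma(G)$, Lemma \ref{A5doublecoverfun} applies: $K \cong L.E$ where $L$ is a solvable $2'$-group and $E = \mathrm{SL}(2,5) = 2.A_5$, and moreover $2-3, 2-5 \notin \ol\Gamma(G)$. The presence of $3-p \in \ol\Gamma(G)$ is the other key input. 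Because $(3, |M(A_5)|) = 1$, Proposition \ref{funny} with $r = 3$ says that either $3-p \notin \ol\Gamma(G)$ (excluded by hypothesis) or $G$ has a section $V.E'$ with $V$ a nontrivial elementary abelian $p$-group and $E'$ a perfect central extension of $A_5$. So such a section exists; consulting Table \ref{table:4}, $E'$ is $A_5$ or $\mathrm{SL}(2,5)$. If $E' = A_5$, then Lemma \ref{reps} and the character data force $3-p \notin \ol\Gamma(G)$, a contradiction. Hence $E' = \mathrm{SL}(2,5)$, i.e., $G$ has a section $V \rtimes \mathrm{SL}(2,5)$ (coprime action, Schur–Zassenhaus) in which an order-$3$ element acts fixed-point-freely on $V$.

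**Deriving the dichotomy.** From the section $V \rtimes \mathrm{SL}(2,5)$ with $3$ acting Frobeniusly, I would read off from Table \ref{table:4} the fixed-point behavior of the other prime-order elements. The central involution of $\mathrm{SL}(2,5)$ acts fixed-point-freely on every nontrivial module (it acts as $-1$ on each faithful constituent), but $V$ could contain trivial constituents; what matters is the irreducible constituent of $V$ on which order $3$ is fixed-point-free. Passing to that irreducible constituent $V_0$, order $3$ acts Frobeniusly on $V_0$, and the representation-theoretic facts of Table \ref{table:4} for $\mathrm{SL}(2,5)$ tell us that in any complex irreducible representation where order-$3$ elements act without fixed points, order-$5$ elements also act without fixed points, and the central involution acts without fixed points as well. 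This yields $5-p \in \ol\Gamma(G)$ and $2-p \in \ol\Gamma(G)$, which is conclusion (2). The only way this can fail is if no such constituent genuinely embeds an $A_5$-action — but that is precisely the $E' = A_5$ case already eliminated, or the degenerate situation in which the relevant section does not actually have $\mathrm{SL}(2,5)$ acting nontrivially, which would again contradict $3-p \in \ol\Gamma(G)$ via Lemma \ref{reps}.

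**Where conclusion (1) comes in.** The disjunction with ``(1) $\ol\Gamma(G)$ is triangle-free and $3$-colorable'' is there to absorb the cases where the hypotheses cannot coexist productively — in particular, $3$-colorability already follows from Lemma \ref{harder3color}'s $A_5$-analogue (or is obtained directly as in Lemma \ref{Wa5} / the Frobenius digraph argument), so the real content is ``triangle-free or (2) holds.'' Here is the branch: if $3$ or $5$ divides $|N|$, then Lemma \ref{Wa5} gives that $\ol\Gamma(G)$ is triangle-free and $3$-colorable, i.e. (1) holds, and we are done. So assume $3, 5 \nmid |N|$. Then the argument of the previous paragraph goes through cleanly — the chief series of $K$ has $5$-free, $3$-free solvable part, so the section $V.E'$ produced by Proposition \ref{funny} genuinely has $E' = \mathrm{SL}(2,5)$ (the central extension by $C_3$ alternative in the proof of \ref{funny} is what would be obstructed, but $(3,|M(A_5)|)=1$ already handles that inside \ref{funny}), and we extract (2).

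**Main obstacle.** The subtle point — and the place I would be most careful — is bridging between ``$G$ has a \emph{section} $V.E'$'' (what Proposition \ref{funny} hands us) and a statement about $\ol\Gamma(G)$ itself, specifically deducing that $2-p$ and $5-p$ lie in $\ol\Gamma(G)$ rather than merely in the complement of the prime graph of that section. For the direction ``edge in section $\Rightarrow$ edge in $\ol\Gamma(G)$'' this is automatic (edges of $\ol\Gamma$ are preserved under passing to sections of appropriate order), so in fact $5-p, 2-p \in \ol\Gamma(\text{section}) \subseteq \ol\Gamma(G)$ is fine — the real care is making sure the \emph{same} module $V$ (and hence the same prime $p$) witnesses all three conclusions simultaneously, which it does because we fix one irreducible constituent $V_0$ on which order $3$ is fixed-point-free and Table \ref{table:4} then controls orders $2$ and $5$ on that \emph{same} $V_0$. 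I expect the bookkeeping around trivial vs. nontrivial constituents of $V$ (ensuring the central involution of $\mathrm{SL}(2,5)$ really is fixed-point-free on the relevant piece, not just on faithful summands) to be the one spot requiring a short explicit argument, drawing on the representation data tabulated for $\mathrm{SL}(2,5)$ in Table \ref{table:4}.
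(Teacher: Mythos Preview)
Your argument contains a genuine logical error in the direction of the section-to-group implication. You write that ``$5-p, 2-p \in \ol\Gamma(\text{section}) \subseteq \ol\Gamma(G)$ is fine'' because ``edges of $\ol\Gamma$ are preserved under passing to sections of appropriate order''. The second clause is correct, but it says precisely that $\ol\Gamma(G)\big|_{\pi(S)} \subseteq \ol\Gamma(S)$ for a section $S$, not the reverse: if $G$ has no element of order $rp$ then neither does $S$, but the absence of such an element in $S$ says nothing about $G$. So from the section $V.E'$ produced by Proposition~\ref{funny} you cannot conclude $5-p \in \ol\Gamma(G)$ or $2-p \in \ol\Gamma(G)$; there could be elements of order $5p$ or $2p$ living elsewhere in $G$ that the section simply does not see.

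The paper's proof is structured to avoid exactly this pitfall. It does not invoke Proposition~\ref{funny} at all. After using Lemma~\ref{A5doublecoverfun} and Lemma~\ref{Wa5} to reduce to $K = L.E$ with $L$ a $\{2,3,5\}'$-group, it takes the Hall $\{2,3,5,p\}$-subgroup $H \cong P \rtimes E$ and argues by \emph{contradiction}: assuming an element of order $2p$ or $5p$ exists, it locates a chief factor $P_0$ of $P$ on which an element of order $2$ or $5$ has a fixed point, and then Table~\ref{table:4} forces an order-$3$ element to have a fixed point on that same $P_0$. This produces an element of order $3p$ inside a subgroup of $G$, hence $3-p \notin \ol\Gamma(G)$, contradicting the hypothesis. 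The inference runs in the legitimate direction (element in a subgroup $\Rightarrow$ element in $G$), and the representation data is used in the contrapositive form (fixed point for $2$ or $5$ $\Rightarrow$ fixed point for $3$) rather than your forward form. Your approach cannot be salvaged without switching to this contrapositive structure and replacing the abstract section with a subgroup in which a putative $2p$- or $5p$-element can actually be located.
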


\begin{proof}
By \Cref{A5doublecoverfun}, $K$ can be written as $L.E$ where $L$ is a $2'$ group and $E$ is the double cover $2.A_5\cong \text{SL}(2,5).$ If $(|L|,15)\neq 1$ then by \Cref{Wa5}, $(1)$ is satisfied. Thus we may proceed assuming $(|L|,15)=1.$ Note that $K$ is $\{2,3,5,p\}$-separable, so by \cite[Theorem 3.20]{FGT} we can take a Hall-$\{2,3,5,p\}$ subgroup $H\leq K.$ Then, according to \cite[Lemma 2.1]{2022}, we have $H\cong P.E$ where $P$ is a $p$-group. By the Schur-Zassenhaus theorem, $H\cong P\rtimes E.$ 

Now suppose, for the sake of contradiction, that $2-p\notin \ol\Gamma(G)$ or $5-p\notin \ol\Gamma(G),$ giving an element $x$ of order $2p$ or $5p$ in $H.$ Replacing $x$ with some conjugate of itself if necessary, we may assume that $x^p \in E.$ Now let $P_0\leq P$ be a normal subgroup of $H$ minimal with respect to the condition that it contains $x^2$ if $|x|=2p$ and $x^5$ if $|x|=5p.$ Passing $H$ to a smaller quotient if necessary, we may assume that $P_0$ is a minimal normal subgroup of $H$ and thus elementary abelian. Now, $x^p$ is an element of $E$ fixing $x^2$ if $|x|=2p$ and $x^5$ if $|x|=5p.$ In other words, we have an order $2$ or $5$ element of $E$ fixing an order $p$  element of $P_0.$ By \Cref{reps} and Table \ref{table:4}, some order $5$ element of $E$ also has fixed points in $P_0,$  which means $3-p\notin \ol\Gamma(G),$ a contradiction.
\end{proof}

\begin{lemma}\label{a53color}
Let $\Gamma$ be the prime graph of some strictly $A_5$-solvable group $G.$ Then one of the following holds:
\begin{enumerate}
 \item $\ol\Gamma$ is triangle-free and $3$-colorable.
\item $\ol\Gamma$ has a $3$-coloring where all neighbors of $2,3,$ and $5$ not in $\pi(A_5)$ have the same color.
\end{enumerate}
\end{lemma}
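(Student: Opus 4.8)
The plan is to imitate the proof of \Cref{harder3color} (the $A_7$ analogue), with one structural change forced by the fact that the Sylow $3$-subgroup of $A_5$ is cyclic of order $3$ and hence satisfies the Frobenius criterion: the vertex $3$ cannot be set aside, and any of the three primes $2,3,5$ of $\pi(A_5)$ may be adjacent to primes outside $\pi(A_5)$. I would begin by invoking \Cref{2.4 2022} to obtain $K\cong N.A_5$ with $N$ solvable and $\pi(G)=\pi(K)$. If $3$ or $5$ divides $|N|$, then \Cref{Wa5} gives conclusion~(1), so assume $3,5\nmid|N|$. Then every prime of $\pi(N)\setminus\{2\}$ lies in $\pi(G)\setminus\pi(A_5)$, and, using the subgroups $N.C_3\le K$ and $N.C_5\le K$ coming from the order-$3$ and order-$5$ subgroups of $A_5$, every Hall $\{3,q\}$- and $\{5,q\}$-subgroup of $K$ with $q\in\pi(N)\setminus\{2\}$ is a solvable group with cyclic Sylow $3$- (resp.\ $5$-) subgroup of order $3$ (resp.\ $5$).

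Next I would orient the induced subgraph $\ol\Gamma\setminus\{2\}$, whose vertex set is $(\pi(N)\setminus\{2\})\cup\{3,5\}$: orient edges inside $\pi(N)\setminus\{2\}$ by the Frobenius digraph $\vv\Gamma(N)$ (see \Cref{frobenius digraph}); orient each edge $3-q$ (resp.\ $5-q$) with $q\in\pi(N)\setminus\{2\}$ as $3\to q$ (resp.\ $5\to q$); and, if $3-5\in\ol\Gamma$, orient it arbitrarily, say $5\to 3$. The orientation of the $3-q$ and $5-q$ edges is forced: the Hall $\{3,q\}$-subgroup is solvable with Sylow $3$-subgroup $C_3$ and, since $3-q\in\ol\Gamma$, has no element of order $3q$; as a nontrivial odd $q$-group cannot act fixed-point-freely on $C_3$, such a group must be Frobenius with complement its Sylow $3$-subgroup, which is precisely the orientation $3\to q$ (and likewise for $5$). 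I would then verify that this orientation has no directed $3$-path: the vertex $5$ has in-degree $0$, the vertex $3$ has in-degree at most $1$ (only from $5$), and, crucially, every vertex of $\pi(N)\setminus\{2\}$ adjacent to $3$ or to $5$ has out-degree $0$ by \Cref{no 2-paths} (with $r\in\{3,5\}$). Combining these facts with the absence of directed $3$-paths in $\vv\Gamma(N)$ (\cite[Corollary 2.7]{2015}), a short case analysis on where $3$ and $5$ sit along a putative $3$-path rules out every possibility. Hence coloring each vertex by the length of the longest directed path leaving it is a proper $3$-coloring $\{\mathcal O,\mathcal D,\mathcal I\}$ of $\ol\Gamma\setminus\{2\}$ in which every neighbor of $3$ or of $5$ outside $\pi(A_5)$---having out-degree $0$---gets the color $\mathcal I$.

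Finally I would put the vertex $2$ back. If $2$ has no neighbor outside $\pi(A_5)$, then its only possible neighbors are $3$ and $5$, so, avoiding at most two colors, some color remains available for $2$; this produces a proper $3$-coloring of $\ol\Gamma$ for which~(2) holds (the condition on neighbors of $2$ being vacuous). If $2$ does have a neighbor outside $\pi(A_5)$, then \Cref{A5doublecoverfun} gives $2-3,2-5\notin\ol\Gamma$, so every neighbor of $2$ lies in $\pi(N)\setminus\{2\}$; being adjacent to $2\in\pi(A_5)$, each such neighbor again has out-degree $0$ in the orientation by \Cref{no 2-paths}, hence is colored $\mathcal I$. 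Setting the color of $2$ to $\mathcal O$ then gives a proper $3$-coloring of all of $\ol\Gamma$ in which every neighbor of $2$, $3$, or $5$ outside $\pi(A_5)$ is colored $\mathcal I$, which is conclusion~(2).

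The hard part will be the no-directed-$3$-path verification once the vertices $3$ and $5$ are inside the oriented part: it requires pinning down the orientation of every edge incident to $3$ or to $5$, which is where the numerical facts $|A_5|_3=3$ and $|A_5|_5=5$ enter, via the structure of Frobenius and $2$-Frobenius $\{3,q\}$- and $\{5,q\}$-groups; and it forces the vertex $2$ to be handled separately, since a $2$-group can act fixed-point-freely on $C_5$, so the edge $2-5$ need not be orientable uniformly---this is exactly where \Cref{A5doublecoverfun} is needed. A secondary point is the bookkeeping that $3,5\nmid|N|$ makes every prime of $\pi(N)\setminus\{2\}$ odd and outside $\pi(A_5)$, which is what permits \Cref{no 2-paths} (whose hypothesis needs the relevant prime to be odd) to be applied to each neighbor of $3$, $5$, or $2$.
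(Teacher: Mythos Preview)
Your proposal is correct and follows essentially the same approach as the paper's proof: reduce via \Cref{Wa5} to $3,5\nmid|N|$, orient $\ol\Gamma\setminus\{2\}$ using the Frobenius digraph of $N$ together with $3\to q$, $5\to q$, invoke \Cref{no 2-paths} to force out-degree $0$ on neighbors of $3$ and $5$, and then reinsert the vertex $2$ in two cases using \Cref{A5doublecoverfun}. The only cosmetic differences are that the paper orients the $3\text{--}5$ edge as $3\to 5$ rather than $5\to 3$, and uses the in/out-degree $3$-coloring rather than the longest-outgoing-path coloring; both choices yield the same conclusion that every neighbor of $2,3,5$ outside $\pi(A_5)$ lands in $\mathcal I$.
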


\begin{proof}
By \Cref{2.4 2022}, $G$ has a subgroup of the form $K\cong N.A_5$ where $N$ is solvable and $\pi(G)=\pi(K).$ If $3$ or $5$ divides $|N|,$ then $\ol\Gamma(G)$ is $3$-colorable and triangle-free by \Cref{Wa5}, so we proceed with the proof assuming otherwise. We induce a partial orientation $\vv{\Gamma}$ on edges of $\ol\Gamma$ according to the orientation they are given in $\vv\Gamma (N.C_3)$ or $\vv\Gamma (N.C_5).$ To see that this is well-defined, note that all edges of $\vv\Gamma(N.C_5)$ are either contained in $\vv\Gamma(N.C_3)$ or are of the form $5\rightarrow p.$ Likewise, all edges of $\vv\Gamma(N.C_3)$ are either contained in $\vv\Gamma(N.C_5)$ or are of the form $3\rightarrow p.$ Since $3,5\ndivides |N|,$ all edges shared by $\vv\Gamma(N.C_5)$ and $\vv\Gamma(N.C_3)$ are oriented according to $\vv\Gamma(N).$ Thus the orientations given by $\vv\Gamma(N.C_3)$ and $\vv\Gamma(N.C_5)$ align where they overlap in $\ol\Gamma(G).$ If $3-5\in \ol\Gamma(G),$ we additionally define the orientation $3\rightarrow 5.$ By \Cref{no 2-paths}, vertices adjacent to $3$ or $5$ have zero out-degree in $\ol\Gamma.$ Thus $3$ and $5$ have no outgoing $2$-paths, and it follows that $\vv\Gamma$ has no $3$-paths.

We assign a $3$-coloring to vertices of $\pi(N)\cup\{3,5\}$ as follows: label all vertices in $\pi(N)\cup\{3,5\}$ with zero out-degree as $\mathcal{I},$ all vertices in $\pi(N)\cup\{3,5\}$ with zero in-degree and non-zero out-degree as $\mathcal{O},$ and all vertices in $\pi(N)\cup\{3,5\}$ with nonzero in- and out-degree with $\mathcal{D}.$ As $V(\ol\Gamma\setminus\{2\})\subseteq \pi(N)\cup \{3,5\}$, this procedure induces a 3-coloring on the subgraph of $\ol\Gamma$ induced by $V(\ol\Gamma)\setminus\{2\}$. 

In the case that vertex $2$ is only adjacent to vertices in $\{3,5\},$ the $3$-coloring of the vertices  $V(\ol\Gamma\setminus\{2\})$ can easily be extended to $\ol\Gamma.$ Additionally, all neighbors of $3$ and $5$ outside of $\pi(A_5)$ will belong to the same color $\mathcal{I},$ as desired.

In the case that $2-p\in \ol\Gamma$ for some $p\in \pi(G)\setminus \pi(A_5),$ from \Cref{A5doublecoverfun} we have $2-5,2-3\notin \ol\Gamma.$ Additionally $2\divides |N|$ and the Hall $\{2,p\}$-subgroups of $N$ must be Frobenius of type $(2,p),$ giving the orientation $2\rightarrow p$ in $\vv\Gamma.$ Additionally, \Cref{no 2-paths} shows vertices in $N(2)\setminus\{3,5\}$ have zero out-degree in $\vv\Gamma(N)$ and thus $\vv\Gamma.$ Thus all vertices of $N(2)\setminus\{3,5\}$ belong to the same color $\mathcal{I}.$
Thus our above $\mathcal{O},\mathcal{D},\mathcal{I}$ coloring gives a $3$-coloring of $\ol\Gamma$ where all neighbors of $2,3,$ and $5$ have the same color $\mathcal{I}$. 
\end{proof}

\begin{theorem}\label{A5}
    Let $\Gamma$ be an unlabeled simple graph. Then $\Gamma$ is isomorphic to the prime graph of an $A_5$-solvable group if and only if one of the following is satisfied:
    \begin{enumerate}
    \item $\ol\Gamma$ is triangle-free and $3$-colorable
    \item There exist vertices $a,b,c$ in $\ol\Gamma$ and a $3$-coloring for which all vertices adjacent to but not included in $\{a,b,c\}$ have the same color. Additionally, $\ol\Gamma$ has at least one triangle and one of the following holds:
    \begin{enumerate}
    \item [(2.1)]$\ol\Gamma$ has exactly one triangle $\{a,b,c\}.$ Vertices $a$ and $b$ are not adjacent to any other vertices.
    \item[(2.2)] All triangles in $\ol\Gamma$ are about vertices $\{b,c,d\}$ for vertices $d\neq a.$ Additionally, $a-c,a-b\notin \ol\Gamma,$ and $N(b)\setminus \{c\}\subset N(a)\cap N(c).$ 
    \item [(2.3)] All triangles in $\ol\Gamma$ are about vertices $\{b,c,d\}$ for vertices  $d\neq a.$ Vertex $a$ is isolated and $N(b)\setminus\{c\}\subset N(c).$
    \end{enumerate}
    \end{enumerate}
\end{theorem}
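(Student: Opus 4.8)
The plan is to control everything through the induced subgraph of $\ol\Gamma$ on $\pi(A_5)=\{2,3,5\}$, exploiting that $\ol\Gamma(A_5)$ is the triangle on those three vertices; in every case that arises I will take the triple $\{a,b,c\}$ to be $\{2,3,5\}$ with $a\mapsto 2$, $b\mapsto 3$, $c\mapsto 5$. For the forward direction, suppose $\Gamma=\Gamma(G)$ with $G$ $A_5$-solvable. By \Cref{ThomasMichaelKeller} I may assume $G$ is strictly $A_5$-solvable, and \Cref{2.4 2022} supplies a subgroup $K\cong N.A_5$ with $N$ solvable and $\pi(G)=\pi(K)$. \Cref{a53color} already yields either (1) or a $3$-coloring of $\ol\Gamma$ in which every neighbour of $2,3,5$ outside $\pi(A_5)$ receives one fixed colour, so I may assume the latter; I may also assume $\ol\Gamma$ contains a triangle, otherwise (1) holds. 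Then \Cref{Wa5} forces $3,5\nmid|N|$, so every prime of $\pi(G)\setminus\pi(A_5)$ divides $|N|$, and consequently the induced subgraph $\ol\Gamma\setminus\pi(A_5)$ embeds in the triangle-free graph $\ol\Gamma(N)$.

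The technical core is the determination of all triangles of $\ol\Gamma$. First I would apply \Cref{no 2-paths} with the simple-group prime taken to be $3$, and again to be $5$, to rule out any triangle of the form $\{3,r,r'\}$ or $\{5,r,r'\}$ with $r,r'\notin\pi(A_5)$: such a triangle would force the edge $r-r'$ to be oriented in both directions in $\vv\Gamma(N)$. Combined with the triangle-freeness of $\ol\Gamma(N)$, with \Cref{no3p} (which excludes $\{2,3,r\}$) and with \Cref{A5doublecoverfun} (which excludes $\{2,5,r\}$, and excludes $\{2,r,r'\}$ since in that situation $2$, $r$, $r'$ all divide $|N|$), this leaves only the triangle $\{2,3,5\}$ and triangles $\{3,5,d\}$ with $d\notin\pi(A_5)$; in particular $3-5\in\ol\Gamma$. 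I then split on $2-3$. If $2-3\in\ol\Gamma$, then \Cref{no3p} forbids every edge $3-r$ and \Cref{A5doublecoverfun} every edge $2-r$ with $r\notin\pi(A_5)$, so $\{2,3,5\}$ is the unique triangle and (2.1) holds. If $2-3\notin\ol\Gamma$, some $\{3,5,d\}$-triangle is present, and \Cref{4.6 2022} upgrades each edge $3-r$ to $5-r$, giving $N(3)\setminus\{5\}\subseteq N(5)$; I split further. If $2$ has a neighbour $q\notin\pi(A_5)$ then \Cref{A5doublecoverfun} gives $2-5\notin\ol\Gamma$ and \Cref{3p2p5p} applied to each $r\in N(3)\setminus\{5\}$ returns either (1) or $\{2-r,5-r\}\subseteq\ol\Gamma$, so $N(3)\setminus\{5\}\subseteq N(2)\cap N(5)$ and (2.2) holds. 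If $2$ has no such neighbour, then $2$ is isolated and (2.3) holds. In every case the coloring produced by \Cref{a53color} is the one demanded in (2).

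For the converse, (1) is realized by a solvable group via \Cref{ThomasMichaelKeller}. Otherwise $\ol\Gamma\setminus\{a,b,c\}$ is triangle-free and $3$-colorable, so the construction behind \cite[Theorem 2.8]{2015} yields a solvable $N$ with $(|N|,|A_5|)=1$ and $\ol\Gamma(N)\cong\ol\Gamma\setminus\{a,b,c\}$. For (2.1) I apply \Cref{graphlizard} with $E=A_5$ (its prime graph complement being the triangle on $\{2,3,5\}$), using the standard $4$-dimensional irreducible representation of $A_5$ for the vertices of $N(c)\setminus\{a,b\}$ and the trivial representation elsewhere. For (2.2) I apply \Cref{graphlizard} with $E=\SL(2,5)$, whose prime graph complement on $\{2,3,5\}$ is the single edge $3-5$ matching the induced subgraph; here one needs irreducible representations of $\SL(2,5)$ realizing the fixed-point patterns ``$2,3,5$ all free'', ``only $5$ free'', ``only $2$ free'' and ``$2,5$ free'' on the central involution and on elements of order $3$ and $5$, which is precisely the content of Table \ref{table:4}. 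For (2.3) I would mimic the device used for case (2.5) of \Cref{a7bigproof}: form $\ol\Gamma_0$ from $\ol\Gamma$ by adding the edge $a-v$ for each $v\in N(b)\setminus\{c\}$; since $N(b)\setminus\{c\}$ is an independent set (every triangle of $\ol\Gamma$ is a $\{b,c,d\}$) contained in $N(c)$, the graph $\ol\Gamma_0$ satisfies (2.2), hence is the prime graph of an $A_5$-solvable group $G$ with $a$ identified with $2$, and then $\Gamma(G\times C_2)\cong\Gamma$ because taking the direct product with $C_2$ turns the vertex $2$ into an isolated vertex of the complement while leaving all other edges unchanged.

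The step I expect to be the main obstacle is the triangle-classification in the forward direction — in particular the use of \Cref{no 2-paths} to exclude triangles meeting $\{3,5\}$ in fewer than two vertices, followed by the careful matching of each surviving configuration to (2.1)--(2.3) with the correct vertex roles and neighbourhood containments. The genuinely deep ingredient, Lemma \ref{kmain}, is already in hand and enters only through \Cref{4.6 2022}. On the construction side the only non-routine verification is that the irreducible characters of $\SL(2,5)$ recorded in Table \ref{table:4} do realize all four fixed-point patterns needed to feed \Cref{graphlizard} in case (2.2).
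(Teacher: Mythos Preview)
Your argument is correct and mostly parallels the paper's proof, with two genuine variations worth noting. For the forward triangle classification, the paper takes the solvable subgroups $N.D_{10}$ and $N.A_4$ of $K$ (with prime sets $\pi(G)\setminus\{3\}$ and $\pi(G)\setminus\{5\}$) to conclude in one stroke that $\ol\Gamma\setminus\{3\}$ and $\ol\Gamma\setminus\{5\}$ are triangle-free, forcing every triangle to contain both $3$ and $5$; your case-by-case exclusion via \Cref{no 2-paths}, \Cref{no3p}, and \Cref{A5doublecoverfun} reaches the same endpoint by a longer path. For the backward direction in (2.3), the paper applies \Cref{graphlizard} directly with $E=C_2.S_5$ (a double cover of $S_5$), whose character data in Table~\ref{table:4} supply the fixed-point pattern $(2)$ needed for vertices adjacent to both $b$ and $c$ but not $a$. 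Your reduction to (2.2) followed by $\times\,C_2$ is a legitimate alternative that avoids introducing this extra group---and in fact some detour is necessary if you insist on $E=\SL(2,5)$, since $\SL(2,5)$ has no irreducible representation realizing pattern $(2)$.

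There is one small omission to patch. In your sub-case ``$2-3\notin\ol\Gamma$ and $2$ has no neighbour outside $\pi(A_5)$'' you assert that $2$ is isolated without excluding $2-5\in\ol\Gamma$. This follows immediately from \Cref{no3p}: the edge $3-d\in\ol\Gamma$ with $d\notin\pi(A_5)$ already forces $2-3,2-5\notin\ol\Gamma$, and the paper invokes exactly this at the head of its $\{3,5,p\}$-triangle case. Similarly, in your (2.3) construction you should record that the $3$-coloring required by condition~(2) for $\ol\Gamma_0$ is obtained from that of $\ol\Gamma$ simply by recoloring the isolated vertex $a$ away from the common colour of $N(\{a,b,c\})\setminus\{a,b,c\}$.
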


\begin{proof}
We will first prove the forward direction. Suppose that $\Gamma$ is the graph of an $A_5$-solvable group $G.$ By \Cref{ThomasMichaelKeller}, we may assume that $G$ is strictly $A_5$-solvable. Then by \Cref{a53color}, $\ol\Gamma$ is $3$-colorable. If $\ol\Gamma$ is also triangle-free, then $\Gamma$ satisfies $(1)$ and we are done. Thus we will proceed assuming $\ol\Gamma$ has at least one triangle. Let $K\cong N.A_5$ be the subgroup of $G$ granted by \Cref{2.4 2022}. Taking solvable subgroups $K_1$ and $K_2$ as in \Cref{Wa5}, it is clear that $\ol\Gamma \setminus \{3\}$ and  $\ol\Gamma \setminus \{5\}$ are triangle-free. Thus any triangle in $\ol\Gamma$ must contain the edge $3-5.$ Additionally, \Cref{a53color} gives that $\ol\Gamma$ has a $3$-coloring in which all vertices adjacent to but not including $\{2,3,5\}$ have the same color.

In the case that $\ol\Gamma$ has a $\{2,3,5\}$-triangle, \Cref{no3p} shows that $3-p\notin \ol\Gamma$ for all $p\in \pi(G) \setminus \pi(A_5)$ and \Cref{A5doublecoverfun} shows that $2-p\notin \ol\Gamma$ for all $p\in \pi(G) \setminus \pi(A_5).$ Thus in this case, $\ol\Gamma$ satisfies (2.1).

In the case that $\ol\Gamma$ has a $\{3,5,p\}$-triangle for some prime $p\neq 2,$ \Cref{no3p} gives $2-3,2-5\notin \ol\Gamma.$ 
If $2-r\notin \ol\Gamma$ for all primes $r\in \pi(G) \setminus \pi(A_5),$ then \Cref{4.6 2022} shows that $N(3)\setminus \{5\}\subset N(5)$, so (2.3) is satisfied. Else if $2-r\in \ol\Gamma$ for some prime $r\in \pi(G) \setminus \pi(A_5),$ note that by \Cref{3p2p5p}, we have $ 2-s,5-s\in \ol\Gamma$ whenever $3-s\in \ol\Gamma$. Hence $N(3)\setminus \{5\}\subset N(2)\cap N(5),$ and $\ol\Gamma$ satisfies (2.2). This completes the forwards direction. 

For the backwards direction, if $\ol\Gamma$ is triangle-free and $3$-colorable, then $\Gamma$ is the prime graph of a solvable group by \Cref{ThomasMichaelKeller}. Hence we need only consider $\Gamma$ satisfying the claims (2.1),(2.2), and (2.3). These cases are a straightforward application of \Cref{graphlizard} and the information from Table \ref{table:4}. In each case, we let $X=\{a,b,c\}$ and make assignments $a\mapsto 2,$ $b\mapsto 3,$ and $c\mapsto 5.$

First, suppose (2.1) holds. We have a graph isomorphism from the subgraph of $\ol\Gamma$ induced by $X$ to $\ol\Gamma(A_5).$ For each $v\in N(X)\setminus X,$ our hypothesis states that $v$ is adjacent to $c$ but not $a$ or $b.$ Note that there is a complex irreducible representation of $A_5$ in which order $2$ and $3$ elements have fixed points but order $5$ elements do not. This representation obeys the conditions laid out in \Cref{graphlizard} with respect to our chosen graph isomorphism, so $\Gamma$ is the prime graph of an $A_5$-solvable group.

Next, suppose (2.2) holds. We have a graph isomorphism from the subgraph of $\ol\Gamma$ induced by $X$ to $\ol\Gamma(\text{SL}(2,5)).$ For each $v\in N(X)\setminus X,$ our hypothesis states that one of the following holds: (1) $v$ is adjacent to $a$ but not $b$ or $c;$ (2) $v$ is adjacent to $c$ but not $a$ or $b$; (3) $v$ is adjacent to $a$ and $c$ but not $b$; (4) $v$ is adjacent to $a,$ $b,$ and $c.$ As demonstrated in Table \ref{table:4}, $\text{SL}(2,5)$ has complex irreducible representations respecting these edge conditions. Thus $\Gamma$ is the prime graph of an $A_5$-solvable group.

Finally, suppose (2.3) holds. We have a graph isomorphism from the subgraph of $\ol\Gamma$ induced by $X$ to $ \ol\Gamma(C_2.S_5)$. For each $v\in N(X)\setminus X,$ our hypothesis states that either $v$ is adjacent to $c$ but not $b$ or $v$ is adjacent to $c$ and $b.$ Once again Table \ref{table:4} gives complex irreducible representations of $C_2.S_5$ (a double cover of $S_5$) corresponding to these relations. Thus $\Gamma$ is the prime graph of an $A_5$-solvable group and our proof is complete.
\end{proof}

\newpage

\section{}
\begin{table}[H]
\centering
\begin{tabular}{|c||c|c|c|c|c|}
    \hline
    $G$ & $|G|$ & $|\Out(G)|$ & $\ol \Gamma(G)$ & $|M(G)|$ & Relevant Subgroups\\ \hline \hline
    $A_7$ & $2^3 \cdot 3^2 \cdot 5 \cdot 7$ & $2$ & $\kyle$ & 6 & $A_6$, $\PSL(2,7)$\\
    $A_8$ & $2^6 \cdot 3^2 \cdot 5 \cdot 7$ & $2$ & $\spoon$ & 2 & $A_7$\\
    $A_9$ & $2^6 \cdot 3^4 \cdot 5 \cdot 7$ & $2$ & $\stargraph$ & 2 & $A_7$\\
    $A_{10}$ & $2^7 \cdot 3^4 \cdot 5^2 \cdot 7$ & $2$ & $\Aten$ & 2 & N/A \\
    $M_{11}$ & $2^4 \cdot 3^2 \cdot 5 \cdot 11$ & $1$ & $\kyle$ & 1 & $C_{11} \rtimes C_5$\\
    $M_{12}$ & $2^6 \cdot 3^3 \cdot 5  \cdot 11$ & $2$ & $\spoon$ & 2 & $M_{11}$\\
    $J_2$ & $2^7 \cdot 3^3 \cdot 5^2 \cdot 7$ & $2$ & $\stargraph$ & 2 & N/A \\
    $\PSL(3,4)$ & $2^6 \cdot 3^2 \cdot 5 \cdot 7$ & $12$ & $\kfour$ & 48 &$A_6, \PSL(2,7)$ \\
    $\PSL(3,5)$ & $2^5 \cdot 3 \cdot 5^3 \cdot 31$ & $2$ & $\spoon$ & 1 & $C_{31} \rtimes C_3$\\
    $\PSL(3,7)$ & $2^5 \cdot 3^2 \cdot 7^3 \cdot 19$ & $6$ & $\spoon$ & 3 & N/A \\
    $\PSL(3,8)$ & $2^9 \cdot 3^2 \cdot 7^2 \cdot 73$ & $6$ & $\spoon$ & 1 & $C_{73} \rtimes C_3$ \\
    $\PSL(3,17)$ & $2^9 \cdot 3^2 \cdot 17^3 \cdot 307$ & $2$ & $\spoon$ & 1 & $C_{307} \rtimes C_3, (C_{17} \times C_{17}) \rtimes C_6$\\
    $\PSL(4,3)$ & $2^7 \cdot 3^6 \cdot 5 \cdot 13$ & $4$ & $\spoon$ & 2 & $\PSL(3,3)$\\
    $O_5(4)$ & $2^8 \cdot 3^2 \cdot 5^2 \cdot 17$ & $4$ & $\stargraph$ & 1 & N/A \\
    $O_5(5)$ & $2^6 \cdot 3^2 \cdot 5^4 \cdot 13$ & $4$ & $\stargraph$ & 2 & N/A \\
    $O_5(7)$ & $2^8 \cdot 3^2 \cdot 5^2 \cdot 7^4$ & $2$ & $\stargraph$ & 2 & N/A \\
    $O_5(9)$ & $2^8 \cdot 3^8 \cdot 5^2 \cdot 41$ & $4$ & $\stargraph$ & 2 & N/A \\
    $O_7(2)$ & $2^9 \cdot 3^4 \cdot 5 \cdot 7$ & $1$ & $\stargraph$ & 2 & $A_7, A_8$\\
    $O_8^+(2)$ & $2^{12} \cdot 3^5 \cdot 5^2 \cdot 7$ & $6$ & $\stargraph$ & 4 & N/A\\
    $\Sz(8)$ & $2^6 \cdot 5 \cdot 7 \cdot 13$ & $3$ & $\kfour$ & 4 & $D_{10}, D_{14}, D_{26}$\\
    $\Sz(32)$ & $2^{10} \cdot 5^2 \cdot 31 \cdot 41$ & $5$ & $\kfour$ & 1 & $D_{50}, D_{62}, D_{82}$\\
    $U_3(4)$ & $2^6 \cdot 3 \cdot 5^2 \cdot 13$ & $4$ & $\spoon$ & 1 & $C_{13} \rtimes C_3$\\
    $U_3(5)$ & $2^4 \cdot 3^2 \cdot 5^3 \cdot 7$ & $6$ & $\spoon$ & 3 & $A_6$ \\
    $U_3(7)$ & $2^7 \cdot 3 \cdot 7^3 \cdot 43$ & $2$ & $\spoon$ & 1 & $C_{43} \rtimes C_3$ \\
    $U_3(8)$ & $2^9 \cdot 3^4 \cdot 7 \cdot 19$ & $18$ & $\spoon$ & 3 & $\PSL(2,7), A_6, A_7$\\
    $U_3(9)$ & $2^5 \cdot 3^6 \cdot 5^2 \cdot 73$ & $4$ & $\stargraph$ & 1 & N/A \\
    $U_4(3)$ & $2^7 \cdot 3^6 \cdot 5 \cdot 7$ & $8$ & $\kyle$ & 36 & $A_7, \PSL(3,4)$\\
    $U_5(2)$ & $2^{10} \cdot 3^5 \cdot 5 \cdot 11$ & $2$ & $\spoon$ & 1 & $C_{11} \rtimes C_5$\\
    $^2 F_4(2)'$ & $2^{11} \cdot 3^3 \cdot 5^2 \cdot 13$ & $2$ & $\spoon$ & 1 & N/A \\
    $G_2(3)$ & $2^6 \cdot 3^6 \cdot 7 \cdot 13$ & $2$ & $\kyle$ & 3 & $C_7 \rtimes C_6$, $C_{13} \rtimes C_6$ \\
    $^3 D_4(2)$ & $2^{12} \cdot 3^4 \cdot 7^2 \cdot 13$ & $3$ & $\stargraph$ & 1 & N/A \\
    $\PSL(2,p^f)$ & $\frac{q(q^2-1)}{(2,q-1)}, q = p^f$ & $f \cdot (2,q-1)$ & $\kyle$ & $(2,q-1)$ &Varies\\
    \hline
\end{tabular}
    \caption{$K_4$ groups and some important structural information. Some important subgroups that were used for classification are included. $M(G)$ denotes the Schur multiplier of $G$.}
    \label{table:1}
\end{table}

\begin{table}[H]
\centering
\begin{tabular}{|c||c|c|c|c|c|}
    \hline
    $G$ & $2$-subgroup & $p_1$-subgroup & $p_2$-subgroup & $p_3$-subgroup & \# \\ \hline \hline
    $A_7$ & Yes & No & Yes & Yes & 1 \\
    $A_8$ & No & No & Yes & Yes & 2 \\
    $A_9$ & No & No & Yes & Yes & 2 \\
    $A_{10}$ & No & No & No & Yes & 3 \\
    $M_{11}$ & No & No & Yes & Yes & 2 \\
    $M_{12}$ & No & No & Yes & Yes & 2 \\
    $J_2$ & No & No & No & Yes & 3 \\
    $\PSL(3,4)$ & No & No & Yes & Yes & 2 \\
    $\PSL(3,5)$ & No & Yes & No & Yes & 2 \\
    $\PSL(3,7)$ & No & No & No & Yes & 3 \\
    $\PSL(3,8)$ & No & Yes & No & Yes & 2 \\
    $\PSL(3,17)$ & No & Yes & No & Yes & 2 \\
    $\PSL(4,3)$ & No & No & Yes & Yes & 2 \\
    $O_5(4)$ & No & No & No & Yes & 3 \\
    $O_5(5)$ & No & No & No & Yes & 3 \\
    $O_5(7)$ & No & No & Yes & No & 3 \\
    $O_5(9)$ & No & No & No & Yes & 3 \\
    $O_7(2)$ & No & No & Yes & Yes & 2 \\
    $O_8^+(2)$ & No & No & No & Yes & 3 \\
    $\Sz(8)$ & No & Yes & Yes & Yes & 1 \\
    $\Sz(32)$ & No & Yes & Yes & Yes & 1 \\
    $U_3(4)$ & No & Yes & No & Yes & 2 \\
    $U_3(5)$ & No & No & No & Yes & 3 \\
    $U_3(7)$ & No & Yes & No & Yes & 2 \\
    $U_3(8)$ & No & No & Yes & Yes & 2 \\
    $U_3(9)$ & No & No & No & Yes & 3 \\
    $U_4(3)$ & No & No & Yes & Yes & 2 \\
    $U_5(2)$ & No & No & Yes & Yes & 2 \\
    ${}^2F_4(2)'$ & No & No & No & Yes & 3 \\
    $G_2(3)$ & No & No & Yes & Yes & 2 \\
    ${}^3D_4(2)$ & No & No & No & Yes & 3\\
    \hline
\end{tabular}
    \caption{A list of $K_4$ groups (beside $\PSL(2,q)$) and whether each Sylow subgroup satisfies the Frobenius Criterion, where $2 < p_1 < p_2 < p_3$. The number counts how many of the Sylow subgroups \textbf{do not} satisfy the Criterion. In our case, a higher number means that the case was easier to handle.} 
    \label{table:2}
\end{table}

\begin{center}
\begin{table}[H]
\centering
\begin{tabular}{|l||p{10cm}|}
\hline
$G$ & Fixed Point Information \\ \hline \hline
$A_7$, $3.A_7$, $16.A_7$, $64.A_7$ & (2, 3, 5, 7), (2, 3, 5) \\ \hline
$2.A_7$ & (2, 3, 5, 7), (2, 3, 5), (3, 5, 7), (3, 7) \\ \hline 
$6.A_7$ &(2, 3, 5, 7), (2, 3, 5), (3, 5, 7), (3, 5), (3, 7)  \\ \hline
$A_8$ & (2, 3, 5, 7) \\ \hline
$2.A_8$ & (2, 3, 5, 7), (2, 3, 7) \\ \hline
$A_9$ & (2, 3, 5, 7) \\ \hline
$2.A_9$ & (2, 3, 5, 7), (2, 3, 7) \\ \hline
$M_{11}$ & (2, 3, 5, 11), (2, 3, 5) \\ \hline
$M_{12}$ & (2, 3, 5, 11) \\ \hline
$2.M_{12}$ & (2, 3, 5, 11), (2, 3, 5) \\ \hline 
$n.\PSL(3,4)$, $n \mid 48$, $6 \nmid n$ & (2, 3, 5, 7) \\ \hline
$n.\PSL(3,4)$, $n \mid 48$, $6 \mid n$ & (2, 3, 5, 7), (2, 3, 5) \\ \hline
$\PSL(3, 5)$ & (2, 3, 5, 31), (2, 3, 5)\\ \hline
$\PSL(3,7)$, $3.\PSL(3,7)$ & (2, 3, 7, 19) \\ \hline
$\PSL(3, 8)$ & (2, 3, 7, 73), (2, 3, 7)\\ \hline
$\PSL(3, 17)$ & (2, 3, 17, 307), (2, 3, 17) (from \cite{PSL})\\ \hline
$\PSL(4, 3)$, $2.\PSL(4, 3)$ & (2, 3, 5, 13)\\ \hline
$O_7(2)$ & (2, 3, 5, 7) \\ \hline
$2.O_7(2)$ & (2, 3, 5, 7), (2, 3, 7) \\ \hline
$U_3(4)$ & (2, 3, 5, 13), (2, 3, 5)\\ \hline
$U_3(5)$, $3.U_3(5)$ & (2, 3, 5, 7)\\ \hline
$U_3(7)$ & (2, 3, 7, 43), (2, 3, 7)\\ \hline
$U_3(8)$, $3.U_3(8)$ & (2, 3, 7, 19)\\ \hline
$1, 2, 3_1, 3_2, 4, 6_2, 12_2, \&\, 18_2.U_4(3)$ & (2, 3, 5, 7) \\ \hline
$6_1, 12_1, 18_1, \&\, 36. U_4(3)$ & (2, 3, 5, 7), (2, 3, 5) \\ \hline
$U_5(2)$ & (2, 3, 5, 11), (2, 3, 5)\\ \hline
${}^2F_4(2)'$ & (2, 3, 5, 13) \\ \hline 
$G_2(3)$, $3.G_2(3)$ & (2, 3, 7, 13) \\ \hline
\end{tabular}
\captionsetup{singlelinecheck=false}
\caption{General information on complex irreducible representations of relevant groups, computed via \cite{GAP4}. The symbols $n.T$ denote a perfect extension of a $K_4$ group $T$ by a group of order $n$ (often these groups are unique, but if not we refer to all of them). Many of these groups can be found in GAP using the PerfectGroup() function (see Section \ref{section:GAP}). Except for $16.A_7$ and $64.A_7$, which are PerfectGroup($8!, 2$) and PerfectGroup($4 \cdot 8!$, 1) respectively, this table only refers to perfect \textit{central} extensions. The existence of fixed points was determined using \cite{GAP4}, \cite{atlas}, and the following formula for the multiplicity of the trivial representation in the representation $V$ with character $\chi$ restricted to a cyclic subgroup $\langle x \rangle$ of order $r$: 
\[m = \frac{1}{r}\sum_{j=1}^r \chi(x^j).\] A row vector $(p_1, p_2,\ldots,  p_n)$ appears in the list if and only if there exists an irreducible representation of $G$ for which there exist elements of order $p_1, \ldots, p_n$ with fixed points while elements of other prime orders act Frobeniusly (recall that ``fixed points" are, by definition, non-zero).}
\label{table:3}
\end{table}
\end{center}

\begin{table}[H]
\centering
\begin{tabular}{|l||p{10cm}|}
    \hline
    $G$ & Information about Irr$(G)$ \\ \hline \hline
     $A_5$ & (2, 3, 5), (2, 3) \\ \hline
     $S_5$ & (2, 3, 5), (2, 3) \\ \hline
     $\SL(2,5)$ & (2, 3, 5), (2, 3), (3, 5), (3), () \\ \hline
     $C_2.S_5$ & (2, 3, 5), (2, 3), (2) \\ \hline
\end{tabular}
\caption{General information on irreducible complex representations of $A_5$, $S_5$, and related groups, computed via \cite{GAP4}, using the same method as Table \ref{table:3}. The extension $C_2.S_5$ refers to the group SmallGroup(240,90). A row vector $(p_1, p_2,\ldots,  p_n)$ appears in the list if and only if there exists an irreducible representation of $G$ with respect to which there exist elements of order $p_1, \ldots , p_n$ with fixed points while elements of other prime orders act Frobeniusly.}
\label{table:4}
\end{table}

\clearpage

\newcommand{\Aseven}{%A7
\begin{tikzpicture}[main/.style = {draw, circle, scale = 0.8}]
    \node[main] (c) {7};
    \node[main] (a)[above left= 0.45cm and 0.45cm of c] {3};
    \node[main] (b)[below left= 0.45cm and 0.45cm of a] {5};
    \node[main] (d)[below left= 0.45cm and 0.45cm of c] {2};
    
    \draw (a) -- (b);
    \draw (a) -- (c);
    \draw (b) -- (c);
    \draw (b) -- (d);
    \draw (c) -- (d);
\end{tikzpicture}
}

\newcommand{\Aeight}{%A8
\begin{tikzpicture}[main/.style = {draw, circle, scale = 0.8}]
    \node[main] (c) {7};
    \node[main] (a)[above left= 0.45cm and 0.45cm of c] {5};
    \node[main] (b)[below left= 0.45cm and 0.45cm of c] {2};
    \node[main] (d)[right= 0.45cm of c] {3};
    
    \draw (a) -- (b);
    \draw (a) -- (c);
    \draw (b) -- (c);
    \draw (c) -- (d);
\end{tikzpicture}
}

\newcommand{\Meleven}{%M11
\begin{tikzpicture}[main/.style = {draw, circle, scale = 0.8}]
    \node[main] (c) {11};
    \node[main] (a)[above left= 0.45cm and 0.45cm of c] {3};
    \node[main] (b)[below left= 0.45cm and 0.45cm of a] {5};
    \node[main] (d)[below left= 0.45cm and 0.45cm of c] {2};
    
    \draw (a) -- (b);
    \draw (a) -- (c);
    \draw (b) -- (c);
    \draw (b) -- (d);
    \draw (c) -- (d);
\end{tikzpicture}
}

\newcommand{\Mtwelve}{%M12
\begin{tikzpicture}[main/.style = {draw, circle, scale = 0.8}]
    \node[main] (c) {11};
    \node[main] (a)[above left= 0.45cm and 0.45cm of c] {5};
    \node[main] (b)[below left= 0.45cm and 0.45cm of c] {3};
    \node[main] (d)[right= 0.45cm of c] {2};
    
    \draw (a) -- (b);
    \draw (a) -- (c);
    \draw (b) -- (c);
    \draw (c) -- (d);
\end{tikzpicture}
}

\newcommand{\PSLthreefour}{%PSL(3,4)
\begin{tikzpicture}[main/.style = {draw, circle, scale = 0.8}]
    \node[main] (c) {7};
    \node[main] (a)[above left= 0.45cm and 0.45cm of c] {3};
    \node[main] (b)[below left= 0.45cm and 0.45cm of a] {5};
    \node[main] (d)[below left= 0.45cm and 0.45cm of c] {2};
    
    \draw (a) -- (b);
    \draw (a) -- (c);
    \draw (a) -- (d);
    \draw (b) -- (c);
    \draw (b) -- (d);
    \draw (c) -- (d);
\end{tikzpicture}
}

\newcommand{\PSLthreefive}{%PSL(3,5)
\begin{tikzpicture}[main/.style = {draw, circle, scale = 0.8}]
    \node[main] (c) {31};
    \node[main] (a)[above left= 0.45cm and 0.45cm of c] {5};
    \node[main] (b)[below left= 0.45cm and 0.45cm of c] {3};
    \node[main] (d)[right= 0.45cm of c] {2};
    
    \draw (a) -- (b);
    \draw (a) -- (c);
    \draw (b) -- (c);
    \draw (c) -- (d);
\end{tikzpicture}
}

\newcommand{\PSLthreeseven}{%PSL(3,7)
\begin{tikzpicture}[main/.style = {draw, circle, scale = 0.8}]
    \node[main] (c) {19};
    \node[main] (a)[above left= 0.45cm and 0.45cm of c] {7};
    \node[main] (b)[below left= 0.45cm and 0.45cm of c] {3};
    \node[main] (d)[right= 0.45cm of c] {2};
    
    \draw (a) -- (b);
    \draw (a) -- (c);
    \draw (b) -- (c);
    \draw (c) -- (d);
\end{tikzpicture}
}

\newcommand{\PSLthreeeight}{%PSL(3,8)
\begin{tikzpicture}[main/.style = {draw, circle, scale = 0.8}]
    \node[main] (c) {73};
    \node[main] (a)[above left= 0.45cm and 0.45cm of c] {3};
    \node[main] (b)[below left= 0.45cm and 0.45cm of c] {2};
    \node[main] (d)[right= 0.45cm of c] {7};
    
    \draw (a) -- (b);
    \draw (a) -- (c);
    \draw (b) -- (c);
    \draw (c) -- (d);
\end{tikzpicture}
}

\newcommand{\PSLthreeseventeen}{%PSL(3,17)
\begin{tikzpicture}[main/.style = {draw, circle, scale = 0.8}]
    \node[main] (c) {307};
    \node[main] (a)[above left= 0.45cm and 0.45cm of c] {17};
    \node[main] (b)[below left= 0.475cm and 0.475cm of c] {3};
    \node[main] (d)[right= 0.45cm of c] {2};
    
    \draw (a) -- (b);
    \draw (a) -- (c);
    \draw (b) -- (c);
    \draw (c) -- (d);
\end{tikzpicture}
}

\newcommand{\PSLfourthree}{%PSL(4,3)
\begin{tikzpicture}[main/.style = {draw, circle, scale = 0.8}]
    \node[main] (c) {13};
    \node[main] (a)[above left= 0.45cm and 0.45cm of c] {5};
    \node[main] (b)[below left= 0.45cm and 0.45cm of c] {3};
    \node[main] (d)[right= 0.45cm of c] {2};
    
    \draw (a) -- (b);
    \draw (a) -- (c);
    \draw (b) -- (c);
    \draw (c) -- (d);
\end{tikzpicture}
}

\newcommand{\Szsmall}{%Sz(8)
\begin{tikzpicture}[main/.style = {draw, circle, scale = 0.8}]
    \node[main] (a) {13};
    %\node[main] (c) {7};
    \node[main] (c)[below right= 0.45cm and 0.45cm of a] {7};
    \node[main] (b)[below left= 0.45cm and 0.45cm of a] {5};
    \node[main] (d)[below= 1.15cm of a] {2};
    %2,5,7,13
    \draw (a) -- (b);
    \draw (a) -- (c);
    \draw (a) -- (d);
    \draw (b) -- (c);
    \draw (b) -- (d);
    \draw (c) -- (d);
\end{tikzpicture}
}

\newcommand{\Szbig}{%Sz(32)
\begin{tikzpicture}[main/.style = {draw, circle, scale = 0.8}]
    \node[main] (c) {31};
    \node[main] (a)[above left= 0.45cm and 0.45cm of c] {5};
    \node[main] (b)[below left= 0.45cm and 0.45cm of a] {41};
    \node[main] (d)[below left= 0.45cm and 0.45cm of c] {2};
    %2,5,31,41
    \draw (a) -- (b);
    \draw (a) -- (c);
    \draw (a) -- (d);
    \draw (b) -- (c);
    \draw (b) -- (d);
    \draw (c) -- (d);
\end{tikzpicture}
}

\newcommand{\Uthreefour}{%U(3,4)
\begin{tikzpicture}[main/.style = {draw, circle, scale = 0.8}]
    \node[main] (c) {13};
    \node[main] (a)[above left= 0.45cm and 0.45cm of c] {3};
    \node[main] (b)[below left= 0.45cm and 0.45cm of c] {2};
    \node[main] (d)[right= 0.45cm of c] {5};
    
    \draw (a) -- (b);
    \draw (a) -- (c);
    \draw (b) -- (c);
    \draw (c) -- (d);
\end{tikzpicture}
}

\newcommand{\Uthreefive}{%U(3,5)
\begin{tikzpicture}[main/.style = {draw, circle, scale = 0.8}]
    \node[main] (c) {7};
    \node[main] (a)[above left= 0.45cm and 0.45cm of c] {5};
    \node[main] (b)[below left= 0.45cm and 0.45cm of c] {3};
    \node[main] (d)[right= 0.45cm of c] {2};
    
    \draw (a) -- (b);
    \draw (a) -- (c);
    \draw (b) -- (c);
    \draw (c) -- (d);
\end{tikzpicture}
}

\newcommand{\Uthreeseven}{%U(3,7)
\begin{tikzpicture}[main/.style = {draw, circle, scale = 0.8}]
    \node[main] (c) {43};
    \node[main] (a)[above left= 0.45cm and 0.45cm of c] {7};
    \node[main] (b)[below left= 0.45cm and 0.45cm of c] {3};
    \node[main] (d)[right= 0.45cm of c] {2};
    
    \draw (a) -- (b);
    \draw (a) -- (c);
    \draw (b) -- (c);
    \draw (c) -- (d);
\end{tikzpicture}
}

\newcommand{\Uthreeeight}{%U(3,8)
\begin{tikzpicture}[main/.style = {draw, circle, scale = 0.8}]
    \node[main] (c) {19};
    \node[main] (a)[above left= 0.45cm and 0.45cm of c] {7};
    \node[main] (b)[below left= 0.45cm and 0.45cm of c] {2};
    \node[main] (d)[right= 0.45cm of c] {3};
    
    \draw (a) -- (b);
    \draw (a) -- (c);
    \draw (b) -- (c);
    \draw (c) -- (d);
\end{tikzpicture}
}

\newcommand{\Ufourthree}{%U(4,3)
\begin{tikzpicture}[main/.style = {draw, circle, scale = 0.8}]
    \node[main] (c) {7};
    \node[main] (a)[above left= 0.45cm and 0.45cm of c] {3};
    \node[main] (b)[below left= 0.45cm and 0.45cm of a] {5};
    \node[main] (d)[below left= 0.45cm and 0.45cm of c] {2};
    
    \draw (a) -- (b);
    \draw (a) -- (c);
    \draw (b) -- (c);
    \draw (b) -- (d);
    \draw (c) -- (d);
\end{tikzpicture}
}

\newcommand{\Ufivetwo}{%U(5,2)
\begin{tikzpicture}[main/.style = {draw, circle, scale = 0.8}]
    \node[main] (c) {11};
    \node[main] (a)[above left= 0.45cm and 0.45cm of c] {5};
    \node[main] (b)[below left= 0.45cm and 0.45cm of c] {2};
    \node[main] (d)[right= 0.45cm of c] {3};
    
    \draw (a) -- (b);
    \draw (a) -- (c);
    \draw (b) -- (c);
    \draw (c) -- (d);
\end{tikzpicture}
}

\newcommand{\tits}{%^2F_4(2)'
\begin{tikzpicture}[main/.style = {draw, circle, scale = 0.8}]
    \node[main] (c) {13};
    \node[main] (a)[above left= 0.45cm and 0.45cm of c] {5};
    \node[main] (b)[below left= 0.45cm and 0.45cm of c] {3};
    \node[main] (d)[right= 0.45cm of c] {2};
    
    \draw (a) -- (b);
    \draw (a) -- (c);
    \draw (b) -- (c);
    \draw (c) -- (d);
\end{tikzpicture}
}

\newcommand{\Gtwothree}{%G_2(3)
\begin{tikzpicture}[main/.style = {draw, circle, scale = 0.8}]
    \node[main] (c) {13};
    \node[main] (a)[above left= 0.45cm and 0.45cm of c] {3};
    \node[main] (b)[left= 1.11cm of c] {7};
    \node[main] (d)[below left= 0.45cm and 0.45cm of c] {2};
    
    \draw (a) -- (b);
    \draw (a) -- (c);
    \draw (b) -- (c);
    \draw (b) -- (d);
    \draw (c) -- (d);
\end{tikzpicture}
}

\renewcommand{\arraystretch}{1.2}
\begin{table}[H]
    \begin{centering}
    \begin{tabular}{|c|c||c|c||c|c|}\hline
    $G$ & $\ol\Gamma(G)$ & $G$ & $\ol\Gamma(G)$ & $G$ & $\ol\Gamma(G)$\\\hline\hline
    $A_7$ & \Aseven & $A_8$ & \Aeight & $M_{11}$ & \Meleven\\\hline
    $M_{12}$ & \Mtwelve & $\PSL(3,4)$ & \PSLthreefour & $\PSL(3,5)$ & \PSLthreefive\\\hline
    $\PSL(3,7)$ & \PSLthreeseven & $\PSL(3,8)$ & \PSLthreeeight & $\PSL(3,17)$ & \PSLthreeseventeen\\\hline
    $\PSL(4,3)$ & \PSLfourthree & $Sz(8)$ & \Szsmall & $Sz(32)$ & \Szbig\\\hline
    $U_3(4)$ & \Uthreefour & $U_3(5)$ & \Uthreefive & $U_3(7)$ & \Uthreeseven\\\hline
    $U_3(8)$ & \Uthreeeight & $U_4(3)$ & \Ufourthree & $U_5(2)$ & \Ufivetwo\\\hline
    $^2F_4(2)'$ & \tits & $G_2(3)$ & \Gtwothree & \multicolumn{2}{c|}{}\\\hline
    \end{tabular}
    \caption{$K_4$ groups and their prime graphs, labeled. Only groups not in the family of $\PSL(2,q)$ groups and groups whose prime graphs have triangles are included.}
    \label{table:5}
\end{centering}
\end{table}

\newpage
\begin{center}
{\bf Acknowledgements}
\end{center}
\vspace{3pt}
The authors thank Eitan Marcus for useful discussions on the topic of this paper and Dr. Alexander Hulpke for assistance with GAP in constructing of groups and extensions. This research was conducted under NSF-REU grant DMS-1757233 and DMS-2150205 by the second, third, and fourth authors during the summer of 2023 under the supervision of the first author. The authors gratefully acknowledge the financial support of NSF for this REU. The second, third, and fourth authors thank the first author and mentor, Dr. Thomas Michael Keller, for his invaluable guidance and expertise throughout the program. Those authors also thank Texas State University for the great working environment and support provided. 
%We also thank Lizard for His Perpetual Source of inspiration.

\end{document}